\renewcommand{\iff}{if and only if }
\newcommand{\st}{such that }
\newcommand{\Z}{\mathbb{Z}}
\newcommand{\Q}{\mathbb{Q}}
\theoremstyle{plain}
\newtheorem{thm}{Theorem}[section]
\newtheorem{prop}[thm]{Proposition}
\newtheorem{lem}[thm]{Lemma}
\newtheorem{cor}[thm]{Corollary}
\newtheorem{ex}[thm]{Example}
\theoremstyle{definition}
\newtheorem{defn}[thm]{Definition}
\theoremstyle{remark}
\newtheorem{rem}[thm]{Remark}
\numberwithin{equation}{section}
\newcommand{\la}{\lambda}
\newcommand{\ra}{\rightarrow}
\newcommand{\lra}{\longrightarrow}
\newcommand{\ten}{\otimes}
\newcommand{\arc}{\ar@{-}@/_/}
\newcommand{\tra}{\ar@{-}}
\newcommand{\dta}{\ar@{.}}
\newcommand{\ltra}{\ar@{<-}}
\newcommand{\rtra}{\ar@{->}}
\newcommand{\larc}{\ar@{<-}@/_/}
\newcommand{\rarc}{\ar@{<-}@/_/}
\begin{document}
\begin{frontmatter}

\title{Cellular structure of $q$-Brauer algebras}

\author{Dung Tien Nguyen}

\address{\emph{Present address} : University of Stuttgart, Institute of Algebra and Number Theory,
Pfaffenwaldring 57, D-70569 Stuttgart, Germany}

\address{\emph{Permanent address} :University of Vinh, Mathematical Faculty,
Leduan 182, Vinh city, Vietnam}

\ead{nguyen@mathematik.uni-stuttgart.de}


{\begin{abstract}
In this paper we consider the $q$-Brauer algebra over $R$ a commutative noetherian domain.
We first construct a new basis for $q$-Brauer algebras,
and we then prove that it is a cell basis, and thus these algebras are cellular in the sense of Graham and Lehrer.
In particular, they are shown to be an iterated inflation of Hecke algebras of type $A_{n-1}.$
Moreover, when $R$ is a field of arbitrary characteristic, we determine for which parameters the $q$-Brauer algebras are quasi-heredity.
So the general theory of cellular algebras and quasi-hereditary algebras applies to $q$-Brauer algebras.
As a consequence, we can determine all irreducible representations of $q$-Brauer algebras by linear algebra methods.
\end{abstract}}

\begin{keyword}
$q$-Brauer algebra; Algebra with involution $q$-deformation; Brauer algebra; Hecke algebra; Cellular algebra.
\end{keyword}
\end{frontmatter}

\section{Introduction}
Schur-Weyl duality relates the representation theory of infinite group $GL_{N}(R)$ with that of symmetric group $S_{n}$
via the mutually centralizing actions of two groups on the tensor power space $(R^{N})^\otimes{^n}$.
In 1937 Richard Brauer \cite{Br} introduced the algebras which are now called 'Brauer algebra'.
These algebras appear in an analogous situation where $GL_{N}(R)$ is replaced by either a symplectic or an orthogonal group
and the group algebra of the symmetric group is replaced by a Brauer algebra.
Afterwards, a q-deformation of these algebras has been found
in \cite{BW} by Birman and Wenzl, and independently by Murakami \cite{Mu}, which is referred nowadays as BMW-algebras, in connection with knot theory and quantum groups.

Recently a new algebra was introduced by Wenzl \cite{W3} via generators and relations who called it the $q$-Brauer algebra and gave a detailed description of its structure.
This algebra is another q-deformation of Brauer algebras and contains the Hecke algebra of type $A_{n-1}$ as a subalgebra.
In particular, he proved that over the field $\Q(r,q)$ it is semisimple and isomorphic to the Brauer algebra.
Subsequently Wenzl found a number of applications of $q$-Brauer algebras, such as in the study of the representation ring of an orthogonal or symplectic group \cite{W4},
of module categories of fusion categories of type A and corresponding subfactors of type II$_{1}$ in \cite{W5}.
Another quantum analogue of the Brauer algebra had been introduced by Molev in \cite{Mo1}. He has shown that Wenzl's $q$-Brauer algebra is a quotient of his algebra in \cite{Mo2}.

It is well known that algebras as the group algebra of symmetric groups, the Hecke algebra of type $A_{n-1}$,
the Brauer algebra and its $q$-deformation the BMW-algebra are cellular algebras in the sense of Graham and Lehrer (see \cite{GL, KX4, X2}).
Hence, a question arising naturally from this situation is whether the $q$-Brauer algebra is also a cellular algebra.

This question will be answered in the positive in this paper via using an equivalent definition with that of Graham and Lehrer defined by Koenig and Xi in \cite{KX2}
and their approach, called \text{ \it 'iterated inflation'}, to cellular structure as in \cite{KX4} (or \cite{KX2}).
Firstly, we will construct a basis for $q$-Brauer algebras which is labeled by a natural basis of Brauer algebras.
This basis allows us to find cellular structure on $q$-Brauer algebras. Also note that there is a general basis for the $q$-Brauer algebras introduced in \cite{W3} by Wenzl,
but it seems that his basis is not suitable to provide a cell basis. Subsequently, we prove that these $q$-Brauer algebras are cellular in the sense of Graham and Lehrer.
More precisely, we exhibit on this algebra an iterated inflation structure as that of the Brauer algebra in \cite{KX4} and of the BMW-algebra in \cite{X2}. These results are valid over a commutative noetherian ring.
These enable us to determine the irreducible representations of $q$-Brauer algebras over arbitrary field of any characteristic by using the standard methods in the general theory of cellular algebras.
As another application, we give the choices for parameters such that the corresponding $q$-Brauer algebras are quasi-hereditary in the sense of \cite{CPS}.
Thus, for those $q$-Brauer algebras, the finite dimensional left modules form a highest weight category with many important homological properties \cite{PS}.
Under a suitable choice of parameters all results of the $q$-Brauer algebra obtained in this paper recover those of the Brauer algebra with non-zero parameter given by Koenig and Xi (\cite{KX3}, \cite{KX4}).

The paper will be organized as follows: In Section two we recall the definition of Brauer algebras and of Hecke algebras of type $A_{n-1}$
as well as the axiomatics of cellular algebras. Some basic results on representations of Brauer algebras are given here.
In Section three we indicate the definition of generic $q$-Brauer algebras as well as collect and extend some basic and necessary properties for later reference.
Theorem \ref{dl11} points out a particular basis on the $q$-Brauer algebras that is then shown a cell basis.
In Section four we prove the main result, Theorem \ref{dl16}, that the $q$-Brauer algebras have cellular structure.
Proposition \ref{md14} tells us that any $q$-Brauer algebra can be presented as an iterated inflation of certain Hecke algebras of type $A_{n-1}$.
The other results are obtained by applying the theory of cellular algebras.
In Section five we determine a necessary and sufficient condition for the $q$-Brauer algebra to be quasi-hereditary, Theorem \ref{qhthm}.

\section{Basic definitions and preliminaries}

In this section we recall the definition of Brauer and Hecke algebras and also that of cellular algebras.

\subsection{Brauer algebra}
Brauer algebras were introduced first by Richard Brauer \cite{Br} in order to study the ${n}$th tensor power of the defining representation of
the orthogonal groups and symplectic groups. Afterward, they were studied in more detail by various mathematicians.
We refer the reader to work of Brown (see \cite{Bro1, Bro2}), Hanlon and Wales (\cite{HW1, HW2, HW3})
or Graham and Lehrer \cite{GL}, Koenig and Xi \cite{KX3, KX4}, Wenzl \cite{W1} for more information.

\subsubsection{Definition of Brauer algebras}
The Brauer algebra is defined over the ring $\mathbb{Z}[x]$ via a basis given by diagrams with 2n vertices, arranged in two rows with n edges in each row, where each vertex belongs to exactly one edge.
The edges which connect two vertices on the same row are called \textit{horizontal edges}.
The other ones are called \textit{vertical edges}. We denote by $D_{n}(x)$  the Brauer algebra
which the vertices of diagrams are numbered 1 to n from left to right in both the top and the bottom.
Two diagrams $d_{1}$ and $d_{2}$ are multiplied by concatenation, that is, the bottom vertices of $d_1$
are identified with the top vertices of $d_2$, hence defining diagram $d$.
Then $d_1 \cdot d_2$ is defined to be $x^{\gamma(d_{1},\ d_{2})}$d, where
$\gamma(d_{1},\ d_{2})$ denote the number of those connected components of the concatenation of
$d_1$ and $d_2$ which do not appear in $d$, that is, which contain neither a top vertex of $d_1$ nor a bottom vertex of $d_2$.\\
Let us demonstrate this by an example. We multiply two elements in $D_{7}(x)$:
\begin{center}
$\begin{array}{c}
\begin{xy}
\xymatrix@!=0.01pc{ \bullet \ar@{-}@/^/[rr] &\bullet \ar@{-}[ld] &\bullet & \bullet \ar@{-}[rrd] &\bullet \ar@{-}[r] &\bullet &\bullet \ar@{-}[lld] \ar@{}[d]^{\text{ $d_{1}$ }} \\
    \bullet \dta[d] & \bullet \dta[d]  \ar@{-}@/_/[rr] &\bullet \dta[d] \ar@{-}@/_/[rrrr] & \bullet \dta[d] &\bullet \dta[d] &\bullet \dta[d] &\bullet \dta[d] \\
    \bullet \ar@{-}[rd] & \bullet \ar@{-}@/^/[rr] &\bullet \ar@{-}[lld] & \bullet &\bullet \ar@{-}[r] &\bullet &\bullet \ar@{-}[d] \ar@{}[d]^{\text{ $ d_{2}$ }} \\
    \bullet & \bullet & \bullet \ar@{-}@/_/[rr] & \bullet \ar@{-}@/_/[rr] &\bullet &\bullet &\bullet }
\end{xy}
\end{array}$
\end{center}
and the resulting diagram is
\begin{center}
$\begin{array}{c}
\begin{xy}
\xymatrix@!=0.01pc{ \bullet \ar@{-}@/^/[rr] &\bullet \ar@{-}[d] &\bullet & \bullet \ar@{-}@/^/[rrr] &\bullet \ar@{-}[r] &\bullet &\bullet \\
            \ar@{}[u]^{\text{$d_{1}. d_{2} = x^{1}$}} \bullet \ar@{-}@/_/[rrrrrr] & \bullet & \bullet \ar@{-}@/_/[rr] & \bullet \ar@{-}@/_/[rr] &\bullet &\bullet &\bullet .}
\end{xy}
\end{array}$
\end{center}

In (\cite{Br}, Section 5) Brauer points out that each basis diagram on $D_{n}(x)$ which has exactly 2k horizontal edges
can be obtained in the form $\omega_{1}e_{(k)}\omega_{2}$ with $\omega_{1}$ and $\omega_{2}$ are permutations in the symmetric group $S_{n}$,
and $e_{(k)}$ is a diagram of the following form:
$$ \begin{array}{c}
\begin{xy}
\xymatrix@!=0.01pc{ \bullet \ar@{-}[r] & \bullet & \ldots &\bullet \ar@{-}[r] & \bullet &\bullet \ar@{-}[d] &\bullet \ar@{-}[d] &\ldots &\bullet \ar@{-}[d]  \\
    \bullet \ar@{-}[r] & \bullet & \ldots & \bullet \ar@{-}[r] & \bullet &\bullet &\bullet &\ldots &\bullet }
\end{xy}
\end{array},$$
where each row has exactly k horizontal edges.

As a consequence, the Brauer algebra can be considered over a
polynomial ring over $\mathbb{Z}$ and is defined via generators and relations as follow:

Take $N$ to be an indeterminate
over $\mathbb{Z}$; Let $R=\mathbb{Z}[N]$ and define the Brauer
algebra $D_{n}(N)$ over $R$ as the associative unital $R$--algebra
generated by the transpositions $s_1,s_2,\dots,s_{n-1}$, together
with elements $e_{(1)},e_{(2)},\dots,e_{([n/2])}$, which satisfy the defining relations:
\begin{align*}
&(S_{0}) \hspace{1cm} s_i2=1&&\text{for $1\le i<n$;}\\
&(S_{1}) \hspace{1cm} s_is_{i+1}s_i=s_{i+1}s_is_{i+1}&&\text{for $1\le i<n-1$;}\\
&(S_{2}) \hspace{1cm} s_is_j=s_js_i&&\text{for $2\le|i-j|$;}\\
&(1) \hspace{1cm} e_{(k)}e_{(i)}=e_{(i)}e_{(k)}=N^{i}e_{(k)}&&\text{for $1 \le i \le k \le [n/2]$;}\\
&(2) \hspace{1cm}  e_{(i)}s_{2j}e_{(k)}=e_{(k)}s_{2j}e_{(i)}=N^{i - 1}e_{(k)}&&\text{for $1\le j \le i \le k \le [n/2]$;}\\
&(3) \hspace{1cm}  s_{2i+1}e_{(k)}=e_{(k)}s_{2i+1} =e_{(k)} &&\text{for $0 \le i < k \le [n/2]$;}\\
&(4) \hspace{1cm}  e_{(k)}s_i=s_ie_{(k)}&&\text{for $2k < i<n$ ;}\\
&(5) \hspace{1cm}  s_{(2i-1)}s_{2i}e_{(k)}=s_{(2i+1)}s_{2i}e_{(k)}&&\text{for $1\le i < k \le k \le [n/2]$;}\\
&(6) \hspace{1cm}  e_{(k)}s_{2i}s_{(2i-1)}=e_{(k)}s_{2i}s_{(2i+1)}&&\text{for $1\le i < k \le k \le [n/2]$;}\\
&(7) \hspace{1cm}  e_{(k+1)} = e_{(1)}s_{2, 2k+1}s_{1, 2k}e_{(k)}&&\text{for $1\le k \le [n/2]-1$.}
\end{align*}
Regard the group ring $RS_n$ as the subring of $D_n(N)$
generated by the transpositions
$$\{s_i=(i,i+1) \text{for $1\le i<n$}\}.$$
By Brown (\cite{Bro2}, Section 3) the Brauer algebra has a decomposition as direct sum of vector spaces
$$D_{n}(x)\cong\bigoplus_{k=0}^{[n/2]}\mathbb{Z}[x]S_{n}e_{(k)}S_{n}.$$
Set $$\textit{I(m)}=\bigoplus_{k\geq{m}}\mathbb{Z}[x]S_{n}e_{(k)}S_{n},$$
then \textit{I(m)} is a two-sided ideal in $D_{n}(x)$ for each $m\leq{[n/2]}.$

\subsubsection{The modules $V^*_k$ and $V_k$ for Brauer algebras}
In this subsection we recall particular modules of Brauer algebras.
$D_{n}(N)$ has a decomposition into $D_{n}(N) - D_{n}(N)$ bimodules
$$D_{n}(N)\cong\bigoplus_{k=0}^{[n/2]}\mathbb{Z}[N]S_{n}e_{(k)}S_{n}+\textit{I(k+1)/I(k+1)}.$$
Using the same arguments as in Section 1 (\cite{W3}), each factor module
$$\mathbb{Z}[N]S_{n}{e_{(k)}}\omega_{j} + \textit{I(k+1)/I(k+1)}$$
is a left $D_{n}(N)$-module with a basis given by the basis diagrams of
$\mathbb{Z}[N]S_{n}{e_{(k)}}\omega_{j}$, where $\omega_{j} \in S_{n}$ is a diagram
such that ${e_{(k)}}\omega_{j}$ is a diagram in $D_{n}(N)$ with no intersection between any two vertical edges.
In particular
\begin{align} \label{eq1}
\hspace{1cm}  \textit{I(k)/I(k+1)} \cong\bigoplus_{j \in P(n,k)}(\mathbb{Z}[N]S_{n}e_{(k)}\omega_{j}+\textit{I(k+1))/I(k+1)},
\end{align}
where $P(n,k)$ is the set of all possibilities of $\omega_{j}.$
As multiplication from the right by $\omega_{j}$ commutes with the $D_{n}(N)$-action,
it implies that each summand on the right hand side is isomorphic to the module
\begin{align} \label{eq2}
\hspace{1cm}  V^*_k=(\mathbb{Z}[N]S_{n}{e_{(k)}} + \textit{I(k+1))/I(k+1)}.
\end{align}
Combinatorially, $V^*_k$ is spanned by basis diagrams with exactly k edges in the bottom row, where the $i-th$ edge is connected the vertices $2i-1$ and $2i$.
Observe that $V^*_k$ is a free, finitely generated $\mathbb{Z}[N]$ module with $\mathbb{Z}[N]$-rank $n!/2^{k}k!$.\\
Similarly, the right $D_{n}(N)$-module is defined
\begin{align} \label{eq3}
\hspace{1cm} V_k=(\mathbb{Z}[N]{e_{(k)}S_{n}} + \textit{I(k+1))/I(k+1)},
\end{align}
where the basis diagrams are obtained from those in $V^*_k$ by an involution, say *, of $D_n(N)$ which rotates a diagram $d \in V^*_k$ around its horizontal axis downward.
For convenience later in Section 4 we use the term $V^*_k$ replacing $V_{n}^{(k)}$ in \cite{W3}. More details for setting up $V^*_k$ can be found in Section 1 of \cite{W3}.
\begin{lem} \label{bd1}(\cite{W3}, Lemma 1.1(d)). The algebra $D_{n}(N)$ is faithfully represented on $\bigoplus_{k=0}^{[n/2]}V^*_k$ (and also on $\bigoplus_{k=0}^{[n/2]}V_k$).
\end{lem}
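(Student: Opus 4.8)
The plan is to show that the structural homomorphism
$$\Phi : D_n(N) \lra \bigoplus_{k=0}^{[n/2]} \mathrm{End}_{\Z[N]}(V^*_k),$$
sending $a$ to the tuple of its left-multiplication actions, is injective; the assertion for $\bigoplus_k V_k$ then follows formally by applying the anti-involution $*$, since $V_k$ is obtained from $V^*_k$ by $*$ and an algebra acts faithfully on a module if and only if it acts faithfully on the $*$-twisted module. Both source and target of $\Phi$ are free $\Z[N]$-modules of finite rank, and $\Z[N]$ is a domain with fraction field $\Q(N)$; hence $\Phi$ is injective as soon as $\Phi \ten_{\Z[N]} \Q(N)$ is, because a nonzero element of $\ker\Phi$ would stay nonzero in $D_n(N)\ten\Q(N)$ by torsion-freeness while still being annihilated. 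So it suffices to prove that the Brauer algebra $D_n(N)\ten\Q(N)$ over the field $\Q(N)$ acts faithfully on $\bigoplus_k (V^*_k)\ten\Q(N)$.

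Over $\Q(N)$ I would invoke the classical fact that $D_n(N)$ is split semisimple, the parameter $N$ being transcendental. The two-sided ideal filtration $D_n(N)=I(0)\supseteq I(1)\supseteq\cdots\supseteq I([n/2])\supseteq 0$ then splits as a filtration of left modules, and by \eqref{eq1} each subquotient $I(k)/I(k+1)$ is a direct sum of copies of $V^*_k$. Consequently the left regular module ${}_{D_n(N)}D_n(N)$ is isomorphic to a direct sum of copies of $V^*_0,\dots,V^*_{[n/2]}$. Since the left regular module is faithful and $\mathrm{Ann}\big(\bigoplus_k (V^*_k)^{\oplus m_k}\big)=\bigcap_k\mathrm{Ann}(V^*_k)=\mathrm{Ann}\big(\bigoplus_k V^*_k\big)$, the module $\bigoplus_k V^*_k$ is faithful over $\Q(N)$, as required.

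If one prefers an argument that does not quote semisimplicity, I would work directly over $\Z[N]$. Suppose $0\neq a\in D_n(N)$ annihilates $\bigoplus_k V^*_k$, and in the decomposition $a=\sum_k a_k$ with $a_k\in\Z[N]S_ne_{(k)}S_n$ of \cite{Bro2} let $m$ be least with $a_m\neq 0$. Any diagram $d$ of rank $>m$ sends each generator $\overline{\sigma e_{(m)}}$ of $V^*_m$ into $I(m+1)$, since the concatenation $d\,\sigma e_{(m)}$ has at least as many horizontal edges per row as $d$ and hence more than $m$; therefore on $V^*_m$ the element $a$ acts exactly as $a_m$, so $a_m$ annihilates $V^*_m$. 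Everything thus reduces to the claim that no nonzero element of the rank-$k$ piece $\Z[N]S_ne_{(k)}S_n$ can annihilate $V^*_k$.

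This last claim I would settle through the diagram bilinear form on $V^*_k$. For $u=\overline{\sigma e_{(k)}}$ in $V^*_k$ and $w=\overline{e_{(k)}\tau}$ in $V_k$, write $e_{(k)}\tau\sigma e_{(k)}\equiv \langle w,u\rangle\, e_{(k)}\pmod{I(k+1)}$; this defines a $\Z[N]$-valued pairing, and the left action of the rank-$k$ piece on $V^*_k$ is essentially contraction against it, so faithfulness amounts to its non-degeneracy. I would compute the Gram matrix in the half-diagram bases by counting the loops closed upon stacking: the diagonal entries equal $N^k$ — this is exactly the relation $e_{(k)}e_{(k)}=N^ke_{(k)}$ of $(1)$ — whereas every other entry is $0$ or a power $N^c$ with $c<k$, since two distinct half-diagrams close strictly fewer than $k$ loops or else create an extra horizontal edge and land in $I(k+1)$. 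Hence the Gram determinant is $\pm N^{k\dim V^*_k}$ plus terms of strictly lower degree in $N$, so it is a nonzero element of $\Z[N]$ and the form is non-degenerate over $\Q(N)$, which forces every nonzero rank-$k$ element to act nontrivially. The main obstacle is precisely this non-degeneracy (equivalently, the semisimplicity input of the first route): the minimal-rank reduction and the lift from $\Q(N)$ to $\Z[N]$ are routine, but verifying that the maximal loop count $k$ is attained only on the diagonal requires a careful analysis of how stacking merges arcs and through-strands.
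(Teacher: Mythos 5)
Your first argument is correct, and it is a genuinely different route from the paper's: the paper gives no internal proof of this lemma at all, simply quoting Wenzl (\cite{W3}, Lemma 1.1(d)), whose argument stays over $\Z[N]$ and rests on the length/leading-term control of his Lemma 1.2 (the paper's Lemma \ref{bd2}). Your reduction to the generic fibre is sound: $D_n(N)$ and the $V^*_k$ are free over the domain $\Z[N]$, so injectivity of $\Phi$ may be tested after tensoring with $\Q(N)$, and over $\Q(N)$ semisimplicity (Brown \cite{Bro2}) makes the filtration by the ideals $I(k)$ split, so that by \eqref{eq1} the left regular module is a direct sum of copies of the $V^*_k$; faithfulness of $\bigoplus_k V^*_k$ follows since annihilators of direct sums are intersections, and the statement for $\bigoplus_k V_k$ follows by the anti-involution $*$. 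What this buys is brevity and no diagram combinatorics; what it costs is importing generic semisimplicity, a considerably heavier theorem than the lemma itself, where Wenzl's cited argument is self-contained and integral.

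Your ``preferred'' elementary alternative, however, has a genuine gap, and it is exactly the one you flag at the end. First, the scalar pairing is ill-defined: for $w=e_{(k)}\tau$ and $u=\sigma e_{(k)}$ one has $e_{(k)}\tau\sigma e_{(k)}\equiv N^{c}\,e_{(k)}\rho \pmod{I(k+1)}$ with $\rho\in S_{2k+1,n}$ in general nontrivial, so the product is \emph{not} a scalar multiple of $e_{(k)}$ modulo $I(k+1)$ whenever $n>2k+1$. Second, the key Gram estimate is false: the maximal loop count $c=k$ is attained precisely when the horizontal-edge configuration of the bottom row of $w$ matches that of the top row of $u$, \emph{regardless} of how the through strands are matched. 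Concretely, for $n=4$, $k=1$, take $w=e_{(1)}$ and $u=e_{(1)}s_3$ (both rows have the edge $(1,2)$, verticals crossed); then $wu=e_{(1)}^2s_3=N\,e_{(1)}s_3$, an off-diagonal entry of top degree $N^{k}$, so ``maximal degree only on the diagonal'' fails and the determinant argument collapses. The correct repair is what the paper itself does at the $q$-level in Section 4: make the form take values in the group algebra of the through-strand group $S_{2k+1,n}$ (the $q=1$ shadow of $\varphi_k$ in \eqref{eq31}), and control through-strand permutations via the unique factorization $\sigma=\omega'\pi'$ of Lemma \ref{bd13}, as in the iterated-inflation analysis of \cite{KX4}. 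Even then faithfulness of the rank-$k$ layer is not a one-line determinant computation: coefficients in $\Z[N]$ may themselves carry powers of $N$, and the matched-configuration block of the group-algebra-valued Gram matrix has the outer-product shape $(\pi'_i\pi_j)_{i,j}$, which is not invertible entrywise, so a more careful leading-term bookkeeping is unavoidable. As written, route two does not close this; your proof stands only through route one.
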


\subsubsection{Length function for Brauer algebras $D_{n}(N)$}
Generalizing the length of elements in reflection groups, Wenzl \cite{W3} defined a length function for a diagram of $D_{n}(N)$ as follows:\\
For a diagram d $\in D_{n}(N)$ with exactly $2k$ horizontal edges, the definition of the length $\ell{(d)}$ is given by
$$\ell{(d)}=min\{\ell{(\omega_{1})} +\ell{(\omega_{2})} | \quad  \omega_{1}e_{(k)}\omega_{2}=d, \ \omega_{1}, \omega_{2} \in {S_{n}} \}.$$
We will call the diagrams $d$ of the form $\omega{e_{(k)}}$ where $l(\omega) = l(d)$ and $\omega \in S_{n}$ {\it{basis diagrams}} of the module $V^*_k$.
\begin{rem}
1. Recall that the length of a permutation $\omega \in S_{n}$ is defined by $\ell(\omega) =$ the cardinality of set $\{ (i, j) |  (j)\omega < (i)\omega$ and $1 \leq i < j \leq  n\}$,
where the symmetric group acts on $\{ 1, 2, ..., n \}$ on the right.

2. Given a diagram d, there can be more than one $\omega$ satisfying $\omega{e_{(k)}}= d$ and $\ell{(\omega)} = \ell{(d)}$.
e.g. $s_{2j-1}s_{2j}e_{(k)} = s_{2j+1}s_{2j}e_{(k)}$ for $2j+1 < k$.
This means that such an expression of d is not unique with respect to $\omega \in S_{n}$.
\item For later use, if $s_{i} = (i, i+1)$ is a transposition in symmetric goup $S_{n}$ with $i, j= 1, ... , k$
let
\[s_{i,j} =
\begin{cases}
s_{i}s_{i+1}...s_{j} &\text{ if \ } i \leq j,\\
s_{i}s_{i-1}...s_{j} &\text{ if \ } i > j.
\end{cases} \]
A permutation $\omega \in S_{n}$ can be written uniquely in the form $\omega = t_{n-1}t_{n-2}...t_{1}$,
where $t_{j} = 1$ or $t_{j} = s_{i_{j},j}$ with $1 \leq{i_{j}} \leq{j}$ and $1 \leq{j}<{n}$.
This can be seen as follows: For given $\omega \in S_{n}$,
there exists a unique $t_{n-1}$ such that $(n)t_{n-1}=(n)\omega$. Hence $\omega' = (n)t^{-1}_{n-1}\omega = n$,
and we can consider $\omega'$ as an element of $S_{n-1}.$ Repeating this process on n implies the general claim.
Set
\begin{align} \label{eq4}
\qquad B^*_k = \{ t_{n-1} t_{n-2}... t_{2k} t_{2k-2}t_{2k-4}...t_{2} \}.
\end{align}
By the definition of $t_{j}$ given above, the number of possibilities of $t_{j}$ is $j+1$.
A direct computation shows that $B^*_k$ has $n!/2^{k}k!$ elements.
In fact, the number of elements in $B^*_k$ is equal to the number of diagrams $d^*$ in $D_{n}(N)$
in which $d^*$ has k horizontal edges in each row and one of its rows is fixed like that of $e_{(k)}$.

3. From now on, a permutation of symmetric group is seen as a diagram with no horizontal edges,
and the product $\omega_{1}\omega_{2}$ in $S_{n}$ is seen as a concatenation of two diagrams in $D_{n}(N)$.

4. Given a basis diagram $d^*=\omega e_{(k)}$ with $\ell(\omega) = \ell{(d^*)}$
does not imply that $\omega \in B^*_k$, but there does exist $\omega' \in B^*_k$ such that  $d^* = \omega'e_{(k)}$.
The latter was already shown by Wenzl (Lemma 1.2(a), \cite{W3}) to exist and to be unique for each basis element of the module $V^*_k$.
He even got $\ell{(d^*)} = \ell(\omega) = \ell(\omega')$,
where $\ell(\omega')$ is the number of factors for $\omega'$ in $B^*_k$.
\end{rem}

\begin{ex} As indicated in remark (2), we choose $j=1, k = 2$. Given a basis diagram $d^*$ in $V^*_2$ is in the form:
\begin{center}
$ \begin{array}{c}
\begin{xy}
\xymatrix@!=0.01pc{ \ & \ &\ & \ & \ &\  \\ \bullet \ar@{-}@/^/[rrr] &\bullet \ar@{-}[r] &\bullet &\bullet &\bullet \ar@{-}[d] &\bullet \ar@{-}[d] &\bullet \ar@{-}[d]  \\
    \ar@{}[u]^{\text{ \large $d^* = $ }} \bullet \ar@{-}[r] &\bullet &\bullet \ar@{-}[r] &\bullet &\bullet &\bullet &\bullet \\
    \ & \ & \ & \ & \ & \ }
\end{xy}
\end{array}$

$\begin{array}{c}
\begin{xy}
\xymatrix@!=0.01pc{ \bullet \ar@{-}[rrd] &\bullet \ar@{-}[ld] &\bullet \ar@{-}[ld] & \bullet \ar@{-}[d] &\bullet \ar@{-}[d] &\bullet \ar@{-}[d] &\bullet \ar@{-}[d] \ar@{}[d]^{\text{ $s_{1}s_{2}$  }} \\
    \bullet \dta[d] & \bullet \dta[d] &\bullet \dta[d] & \bullet \dta[d] &\bullet \dta[d] &\bullet \dta[d] &\bullet \dta[d] \\
    \ar@{}[u]^{\text{ \large $ =$ \ }} \bullet \ar@{-}[r] & \bullet &\bullet \ar@{-}[r] & \bullet &\bullet \ar@{-}[d] &\bullet \ar@{-}[d] &\bullet \ar@{-}[d] \ar@{}[d]^{\text{ \large $ e_{(2)}$ }} \\
    \bullet \ar@{-}[r] & \bullet & \bullet \ar@{-}[r] & \bullet &\bullet &\bullet &\bullet }
\end{xy}
\end{array}$

\bigskip

$\begin{array}{c}
\begin{xy}
\xymatrix@!=0.01pc{ \bullet \ar@{-}[d] &\bullet \ar@{-}[rd] &\bullet \ar@{-}[rd] & \bullet \ar@{-}[lld] &\bullet \ar@{-}[d] &\bullet \ar@{-}[d] &\bullet \ar@{-}[d] \ar@{}[d]^{\text{ $ s_{3}s_{2}$  }} \\
    \bullet \dta[d] & \bullet \dta[d] &\bullet \dta[d] & \bullet \dta[d] &\bullet \dta[d] &\bullet \dta[d] &\bullet \dta[d] \\
    \ar@{}[u]^{\text{ \large $ =$ \ }} \bullet \ar@{-}[r] & \bullet &\bullet \ar@{-}[r] & \bullet &\bullet \ar@{-}[d] &\bullet \ar@{-}[d] &\bullet \ar@{-}[d] \ar@{}[d]^{\text{ \large $ e_{(2)}$ }} \\
    \bullet \ar@{-}[r] & \bullet & \bullet \ar@{-}[r] & \bullet &\bullet &\bullet &\bullet }
\end{xy}
\end{array}$

\end{center}
In the picture $d^*$ has two representations $d^*=s_{1}s_{2}e_{(2)} = s_{3}s_{2}e_{(2)}$ satisfying
$$\ell{(d^*)} = \ell(s_{1}s_{2})=\ell(s_{3}s_{2}) = 2.$$
However, $s_{1}s_{2}$ is in $B^*_2$ but $s_{3}s_{2}$ is not.
In general, given a basis diagram $d^*$ in $V^*_k$ there always exists a unique permutation $\omega \in B^*_k$
such that $d^* = \omega e_{k}$ and $l(d^*)= l(\omega)$.\\
The statement in the remark (5) is shown in the following lemma.
\end{ex}

\begin{lem}\label{bd2}(\cite{W3}, Lemma 1.2).
(a) The module $V^*_k$ has a basis $\{ \omega v_{1} = v_{\omega e_{(k)}},\ \omega \in  B^*_k \}$ with $\ell{(\omega e_{(k)})} = \ell(\omega).$
Here $\ell(\omega)$ is the number of factors for $\omega$ in \eqref{eq4},
and\\$v_{1} = (e_{(k)} + I(k+1))/I(k+1) \in V^*_k$.

(b) For any basis element $d^*$ of $V^*_k$, we have $|\ell{(s_{i}d^*)} - \ell{(d^*)}| \leq 1$. Equality of lengths holds only if $s_{i}d^* = d^*$.
\end{lem}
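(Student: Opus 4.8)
The plan is to prove (a) by identifying $B^*_k$ with a complete, irredundant set of minimal-length representatives for the left cosets of the stabilizer of $e_{(k)}$ in $S_n$, and then to deduce (b) from (a) together with a direct diagrammatic analysis of left multiplication by $s_i$. First I would describe the stabilizer $H = \{ \omega \in S_n : \omega e_{(k)} = e_{(k)} \}$. Since $e_{(k)}$ carries the top horizontal edges $\{1,2\}, \{3,4\}, \ldots, \{2k-1,2k\}$ and vertical edges on the remaining vertices, a permutation fixes the diagram $e_{(k)}$ exactly when it permutes the $k$ pairs among themselves and/or transposes the two ends of a pair, while fixing each of $2k+1, \ldots, n$ pointwise; thus $H$ is the hyperoctahedral subgroup of order $2^k k!$ (for instance $s_1 e_{(k)} = e_{(k)}$ is relation (3) with $i=0$). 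Consequently the distinct basis diagrams of $V^*_k$ are indexed by the left cosets $S_n/H$, of which there are $n!/2^k k!$ — exactly the $\mathbb{Z}[N]$-rank of $V^*_k$ recorded after \eqref{eq2} and the cardinality of $B^*_k$ computed after \eqref{eq4}.

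Next I would show that each $\omega \in B^*_k$ is the unique minimal-length element of its coset $\omega H$. Starting from the staircase normal form $\omega = t_{n-1} t_{n-2} \cdots t_1$ of the preceding remark, the factors of odd index at most $2k-1$, namely $t_{2k-1}, t_{2k-3}, \ldots, t_1$, are precisely those absorbable into $H$: on the level of $V^*_k$ this is the content of relations (3), (5) and (6), which give $s_{2i+1} e_{(k)} = e_{(k)}$ and $s_{2i-1} s_{2i} e_{(k)} = s_{2i+1} s_{2i} e_{(k)}$ and let one rewrite any $\omega e_{(k)}$ so that those factors disappear. This produces, for every $\omega \in S_n$, a unique $\omega' \in B^*_k$ with $\omega e_{(k)} = \omega' e_{(k)}$; since the staircase form is reduced, the surviving factors compute the Coxeter length, so $\ell(\omega' e_{(k)}) = \ell(\omega')$ equals the number of factors in \eqref{eq4}. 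As $|B^*_k| = n!/2^k k!$ matches the rank of $V^*_k$, the spanning family $\{ \omega e_{(k)} : \omega \in B^*_k \}$ is forced to be a basis, which proves (a). The main obstacle here is that $H$ is not a standard parabolic subgroup of $S_n$, so the clean factorization $\omega = \omega' h$ with additive lengths is not automatic and must be extracted by hand from the explicit normal form and relations (3), (5), (6).

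For (b) I would argue diagrammatically and calibrate lengths through (a). Write $d^* = \omega e_{(k)}$ with $\omega \in B^*_k$ and $\ell(d^*) = \ell(\omega)$. Left multiplication by $s_i$ swaps the top vertices $i$ and $i+1$ of $d^*$, and there are two cases: if $\{i, i+1\}$ is a horizontal edge of $d^*$ then $s_i d^* = d^*$ and the lengths coincide trivially; otherwise (two vertical endpoints, one vertical and one horizontal endpoint, or endpoints of two distinct horizontal edges are interchanged) $s_i d^*$ is a genuinely different basis diagram. In the latter case, writing $s_i \omega = \omega' h$ with $\omega' \in B^*_k$ and $h \in H$ gives $s_i d^* = \omega' e_{(k)}$ and $\ell(s_i d^*) = \ell(\omega')$; since $\ell(s_i \omega) = \ell(\omega) \pm 1$ in $S_n$, the estimates $\ell(\omega') \le \ell(s_i \omega)$ and $\ell(\omega) \le \ell(s_i \omega')$ (using $s_i \omega' \in \omega H$) force $|\ell(\omega') - \ell(\omega)| \le 1$. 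The crux, and the step I expect to be hardest, is to rule out $\ell(\omega') = \ell(\omega)$ when $\omega' \ne \omega$; for this I would combine the diagrammatic classification above with a parity/monotonicity count of the inversions among the vertical strands to show that outside the horizontal-edge case the length changes by exactly one, giving the sharpened conclusion that equality of lengths can hold only when $s_i d^* = d^*$.
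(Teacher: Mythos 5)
Your coset-theoretic framework is sound as a skeleton, and it is essentially the mechanism behind the result: the stabilizer $H$ of $e_{(k)}$ is indeed the group of order $2^k k!$ you describe, $|B^*_k| = n!/2^k k! = |S_n/H|$, and injectivity-plus-counting would give (a) once every $\omega e_{(k)}$ can be straightened into $B^*_k$-form without increasing length. Note, though, that the paper itself offers no proof of this lemma — it is quoted from Wenzl (\cite{W3}, Lemma 1.2) — and the closest in-paper material is the straightening algorithm of Subsection 3.3, which constructs the unique $\omega \in B^*_k$ for a given diagram exactly by the kind of normal-form reduction you envisage (first the vertical strands via the factors $d^*_{(j, f(j))}$, then the horizontal edges via the $t_{(2j)}$).

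However, both steps you yourself flag as the hard ones are genuinely missing, and they are precisely the content of the lemma. First, the absorption claim is not a factor-by-factor deletion: since $H$ is neither a standard parabolic nor even a reflection subgroup of $S_n$ (the only transpositions it contains are the $(2i-1,\,2i)$), no general theory of minimal coset representatives applies, and pushing an odd-indexed factor $t_{2j-1}$ into $e_{(k)}$ via relations (5) and (6) rewrites the neighbouring factors $t_m$, so one needs an explicit induction showing the process terminates in $B^*_k$-form \emph{and} never increases the number of letters — that monotonicity is exactly what $\ell(\omega e_{(k)}) = \ell(\omega)$ amounts to, so asserting it begs the question. You would also need to handle the fact that $\ell(d)$ is defined as a minimum over two-sided expressions $\omega_1 e_{(k)} \omega_2$, while your argument only controls one-sided expressions $\omega e_{(k)}$; folding $\omega_2$ across $e_{(k)}$ (using the right stabilizer and the commutation with $S_{2k+1,n}$) is easy but cannot be omitted. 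Second, in (b) your derivation of $|\ell(s_i d^*) - \ell(d^*)| \le 1$ from (a) is correct, but the strictness statement — that equality forces $s_i d^* = d^*$ — is left as a promissory note, and the tool you propose (inversions among the vertical strands) cannot suffice as stated: already $d^* = s_2 e_{(1)}$ in $D_3(N)$ has $\ell(d^*) = 1$ with no crossing between two vertical strands, the single crossing being between a vertical strand and a horizontal arc. A correct count must include strand–arc and arc–arc crossings of a reduced drawing, after which the dichotomy "$s_i$ creates a crossing or removes one, unless it swaps the two ends of one arc" does give the claim. As written, then, the proposal is a plausible plan with two unclosed gaps rather than a proof.
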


For $k\leq [n/2]$, let
\begin{align} \label{eq5}
B_k = \{\omega^{-1} | \ \omega \in B^*_k \}.
\end{align}
The following statement is similar to Lemma \ref{bd2} above.

\begin{lem} \label{bd3}
(a) The module $V_k$  has a basis $\{ v_{1}\omega = v_{e_{(k)}\omega },\ \omega \in B_k \}$
with $\ell{(e_{(k)}\omega)} = \ell(\omega).$ Here $\ell(\omega)$ is the number of factors for $\omega$ in \eqref{eq5},
and  $v_{1} = (e_{(k)} + I(k+1))/I(k+1) \in V_k$.

(b) For any basis element $d$ of $V^{(k)}_n$, we have $|\ell{(ds_{i})} - \ell{(d)}| \leq 1$.
Equality of lengths holds only if $ds_{i} = d$.
\end{lem}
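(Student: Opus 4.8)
Lemma \ref{bd3} claims for the *right* module $V_k$ exactly what Lemma \ref{bd2} establishes for the *left* module $V^*_k$. Part (a) asserts a basis $\{v_1\omega = v_{e_{(k)}\omega} : \omega \in B_k\}$ with a length-matching property, and part (b) asserts a one-step length-change bound under right multiplication by a transposition $s_i$. The key structural fact I would exploit is that $V_k$ is, by its very definition in \eqref{eq3}, the image of $V^*_k$ under the involution $*$ of $D_n(N)$ that rotates a diagram about its horizontal axis; and that $B_k$ is defined in \eqref{eq5} precisely as the set of inverses $\{\omega^{-1} : \omega \in B^*_k\}$. So the entire content should follow by transporting Lemma \ref{bd2} through $*$, rather than by redoing the combinatorial analysis.

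**The plan.**

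The plan is to show that the involution $*$ carries the whole setup for $V^*_k$ to the corresponding setup for $V_k$, and then simply push Lemma \ref{bd2} across it. First I would record the basic behaviour of $*$: it is an anti-automorphism, so $(ab)^* = b^*a^*$; it fixes each $e_{(k)}$ (since $e_{(k)}$ is symmetric under rotation about its horizontal axis); and on the symmetric group it sends a diagram $\omega$ to its horizontal flip, which for a permutation-diagram is exactly $\omega^{-1}$. From $e_{(k)}^* = e_{(k)}$ and $\omega^* = \omega^{-1}$ one gets $(\omega e_{(k)})^* = e_{(k)}^*\omega^* = e_{(k)}\omega^{-1}$. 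Since $*$ also preserves the ideal filtration (it sends $I(m)$ to itself, as $I(m)$ is spanned by diagrams with at least $m$ horizontal edges per row and $*$ preserves the number of horizontal edges), it induces an isomorphism of the quotient spaces carrying $V^*_k$ to $V_k$ as a map of $\mathbb{Z}[N]$-modules, intertwining the left $D_n(N)$-action on $V^*_k$ with the right action on $V_k$.

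**Deriving (a) and (b).**

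For part (a): applying $*$ to the basis $\{\omega e_{(k)} : \omega \in B^*_k\}$ of $V^*_k$ from Lemma \ref{bd2}(a) yields the set $\{e_{(k)}\omega^{-1} : \omega \in B^*_k\} = \{e_{(k)}\sigma : \sigma \in B_k\}$, which is therefore a $\mathbb{Z}[N]$-basis of $V_k$; this is exactly the claimed basis $\{v_1\sigma = v_{e_{(k)}\sigma}\}$. The length identity $\ell(e_{(k)}\omega^{-1}) = \ell(\omega^{-1})$ then needs the compatibility of the length function with $*$, namely $\ell(d^*) = \ell(d)$; this holds because $*$ is a length-preserving bijection on $S_n$ (reversing a reduced word: $\ell(\omega^{-1}) = \ell(\omega)$) and because the definition of $\ell(d)$ as a minimum over factorizations $\omega_1 e_{(k)}\omega_2 = d$ is manifestly symmetric under $d \mapsto d^*$, $(\omega_1,\omega_2)\mapsto(\omega_2^{-1},\omega_1^{-1})$. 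Combining, $\ell(e_{(k)}\sigma) = \ell(\sigma)$ for $\sigma \in B_k$, as required. For part (b): given a basis element $d$ of $V_k$, write $d = d'^*$ with $d'$ a basis element of $V^*_k$; then $ds_i = (s_i d')^*$ since $s_i^* = s_i^{-1} = s_i$, so $\ell(ds_i) = \ell(s_i d')$ and $\ell(d) = \ell(d')$ by length-preservation of $*$, and the inequality $|\ell(ds_i) - \ell(d)| = |\ell(s_i d') - \ell(d')| \le 1$ together with the equality case $ds_i = d \iff s_i d' = d'$ is inherited verbatim from Lemma \ref{bd2}(b).

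**Main obstacle.**

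I expect the only real obstacle to be the careful verification that $*$ interacts correctly with the length function and the ideal filtration at the level of the quotients, i.e.\ that $*$ genuinely descends to a well-defined $\mathbb{Z}[N]$-linear isomorphism $V^*_k \to V_k$ swapping left and right actions and preserving $\ell$. Once this bookkeeping is in place the statement is a formal transport of Lemma \ref{bd2}; no new combinatorial estimate on diagrams is needed. The subtlety to watch is Remark (2)'s warning that the expression $d = \omega e_{(k)}$ is not unique as a permutation $\omega$, so I must phrase everything in terms of the canonical representatives in $B^*_k$ (respectively $B_k$) and invoke the uniqueness established by Wenzl (Remark (4), \cite{W3}, Lemma 1.2(a)) to be sure the flipped basis is indexed bijectively by $B_k$.
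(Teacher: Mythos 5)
Your proposal is correct and matches the paper's intent: the paper gives no explicit proof of Lemma \ref{bd3}, merely asserting it is ``similar to Lemma \ref{bd2}'', and the implicit mechanism is precisely the one you spell out --- transporting Lemma \ref{bd2} through the involution $*$ (an anti-automorphism fixing $e_{(k)}$, sending a permutation diagram $\omega$ to $\omega^{-1}$, preserving the filtration $I(m)$ and the length function), which is also how the paper itself introduces $V_k$ in \eqref{eq3} and $B_k$ in \eqref{eq5}. Your added care about non-uniqueness of the expression $d=\omega e_{(k)}$ and the use of the canonical representatives in $B^*_k$, $B_k$ is exactly the right bookkeeping, so nothing is missing.
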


\subsection{Hecke algebra}
In this section, we recall the definition of the Hecke algebra of type $A_{n-1}$, following Dipper and James \cite{DJ1},
and we then review some basic facts which are necessary for our subsequent work.

\begin{defn} \label{dn5} Let $R$ be a commutative ring with identity 1, and let q be an invertible element of $R$.
The Hecke algebra $ H = H_{R, q}= H_{R, q}(S_{n})$ of the symmetric group $S_{n}$
over $R$ is defined as follows. As an $R$-module, $H$ is free with basis $\{ g_{\omega} |\ \omega \in S_{n}\}$.
The multiplication in $H$ satisfies the following relations:
\begin{itemize}
\item[(i)] $\boldmath{1} \in H$;
\item[(ii)] If $\omega = s_{1}s_{2}...s_{j}$ is a reduced expression for $\omega \in  S_{n}$, then $g_{\omega} = g_{s_{1}}g_{s_{2}}...g_{s_{j}}$;
\item[(iii)] $g^{2}_{s_{j}} = (q-1)g_{s_{j}} + q$ for all transpositions $s_{j}$, where $q = q.\boldmath{1} \in H$.
\end{itemize}
\end{defn}

\smallskip

Recall that if $\omega = s_{1}s_{2}...s_{k}$ with $s_{j} = (j, j+1)$ and k is minimal with this property,
then $\ell(\omega)= k$, and we call $s_{1}s_{2}...s_{k}$ \textit{a reduced expression} for $\omega$.
It is useful to abbreviate $g_{s_{j}}$ by $g_{j}$.
Let $R = \Z[q, q^{-1}]$, and let n be a natural number. We use the term $H_{n}$ to indicate $H_{R, q}(S_{n})$.
Denote by $H_{2k+1,n}$ the subalgebra of the Hecke algebra which is generated by elements $g_{2k+1}, g_{2k+2},..., g_{n-1} \ in \ H_{n}$.
As a free R-module, $H_{2k+1,n}$ has an $R$-basis \hspace{2cm} $\{ g_{\omega} |\ \omega \in S_{2k+1,n}\}$,
where $S_{2k+1,n}$ generated by elements $s_{2k+1}, s_{2k+2},..., s_{n-1}$ is a subgroup of the symmetric group $S_{n}.$
In the next lemma we collect some basic facts on $H_{n}$.

\begin{lem} \label{bd4}
(a) If $\omega,\ \omega' \in S_{n}$ and $l(\omega \omega') = l(\omega) + l(\omega')$, then $g_{\omega}g_{\omega'}= g_{\omega \omega'}.$

(b) Let $s_{j}$ be a transposition and $\omega \in S_{n}$, then
\[g_{j}g_{\omega} =
\begin{cases}
g_{s_{j}\omega } &\text{ if \ } l(s_{j}\omega ) = l(\omega) + 1 \\
(q-1)g_{\omega } + qg_{s_{j}\omega } &\text{otherwise, }
\end{cases} \]
and
\[g_{\omega}g_{j} =
\begin{cases}
g_{\omega s_{j} } &\text{ if \ } l(\omega s_{j} ) = l(\omega) + 1 \\
(q-1)g_{\omega } + qg_{\omega s_{j}} &\text{otherwise. }
\end{cases} \]

(c) Let $\omega \in S_{n}$. Then $g_{\omega}$ is invertible in $H_{n}$
with inverse $g^{-1}_{\omega} = g^{-1}_{j}g^{-1}_{j-1}...g^{-1}_{2}g^{-1}_{1}$,
where $\omega = s_{1}s_{2}...s_{j}$ is a reduced expression for $\omega$, and
$$g_{j}^{-1} = q^{-1}g_{j} + (q^{-1} -1), \ \ so \ \ g_{j} = qg_{j}^{-1} + (q - 1) \text{ for all $s_{j}$}.$$
\end{lem}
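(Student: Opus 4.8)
The plan is to prove the three parts of Lemma \ref{bd4} by the standard induction-on-length arguments that underlie the theory of Iwahori--Hecke algebras, using the defining relation (iii) as the only nontrivial input. Throughout I would use freely that the $g_\omega$ form an $R$-basis and that, by (ii), $g_\omega$ is defined via any reduced expression for $\omega$; the fact that this is well-defined (independent of the chosen reduced word) is guaranteed by Matsumoto's theorem, since all braid relations hold among the $g_{s_j}$ as a consequence of (i)--(iii). I would begin by recording the two rescaled forms of relation (iii), namely $g_j^2=(q-1)g_j+q$ and hence $g_j(g_j-(q-1))=q\cdot\mathbf{1}$, which immediately exhibits invertibility of each $g_j$ and will be reused in every part.

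\textbf{Part (a).} Here I would argue by induction on $\ell(\omega')$. If $\ell(\omega')=0$ then $\omega'=1$ and the claim is trivial. For the inductive step, write $\omega'=\omega'' s_j$ with $\ell(\omega')=\ell(\omega'')+1$, so that $g_{\omega'}=g_{\omega''}g_j$ by (ii). The hypothesis $\ell(\omega\omega')=\ell(\omega)+\ell(\omega')$ forces $\ell(\omega\omega'')=\ell(\omega)+\ell(\omega'')$ as well, since deleting the last generator can drop the length by at most one. By induction $g_\omega g_{\omega''}=g_{\omega\omega''}$, and because $\ell(\omega\omega'' s_j)=\ell(\omega\omega'')+1$ we may concatenate reduced expressions and apply (ii) once more to get $g_\omega g_{\omega'}=g_{\omega\omega''}g_j=g_{\omega\omega'}$. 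This is the cleanest part and I expect no obstacle.

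\textbf{Part (b).} This is the single-generator multiplication rule and the heart of the lemma. I would treat $g_j g_\omega$ (the right-hand case $g_\omega g_j$ being entirely symmetric). If $\ell(s_j\omega)=\ell(\omega)+1$, then $s_j\omega$ has a reduced expression beginning with $s_j$, so by part (a) with the factorization $s_j\omega=s_j\cdot\omega$ we get $g_j g_\omega=g_{s_j\omega}$ directly. The genuinely substantive case is $\ell(s_j\omega)=\ell(\omega)-1$: then $\omega$ admits a reduced expression starting with $s_j$, say $\omega=s_j\omega'$ with $\ell(\omega)=\ell(\omega')+1$, whence $g_\omega=g_j g_{\omega'}$ by (ii). Now I would compute $g_j g_\omega=g_j^2 g_{\omega'}=\big((q-1)g_j+q\big)g_{\omega'}=(q-1)g_j g_{\omega'}+q g_{\omega'}=(q-1)g_\omega+q g_{s_j\omega}$, using $s_j\omega=\omega'$. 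This is exactly the stated ``otherwise'' formula. The main obstacle here is purely bookkeeping: correctly identifying, in the length-decreasing case, the reduced expression that begins (resp.\ ends) with the relevant $s_j$, and keeping the two sides $g_j g_\omega$ versus $g_\omega g_j$ distinct so the two displayed formulas come out with the right one-sided products.

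\textbf{Part (c).} Finally, invertibility follows by assembling the generator-level inverses. From $g_j^2=(q-1)g_j+q$ one checks directly that $g_j^{-1}:=q^{-1}g_j+(q^{-1}-1)$ satisfies $g_j g_j^{-1}=g_j^{-1}g_j=\mathbf{1}$, which also yields the rearranged identity $g_j=qg_j^{-1}+(q-1)$; this uses that $q$ is invertible in $R$. Given a reduced expression $\omega=s_1 s_2\cdots s_j$, part (a) gives $g_\omega=g_1 g_2\cdots g_j$, and then $g_\omega^{-1}=g_j^{-1}g_{j-1}^{-1}\cdots g_1^{-1}$ by the usual reversal-of-order formula for inverses of products, each factor being invertible by the generator computation. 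I would remark that well-definedness of $g_\omega^{-1}$ is again underwritten by the independence of $g_\omega$ from the chosen reduced word, so no ambiguity arises. This completes the proof.
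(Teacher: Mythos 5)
Your proposal is correct in all three parts: the induction on $\ell(\omega')$ in (a) (with the observation that $\ell(\omega\omega'')=\ell(\omega)+\ell(\omega'')$ is forced because removing the final generator changes length by exactly one), the reduction of the length-decreasing case in (b) to $g_j^2 g_{\omega'}$ via a reduced expression of $\omega$ beginning with $s_j$, and the factor-by-factor inversion in (c) are all sound, and the computation $g_j^{-1}=q^{-1}g_j+(q^{-1}-1)$ checks out. Note, however, that the paper itself gives no proof of this lemma: it is stated as a collection of standard facts on $H_n$ following Dipper and James \cite{DJ1}, so there is no in-paper argument to compare against. Your proof is the standard one from the Iwahori--Hecke literature, and one small remark: with the paper's Definition \ref{dn5}, condition (ii) already \emph{posits} that $g_\omega$ equals the product over any reduced expression, so the appeal to Matsumoto's theorem for well-definedness, while harmless and the right justification for the existence of such an algebra, is not logically needed once that definition is taken as given.
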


In the literature, the Hecke algebras $H_{n}$ of type $A_{n -1}$ are commonly defined by generators
$g_{i}$, $1 \leq i < n$ and relations
\begin{align*}
&(H_{1}) \hspace{3cm} g_{i}g_{i + 1}g_{i} = g_{i + 1}g_{i}g_{i + 1} && \text{ for $1 \leq i \le n-1$};\\
&(H_{2})  \hspace{3cm} g_{i}g_{j} = g_{j}g_{i} && \text{ for  $|i - j| > 1$}.
\end{align*}
The next statement is implicit in \cite{DJ1} (or see Lemma 2.3 of \cite{M}).

\begin{lem} \label{bd5}
The $R$-linear map $i$: $H_{n}\ \lra \ H_{n}$ determined by $i(g_{\omega}) = g_{\omega^{-1}}$
for each $\omega \in S_{n}$ is an involution on $H_{n}$
\end{lem}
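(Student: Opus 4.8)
The plan is to verify the three defining features of an involution in turn: $R$-linearity, the identity $i^2 = \mathrm{id}$, and anti-multiplicativity $i(xy) = i(y)i(x)$. The first is immediate, since $i$ is prescribed on the $R$-basis $\{g_\omega\}$ and extended $R$-linearly. The second is equally quick: on basis elements $i^2(g_\omega) = i(g_{\omega^{-1}}) = g_{(\omega^{-1})^{-1}} = g_\omega$, so $i^2 = \mathrm{id}$ by linearity, and in particular $i$ is an $R$-module automorphism of $H_n$. All the content therefore lies in showing that $i$ reverses products.

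For anti-multiplicativity I would argue through the presentation of $H_n$ by generators $g_1,\dots,g_{n-1}$ and relations $(H_1)$, $(H_2)$, together with the quadratic relation $g_j^2 = (q-1)g_j + q$. Let $F$ be the free associative $R$-algebra on symbols $x_1,\dots,x_{n-1}$ and let $\rho\colon F \to F$ be the word-reversing anti-automorphism $\rho(x_{i_1}x_{i_2}\cdots x_{i_k}) = x_{i_k}\cdots x_{i_2}x_{i_1}$, extended $R$-linearly. One checks that $\rho$ sends each defining relator to a relator in the same list: reversing $x_i x_{i+1} x_i = x_{i+1} x_i x_{i+1}$ and $x_i x_j = x_j x_i$ returns the identical relations, and $\rho$ fixes the relator $x_j^2 - (q-1)x_j - q$ since $x_j^2$, $x_j$, and the constant are each reversal-invariant. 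Hence $\rho$ preserves the two-sided ideal generated by the relations and descends to an anti-automorphism $\bar i$ of $H_n$ with $\bar i(g_j) = g_j$ for every $j$.

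It then remains to identify $\bar i$ with $i$ on the basis. Given $\omega \in S_n$, fix a reduced expression $\omega = s_{i_1}\cdots s_{i_k}$, so that $g_\omega = g_{i_1}\cdots g_{i_k}$ by item (ii) of Definition \ref{dn5}. Since $\bar i$ is an anti-homomorphism fixing the generators, $\bar i(g_\omega) = g_{i_k}\cdots g_{i_1}$. Now $s_{i_k}\cdots s_{i_1}$ is a reduced expression for $\omega^{-1}$ (the reversal of a reduced word is a reduced word for the inverse, and $\ell(\omega^{-1}) = \ell(\omega)$), so item (ii) gives $g_{\omega^{-1}} = g_{i_k}\cdots g_{i_1}$. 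Therefore $\bar i(g_\omega) = g_{\omega^{-1}} = i(g_\omega)$ for all $\omega$, and since both maps are $R$-linear, $i = \bar i$ is the required anti-automorphism. Combined with $i^2 = \mathrm{id}$, this shows $i$ is an involution.

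The step I expect to require the most care is the well-definedness of $\bar i$, i.e. confirming that $\rho$ genuinely descends to the quotient $H_n$; once the three relations are seen to be stable under reversal this is routine, but it is the only place where the palindromic shape of the Hecke relations is essential (in particular the symmetry of the quadratic relation, which would fail for a non-self-reversing relator). An alternative, more computational route avoids the presentation entirely: one proves $i(g_\sigma g_\tau) = g_{\tau^{-1}} g_{\sigma^{-1}}$ by induction on $\ell(\tau)$, writing $\tau = \tau' s_j$ and invoking the multiplication rules of Lemma \ref{bd4}(a),(b) in the two cases $\ell(\tau' s_j) = \ell(\tau') \pm 1$. I prefer the presentation argument, as it sidesteps this case analysis and exposes why the claim holds.
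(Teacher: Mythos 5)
Your proposal is correct, and it is worth noting that the paper itself offers no proof at all for this lemma: it simply remarks that the statement ``is implicit in \cite{DJ1} (or see Lemma 2.3 of \cite{M})'' and moves on. Your presentation-based argument is essentially the standard proof found in that cited literature, so you have supplied the content the paper outsources. The three-step structure is sound: linearity and $i^2=\mathrm{id}$ are immediate on the basis, the word-reversal map $\rho$ on the free algebra visibly preserves the braid, commutation, and quadratic relators (the palindromic shape of each is exactly the point, as you say), and the identification $\bar i = i$ correctly rests on the fact that reversing a reduced word for $\omega$ yields a reduced word for $\omega^{-1}$ of the same length, so that Definition \ref{dn5}(ii) applies to both sides. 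One small point deserves to be made explicit: Definition \ref{dn5} defines $H_n$ as a free $R$-module with basis $\{g_\omega\}$ and prescribed multiplication, whereas your $\rho$ descends to the algebra \emph{presented} by $g_1,\dots,g_{n-1}$ subject to $(H_1)$, $(H_2)$ and the quadratic relation; to transport $\bar i$ to $H_n$ you are tacitly using the standard isomorphism between these two descriptions (Iwahori's theorem), which the paper asserts without proof in the sentence introducing $(H_1)$ and $(H_2)$. Since the paper takes that equivalence for granted, your use of it is legitimate here, but in a fully self-contained treatment it is the one external input your argument needs. Your alternative inductive route via Lemma \ref{bd4}(a),(b) would avoid this dependence at the cost of the case analysis on $\ell(\tau's_j)=\ell(\tau')\pm 1$, and both are standard; your stated preference for the presentation argument is reasonable since it explains structurally why the claim holds.
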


\subsection{Cellular algebras}
In this section we recall the original definition of cellular algebras in the sense of Graham and Lehrer in \cite{GL} and an equivalent definition given in \cite{KX1} by Koenig and Xi.
Then we are going to apply the idea and approach in (\cite{KX4, X2}) to cellular structure
to prove that the q-Brauer algebra $Br_{n}(r, q)$ is cellular.
In fact, to obtain this result we firstly construct a basis for $Br_{n}(r, q)$ that is naturally indexed by the basis of Brauer algebra $D_{n}(N)$.
Subsequently, the q-Brauer algebra is shown to be an iterated inflation of Hecke algebras of type $A_{n-1}$. More details about the concept ''iterated inflation'' were given in \cite{KX2, KX4}. Using the result in this section, we can determine the quasi-hereditary q-Brauer algebras in Section 5.

\begin{defn}(Graham and Lehrer, \cite{GL}). Let $R$ be a commutative Noetherian integral domain with identity.
A \textit{cellular algebra} over $R$ is an associative (unital) algebra $A$
together with \textit{cell datum} $(\varLambda, M, C, i)$, where
\begin{itemize}
\item[(C1)] $\varLambda$ is a partially ordered set (poset) and for each $\lambda \in \varLambda$,
$M(\lambda)$ is a finite set such that the algebra $A$ has an $R$-basis $C^{\lambda}_{S, T}$,
where $(S, T)$ runs through all elements of $M(\lambda) \times M(\lambda)$ for all $\lambda \in \varLambda.$
\item[(C2)] If $\lambda \in \varLambda$ and $S, T \in M(\lambda).$
  Then $i$ is involution of $A$ such that $i(C^{\lambda}_{S, T}) = C^{\lambda}_{T, S}.$
\item[(C3)] For each $\lambda \in \varLambda$ and $S, T \in M(\lambda)$ then for any element $a \in A$ we have
  $$aC^{\lambda}_{S, T} \equiv \sum_{U \in M(\lambda)} r_{a}(U, S)C^{\lambda}_{U, T} \ (mod \ A(< \lambda)),$$
  where $r_{a}(U, S) \in R$ is independent of T, and $A(< \lambda)$ is
  the $R$-submodule of $A$ generated by $\{C^{\mu}_{S^{'}, T^{'}} | \mu < \lambda;\ S^{'}, T^{'} \in M(\mu)\}.$
\end{itemize}
\end{defn}
The basis $\{C^{\lambda}_{S, T}\}$ of a cellular algebra $A$ is called as a \textit{cell basis}.
In \cite{GL}, Graham and Lehrer defined a bilinear form $\phi_{\lambda}$ for each $\lambda \in \varLambda$ with respect to this basis as follows.
$$C^{\lambda}_{S, T}C^{\lambda}_{U, V} \equiv \phi_{\lambda}(T, U)C^{\lambda}_{S, V} \ (mod \ A < \lambda).$$
They also proved that the isomorphism classes of simple modules are parametrized by
the set $$\varLambda_{0} = \{ \lambda \in \varLambda |\ \phi_{\lambda} \neq 0\}.$$
The following is an equivalent definition of cellular algebra.

\begin{defn}(Koenig and Xi, \cite{KX1}).
Let $A$ be an $R$-algebra where $R$ is a commutative noetherian integral domain.
Assume there is an involution $i$ on $A$, a two-sided ideal $J$ in $A$ is called \textit{ cell ideal} \iff
$i(J) = J$ and there exists a left ideal $\Delta \subset J$ such that $\Delta$ is finitely generated and free over $R$ and
such that there is an isomorphism of $A$-bimodules $\alpha \ : \ J \simeq \Delta \otimes_{R}i(\Delta)$
(where $i(\Delta) \subset J$ is the $i$-image of $\Delta$) making the following diagram commutative:

\[
\begin{xy}
\xymatrix
{
J \ar[r]^{ \alpha} \ar[d]_{i}
& \Delta \otimes_{R}i(\Delta) \ar[d]^{x\otimes y \mapsto i(y)\otimes i(x)}
\\
J \ar[r]^{\alpha}
& \Delta \otimes_{R}i(\Delta)
\\
}
\end{xy}
\]

The algebra $A$ with the involution $i$ is called \textit{cellular} \iff there is an \hspace{2cm} $R$-~module decomposition
$A = J^{'}_{1} \oplus J^{'}_{2} \oplus ... J^{'}_{n}$ (for some n) with $i(J^{'}_{j}) = J^{'}_{j}$ for each j
and such that setting $J_{j} = \oplus_{l = 1}^{j} J^{'}_{l}$ gives a chain of two-sided ideals of $A$:
$0 = J_{0} \subset  J_{1} \subset J_{2} \subset ... \subset J_{n}= A$ (each of them fixed by $i$) and
for each j ($j = 1, ..., n$) the quotient $J^{'}_{j} = J_{j}/J_{j-1}$ is a cell ideal (with respect to the involution induced by $i$ on the quotient) of $A/J_{j-1}.$
\end{defn}

Recall that an involution $i$ is defined as an $R$-linear anti-automorphism of $A$ with $i^{2} = id.$
The $\Delta^{'}s$ obtained from each section $J_{j}/J_{j-1}$ are called $cell \ modules$ of the cellular algebra $A$.
Note that all simple modules are obtained from cell modules \cite{GL}.

In \cite{KX1}, Koenig and Xi proved that the two definitions of cellular algebra are equivalent.
The first definition can be used to check concrete examples, the latter, however, is convenient to look at the structure of cellular algebras as well as to check cellularity of an algebra.

Typical examples of cellular algebras are the following:
Group algebras of symmetric groups, Hecke algebras of type $A_{n-1}$
or even of Ariki-Koike finite type \cite{G} (i.e., cyclotomic Hecke algebras), Schur algebras of type A,
Brauer algebras, Temperley-Lieb and Jones algebras which are subalgebras of the Brauer algebra \cite{GL},
partition algebras \cite{X1}, BMW-algebras \cite{X2}, and recently Hecke algebras of finite type \cite{G}.

The following results are shown in \cite{GL}; see also \cite{KX4}.
\begin{thm}  The Brauer algebra $D_{n}(x)$ is cellular for any commutative noetherian integral domain with identity $R$ and parameter $x \in R$.
\end{thm}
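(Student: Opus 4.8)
The plan is to realize $D_n(x)$ as an \emph{iterated inflation} of the group algebras $R S_{n-2k}$, $0 \le k \le [n/2]$, and then to invoke the Koenig--Xi reformulation of cellularity from \cite{KX1}. As the required involution I would take the map $*$ introduced above, which rotates a diagram about its horizontal axis. Since $*$ is an $R$-linear anti-automorphism with $*^2 = \mathrm{id}$ that preserves the number of horizontal edges of a diagram, each ideal $I(k) = \bigoplus_{j \ge k}\mathbb{Z}[x]S_n e_{(j)}S_n$ is fixed by $*$. This produces the chain of two-sided ideals $0 = I([n/2]+1) \subset I([n/2]) \subset \cdots \subset I(1) \subset I(0) = D_n(x)$, each stable under $i = *$, which is exactly the outer filtration demanded by the definition.

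The heart of the proof is to show that each section $I(k)/I(k+1)$ is a cell ideal of $D_n(x)/I(k+1)$, and I would do this by exhibiting it as an inflation of $R S_{n-2k}$ along a free $R$-module. The key combinatorial observation is that a basis diagram with exactly $2k$ horizontal edges is determined by three independent pieces of data: the matching of its top row into $k$ horizontal edges (the data that, together with the through-strands, indexes the basis of $V^*_k$ in Lemma~\ref{bd2}); the matching of its bottom row (as in $V_k$, Lemma~\ref{bd3}); and the permutation in $S_{n-2k}$ recording how the $n-2k$ through-strands join the free top vertices to the free bottom ones. Collecting the top matchings into a free $R$-module $U_k$, this yields an $R$-module isomorphism $I(k)/I(k+1) \cong U_k \otimes_R R S_{n-2k} \otimes_R U_k$.

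To identify this as a genuine inflation I must analyse the multiplication modulo $I(k+1)$. When two diagrams of horizontal-edge degree $2k$ are concatenated, either the product again has exactly $2k$ horizontal edges --- in which case the through-permutations compose in $S_{n-2k}$ and each closed loop formed in the gluing contributes a factor $x$ --- or the product acquires strictly more horizontal edges and hence lies in $I(k+1)$. Recording the gluing data (the power of $x$ together with the resulting element of $R S_{n-2k}$) as a bilinear form $\phi_k \colon U_k \otimes_R U_k \to R S_{n-2k}$ shows precisely that the layer is the inflation of $R S_{n-2k}$ along $U_k$ via $\phi_k$, with $i=*$ restricting to the swap $u\otimes h\otimes w \mapsto w\otimes i(h)\otimes u$ on each layer, so that the square in the Koenig--Xi definition commutes. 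Since $R S_{n-2k}$ is itself cellular (Graham--Lehrer, \cite{GL}) and an inflation of a cellular algebra along a free module is again cellular, assembling the layers along the ideal chain gives cellularity of $D_n(x)$; the cell datum is then read off as $\Lambda = \{(k,\mu) : 0\le k \le [n/2],\ \mu \vdash n-2k\}$, ordered by $k$ descending and refined by the dominance order on $\mu$, with $M(k,\mu)$ the set of pairs (top matching, standard $\mu$-tableau).

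I expect the main obstacle to be the verification of axiom (C3), equivalently the bimodule compatibility of the isomorphism $\alpha$: one must check that left multiplication by an arbitrary diagram carries a basis element to an $R$-combination of basis elements sharing the same bottom datum $T$, with coefficients independent of $T$, and that every \emph{defect} term (one that raises the horizontal-edge count) genuinely lands in $I(k+1)$. Over an integral domain the loop-counting exponents of $x$ are unambiguous, so the subtlety is not the parameter itself but the bookkeeping that threads the cellular (Murphy-type) structure of the middle factor $R S_{n-2k}$ through the gluing while keeping the top and bottom halves decoupled modulo the lower layer.
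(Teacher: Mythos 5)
Your proposal is correct and follows essentially the same route as the paper: the paper does not prove this theorem itself but cites \cite{GL} and \cite{KX4}, and your argument is precisely the iterated-inflation proof of \cite{KX4} --- the chain of ideals $I(k)$ fixed by the flip involution $*$, each layer $I(k)/I(k+1)$ an inflation $U_k \otimes_R RS_{n-2k} \otimes_R U_k$ with the bilinear form recording loop counts and through-permutations. This is also exactly the blueprint the paper then adapts to establish cellularity of the $q$-Brauer algebra in Section 4.
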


\begin{thm}\label{dl1}
Let $R = \Z[q, q^{-1}]$. Then $R$-algebra $H_{n}$ is a cellular algebra.
\end{thm}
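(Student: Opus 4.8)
The plan is to exhibit the cellular structure on $H_n$ via \emph{Murphy's standard basis}, whose cell datum $(\varLambda, M, C, i)$ is built from partitions and Young tableaux. Take $\varLambda$ to be the set of partitions $\lambda \vdash n$, partially ordered by the dominance order $\unrhd$ (oriented so that the two-sided ideal spanned by the more dominant partitions will play the role of $A(<\lambda)$). For $\lambda \vdash n$ let $M(\lambda)$ be the finite set of standard $\lambda$-tableaux. Let $S_{\lambda} \les S_n$ be the Young (row-stabilizer) subgroup and set
$$x_{\lambda} = \sum_{\omega \in S_{\lambda}} g_{\omega} \in H_n.$$
For a standard tableau $\mf{s}$ of shape $\lambda$ let $d(\mf{s}) \in S_n$ be the unique permutation with $\mf{s} = \mf{t}^{\lambda} d(\mf{s})$, where $\mf{t}^{\lambda}$ is the superstandard tableau, and define Murphy's element
$$C^{\lambda}_{\mf{s}, \mf{t}} = m_{\mf{s}\mf{t}} = g_{d(\mf{s})^{-1}}\, x_{\lambda}\, g_{d(\mf{t})},$$
with $i$ the involution of Lemma \ref{bd5}.

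Condition (C1) is Murphy's theorem: the family $\{m_{\mf{s}\mf{t}}\}$, as $\lambda$ ranges over $\varLambda$ and $(\mf{s},\mf{t})$ over $M(\lambda)\times M(\lambda)$, is a free $R$-basis of $H_n$. I would take this as the starting combinatorial input; its proof rests on the fact that $x_{\lambda}H_n$ has the $m_{\mf{s}\mf{t}}$ as a basis together with a triangular (with respect to dominance) change of basis from the natural basis $\{g_{\omega}\}$. Condition (C2) is then immediate: since $S_{\lambda} = S_{\lambda}^{-1}$ and $i(g_{\omega})=g_{\omega^{-1}}$ by Lemma \ref{bd5}, we get $i(x_{\lambda})=x_{\lambda}$, and as $i$ reverses products,
$$i(m_{\mf{s}\mf{t}}) = g_{d(\mf{t})^{-1}}\, x_{\lambda}\, g_{d(\mf{s})} = m_{\mf{t}\mf{s}},$$
that is, $i(C^{\lambda}_{\mf{s},\mf{t}}) = C^{\lambda}_{\mf{t},\mf{s}}$.

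The substance of the proof is (C3). By Lemma \ref{bd4}(a) every element of $H_n$ is an $R$-combination of products of the generators $g_1,\dots,g_{n-1}$, so by $R$-linearity and induction on length it suffices to establish, for each simple generator $g_j$ and each standard $\mf{s}$, a relation of the shape
$$g_j\, m_{\mf{s}\mf{t}} \equiv \sum_{\mf{u}} r_{\mf{u}}\, m_{\mf{u}\mf{t}} \pmod{H_n^{\rhd\lambda}},$$
where the scalars $r_{\mf{u}} \in R$ depend only on $g_j$ and $\mf{s}$ (not on $\mf{t}$), and $H_n^{\rhd\lambda}$ is the $R$-span of all $m_{\mf{u}\mf{v}}$ of shape strictly more dominant than $\lambda$. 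This $\mf{t}$-independence is exactly what the coefficient $r_a(U,S)$ in (C3) requires. Computing $g_j m_{\mf{s}\mf{t}} = g_j\, g_{d(\mf{s})^{-1}}\, x_{\lambda}\, g_{d(\mf{t})}$ reduces, via Lemma \ref{bd4}(b) applied to $g_j g_{d(\mf{s})^{-1}}$, to a case analysis on whether $\ell(s_j d(\mf{s})^{-1})$ rises or falls; the resulting terms are indexed by the (possibly non-standard) $\lambda$-tableaux obtained from $\mf{s}$ by acting with $s_j$.

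The main obstacle is the straightening step: when $g_j$ produces a term indexed by a non-standard tableau, one must rewrite it as an $R$-combination of standard $m_{\mf{u}\mf{t}}$ modulo $H_n^{\rhd\lambda}$. I expect to handle this via the Garnir relations for the Hecke algebra, which express $x_{\lambda}$ times a column-violating permutation as a combination of standard tableaux plus a correction lying in the span of strictly more dominant shapes. The crucial supporting fact, needing its own argument, is that $H_n^{\rhd\lambda}$ is a two-sided ideal stable under left multiplication, so that every correction is legitimately absorbed modulo $A(<\lambda)$. Granting the straightening lemma and this ideal property, the induction on $\ell(a)$ together with the $\mf{t}$-independence of the coefficients completes (C3), and $H_n$ is cellular in the sense of Graham and Lehrer, its cell modules being the Specht modules $S^{\lambda}$.
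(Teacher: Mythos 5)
Your proposal is correct in outline, but it takes a genuinely different route from the paper, which in fact offers no argument at all: Theorem \ref{dl1} is quoted from \cite{GL} (see also \cite{KX4}), and Graham--Lehrer's own verification for type $A_{n-1}$ proceeds via the Kazhdan--Lusztig basis, with the cell datum read off from the Robinson--Schensted correspondence and the combinatorics of left cells. Your reconstruction via Murphy's standard basis $m_{\mf{s}\mf{t}} = g_{d(\mf{s})^{-1}} x_{\lambda} g_{d(\mf{t})}$ (from \cite{M}) is the other standard proof, and it has a concrete advantage matching the statement as formulated here: the Kazhdan--Lusztig cell datum naturally lives over $\Z[q^{1/2}, q^{-1/2}]$, whereas the Murphy basis is defined integrally over $R = \Z[q, q^{-1}]$, which is exactly the ground ring of the theorem; it also produces the Specht modules as cell modules and the Dipper--James bilinear form $\phi_{\lambda}$, which is precisely the input the paper later needs (the parametrization of simple $Br_n(r,q)$-modules by $e(q)$-restricted partitions rests on \cite{DJ1}, Theorem 7.6, applied to the cell data of the subalgebras $H_{2k+1,n}$). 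Your sketch correctly identifies and isolates the two nontrivial inputs --- Murphy's basis theorem for (C1), and the Garnir straightening together with the two-sided ideal property of $H_n^{\rhd\lambda}$ for (C3) --- and the remaining verifications you give ((C2) from $i(x_{\lambda}) = x_{\lambda}$ since $S_{\lambda}$ is a subgroup, the reduction of (C3) to the generators $g_j$ with $\mf{t}$-independent coefficients, and the correct reversal of the dominance order so that more dominant shapes span $A(<\lambda)$) are all accurate; what each approach buys is that Graham--Lehrer's is structurally elegant but leans on deep Kazhdan--Lusztig cell theory and the extended ring, while yours is elementary, characteristic-free, and self-contained modulo Murphy's combinatorics.
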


Next we will prove that q-Brauer algebras are also cellular.
Before doing this, let us introduce what the q-Brauer algebra is and construct a particular basis in the following section.

\section{ $q$-Brauer algebras}
Firstly, we recall the definition of the generic $q$-Brauer algebras due to Wenzl \cite{W3}.
Then we collect and extend basic properties of $q$-Brauer algebras that will be needed in Section 4.
The main result of this section is in Subsection 3.2 where a particular basis for $q$-Brauer algebras is given in Theorem \ref{dl11}.

\begin{defn} Fix $N \in \Z \setminus \{0\}$, let $q$ and $r$ be invertible elements. Moreover, assume that if $q = 1$ then $r= q^N$.
The $q$-Brauer algebra $Br_{n}(r, q)$ is defined over the ring
$\mathbb{Z}[q^{\pm1}, r^{\pm1}, ((r-1)/(q-1))^{\pm 1}]$ by generators $g_{1}$, $g_{2}$, $g_{3}$, ..., $g_{n-1}$ and $e$ and relations
\begin{enumerate}
\item[(H)] The elements $g_{1}$, $g_{2}$, $g_{3}$, ..., $g_{n-1}$  satisfy the relations of the Hecke algebra $H_{n}$;

\item[$(E_{1})$] $e^{2} = \dfrac{r-1} {q-1}e$;

\item[$(E_{2})$]  $eg_{i} = g_{i}e$ for $i > 2,$ $eg_{1} = g_{1}e = qe$, $eg_{2}e = re$ and $eg^{-1}_{2}e = q^{-1}e$;

\item[$(E_{3})$] $e_{(2)} = g_{2}g_{3}g^{-1}_{1}g^{-1}_{2}e_{(2)} = e_{(2)}g_{2}g_{3}g^{-1}_{1}g^{-1}_{2}$,
where $ e_{(2)} = e(g_{2}g_{3}g^{-1}_{1}g^{-1}_{2})e.$
\end{enumerate}
\end{defn}

For $1 \le l,  k \le n$, let
\[ g^{+}_{l, k} =
\begin{cases}
&g_{l}g_{l+1}...g_{k} \hspace{1.5cm} \text{ if $ l \le k $}, \\
&g_{l}g_{l-1}...g_{k} \hspace{1.5cm} \text{ if $ l > k $},
\end{cases}
\]
and
\[ g^{-}_{l, k} =
\begin{cases}
&g^{-1}_{l}g^{-1}_{l+1}...g^{-1}_{k} \hspace{1.5cm} \text{ if $l \le k $}, \\
&g^{-1}_{l}g^{-1}_{l-1}...g^{-1}_{k} \hspace{1.5cm} \text{ if $ l > k $}.
\end{cases}
\]
Now the elements $e_{(k)}$ in $Br_{n}(r, q)$ are defined inductively by $e_{(1)} =e$
and by
\begin{align} \label{eq6}
e_{(k+1)} = eg^{+}_{2, 2k+1}g^{-}_{1, 2k}e_{(k)}.
\end{align}
Note that from the above definition we also obtain the equalities
\begin{align} \label{eq7}
eg^{-1}_{1} = g^{-1}_{1}e = q^{-1}e.
\end{align}

\begin{rem} \label{remofqBr}
1. Another version of the $q$-Brauer algebra, due to Wenzl (see \cite{W3}, Definition 3.1), is defined as follows:
The $q$-Brauer algebra $Br_n(N)$ is defined over the ring $\Z[q,q^{-1}]$ with the same generators
as before, with relations (H) and $(E_3)$ unchanged, and with
\begin{enumerate}
\item[$(E1)'$] $e2=[N]e$ where $[N] = 1 + q^{1} + \cdots + q^{N-1}$;
\item[$(E2)'$] $eg_i=g_ie$ for $i>2$, $eg_1=g_1e=qe$, $eg_2e=q^Ne$ and
$eg_2^{-1}e=q^{-1}e$.
\end{enumerate}
\smallskip
Notice that the parameter $[N]$ above is different from that in Wenzl's definition.
In \cite{W3}, Wenzl sets $[N]= (q^{N}-1)/(q-1)$ for $N \in \Z \setminus \{0\}$.
This setting implies that his $q$-Brauer algebra $Br_n(N)$ gets back the Brauer algebra $D_n(N)$ only with respect to the ground rings which admit the limit $q \ra 1$, such as the field of real or complex numbers.
The new parameter $[N]$ enables us to study structure and representation theory of the $q$-Brauer algebra $Br_n(N)$ over an arbitrary field of any characteristic, as well as to compare it with the Brauer algebra even in the case $q=1$.
More precisely, our choice of the new parameter does not affect the properties of the $q$-Brauer algebra $Br_n(N)$, which were studied in detail by Wenzl.
Moreover, all results on $Br_{n}(r, q)$ in the subsequent sections hold true for $Br_n(N)$ as well.

2. It is obvious that if $q=1$ the $q$-Brauer algebra $Br_n(N)$ recovers the classical Brauer algebra $D_n(N)$. In this case $g_i$ becomes the simple reflection $s_i$
and the element $e_{(k)}$ can be identified with the diagram $e_{(k)}$.
Similarly, over the ground field of real or complex numbers the generic $q$-Brauer algebra $Br_{n}(r, q)$ coincides with the Brauer algebra $D_n(N)$ when fixing $r=q^{N}$ in the limit $q \rightarrow 1$ (see \cite{W3}, Remark 3.1).

3. A closely related algebra has appeared in Molev's work.
In 2003, Molev \cite{Mo1} introduced a new $q$-analogue of the Brauer algebra by considering the centralizer of the natural action in tensors of the nonstandard deformation of the universal enveloping algebra $U(o_N)$.
He defined relations for these algebras and constructed representations of them on tensor spaces.
However, in general these representations are not faithful,
and little is known about these abstract algebras besides these representations.
The $q$-Brauer algebra, a closely related algebra with that of Molev, was introduced later by Wenzl \cite{W3} via generators and relations who gave a detailed description of its structure. In particular, he proved that generically it is semisimple and isomorphic to the Brauer algebra.
It can be checked that the representations of Molev's algebras in \cite{Mo1} are also representations of the $q$-Brauer algebras (see e.g. \cite{W5}, Section 2.2) and that the relations written down by Molev are also satisfied by the generators of the $q$-Brauer algebras;
but potentially, Molev's abstractly defined algebras could be larger (see \cite{Mo2}).
\end{rem}

The next lemmas will indicate how the Brauer algebra relations extend to the $q$-Brauer algebra.

\begin{lem}\label{bd6}(\cite{W3}, Lemma 3.3)
(a) The elements $e_{(k)}$ are well-defined.

(b) $g^{+}_{1, 2l}e_{(k)} = g^{+}_{2l+1, 2}e_{(k)}$ and
  $g^{-}_{1, 2l}e_{(k)} = g^{-}_{2l+1, 2}e_{(k)}$ for $l < k.$

(c) $g_{2j-1}g_{2j}e_{(k)} = g_{2j+1}g_{2j}e_{(k)}$
  and $g^{-1}_{2j-1}g^{-1}_{2j}e_{(k)} = g^{-1}_{2j+1}g^{-1}_{2j}e_{(k)}$ for $1 \leq j < k.$

(d) For any $j \leq k$ we have $e_{(j)}e_{(k)} = e_{(k)}e_{(j)} = (\dfrac{r-1} {q-1})^{j}e_{(k)}$.
\item[(e)] $(\dfrac{r-1} {q-1})^{j-1}e_{(k+1)} = e_{(j)}g^{+}_{2j, 2k+1}g^{-}_{2j-1, 2k}e_{(k)}$ for $1 \leq j < k$.

(f) $e_{(j)}g_{2j}e_{(k)} = r(\dfrac{r-1} {q-1})^{j-1}e_{(k)}$ for $1 \leq j \leq k.$
\end{lem}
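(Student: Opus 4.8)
The plan is to prove (a)--(f) simultaneously by induction on $k$, with the inductive definition \eqref{eq6} as the driving mechanism, supplemented by the Hecke relations (H), the relations $(E_1)$--$(E_3)$, the derived identity \eqref{eq7}, and Lemma \ref{bd4}(c) for handling inverse generators. Alongside these I would establish, by the same induction, the auxiliary commutation $g_i e_{(k)} = e_{(k)} g_i$ for $i > 2k$ (the $q$-analogue of the Brauer relation (4)) and use it freely. The base cases are read off from the defining relations: $(E_1)$ is the $j=k=1$ instance of (d); the identity $eg_2 e = re$ in $(E_2)$ is the $j=k=1$ instance of (f); and the two forms of $(E_3)$, combined with \eqref{eq6} and $(E_2)$, yield the $k=2$ instances of the sliding relations (b) and (c) after braid manipulation. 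Within each inductive step I would prove the statements in the order (a), then (b) and (c), then (d), (e), (f), since each later identity consumes the earlier ones.

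For (a), well-definedness means the recursion \eqref{eq6} is internally consistent: for $k=2$ it must agree with the element defined in $(E_3)$ (which is immediate, both being $e\,g_2 g_3 g_1^{-1} g_2^{-1} e$), and for higher $k$ it must be compatible with the symmetric, right-handed form of the recursion $e_{(k+1)} = e_{(k)}\, g^-_{2k,1} g^+_{2k+1,2}\, e$. I would verify this by a routine induction that reduces the left/right discrepancy, via the inductive hypothesis on $e_{(k)}$, to a braid identity among the generators bracketing the inner copy of $e_{(k)}$.

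The heart of the matter is parts (b) and (c), the $q$-deformed sliding relations, which are the $q$-analogues of the Brauer relations (5) and (6). I would establish (c) first: substituting \eqref{eq6} for $e_{(k)}$ and applying the braid relation $g_i g_{i+1} g_i = g_{i+1} g_i g_{i+1}$ together with the far commutations $g_i g_j = g_j g_i$ for $|i-j| \ge 2$, I would migrate the pair $g_{2j+1} g_{2j}$ past the word $g^+_{2,2k-1} g^-_{1,2k-2}$ until it reaches the inner $e_{(k-1)}$, where the inductive hypothesis for (c) applies; the boundary cases, in which the pair meets the newly introduced generators $g_{2k-2}, g_{2k-1}$, are handled directly by $(E_3)$ and \eqref{eq7}. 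Relation (b) then follows by iterating (c) and the commutations. This is the step I expect to be the \emph{main obstacle}: threading a single generator and its inverse through the ascending word $g^+$ and the descending word $g^-$ demands careful bookkeeping, and the alternation between $g_i$ and $g_i^{-1}$ (controlled by \eqref{eq7} and Lemma \ref{bd4}(c)) is where errors most easily creep in.

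Granting (b) and (c), the multiplicative identities follow cleanly. For (d) the case $j=1$ is immediate, since $e e_{(k)} = e^2 g^+_{2,2k-1} g^-_{1,2k-2} e_{(k-1)} = \tfrac{r-1}{q-1} e_{(k)}$ by $(E_1)$ and \eqref{eq6}, and the general $j$ follows by a nested induction that peels off factors $e_{(j)} = e\, g^+_{2,2j-1} g^-_{1,2j-2} e_{(j-1)}$ and reorganizes the word using (b) and (c). Part (e) generalizes \eqref{eq6} (its $j=1$ case is exactly \eqref{eq6}) and is obtained by induction on $j$, again peeling off one copy of the recursion and invoking (b), (c), (d) to rebuild $e_{(k+1)}$. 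Finally (f) follows from $(E_2)$: the case $j=1$ is $e g_2 e_{(k)} = (eg_2 e)\, g^+_{2,2k-1} g^-_{1,2k-2} e_{(k-1)} = r\, e_{(k)}$, and the general case is reached by combining this with (d) and (e).
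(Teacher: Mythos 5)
The first thing to note is that the paper contains no proof of this lemma: it is quoted verbatim from Wenzl (\cite{W3}, Lemma 3.3), so the only in-paper material to compare against is the proof of the mirrored Lemma \ref{bd7} and of Corollary \ref{hq13}, and your plan is recognisably of that genre --- an intertwined induction on $k$ driven by \eqref{eq6}, with generators threaded through $g^{+}_{2,2k-1}g^{-}_{1,2k-2}$ via $(H_1)$, $(H_2)$ and Lemma \ref{bd4}(c). At the level of architecture your outline is sensible: the base cases you identify are correct ($(E_1)$ is (d) at $j=k=1$, $eg_2e=re$ is (f) at $j=k=1$, and $(E_3)$ does yield the inverse form of (c) at $k=2$, since left-multiplying $(E_3)$ by $g_2g_1g_3^{-1}g_2^{-1}$ gives $g_1^{-1}g_2^{-1}e_{(2)}=g_3^{-1}g_2^{-1}e_{(2)}$); your derivation of (b) by iterating (c) with far commutations works; and the $j=1$ instances of (d), (e), (f) are exactly as easy as you claim.

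However, there are three concrete gaps. First, your induction package omits the eigenvalue relations $g_{2j+1}e_{(k)}=e_{(k)}g_{2j+1}=qe_{(k)}$ for $j<k$ (Lemma \ref{bd7}(b) in the paper); the far commutation $g_ie_{(k)}=e_{(k)}g_i$ for $i>2k$ together with \eqref{eq7} is not enough. Already converting the inverse form of (c) at $k=2$ into the positive form via $g_i=qg_i^{-1}+(q-1)$ requires $g_3^{-1}e_{(2)}=q^{-1}e_{(2)}$, which is not \eqref{eq7}; these relations must be carried along in the simultaneous induction. Second, your migration argument for (c) works only in the interior range $2\le j\le k-1$: there the pair $g_{2j-1}g_{2j}$ commutes with the leading $e$, and the conjugation identities $g_{a}g^{+}_{2,2k-1}=g^{+}_{2,2k-1}g_{a-1}$ ($3\le a\le 2k-1$) and $g_{a}g^{-}_{1,2k-2}=g^{-}_{1,2k-2}g_{a-1}$ ($2\le a\le 2k-2$) keep all indices in range, delivering $g_{2j-3}g_{2j-2}e_{(k-1)}$ and allowing the inductive hypothesis at $(k-1,j-1)$. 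But for $j=1$ the pair collides with $e$ and the shift would produce $g_0$; handling this ``directly by $(E_3)$ and \eqref{eq7}'' covers only $k=2$, and for $k>2$ you need either a dedicated induction or the refactoring $e_{(k)}=\frac{q-1}{r-1}\,e_{(2)}g^{+}_{4,2k-1}g^{-}_{3,2k-2}e_{(k-1)}$ --- which is part (e) with $j=2$ and hence circular in your stated order (a), (b), (c), (d), (e), (f). Third, and most seriously, the mirrored recursion $e_{(k+1)}=e_{(k)}g^{-}_{2k,1}g^{+}_{2k+1,2}e$ that you fold into (a) as ``a routine induction'' is anything but: in the paper this is precisely Lemma \ref{bd7}(c), whose proof occupies the computations \eqref{eq10}--\eqref{eq18} and consumes Lemma \ref{bd7}(b) together with parts (d) and (f) of the present lemma at level $k$. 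Placing it inside (a) at the start of stage $k$, before (d) and (f), makes the dependencies circular; it must be staged at the end of stage $k$ using (d), (f) at levels $\le k$, or else (a) should be read minimally (\eqref{eq6} is a single well-posed product, and agreement with $(E_3)$ at $k=2$ is immediate). The plan is repairable along these lines, but as written these steps would fail.
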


\begin{lem} \label{bd7}
(a) $g_{2j+1}g^{+}_{2, 2k+1}g^{-}_{1, 2k} = g^{+}_{2, 2k+1}g^{-}_{1, 2k}g_{2j-1}$,
and $g^{-}_{2k, 1}g^{+}_{2k+1, 2}g_{2j+1} = g_{2j-1}g^{-}_{2k, 1}g^{+}_{2k+1, 2}$  for $1 \leq j \leq k$.

(b)  $g_{2j+1} e_{(k)} = e_{(k)}g_{2j+1} = qe_{(k)}$, and
  $g^{-1}_{2j+1} e_{(k)} = e_{(k)}g^{-1}_{2j+1} = q^{-1}e_{(k)}$
for $0 \leq j < k$.

(c) $e_{(k+1)} = e_{(k)}g^{-}_{2k, 1}g^{+}_{2k+1, 2}e$.
\end{lem}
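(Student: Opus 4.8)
The plan is to handle the three parts in the order (a), (b), (c): part (b) rests on (a), while the right-hand identities of (b) together with (c) will all be extracted from a single anti-automorphism. Part (a) lives entirely inside the Hecke subalgebra, so only the braid relations $(H_1)$ and the commutations $(H_2)$ are available. First I would push $g_{2j+1}$ through $g^{+}_{2,2k+1}=g_2g_3\cdots g_{2k+1}$: it commutes with every factor of index $\le 2j-1$ and every factor of index $\ge 2j+3$, so only the block $g_{2j}g_{2j+1}g_{2j+2}$ is touched, and one braid move $g_{2j+1}g_{2j}g_{2j+1}=g_{2j}g_{2j+1}g_{2j}$ followed by commuting the freed $g_{2j}$ to the far right gives
$$g_{2j+1}\,g^{+}_{2,2k+1}=g^{+}_{2,2k+1}\,g_{2j}.$$
By the mirror computation, whose only nonroutine input is the inverse braid identity $g_{2j}g^{-1}_{2j-1}g^{-1}_{2j}=g^{-1}_{2j-1}g^{-1}_{2j}g_{2j-1}$ (after which $g_{2j-1}$ commutes past $g^{-1}_{2j+1},\dots,g^{-1}_{2k}$ to the right), one gets $g_{2j}\,g^{-}_{1,2k}=g^{-}_{1,2k}\,g_{2j-1}$; composing the two yields the first equation of (a). The second equation is the image of the first under the Hecke anti-automorphism $i$ of Lemma \ref{bd5}: since $i$ fixes each $g_\ell$ (hence each $g^{-1}_\ell$) and reverses products, it sends $g^{+}_{2,2k+1}\mapsto g^{+}_{2k+1,2}$, $g^{-}_{1,2k}\mapsto g^{-}_{2k,1}$ and $g_{2j\pm1}\mapsto g_{2j\pm1}$, turning the first identity into exactly the second.

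For (b) I would induct on $k$ using the recursion \eqref{eq6}, namely $e_{(k+1)}=e\,g^{+}_{2,2k+1}g^{-}_{1,2k}\,e_{(k)}$. The base case $k=1$ is relation $(E_2)$, i.e. $g_1e=eg_1=qe$. In the step, the case $j=0$ is immediate from $g_1e=qe$. For $1\le j\le k$ I would first move $g_{2j+1}$ past the leading $e$ using $eg_i=g_ie$ for $i>2$ (here $2j+1\ge 3$), then apply part (a) to rewrite $g_{2j+1}\,g^{+}_{2,2k+1}g^{-}_{1,2k}=g^{+}_{2,2k+1}g^{-}_{1,2k}\,g_{2j-1}$, and finally invoke the induction hypothesis $g_{2j-1}e_{(k)}=qe_{(k)}$ (legitimate since $0\le j-1<k$); this gives $g_{2j+1}e_{(k+1)}=qe_{(k+1)}$. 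The inverse version $g^{-1}_{2j+1}e_{(k)}=q^{-1}e_{(k)}$ is then purely formal: left-multiply $g_{2j+1}e_{(k)}=qe_{(k)}$ by $q^{-1}g^{-1}_{2j+1}$.

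The right-hand identities of (b) and the whole of (c) I would deduce from the anti-automorphism $i$ of $Br_n(r,q)$ determined by $i(g_\ell)=g_\ell$, $i(e)=e$ and $i(xy)=i(y)i(x)$, which extends the Hecke anti-automorphism of Lemma \ref{bd5}. Granting that $i$ is well defined and that $i(e_{(k)})=e_{(k)}$ for all $k$, applying $i$ to \eqref{eq6} gives
$$e_{(k+1)}=i(e_{(k)})\,i(g^{-}_{1,2k})\,i(g^{+}_{2,2k+1})\,i(e)=e_{(k)}\,g^{-}_{2k,1}g^{+}_{2k+1,2}\,e,$$
which is exactly (c); and applying $i$ to the left-hand statements of (b) produces their right-hand counterparts $e_{(k)}g_{2j+1}=qe_{(k)}$ and $e_{(k)}g^{-1}_{2j+1}=q^{-1}e_{(k)}$.

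The main obstacle is therefore establishing that $i$ is a genuine anti-automorphism and that it fixes every $e_{(k)}$. Well-definedness reduces to checking that reversing each defining relation again yields a relation; this is transparent for $(H)$, $(E_1)$ and $(E_2)$, so only $(E_3)$ requires work, and the fixed-point property hinges on the single base identity
$$e\,g_2g_3g^{-1}_1g^{-1}_2\,e=e\,g^{-1}_2g^{-1}_1g_3g_2\,e,$$
that is $i(e_{(2)})=e_{(2)}$. I would attack this by expanding $g^{-1}_1,g^{-1}_2$ through $g^{-1}_j=q^{-1}g_j+(q^{-1}-1)$ (Lemma \ref{bd4}(c)) and collapsing the resulting terms with $(E_2)$ in the forms $eg^{\pm1}_1=q^{\pm1}e$, $eg_2e=re$ and $eg_3=g_3e$; this is the one genuinely delicate computation, and it is exactly the algebraic shadow of the fact that the diagram $e_{(k)}$ is symmetric under the horizontal-flip involution of the Brauer algebra. (The invariance $i(e_{(k)})=e_{(k)}$ can alternatively be imported from Wenzl's structural description of the $e_{(k)}$ in \cite{W3}, since it cannot be propagated from $k=2$ through \eqref{eq6} without invoking (c) itself.)
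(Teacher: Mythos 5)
Parts (a) and the left-hand identities of (b) are correct, and they essentially follow the paper's own route. For the first identity of (a) the paper reaches $g^{+}_{2,2k+1}g_{2j}g^{-}_{1,2k}$ exactly as you do, and then expands $g_{2j}=(q-1)+qg^{-1}_{2j}$ where you instead use the equivalent inverse braid identity $g_{2j}g^{-1}_{2j-1}g^{-1}_{2j}=g^{-1}_{2j-1}g^{-1}_{2j}g_{2j-1}$ --- a genuine streamlining; your derivation of the mirror identity from the Hecke involution of Lemma \ref{bd5} replaces the paper's ``the other one is similar'' and is legitimate, since everything lives in $H_n$. Your induction for $g_{2j+1}e_{(k)}=qe_{(k)}$ (move $g_{2j+1}$ past $e$, apply (a), invoke the hypothesis for $g_{2j-1}$) is precisely the paper's argument, including the formal deduction of the inverse version.

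The genuine gap is in your treatment of (c) and the right-hand identities of (b), and it is exactly the circularity you flag: applying $i$ to \eqref{eq6} gives (c) only if $i(e_{(k)})=e_{(k)}$ for all $k$, and that invariance cannot be propagated through \eqref{eq6} without (c) itself. Neither of your patches closes this. Within this paper the logical order is the reverse of yours: the involution on $Br_n(r,q)$ is established only in Proposition \ref{md12}, and its proof obtains $i(e_{(k)})=e_{(k)}$ by induction precisely by citing Lemma \ref{bd7}(c). The fallback to \cite{W3} is not available in the needed form either: Wenzl defines $e_{(k)}$ by the same one-sided recursion, and the paper attributes Lemmas \ref{bd6} and \ref{bd8} to \cite{W3} but presents Lemma \ref{bd7} and the involution as its own contributions. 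Your ``one genuinely delicate computation'' $i(e_{(2)})=e_{(2)}$ is indeed exactly the paper's computation \eqref{eq12}, but it only starts the process: the paper must still deduce $e_{(2)}g_1=e_{(2)}g_3=qe_{(2)}$, prove (c) for $k=2$ via \eqref{eq13}--\eqref{eq15}, and then run the intertwined induction (c) for $k$ $\Rightarrow$ \eqref{eq10},\eqref{eq11} for $k+1$ $\Rightarrow$ (c) for $k+1$, with the computations \eqref{eq16}--\eqref{eq18}; that induction is the actual content of (c), and the involution route presupposes it. A smaller point in the same direction: well-definedness of $i$ on the presentation requires the reversed $(E_3)$ in full, i.e.\ also the absorption identities $e_{(2)}=e_{(2)}g^{-1}_2g^{-1}_1g_3g_2=g^{-1}_2g^{-1}_1g_3g_2\,e_{(2)}$, not only the palindromic identity; these do follow from $(E_3)$ together with \eqref{eq12}-type manipulations, but your sketch does not account for them.
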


\begin{proof}
(a) Let us to prove the first equality, the other one is similar.
\begin{align*}
g_{2j+1}g^{+}_{2, 2k+1}g^{-}_{1, 2k}& \overset{(H_{2})}{=} g^{+}_{2, 2j-1}(g_{2j+1}g_{2j}g_{2j+1})g^{+}_{2j+2, 2k+1}g^{-}_{1, 2k}\\
& \overset{(H_{1})}{=} g^{+}_{2, 2j-1}(g_{2j}g_{2j+1}g_{2j})g^{+}_{2j+2, 2k+1}g^{-}_{1, 2k}\\
& \overset{}{=} g^{+}_{2, 2j+1}g_{2j}g^{+}_{2j+2, 2k+1}g^{-}_{1, 2k}\\
& \overset{(H_{2})}{=} g^{+}_{2, 2k+1}g_{2j}g^{-}_{1, 2k}
\overset{(H_{2})}{=} g^{+}_{2, 2k+1}g^{-}_{1, 2j-2}g_{2j}g^{-}_{2j-1, 2k}\\
& \overset{L\ref{bd4}(c)}{=} g^{+}_{2, 2k+1}g^{-}_{1, 2j-2}[(q-1) + qg^{-1}_{2j}]g^{-}_{2j-1, 2k}\\
& \overset{}{=} (q-1)g^{+}_{2, 2k+1}g^{-}_{1, 2k} + qg^{+}_{2, 2k+1}g^{-}_{1, 2j-2} (g^{-1}_{2j}g^{-1}_{2j-1}g^{-1}_{2j}) g^{-}_{2j+1, 2k}\\
& \overset{(H_{1})}{=} (q-1)g^{+}_{2, 2k+1}g^{-}_{1, 2k} + qg^{+}_{2, 2k+1}g^{-}_{1, 2j-2} (g^{-1}_{2j-1}g^{-1}_{2j}g^{-1}_{2j-1}) g^{-}_{2j+1, 2k}\\
& \overset{}{=} (q-1)g^{+}_{2, 2k+1}g^{-}_{1, 2k} + qg^{+}_{2, 2k+1}g^{-}_{1, 2j} g^{-1}_{2j-1} g^{-}_{2j+1, 2k}\\
& \overset{(H_{2})}{=} (q-1)g^{+}_{2, 2k+1}g^{-}_{1, 2k} + qg^{+}_{2, 2k+1}g^{-}_{1, 2k} g^{-1}_{2j-1}\\
& \overset{L\ref{bd4}(c)}{=} (q-1)g^{+}_{2, 2k+1}g^{-}_{1, 2k} + qg^{+}_{2, 2k+1}g^{-}_{1, 2k} [q^{-1}g_{2j-1} + (q^{-1}-1)]\\
& \overset{}{=} g^{+}_{2, 2k+1}g^{-}_{1, 2k}g_{2j-1}.
\end{align*}
Notice that when $j=k$ then $g^{+}_{2j+2, 2k+1} = g^{-}_{2j+1, 2k}= 1$, where 1 is the identity element in $Br_{n}(r, q).$

(b)  To prove (b), we begin by showing the equality
\begin{align} \label{eq8}
g_{2j+1} e_{(k)} = qe_{(k)}\ with\ 0 \leq j < k.
\end{align}
Then the equality
\begin{align} \label{eq9}
g^{-1}_{2j+1} e_{(k)} = q^{-1}e_{(k)}\ with\ 0 \leq j < k
\end{align}
comes as a consequence.

The other equalities will be shown simultaneously with proving (c).
The equality (\ref{eq8}) is shown by induction on $k$ as follows:

For $ k=1$, the claim follows from $(E_{2}).$
Now suppose that the equality (\ref{eq8}) holds for $k-1$, that is,
$$g_{2j+1}e_{(k-1)} = qe_{(k-1)} \text{ for $j < k -1$}.$$
Then with $j < k$
\begin{align*}
g_{2j+1}e_{(k)} \overset{(\ref{eq6})}{=} g_{2j+1}eg^{+}_{2, 2k-1}g^{-}_{1, 2k-2}e_{(k-1)}
\overset{(a)\ for \ j< k}{=} eg^{+}_{2, 2k-1}g^{-}_{1, 2k-2}(g_{2j-1}e_{(k-1)}) \\
\overset{(a)}{=} qeg^{+}_{2, 2k-1}g^{-}_{1, 2k-2}e_{(k-1)}\overset{(\ref{eq6})}{=} qe_{(k)}
\end{align*}
by the induction assumption.
The equality (\ref{eq9}) is obtained immediately by multiplying the equality (\ref{eq8}) by $g_{2j+1}^{-1}$ on the left.

The following equalities
\begin{align}
\label{eq10} e_{(k)}g_{2j+1} = qe_{(k)} \hspace{0.2cm}
\end{align}
and
\begin{align} \label{eq11}
e_{(k)}g^{-1}_{2j+1} = q^{-1}e_{(k)} \ \ \  for  \ \ 0 \leq j < k,
\end{align}
are proven by induction on $k$ in a combination with (c) in the following way:
If (c) holds for $k-1$ then the equalities (\ref{eq10}) and (\ref{eq11}) are proven to hold for $k$.
This result implies that (c) holds for $k$, and hence, the equalities (\ref{eq10}) and (\ref{eq11}), again, are true for $k+1$.
Proceeding in this way, all relations (c), (\ref{eq10}) and (\ref{eq11}) are obtained.
Indeed, when $k=1$ (c) follows from direct calculation:
\begin{align} \label{eq12}
e_{(2)} &\overset{(E_{3})}{=} e(g_{2}g_{3}g^{-1}_{1}g^{-1}_{2})e  \overset{L\ref{bd4}(c)}{=} e((q-1) + qg^{-1}_{2})g_{3}g^{-1}_{1}(q^{-1}g_{2} - q^{-1}(q-1))e\\
&\overset{}{=} q^{-1}(q - 1)eg_{3}g^{-1}_{1}g_{2}e + eg^{-1}_{2}g_{3}g^{-1}_{1}g_{2}e - q^{-1}(q -1)^{2}eg_{3}(g^{-1}_{1}e) - (q - 1)eg^{-1}_{2}g_{3}(g^{-1}_{1}e) \notag \\
& \overset{(E_{2})}{=} eg^{-1}_{2}g_{3}g^{-1}_{1}g_{2}e + q^{-1}(q - 1)g_{3}(eg^{-1}_{1})g_{2}e - q^{-1}(q -1)^{2}eg_{3}(g^{-1}_{1}e) - (q - 1)eg^{-1}_{2}g_{3}(g^{-1}_{1}e)\notag \\
& \overset{\eqref{eq7}}{=} eg^{-1}_{2}g_{3}g^{-1}_{1}g_{2}e + q^{-2}(q - 1)g_{3}(eg_{2}e) - q^{-2}(q -1)^{2}eg_{3}e - q^{-1}(q - 1)eg^{-1}_{2}g_{3}e\notag \\
& \overset{(E_{2})}{=} eg^{-1}_{2}g_{3}g^{-1}_{1}g_{2}e + q^{-2}(q - 1)rg_{3}e - q^{-2}(q -1)^{2}e^{2}g_{3} - q^{-1}(q - 1)eg^{-1}_{2}eg_{3}\notag \\
& \overset{(E_{1}),\ (E_{2})}{=} eg^{-1}_{2}g_{3}g^{-1}_{1}g_{2}e + q^{-2}(q - 1)rg_{3}e - q^{-2}(q -1)(r-1)eg_{3} - q^{-2}(q - 1)eg_{3}\notag \\
& = eg^{-1}_{2}g_{3}g^{-1}_{1}g_{2}e \overset{(H_{2})}{=} eg^{-1}_{2}g^{-1}_{1}g_{3}g_{2}e.\notag
\end{align}
The above equality implies (\ref{eq10}) for $k=2$ and $j < 2$ in the following way:
\begin{align*}
&e_{(2)}g_{1} \overset{(E_{3})}{=} eg^{+}_{2, 3}g^{-}_{1, 2}(eg_{1}) \overset{(E_{2}), (E_{3})}{=} qe_{(2)};\\
&e_{(2)}g_{3} \overset{(\ref{eq12})}{=} (eg^{-1}_{2}g^{-1}_{1}g_{3}g_{ 2}e)g_{3} \overset{(E_{2})}{=} e(g^{-1}_{2}g^{-1}_{1}g_{3}g_{ 2})g_{3}e\\
&\overset{(a)\ for\ k=1}{=} (eg_{1})g^{-1}_{2}g^{-1}_{1}g_{3}g_{ 2}e \overset{(E_{2})}{=} qeg^{-1}_{2}g^{-1}_{1}g_{3}g_{ 2}e
\overset{(\ref{eq12})}{=} qe_{(2)}.
\end{align*}
Therefore, in this case the equality (\ref{eq11}) follows from multiplying the equality (\ref{eq10}) with $g_{2j+1}^{-1}$ on the right.
As a consequence, (c) is shown to be true for $k=2$ by following calculation. \\
$ e_{(2)}g^{-}_{4, 1}g^{+}_{5, 2}e \overset{(E_{3})}{=} \ (eg^{+}_{2, 3}g^{-}_{1, 2}e)g^{-}_{4, 1}g^{+}_{5, 2}e
\overset{(H_{2})}{=} (eg^{+}_{2, 3}g^{-}_{1, 2}e)(g^{-}_{4, 3}g^{+}_{5, 4})g^{-}_{2, 1}g^{+}_{3, 2}e \\
\overset{(E_{2})}{=} eg^{+}_{2, 3}g^{-}_{1, 2}(g^{-}_{4, 3}g^{+}_{5, 4})(eg^{-}_{2, 1}g^{+}_{3, 2}e)
\overset{(H_{2}), (E_{3})}{=} eg^{+}_{2, 3}g^{-1}_{4}g_{5}g^{-1}_{1, 3}g_{ 4}e_{(2)}\\
\overset{L\ref{bd4}(c)}{=}  eg^{+}_{2, 3}[q^{-1}g_{4} + (q^{-1}-1)] g_{5}g^{-}_{1, 3}[(q-1) + qg^{-1}_{ 4}]e_{(2)} \\
\overset{}{=} q^{-1}(q-1) eg^{+}_{2, 5}g^{-}_{1, 3} e_{(2)} + eg^{+}_{2, 5}g^{-}_{1, 4}e_{(2)} - q^{-1}(q-1)^{2}e g^{+}_{2, 3}g_{5}g^{-}_{1, 3}e_{(2)} - (q-1)e g^{+}_{2, 3}g_{5}g^{-}_{1, 4}e_{(2)} \\
\overset{(\ref{eq6})}{=} q^{-1}(q-1) eg^{+}_{2, 5}g^{-}_{1, 3} e_{(2)} + e_{(3)} - q^{-1}(q-1)^{2}e g^{+}_{2, 3}g_{5}g^{-}_{1, 3}e_{(2)} - (q-1)e g^{+}_{2, 3}g_{5}g^{-}_{1, 4}e_{(2)}.$\\
Subsequently, it remains to prove that
$$ q^{-1}(q-1) eg^{+}_{2, 5}g^{-}_{1, 3} e_{(2)} - q^{-1}(q-1)^{2}e g^{+}_{2, 3}g_{5}g^{-}_{1, 3}e_{(2)} - (q-1)e g^{+}_{2, 3}g_{5}g^{-}_{1, 4}e_{(2)} \ = \ 0.$$
To this end, considering separately each summand in the left hand side of the last equality, it yields
\begin{align} \label{eq13}
\dfrac{q-1}{q} eg^{+}_{2, 5}g^{-}_{1, 3} e_{(2)} &\overset{(H_{2})}{=} \dfrac{q-1} {q}  eg^{+}_{2, 3}g^{-}_{1, 2}g^{+}_{4, 5} (g^{-1}_{3}e_{(2)})\\
&\overset{(\ref{eq9})\ for \ k=2}{=}  \dfrac{q-1} {q^{2}} eg^{+}_{2, 3}g^{-}_{1, 2}g^{+}_{4, 5}e_{(2)}
\overset{L\ref{bd6}(d)}{=} \dfrac {(q-1)^{2}} {q^{2}(r-1)}(eg^{+}_{2, 3}g^{-}_{1, 2})g^{+}_{4, 5}ee_{(2)}\notag\\
&\overset{(E_{2})}{=} \dfrac {(q-1)^{2}} {q^{2}(r-1)}(eg^{+}_{2, 3}g^{-}_{1, 2}e)g^{+}_{4, 5}e_{(2)}
\overset{(E_{3})}{=}\dfrac {(q-1)^{2}} {q^{2}(r-1)}e_{(2)}g^{+}_{4, 5}e_{(2)}\notag \\
&\overset{(E_{2}),\ (H_{2})}{=}\dfrac {(q-1)^{2}} {q^{2}(r-1)}e_{(2)}g_{4}e_{(2)}g_{5}
\overset{L\ref{bd6}(f)}{=} \dfrac {r(q-1)^{3}} {q^{2}(r-1)^{2}} e_{(2)}g_{ 5}.\notag
\end{align}

\begin{align} \label{eq14}
\dfrac{(q-1)^{2}}{q}e g^{+}_{2, 3}g_{5}&g^{-}_{1, 3}e_{(2)} \overset{(E_{2}),\ (H_{2})}{=} \dfrac {(q-1)^{2}} {q} e g^{+}_{2, 3}g^{-}_{1, 2}(g^{-1}_{3}e_{(2)})g_{5}\\
&\overset{(\ref{eq8})\ for \ k=2}{=} \dfrac {(q-1)^{2}} {q^{2}} e g^{+}_{2, 3}g^{-}_{1, 2}e_{(2)}g_{5}
\overset{L\ref{bd6}(d)}{=} \dfrac {(q-1)^{3}} {q^{2}(r-1)} (e g^{+}_{2, 3}g^{-}_{1, 2}e)e_{(2)}g_{5} \notag \\
&= \dfrac{(q-1)^{3}} {q^{2}(r-1)}e_{(2)} e_{(2)}g_{5}
\overset{L\ref{bd6}(d)}{=}  \dfrac {(q-1)(r-1)} {q^{2}}e_{(2)}g_{5}.\notag
\end{align}

\begin{align} \label{eq15}
(q-1)e g^{+}_{2, 3}g_{5}g^{-}_{1, 4}e_{(2)}& \overset{(E_{2}),\ (H_{2})}{=} (q-1)g_{5}e g^{+}_{2, 3}g^{-}_{1, 2}g^{-}_{3, 4}e_{(2)}\\
&\overset{L\ref{bd6}(d)}{=} \dfrac {(q-1)^{2}} {r-1} g_{5}(e g^{+}_{2, 3}g^{-}_{1, 2})g^{-}_{3, 4}(ee_{(2)})\notag\\
&\overset{(E_{2})}{=} \dfrac {(q-1)^{2}} {r-1} g_{5}(e g^{+}_{2, 3}g^{-}_{1, 2}e)g^{-}_{3, 4}e_{(2)}
\overset{(E_{3})}{=} \dfrac {(q-1)^{2}} {r-1} g_{5}(e_{ (2)}g^{-1}_{3})g^{-1}_{ 4}e_{(2)}\notag \\
&\overset{(\ref{eq11}) for \ k=2}{=}\dfrac {(q-1)^{2}} {q(r-1)} g_{5}e_{ (2)}g^{-1}_{ 4}e_{(2)}
\overset{L\ref{bd4}(c)}{=} \dfrac {(q-1)^{2}} {q(r-1)} g_{5}e_{ (2)}[q^{-1}g_{ 4} + (q^{-1}-1)] e_{(2)}\notag \\
&\overset{}{=} \dfrac {(q-1)^{2}} {q^{2}(r-1)} g_{5}e_{ (2)}g_{ 4}e_{(2)}  -  \dfrac {(q-1)^{3}} {q^{2}(r-1)} g_{5}e_{ (2)}e_{(2)}  \notag \\
&\overset{(E_{2}),\ (H_{2}),\ L\ref{bd6}(d), (f)}{=}  \dfrac {r(q-1)^{3}} {q^{2}(r-1)^{2}} e_{(2)}g_{ 5}  -  \dfrac {(q-1)(r-1)} {q^{2}} e_{(2)}g_{5}. \notag
\end{align}
By (\ref{eq13}), (\ref{eq14}) and (\ref{eq15}), it implies the equation (c) for $k =2$.
Now suppose that the relations (\ref{eq10}) and (\ref{eq11}) hold for $k$. We will show that (c) holds for $k$, and as a consequence both (\ref{eq10}) and (\ref{eq11}) hold with $k+1$.
Indeed, we have:

\begin{align*}
e_{(k)}g^{-}_{2k, 1}g^{+}_{2k+1, 2}e &\overset{(\ref{eq6})}{=} (eg^{+}_{2, 2k - 1}g^{-}_{1, 2k - 2} e_{(k - 1)})(g^{-1}_{2k}g^{-1}_{2k - 1}g^{-}_{2k-2, 1})(g_{2k + 1}g_{2k}g^{+}_{2k - 1, 2}e) \\
&\overset{(H_{2})}{=} (eg^{+}_{2, 2k - 1}g^{-}_{1, 2k - 2} e_{(k - 1)})(g^{-1}_{2k}g_{2k + 1})(g^{-1}_{2k - 1}g_{2k})g^{-}_{2k-2, 1}g^{+}_{2k - 1, 2}e \\
&\overset{(E_{2}),\ (H_{2})}{=} eg^{+}_{2, 2k - 1}g^{-}_{1, 2k - 2}(g^{-1}_{2k}g_{2k + 1})(g^{-1}_{2k - 1}g_{2k})e_{(k - 1)}g^{-}_{2k-2, 1}g^{+}_{2k - 1, 2}e \\
&\overset{(H_{2})}{=} eg^{+}_{2, 2k - 1}(g^{-1}_{2k}g_{2k + 1})g^{-}_{1, 2k - 2}(g^{-1}_{2k - 1}g_{2k})(e_{(k - 1)}g^{-}_{2k-2, 1}g^{+}_{2k - 1, 2}e).
\end{align*}
By induction assumption, the last formula is equal to
$$eg^{+}_{2, 2k - 1}[q^{-1}g_{2k}\ +\ (q^{-1} - 1)]g_{2k + 1}g^{-}_{1, 2k - 1}[(q - 1)\ +\ qg^{-1}_{2k}]e_{(k)},$$
and direct calculation implies
\begin{align*}
eg^{+}_{2, 2k + 1}g^{-}_{1, 2k}e_{(k)} &+ q^{-1}(q - 1) eg^{+}_{2, 2k + 1}g^{-}_{1, 2k - 1}e_{(k)}\\
&- q^{-1}(q - 1)^{2}eg^{+}_{2, 2k - 1}g_{2k+ 1}g^{-}_{1, 2k - 1}e_{(k)} - (q - 1)eg^{+}_{2, 2k - 1}g_{2k+ 1}g^{-}_{1, 2k}e_{(k)} \\
\overset{(\ref{eq6})}{=} e_{(k + 1)} &+ q^{-1}(q - 1) eg^{+}_{2, 2k + 1}g^{-}_{1, 2k - 1}e_{(k)}\\
&- q^{-1}(q - 1)^{2}eg^{+}_{2, 2k - 1}g_{2k+ 1}g^{-}_{1, 2k - 1}e_{(k)} - (q - 1)eg^{+}_{2, 2k - 1}g_{2k+ 1}g^{-}_{1, 2k}e_{(k)}.
\end{align*}

Applying the same arguments as in the case k = 3, each separate summand in the above formula can be computed as follows:
\begin{align} \label{eq16}
\dfrac{q - 1}{q} eg^{+}_{2, 2k + 1}&g^{-}_{1, 2k - 1}e_{(k)} \overset{}{=} \dfrac {q-1} {q} eg^{+}_{2, 2k + 1}g^{-}_{1, 2k - 1}(g^{-1}_{2k - 1}e_{(k)})\\
&\overset{(\ref{eq9}) \ for\ k}{=} \dfrac {q-1} {q^{2}} eg^{+}_{2, 2k + 1}g^{-}_{1, 2k - 2}e_{(k)}
\overset{(H_{2})}{=} \dfrac {q-1} {q^{2}} eg^{+}_{2, 2k - 1}g^{-}_{1, 2k - 2}g^{+}_{2k, 2k +1}e_{(k)} \notag \\
&\overset{L\ref{bd6}(d)}{=} \dfrac {(q-1)^{k}} {q^{2}(r-1)^{k-1}} (eg^{+}_{2, 2k - 1}g^{-}_{1, 2k - 2})g^{+}_{2k, 2k+1}(e_{(k - 1)}e_{(k)})\notag \\
&\overset{(H_{2}),\ (E_{2})}{=} \dfrac {(q-1)^{k}} {q^{2}(r-1)^{k-1}} (eg^{+}_{2, 2k - 1}g^{-}_{1, 2k - 2}e_{(k - 1)})g_{2k}e_{(k)}g_{2k + 1}\notag \\
&\overset{(\ref{eq6})}{=} \dfrac {(q-1)^{k}} {q^{2}(r-1)^{k-1}} e_{(k)}g_{2k}e_{(k)}g_{2k + 1} \notag
\overset{L\ref{bd6}(f)}{=} \dfrac {r(q-1)^{2k-1}} {q^{2}(r-1)^{2k-2}} e_{(k)}g_{2k + 1}.\notag
\end{align}
\begin{align} \label{eq17}
\dfrac{(q - 1)^{2}}{q}  eg^{+}_{2, 2k - 1}&g_{2k+ 1}g^{-}_{1, 2k - 1}e_{(k)} \overset{(H_{2}),\ (E_{2})}{=} \dfrac {(q-1)^{2}} {q} g_{2k+ 1}eg^{+}_{2, 2k - 1}g^{-}_{1, 2k - 2}(g^{-1}_{2k - 1}e_{(k)})\\
&\overset{(3.3) for\ k}{=} \dfrac {(q-1)^{2}} {q^{2}} g_{2k+ 1}eg^{+}_{2, 2k - 1}g^{-}_{1, 2k - 2}e_{(k)}\notag \\
&\overset{L\ref{bd6}(d)}{=} \dfrac {(q-1)^{k+1}} {q^{2}(r-1)^{k-1}} g_{2k+ 1}(eg^{+}_{2, 2k - 1}g^{-}_{1, 2k - 2}e_{(k - 1)})e_{(k)}\notag \\
&\overset{(\ref{eq6}) }{=} \dfrac {(q-1)^{k+1}} {q^{2}(r-1)^{k-1}} g_{2k+ 1}e_{(k)}e_{(k)}
\overset{L3.2(d),\ (E_{2})}{=} \dfrac {(q-1)(r-1)} {q^{2}} e_{(k)}g_{2k+ 1} \notag
\end{align}
\begin{align} \label{eq18}
(q - 1) & eg^{+}_{2, 2k - 1} g_{2k+ 1}g^{-}_{1, 2k}e_{(k)} \overset{(H_{2}), (E_{2})}{=} (q - 1)g_{2k+ 1}(eg^{+}_{2, 2k - 1}g^{-}_{1, 2k - 2})g^{-}_{2k - 1, 2k}e_{(k)} \\
&\overset{L\ref{bd6}(d)}{=} \dfrac {(q-1)^{k}} {(r-1)^{k-1}} g_{2k+ 1}(eg^{+}_{2, 2k - 1}g^{-}_{1, 2k - 2})g^{-}_{2k - 1, 2k}(e_{(k - 1)}e_{(k)})\notag \\
&\overset{(E_{2}),\ (H_{2})}{=} \dfrac {(q-1)^{k}} {(r-1)^{k-1}} g_{2k+ 1}(eg^{+}_{2, 2k - 1}g^{-}_{1, 2k - 2}e_{(k - 1)})g^{-}_{2k - 1, 2k}e_{(k)}\notag
\end{align}
\begin{align*}
&\overset{(\ref{eq6})}{=} \dfrac {(q-1)^{k}} {(r-1)^{k-1}} g_{2k+ 1}(e_{(k)}g^{-1}_{2k - 1})g^{-1}_{2k}e_{(k)}
\overset{(\ref{eq11})\ for \ k}{=} \dfrac {(q-1)^{k}} {q(r-1)^{k-1}} g_{2k+ 1}e_{(k)}g^{-1}_{2k}e_{(k)}\notag \\
&\overset{L\ref{bd4}(c)}{=}\dfrac {(q-1)^{k}} {q(r-1)^{k-1}} g_{2k+ 1}e_{(k)}[q^{-1}g_{2k} + (q^{-1} - 1)]e_{(k)}\notag \\
& = \dfrac {(q-1)^{k}} {q^{2}(r-1)^{k-1}} g_{2k+ 1}e_{(k)}g_{2k}e_{(k)}\ - \  \dfrac {(q-1)^{k+1}} {q^{2}(r-1)^{k-1}} g_{2k+ 1}(e_{(k)})^{2}\notag \\
&\overset{L\ref{bd6}(d), (f)}{=} \ \dfrac {r(q-1)^{2k-1}} {q^{2}(r-1)^{2k-2}} e_{(k)}g_{2k+ 1}\ - \ \dfrac {(q-1)(r-1)} {q^{2}} e_{(k)}g_{2k+ 1}.\notag
\end{align*}
The above calculations imply that $e_{(k)}g^{-}_{2k, 1}g^{+}_{2k+1, 2}e = e_{(k+1)}.$ Thus (c) holds with value $k$.
Using the relation (a), and the equalities (\ref{eq10}) and (c) for $k$, it yields the equalities (\ref{eq10}) and (\ref{eq11}) for $k + 1$ as follows:

For $j < (k +1)$ then
\begin{align*}
e_{(k +1)}g_{2j + 1} &\overset{(c)\ for \ k}{=} (e_{(k)}g^{-}_{2k, 1}g^{+}_{2k +1, 2} e)g_{2j + 1}\\
&\overset{(E_{2})}{=}  e_{(k)} (g^{-}_{2k, 1}g^{+}_{2k +1, 2}g_{2j +1}) e
\overset{(a) for\ k}{=}  (e_{(k)}g_{2j -1}) g^{-}_{2k, 1}g^{+}_{2k +1, 2} e\\
&\overset{(\ref{eq10})\ for \ k}{=}  q(e_{(k)}g^{-}_{2k, 1}g^{+}_{2k +1, 2} e)
\overset{(c) \ for \ k}{=}  qe_{(k+1)}.
\end{align*}
The relation (\ref{eq11}) for $k+1$ is obtained by multiplying the relation (\ref{eq10}) for $k + 1$ by $g^{-1}_{2j+1}$ on the right.
\end{proof}

\begin{lem} \label{bd8} (\cite{W3}, Lemma 3.4)
We have $e_{(j)}H_{n}e_{(k)} \subset H_{2j+1, n}e_{(k)} \ + \sum_{m \geq k+1} H_{n} e_{(m)}H_{n},$
where $j \leq k$. Moreover, if $j_{1} \geq 2k$ and $j_{2} \geq 2k+1,$ we also have:

(a) $eg^{+}_{2, j_{2}}g^{+}_{1, j_{1}}e_{(k)} = e_{(k+1)}g^{+}_{2k+1, j_{2}}g^{-}_{2k+1, j_{1}},$
if $j_{1} \geq 2k$ and $j_{2} \geq 2k+1$.

(b) $eg^{+}_{2, j_{2}}g^{+}_{1, j_{1}}$ is equal to
\begin{center} $e_{(k+1)}g^{+}_{2k+2, j_{1}}g^{+}_{2k+1, j_{2}} + q^{N+1}(q-1)\sum^{k}_{l=1}q^{2l-2}(g_{2l+1}+1)g^{+}_{2l+2, j_{2}}g^{+}_{2l+1, j_{1}}e_{(k)}.$ \end{center}
\end{lem}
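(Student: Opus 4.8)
The plan is to treat the three assertions in order: first establish the closed formula (a), then derive (b) from it by re-expanding the inverse generators, and finally deduce the containment by reducing an arbitrary sandwiched Hecke element to these normal forms. Throughout, the two decisive structural moves are that a generator $g_i$ with $i>2k$ commutes with $e_{(k)}$ (the $q$-analogue of Brauer relation~(4)), whereas each small odd generator $g_{2m+1}$, $m<k$, is absorbed as $g_{2m+1}e_{(k)}=qe_{(k)}$ by Lemma~\ref{bd7}(b). These are precisely what let one split the long braids $g^{+}_{2,j_2}$ and $g^{+}_{1,j_1}$ at the indices $2k+1$ and $2k$.

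For part (a) I would argue by induction on $j_1+j_2$, the engine being the defining recursion $e_{(k+1)}=eg^{+}_{2,2k+1}g^{-}_{1,2k}e_{(k)}$ of \eqref{eq6}. Writing $g^{+}_{2,j_2}=g^{+}_{2,2k+1}g^{+}_{2k+2,j_2}$ and $g^{+}_{1,j_1}=g^{+}_{1,2k}g^{+}_{2k+1,j_1}$ and commuting the high factors (indices $>2k$) past $e_{(k)}$, the product factors as a low block $eg^{+}_{2,2k+1}g^{+}_{1,2k}e_{(k)}$ followed by a high tail. The low block is matched to $e_{(k+1)}$ by converting $g^{+}_{1,2k}$ into $g^{-}_{1,2k}$ through $g_i=qg_i^{-1}+(q-1)$ of Lemma~\ref{bd4}(c), the cross-terms being killed by the odd-absorption of Lemma~\ref{bd7}(b), while the tail is relabelled into $g^{+}_{2k+1,j_2}g^{-}_{2k+1,j_1}$; the inductive step then appends a single high generator to each of $j_2,j_1$ and checks compatibility using only commutation and Lemma~\ref{bd4}(c).

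Part (b) I would obtain purely from (a): starting from $e_{(k+1)}g^{+}_{2k+1,j_2}g^{-}_{2k+1,j_1}$, expand every inverse generator in $g^{-}_{2k+1,j_1}$ via $g_i^{-1}=q^{-1}g_i+(q^{-1}-1)$. The top-degree term collects into $e_{(k+1)}g^{+}_{2k+2,j_1}g^{+}_{2k+1,j_2}$, while each lower term exhibits a repeated $e$ that can be collapsed; invoking $eg_2e=re$ (so $r=q^{N}$ enters in $Br_n(N)$) together with the loop identities $e_{(j)}e_{(k)}=((r-1)/(q-1))^{j}e_{(k)}$ and $e_{(j)}g_{2j}e_{(k)}=r((r-1)/(q-1))^{j-1}e_{(k)}$ of Lemma~\ref{bd6}(d),(f) reduces these to scalar multiples of $(g_{2l+1}+1)g^{+}_{2l+2,j_2}g^{+}_{2l+1,j_1}e_{(k)}$. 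Tracking the scalars through these collapses should reproduce exactly the coefficients $q^{N+1}(q-1)q^{2l-2}$ in the stated sum.

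Finally, for the containment it suffices to test on the basis $\{g_\omega\}$ of $H_n$ and to show $e_{(j)}g_\omega e_{(k)}\in H_{2j+1,n}e_{(k)}+\sum_{m\ges k+1}H_n e_{(m)}H_n$. I would induct on $\ell(\omega)$, peeling one generator $g_i$ off the right of $g_\omega$: if $i=2m+1$ with $m<k$ it is absorbed as a factor $q$ by Lemma~\ref{bd7}(b); if $i>2k$ it commutes through $e_{(k)}$ and, being a generator of $H_{2j+1,n}$, is harmless since that subalgebra and the ideal $\sum_{m\ges k+1}H_ne_{(m)}H_n$ are each stable under right multiplication. The remaining case, an even generator $g_{2m}$ with $m\les k$, is the genuine obstacle: here neither absorption nor commutation applies, and one must feed the configuration through identities (a) and (b) so that it either collapses to a scalar multiple of $e_{(k)}$ (via Lemma~\ref{bd6}(f)) or is promoted to an $e_{(k+1)}$-term lying in $\sum_{m\ges k+1}H_ne_{(m)}H_n$. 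I expect this even-generator bookkeeping — equivalently, the clean separation in (b) of the surviving $e_{(k+1)}$-part from the lower $e_{(k)}$-corrections — to be the technical heart of the lemma.
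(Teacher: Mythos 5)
You are proving a statement that this paper never proves: Lemma~\ref{bd8} is imported verbatim from Wenzl (\cite{W3}, Lemma 3.4), so your attempt can only be judged against the statement itself and Wenzl's argument, and judged that way it has genuine gaps. The decisive one is in your part (a): you propose to match the low block $eg^{+}_{2,2k+1}g^{+}_{1,2k}e_{(k)}$ with $e_{(k+1)}$ by converting $g^{+}_{1,2k}$ into $g^{-}_{1,2k}$ through $g_{i}=qg_{i}^{-1}+(q-1)$, ``the cross-terms being killed by the odd-absorption of Lemma~\ref{bd7}(b)''. They are not killed. Lemma~\ref{bd7}(b) only absorbs \emph{odd}-indexed generators adjacent to $e_{(k)}$; the cross-terms produced by the conversion contain \emph{even}-indexed generators sandwiched between two occurrences of $e$, and by $(E_{2})$ and Lemma~\ref{bd6}(f) these collapse to \emph{nonzero} multiples of lower terms. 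Indeed they are precisely the source of the correction sum in (b): if they vanished, the summand $q^{N+1}(q-1)\sum_{l}\cdots e_{(k)}$ would be zero. Concretely, in $Br_{n}(N)$ with $k=1$, $j_{1}=2$, $j_{2}=3$ one computes $eg_{2}g_{3}g_{1}g_{2}e=q^{2}e_{(2)}+q^{N+1}(q-1)(g_{3}+1)e$, so the all-plus product is not of the form $e_{(2)}\cdot(\mathrm{tail\ in\ }H_{3,n})$. This also exposes transcription typos in the statement as printed here: with $g^{+}_{1,j_{1}}$ on the left-hand side, (a) fails already in the classical limit $q=1$, where for the same indices it would assert $e_{(2)}=e_{(2)}s_{2}$ in $D_{n}(N)$. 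The version consistent with \eqref{eq6} carries $g^{-}_{1,j_{1}}$ on the left (with $g^{+}_{2k+2,j_{2}}$ on the right), and then (a) needs neither your induction on $j_{1}+j_{2}$ nor any plus/minus conversion: split $g^{+}_{2,j_{2}}=g^{+}_{2,2k+1}g^{+}_{2k+2,j_{2}}$ and $g^{-}_{1,j_{1}}=g^{-}_{1,2k}g^{-}_{2k+1,j_{1}}$, commute every generator of index $\geq 2k+1$ past $e_{(k)}$ (Remark~\ref{rem1}(1)), and quote \eqref{eq6}.

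Your route to (b) is structurally wrong for the same reason. Expanding the inverses in the high tail $g^{-}_{2k+1,j_{1}}$ of (a) only produces terms of the form $e_{(k+1)}h$ with $h\in H_{2k+1,n}$: no second occurrence of $e$ ever appears, so there is nothing for $eg_{2}e=re$ or Lemma~\ref{bd6}(d),(f) to collapse, and such terms can never yield the low-index factors $(g_{2l+1}+1)g^{+}_{2l+2,j_{2}}g^{+}_{2l+1,j_{1}}e_{(k)}$ for \emph{every} level $l=1,\dots,k$. Those corrections arise from converting the low block $g^{+}_{1,2k}$ inside the recursion \eqref{eq6}, one level at a time; (b) therefore requires an induction on $k$ interleaved with the collapse relations, not a one-shot re-expansion of (a). Finally, your proof of the containment $e_{(j)}H_{n}e_{(k)}\subset H_{2j+1,n}e_{(k)}+\sum_{m\geq k+1}H_{n}e_{(m)}H_{n}$ is incomplete exactly where you say its heart lies: the case of a reduced word ending in an even generator $g_{2m}$, $m\leq k$, is not argued but only referred back to (a) and (b) --- and since your (a) and (b) are flawed, the induction on $\ell(\omega)$ does not close; moreover your peeling acts only on the right and never explains why the surviving Hecke factor lands in the subalgebra $H_{2j+1,n}$ determined by the \emph{left} idempotent $e_{(j)}$. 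The mechanism you gesture at (collapse via $e_{(j)}g_{2j}e_{(k)}=r((r-1)/(q-1))^{j-1}e_{(k)}$, or promotion to $e_{(k+1)}$-terms inside the ideal) is the right one, but carrying it out is the actual content of Wenzl's proof and cannot simply be declared.
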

The next lemma is necessary for showing a basis of $Br_{n}(r, q).$

\begin{lem}\label{bd9}(\cite{W3}, Proposition 3.5). The algebra $Br_{n}(r, q)$ is spanned by $\sum^{[n/2]}_{k=0}H_{n}e_{(k)}H_{n}.$
In particular, its dimension is at most the one of the Brauer algebra.
\end{lem}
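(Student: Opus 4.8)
The plan is to write $M := \sum_{k=0}^{[n/2]} H_n e_{(k)} H_n$ (with the convention $e_{(0)} = 1$) and prove the equality $Br_{n}(r,q) = M$ by showing that $M$ is a left ideal of $Br_{n}(r,q)$ containing the identity. A left ideal containing $1$ must be the whole algebra, so this is all the spanning statement needs; note in particular that the whole argument is \emph{one-sided}, so I will never have to analyse right multiplication by $e$ nor appeal to an involution. Since $M \subseteq Br_{n}(r,q)$ is obvious, everything reduces to checking closure of $M$ under left multiplication by the algebra generators $g_{1},\dots,g_{n-1}$ and $e$, because these generate $Br_{n}(r,q)$ and induction on word length then propagates the closure to arbitrary elements.

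First I record the two cheap closure properties. Because $H_{n}$ is a subalgebra, right multiplication by $H_{n}$ fixes each summand, $(H_{n}e_{(k)}H_{n})H_{n} = H_{n}e_{(k)}H_{n}$, so $M H_{n} = M$; and for the same reason left multiplication by any Hecke generator preserves each summand, $g_{i}H_{n}e_{(k)}H_{n} \subseteq H_{n}e_{(k)}H_{n} \subseteq M$. Also $1 = e_{(0)} \in M$ (the $k=0$ term) and the generators lie in $M$, namely $g_{i} \in H_{n} = H_{n}e_{(0)}H_{n}$ and $e = e_{(1)} \in H_{n}e_{(1)}H_{n}$.

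The only remaining generator is $e = e_{(1)}$, and this is where the real input enters. I claim $e\,H_{n}e_{(k)} \subseteq M$ for every $k$. For $k=0$ this is just $eH_{n} \subseteq H_{n}e_{(1)}H_{n} \subseteq M$. For $k \ge 1$ I invoke Lemma \ref{bd8} with $j=1$, which gives $e\,H_{n}e_{(k)} = e_{(1)}H_{n}e_{(k)} \subseteq H_{3,n}e_{(k)} + \sum_{m \ge k+1} H_{n}e_{(m)}H_{n} \subseteq H_{n}e_{(k)}H_{n} + M = M$. Multiplying on the right by $H_{n}$ and using $MH_{n} = M$ then yields $e\,H_{n}e_{(k)}H_{n} \subseteq M$, so $eM \subseteq M$. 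Thus $M$ is stable under left multiplication by every generator, hence is a left ideal, and since $1 \in M$ we conclude $Br_{n}(r,q) = Br_{n}(r,q)\cdot 1 \subseteq Br_{n}(r,q)\cdot M \subseteq M$, giving the spanning statement. I expect this step to be essentially painless precisely because Lemma \ref{bd8} has already absorbed the delicate commutation work; without it one would have to grind the relations $(E_{1})$--$(E_{3})$ by hand.

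For the dimension assertion, the spanning result gives $\dim_{R} Br_{n}(r,q) \le \sum_{k} \dim_{R}(H_{n}e_{(k)}H_{n})$, so it suffices to bound each layer by the number of Brauer diagrams with exactly $2k$ horizontal edges; summing these recovers $\dim D_{n}(N)$ through the decomposition $D_{n}(N) \cong \bigoplus_{k} \Z[N]S_{n}e_{(k)}S_{n}$. The layer $H_{n}e_{(k)}H_{n}$ is spanned by the products $g_{\alpha}e_{(k)}g_{\beta}$ with $\alpha,\beta \in S_{n}$, and the key observation is that the reduction relations $g_{2i+1}e_{(k)} = e_{(k)}g_{2i+1} = qe_{(k)}$ for $i<k$ (Lemma \ref{bd7}(b)), $g_{2j-1}g_{2j}e_{(k)} = g_{2j+1}g_{2j}e_{(k)}$ (Lemma \ref{bd6}(c)), together with the commutation of $e_{(k)}$ past $g_{i}$ for $i>2k$ (iterate $(E_{2})$), are exactly the $q$-analogues of the Brauer relations $(3),(4),(5),(6)$ that collapse $\{\omega e_{(k)}\omega'\}$ onto the diagram basis of $\Z[N]S_{n}e_{(k)}S_{n}$. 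Carrying out the same coset bookkeeping used for $D_{n}(N)$ --- a left cap-configuration, a right cap-configuration, and a through-strand permutation in the middle Hecke algebra $H_{2k+1,n}$ --- then bounds $\dim_{R}(H_{n}e_{(k)}H_{n})$ by the count of Brauer diagrams with $2k$ horizontal edges. I regard this final counting step as the genuine obstacle: one must verify that no independent relation is lost in passing from $S_{n}$ to $H_{n}$, so that the $q$-deformed spanning set is no larger than its classical shadow.
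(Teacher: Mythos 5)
Your proposal is correct and follows essentially the route of the proof the paper relies on (the paper cites Wenzl's Proposition 3.5 rather than proving the lemma itself): spanning is obtained from closure of $\sum_{k} H_{n}e_{(k)}H_{n}$ under the generators, with Lemma \ref{bd8} supplying the only nontrivial closure step, and the dimension bound from the diagram-indexed coset count $\left(\frac{n!}{2^{k}k!(n-2k)!}\right)^{2}(n-2k)!$ per layer, your one-sided left-ideal-containing-$1$ formulation being a harmless streamlining of checking closure under multiplication. One small correction: your closing worry is misdirected, since an upper bound on dimension never requires verifying that no ``independent relation is lost'' --- it only requires that every classical reduction move has a $q$-analogue (Lemmas \ref{bd6}(c), \ref{bd7}(b) and Remark \ref{rem1}(1), exactly as you cite) and that the rewriting terminates, the extra summands produced by $g_{i}^{2}=(q-1)g_{i}+q$ having strictly smaller length and hence being absorbed by induction on $\ell(\omega)$.
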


\subsection{The $Br_{n}(r, q)$-modules $V^*_k$}
Wenzl \cite{W3} defined an action of generators of $q$-Brauer algebra on module $V^*_k$ as follows:
\[g_{j}v_{d} =
\begin{cases}
qv_{d} & \text{if} \ s_{j}d = d,\\
v_{s_{j}d } &\text{ if }\ l( s_{j}d) >  l(d), \\
(q-1)v_{d} + qv_{s_{j}d} &\text{if } \ l(s_{j}d) <  l(d),
\end{cases} \]
and
\[ehg^{+}_{2, j_{2}}g^{-}_{1, j_{1}}v_{1} =
\begin{cases}
q^{N}hg^{+}_{3, j_{2}}v_{1} & \text{if}\  g^{-}_{1, j_{1}} = 1,\\
q^{-1}hg^{-}_{j_{2}+1, j_{1}}v_{1} &\text{ if }\ g^{-}_{1, j_{1}} = 1, \\
0 &\text{if } j_{1} \neq 2k \ and \ j_{2} \neq 2k + 1,
\end{cases} \]
where $v_{1}$ is defined as in Lemma \ref{bd2} and h $\in H_{3, n}$.

\begin{lem}\label{bd10}(\cite{W3}, Proposition 3.6).
The action of the elements $g_{j}$ with $1 \leq j < n$ and $e$ on $V^*_k$ as given above
defines a representation of $Br_{n}(r, q).$
\end{lem}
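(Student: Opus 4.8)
The plan is to use the presentation of $Br_{n}(r,q)$ by generators and relations: since the algebra is defined by $g_{1},\dots,g_{n-1},e$ subject to (H), $(E_{1})$, $(E_{2})$, $(E_{3})$, producing a representation on $V^{*}_{k}$ amounts to exhibiting $R$-linear operators $\rho(g_{j})$ and $\rho(e)$ on $V^{*}_{k}$ that satisfy these same relations. So the whole proof reduces to checking each relation as an identity of operators, and it suffices to test every identity on the $R$-basis $\{v_{d}=v_{\omega e_{(k)}} : \omega \in B^{*}_{k}\}$ from Lemma \ref{bd2}.

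First I would settle two preliminary points. One is that $\rho(e)$ is well defined: since a basis diagram admits several expressions of the form $hg^{+}_{2,j_{2}}g^{-}_{1,j_{1}}v_{1}$ (compare the ambiguity $s_{2j-1}s_{2j}e_{(k)}=s_{2j+1}s_{2j}e_{(k)}$ noted after Lemma \ref{bd2}), the case-defined formula for $e$ must be shown independent of the chosen expression, which I would do by normalising each vector through the distinguished reduced form attached to $B^{*}_{k}$. The other preliminary is relation (H): the displayed rule for $g_{j}v_{d}$ is exactly the standard Hecke-type action, so I would verify that it makes $V^{*}_{k}$ a module over $H_{n}$. The quadratic relation $g_{j}^{2}=(q-1)g_{j}+q$ follows case-by-case from the three-line definition together with the dichotomy in Lemma \ref{bd2}(b) (equal lengths force $s_{j}d=d$, so the three cases are exhaustive and disjoint); the braid relation $g_{i}g_{i+1}g_{i}=g_{i+1}g_{i}g_{i+1}$ and the commutation $g_{i}g_{j}=g_{j}g_{i}$ for $|i-j|>1$ are then checked on basis diagrams by tracking how the transpositions act.

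The substantial work is the relations involving $e$, which I would verify directly on $v_{d}$ using the explicit action formula and the combinatorics of the basis diagrams. For $(E_{1})$ one computes $\rho(e)^{2}v_{d}$ and reads off the scalar $(r-1)/(q-1)$, using the kind of bookkeeping already visible in Lemma \ref{bd6}(d). The relations $eg_{i}=g_{i}e$ for $i>2$ and $eg_{1}=g_{1}e=qe$ are comparatively routine, reflecting that $g_{1}$ and $g_{i}$ ($i>2$) move the relevant strands compatibly with the $e$-action. The delicate identities are $eg_{2}e=re$ and $eg_{2}^{-1}e=q^{-1}e$, together with relation $(E_{3})$ defining $e_{(2)}$: here the composite application of $\rho(e)$ must be unwound through its three cases, and I would organise the computation using the identities of Lemma \ref{bd7} (especially parts (a) and (c)) and Lemma \ref{bd8}(a),(b), which govern how $eg^{+}_{2,j_{2}}g^{+}_{1,j_{1}}$ collapses onto $e_{(k+1)}$.

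The main obstacle, therefore, is precisely this composite behaviour of $\rho(e)$: because the action of $e$ is specified by cases on vectors written in the form $hg^{+}_{2,j_{2}}g^{-}_{1,j_{1}}v_{1}$, verifying $eg_{2}e=re$, $eg_{2}^{-1}e=q^{-1}e$ and $(E_{3})$ requires re-expressing each intermediate vector in that normal form before every application of $\rho(e)$, and keeping the length function under control so that the correct case is triggered at each step. Once these are in place, every defining relation of $Br_{n}(r,q)$ holds as an operator identity on $V^{*}_{k}$, and the assignment extends to the desired algebra homomorphism.
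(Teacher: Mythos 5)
The paper does not actually prove this lemma: it is quoted verbatim from Wenzl (\cite{W3}, Proposition 3.6), so there is no internal proof here to compare your attempt with. Your overall strategy --- verify the defining relations (H), $(E_1)$, $(E_2)$, $(E_3)$ as operator identities on the basis $\{v_{\omega e_{(k)}} \mid \omega \in B^*_k\}$ of Lemma \ref{bd2}, after first settling well-definedness of the $e$-action and the Hecke-module structure --- is the standard route and is essentially how the result is established in Wenzl's original paper. Your observation that well-definedness of $\rho(e)$ requires an argument is apt; note moreover that the case formula for the $e$-action as printed in this paper is garbled (two of the three displayed cases carry the identical condition $g^{-}_{1,j_1}=1$), so the corrected cases from \cite{W3} must be restored before any verification can even begin.

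There is, however, a genuine flaw in the plan as stated: you propose to organise the verification of $eg_2e=re$, $eg_2^{-1}e=q^{-1}e$ and $(E_3)$ using Lemma \ref{bd7}(c) and Lemma \ref{bd8}(a),(b). Those are identities in the abstract algebra $Br_n(r,q)$, derived from the very relations $(E_1)$--$(E_3)$ whose validity for the operators $\rho(g_j)$, $\rho(e)$ you are in the middle of establishing; invoking them at this stage is circular. Before the $e$-relations are known to hold on $V^*_k$, you may legitimately use only (i) the combinatorial definition of the action, (ii) length-function facts such as Lemma \ref{bd2}(b), and (iii) pure Hecke identities --- for instance Lemma \ref{bd7}(a), which involves no $e$ at all and so is available once (H) has been checked on the module. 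The honest version of your plan must therefore re-derive the collapsing behaviour of $e\,g^{+}_{2,j_2}g^{-}_{1,j_1}$ at the level of the operators by direct computation with the case formula, tracking the normal form in $B^*_k$ at each step; that is precisely the substantive content your outline defers. As written, the proposal is a correct roadmap pointing at the right difficulties, but the difficulties themselves are left unresolved, so it does not yet constitute a proof.
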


\subsection{ A basis for $q$-Brauer algebra}
We can now find a basis for the $q$-Brauer algebra that bases on
considering diagrams of Brauer algebra.
This basis gives a cellular structure on the $q$-Brauer algebra that will be shown in the next section.
In this section we choose the parameter on the Brauer algebra to be an integer N $\in \Z \setminus \{0\}$.

Given a diagram $d \in D_{n}(N)$ with exactly 2k horizontal edges,
it is considered as concatenation of three diagrams $(d_{1}, \omega_{(d)}, d_{2})$ as follows:

1. $d_{1}$ is a diagram in which the top row has the positions of horizontal edges as these in the first row of d,
its bottom row is like a row of diagram $e_{(k)}$, and there is no crossing between any two vertical edges.

2. Similarly, $d_{2}$ is a diagram where its bottom row is the same row in d,
the other one is similar to that of $e_{(k)}$,
and there is no crossing between any two vertical edges.

3. The diagram $\omega_{(d)}$ is described as follows : we enumerate the free vertices,
which just make vertical edges, in both rows of $d$ from left to right by $2k+1, 2k+2,..., n$.
We also enumerate the vertices in each of two rows of $\omega_{(d)}$ from left to right by $1, 2, ..., 2k+1, 2k+2,..., n$.
Assume that each vertical edge in d is connected by $i-th$ vertex of the top row and $j-th$ vertex
of the other one with $2k+1 \leq i, j \leq n$.
Define $\omega_{(d)}$ a diagram which has first vertical edges 2k joining $m-$th points in each of two rows together with $1 \leq m \leq 2k$,
and its other vertical edges are obtained by maintaining the vertical edges $(i, j)$ of $d$.

\begin{ex}\label{ex7} For $n=7, k=2.$ Given a diagram
$$ \begin{array}{c} 
\begin{xy}
\xymatrix@!=0.01pc{ \bullet \ar@{-}[rrrd] & \bullet \ar@{-}@/^/[rr] &\bullet \ar@{-}@/^/[rr] &\bullet & \bullet &\bullet \ar@{-}[llllld] & \bullet \ar@{-}[llllld] \\
  \ar@{}[u]^{\text{ \large $d = $ \  }} \bullet & \bullet &\bullet \ar@{-}@/_/[rr] &\bullet &\bullet &\bullet \ar@{-}[r] & \bullet }
\end{xy}\end{array}$$
the diagram d can be expressed as product of diagrams in the following:
$$ \begin{array}{c} 
\begin{xy}
\xymatrix@!=0.01pc{ \bullet \ar@{-}[rrrrd] & \bullet \ar@{-}@/^/[rr] &\bullet \ar@{-}@/^/[rr] &\bullet & \bullet &\bullet \ar@{-}[d] & \bullet \ar@{-}[d] \ar@{}[d]^{\text{ \large $d_{1}$ \  }} \\
    \bullet \dta[d] \ar@{-}[r] & \bullet \dta[d] & \bullet \dta[d] \ar@{-}[r] & \bullet \dta[d] &\bullet \dta[d] &\bullet \dta[d] &\bullet \dta[d] \\
    \bullet \ar@{-}[d] & \bullet \ar@{-}[d] &\bullet \ar@{-}[d] & \bullet \ar@{-}[d] &\bullet \ar@{-}[rrd] &\bullet \ar@{-}[ld] &\bullet \ar@{-}[ld] \ar@{}[d]^{\text{ \large $\omega_{(d)}$  }}  \\
    \bullet \dta[d] & \bullet \dta[d] &\bullet \dta[d] &\bullet \dta[d] &\bullet \dta[d] &\bullet \dta[d] &\bullet \dta[d] \\
    \bullet \ar@{-}[r] & \bullet &\bullet \ar@{-}[r] & \bullet &\bullet \ar@{-}[lllld] &\bullet \ar@{-}[lllld] &\bullet \ar@{-}[llld] \ar@{}[d]^{\text{ \large $d_{2}$ \  }} \\
    \bullet & \bullet &\bullet \ar@{-}@/_/[rr] &\bullet &\bullet &\bullet \ar@{-}[r] & \bullet }
\end{xy}\end{array}.$$
Thus we have $ d= (N)^{-2} \cdot d_{1} \omega_{(d)} d_{2}.$
\end{ex}

Notice that diagram $\omega_{(d)}$ can be seen as a permutation of symmetric group $S_{2k+1, n}.$
Since the above expression is unique with respect to each diagram $d$, the form $(d_{1},\ \omega_{(d)},\ d_{2})$ is determined uniquely.
By Lemmas \ref{bd2} and \ref{bd3}, there exist unique permutations
$\omega_{1} \in B^*_k$ and $\omega_{2} \in B_k$
such that  $d_{1}=\omega_{1}e_{(k)}$ and $d_{2}=e_{(k)}\omega_{2}$
with $\ell(d_{1}) = \ell(\omega_{1})$, $\ell(d_{2}) = \ell(\omega_{2}).$
Thus a diagram $d$ is uniquely represented by the 3-tuple $(\omega_{1},\ \omega_{(d)},\ \omega_{2})$ with $\omega_{1} \in B^*_k$, $\omega_{2} \in B_k$ and $\omega_{(d)} \in S_{2k+1, n}$  such that
$d= N^{-k}\omega_{1}e_{(k)} \omega_{(d)}e_{(k)}\omega_{2}$ and $\ell(d) = \ell(\omega_{1}) + \ell(\omega_{(d)}) + \ell(\omega_{2}).$
We call such a unique representation {\it a reduced expression} of $d$ and briefly write $(\omega_{1}, \omega_{(d)},\omega_{2})$.

\begin{ex}\label{ex8} The above example implies that $d_{1}=\omega_{1}e_{(2)}$ with $\omega_{1} = s_{1, 4}s_{2} \in B^*_2$,
$d_{2}=e_{(2)}\omega_{2}$ with $\omega_{2} = s_{4,1}s_{5,2}s_{6,4} \in B_2$
and $\omega_{(d)} = s_{5}s_{6}$.
Thus we obtain
$$d= N^{-2}(\omega_{1}e_{(2)}) (s_{5}s_{6})(e_{(2)}\omega_{2})
=\omega_{1}e_{(2)} s_{5}s_{6}\omega_{2} =\omega_{1}s_{5}s_{6}e_{(2)} \omega_{2}.$$
Hence, $d$ has a unique reduced expression $(\omega_{1},\ s_{5}s_{6},\ \omega_{2}) = (s_{1,4}s_{2},\ s_{5}s_{6},\ s_{4,1}s_{5,2}s_{6,4})$
with
$$\ell(d) = \ell(\omega_{1}) + \ell(s_{5}s_{6}) + \ell(\omega_{2}) = 5 + 2 + 11 = 18.$$
\end{ex}

\begin{rem} \label{rem1}
1. We abuse notation by denoting $e_{(k)}$ both a certain diagram in the Brauer algebra $D_{n}(N)$ and an element in the $q$-Brauer algebra $Br_{n}(r, q)$.
Given a diagram $d$ the above arguments imply that diagrams $e_{(k)}$ and $\omega_{(d)}$ commute on the Brauer algebra.
Similarly, this also remains on the level of $q$-Brauer algebra, that is, $e_{(k)}g_{\omega_{(d)}} = g_{\omega_{(d)}}e_{(k)}$.
In particular, since $\omega_{(d)} \in S_{2k+1, n}$, it is sufficient to show that
$$e_{(k)}g_{i} =  g_{i}e_{(k)}\ with\ 2k+1 \leq i \leq n-1.$$
Obviously, $eg_{i}= g_{i}e$ by the equality in $(E_{2}).$ Suppose that  $e_{(k-1)}g_{i}= g_{i}e_{(k-1)}$, then
\begin{align*}
e_{(k)}g_{i} & \overset{(\ref{eq6})}{=} (eg^{+}_{2, 2k-1}g^{-}_{1, 2k-2}e_{(k-1)})g_{i} =
e(g^{+}_{2, 2k-1}g^{-}_{1, 2k-2})g_{i}e_{(k-1)}\\
&\overset{(H_{2})}{=} g_{i}e(g^{+}_{2, 2k-1}g^{-}_{1, 2k-2})e_{(k-1)} \overset{(\ref{eq6})}{=}g_{i}e_{(k)}.
\end{align*}

2.  Wenzl \cite{W3} introduced a reduced expression of Brauer diagram $d$ in the general way
$$\ell(d)= min \{ \ell(\delta_{1})+\ell(\delta_{2}): \ d= \delta_{1}e_{(k)}\delta_{2}, \ \delta_{1}, \delta_{2} \in S_{n} \}.$$
This definition implies several different reduced expressions with respect to a diagram $d$.
In the above construction we give the notation of reduced expression of a diagram~$d$ with a difference.
Our definition also bases on the length of diagram $d$.
However, here $d$ is represented by partial diagrams $d_{1},$ $d_{2}$ and $\omega_{(d)}$ such that $d =N^{-k} d_{1}\omega_{(d)}d_{2}$,
and this allows us to produce a unique reduced expression of $d$.
No surprise that the length of a diagram $d$ is the same in both our description and that of Wenzl, since
\begin{align*}
d &= N^{-k} d_{1}\omega_{(d)}d_{2} = N^{-k}(\omega_{1}e_{(k)}) \omega_{(d)}(e_{(k)}\omega_{2})\\
&= N^{-k}(\omega_{1}\omega_{(d)})e^{2}_{(k)}\omega_{2} = (\omega_{1}\omega_{(d)})e_{(k)}\omega_{2} \\
& = \omega_{1}e_{(k)}(\omega_{(d)}\omega_{2})
= \delta_{1}e_{(k)}\delta_{2}
\end{align*}
where $(\delta_{1}, \delta_{2})= (\omega_{1}\omega_{(d)},\ \omega_2)$ or $(\delta_1, \delta_{2}) = (\omega_{1},\ \omega_{(d)}\omega_{2})$.
Therefore
$$\ell(d) = \ell(d_{1}) + \ell(\omega_{(d)}) + \ell(d_{2}) = min \{\ell(\delta_{1}) + \ell(\delta_{2}) \}.$$
\end{rem}

\begin{ex}\label{ex9} We consider the diagram $d$ as in the example \ref{ex7}. Using $reduced \ expression$ definition of Wenzl,
$d$ can be rewritten as a product $d=\delta_{1}e_{(k)}\delta_{2}$ in the following way:

{\it Case 1.}
$$ \begin{array}{c} 
\begin{xy}
\xymatrix@!=0.01pc{ &\ &\ &\ &\ &\ &\ &\ \bullet \ar@{-}[rrrrrrd] & \bullet \ar@{-}[ld] &\bullet \ar@{-}[d] &\bullet \ar@{-}[lld] & \bullet \ar@{-}[ld] &\bullet \ar@{-}[ld] & \bullet \ar@{-}[ld] \ar@{}[d]^{\text{ \large $\delta_{1}$  }}\\
&\ &\ &\ &\ &\ &\ &\ \bullet \dta[d] & \bullet \dta[d] &\bullet \dta[d] &\bullet \dta[d] & \bullet \dta[d] &\bullet \dta[d] & \bullet \dta[d] \\
           \bullet \ar@{-}[rrrd] & \bullet \ar@{-}@/^/[rr] &\bullet \ar@{-}@/^/[rr] &\bullet & \bullet &\bullet \ar@{-}[llllld] & \bullet \ar@{-}[llllld] \ar@{}[d]^{\text{\large {=} }}
&\bullet \ar@{-}[r] & \bullet &\bullet \ar@{-}[r] &\bullet & \bullet \ar@{-}[d] &\bullet \ar@{-}[d] & \bullet \ar@{-}[d] \ar@{}[d]^{\text{\large $e_{(2)}.$ }}\\
  \ar@{}[u]^{\text{ \large $d$ \  }} \bullet & \bullet &\bullet \ar@{-}@/_/[rr] &\bullet &\bullet &\bullet \ar@{-}[r] & \bullet
& \bullet \ar@{-}[r] \dta[d] & \bullet \dta[d] & \bullet  \ar@{-}[r] \dta[d] & \bullet \dta[d] &\bullet \dta[d] &\bullet \dta[d] &\bullet \dta[d]  \\
&\ &\ &\ &\ &\ &\ &\ \bullet \ar@{-}[rrd] & \bullet \ar@{-}[rrrd] & \bullet \ar@{-}[rrrd] & \bullet \ar@{-}[rrrd] &\bullet \ar@{-}[lllld] &\bullet \ar@{-}[lllld] &\bullet \ar@{-}[llld] \ar@{}[d]^{\text{ \large $\delta_{2}$  }} \\
&\ &\ &\ &\ &\ &\ &\ \bullet & \bullet & \bullet & \bullet &\bullet &\bullet &\bullet}
\end{xy}\end{array}$$
Where $\delta_{1} = \omega_{1} s_{5}s_{6} = s_{1,4}s_{2}s_{5}s_{6} = s_{1,6}s_{2} \ \in \ B^*_2$ with $\ell(\delta_{1}e_{(2)}) = \ell(\delta_{1})= 7$,
and\\
$\delta_{2} = \omega_{2} = s_{4,1}s_{5,2}s_{6,4} \ \in \ B_2$, with $\ell(e_{(2)}\delta_{2}) = \ell(\delta_{2}) = 11.$
In this case $d$ has a reduced expression
$$(\delta_{1}, \delta_{2}) = (s_{1,4}s_{2},\ s_{4,1}s_{5,2}s_{6,4})$$
with
$$\ell(d) = \ell(\delta_{1}) + \ell(\delta_{2}) = 7 + 11 = 18.$$

{\it Case 2}. The diagram $d$ can also be represented by another way as follows:

$$ \begin{array}{c} 
\begin{xy}
\xymatrix@!=0.01pc{ &\ &\ &\ &\ &\ &\ &\ \bullet \ar@{-}[rrrrd] & \bullet \ar@{-}[ld] &\bullet \ar@{-}[d] &\bullet \ar@{-}[lld] & \bullet \ar@{-}[ld] &\bullet \ar@{-}[d] & \bullet \ar@{-}[d] \ar@{}[d]^{\text{ \large $\delta_{1}$  }}\\
&\ &\ &\ &\ &\ &\ &\ \bullet \dta[d] & \bullet \dta[d] &\bullet \dta[d] &\bullet \dta[d] & \bullet \dta[d] &\bullet \dta[d] & \bullet \dta[d] \\
           \bullet \ar@{-}[rrrd] & \bullet \ar@{-}@/^/[rr] &\bullet \ar@{-}@/^/[rr] &\bullet & \bullet &\bullet \ar@{-}[llllld] & \bullet \ar@{-}[llllld] \ar@{}[d]^{\text{\large {=} }}
&\bullet \ar@{-}[r] & \bullet &\bullet \ar@{-}[r] &\bullet & \bullet \ar@{-}[d] &\bullet \ar@{-}[d] & \bullet \ar@{-}[d] \ar@{}[d]^{\text{\large $e_{(2)}$ }}\\
  \ar@{}[u]^{\text{ \large $d$ \  }} \bullet & \bullet &\bullet \ar@{-}@/_/[rr] &\bullet &\bullet &\bullet \ar@{-}[r] & \bullet
& \bullet \ar@{-}[r] \dta[d] & \bullet \dta[d] & \bullet  \ar@{-}[r] \dta[d] & \bullet \dta[d] &\bullet \dta[d] &\bullet \dta[d] &\bullet \dta[d]  \\
&\ &\ &\ &\ &\ &\ &\ \bullet \ar@{-}[rrd] & \bullet \ar@{-}[rrrd] & \bullet \ar@{-}[rrrd] & \bullet \ar@{-}[rrrd] &\bullet \ar@{-}[ld] &\bullet \ar@{-}[llllld] &\bullet \ar@{-}[llllld] \ar@{}[d]^{\text{ \large $\delta_{2}$  }} \\
&\ &\ &\ &\ &\ &\ &\ \bullet & \bullet & \bullet & \bullet &\bullet &\bullet &\bullet}
\end{xy}\end{array}.$$
Where $\delta_{1} = \omega_{1} = s_{1,4}s_{2}\ \in \ B^*_2$ with $\ell(\delta_{1}e_{(2)}) = \ell(\delta_{1})= 5$, \\
and $\delta_{2} = s_{5}s_{6} \omega_{2} =  s_{5}s_{6}s_{4,1}s_{5,2}s_{6,4}= s_{4,2}s_{5,1}s_{6,2}  \ \in \ B_2$ with
$\ell(e_{(2)}\delta_{2}) = \ell(\delta_{2})= 13$.\\
The reduced expression of $d$ in this case is
$$(\delta_{1}, \delta_{2}) =(s_{1,6}s_{2},\ s_{4,2}s_{5,1}s_{6,2})$$
with
$$\ell(d) = \ell(\delta_{1}) + \ell(\delta_{2}) = 5 + 13 = 18.$$
Therefore, comparing with the example \ref{ex8} implies that $\ell(d)$ is the same.
Two cases above yield that Wenzl's definition of the reduced expression of a diagram $d$ is not unique
since
$$(s_{1,4}s_{2},\ s_{4,1}s_{5,2}s_{6,4})\ \neq \ (s_{1,6}s_{2},\ s_{4,2}s_{5,1}s_{6,2}).$$
\end{ex}

\begin{defn}\label{dn10}
For each diagram $d$ of the Brauer algebra $D_{n}(N),$ we define a corresponding element, say $g_{d}$, in $Br_{n}(r, q)$ as follows:
if $d$ has exactly $2k$ horizontal edges and $(\omega_{1}, \omega_{(d)},\omega_{2})$ is a reduced expression of $d$,
then define $g_{d} := g_{\omega_{1}}e_{(k)}g_{\omega_{(d)}}g_{\omega_{2}}$.
If the diagram $d$ has no horizontal edge,
then $d$ is seen as a permutation $\omega_{(d)}$ of the symmetric group $S_{n}$.
And in this case, define $g_{d}=g_{\omega_{(d)}}.$
\end{defn}
The main result of this section is stated below.

\begin{thm} \label{dl11}
The $q$-Brauer algebra $Br_{n}(r,q)$ over the ring $R$ has a basis $\{ g_{d}$ $| d \in D_{n}(N) \}$
labeled by diagrams of the Brauer algebra.
\end{thm}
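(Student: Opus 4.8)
The plan is to establish two facts: that the set $\{g_{d} : d \in D_{n}(N)\}$ spans $Br_{n}(r,q)$ over $R$, and that it is linearly independent. Since the $g_{d}$ are indexed bijectively by the diagrams of $D_{n}(N)$ via the reduced expressions $(\omega_{1},\omega_{(d)},\omega_{2})$ fixed before the statement, their number equals the rank of the Brauer algebra. Hence proving both statements yields simultaneously that $Br_{n}(r,q)$ is free of rank $\dim D_{n}(N)$ with the $g_{d}$ as basis.

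First I would prove spanning, taking Lemma \ref{bd9} as the starting point: since $Br_{n}(r,q)=\sum_{k=0}^{[n/2]}H_{n}e_{(k)}H_{n}$ and $H_{n}$ has $R$-basis $\{g_{\sigma} : \sigma \in S_{n}\}$, it suffices to rewrite each product $g_{\sigma}e_{(k)}g_{\tau}$ as an $R$-combination of the $g_{d}$. I would argue by downward induction on $k$ along the filtration $I(m)=\sum_{k\ge m}H_{n}e_{(k)}H_{n}$, which is a chain of two-sided ideals by Lemma \ref{bd8}, working modulo $I(k+1)$. For the left factor, the representation of $Br_{n}(r,q)$ on $V^{*}_{k}$ (Lemma \ref{bd10}) together with the basis $\{g_{\omega_{1}}v_{1} : \omega_{1}\in B^{*}_{k}\}$ of $V^{*}_{k}$ (Lemma \ref{bd2}) lets me expand $g_{\sigma}e_{(k)}\equiv\sum_{\omega_{1}\in B^{*}_{k}}c_{\omega_{1}}g_{\omega_{1}}e_{(k)}\pmod{I(k+1)}$; the symmetric statement for the right factor uses $V_{k}$ and $B_{k}$ (Lemma \ref{bd3}). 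The middle factor is controlled by Lemma \ref{bd8}, which with $j=k$ gives $e_{(k)}H_{n}e_{(k)}\subseteq H_{2k+1,n}e_{(k)}+I(k+1)$; since $e_{(k)}$ commutes with $g_{\omega_{(d)}}$ for $\omega_{(d)}\in S_{2k+1,n}$ (Remark \ref{rem1}), the surviving Hecke part contributes exactly the permutations $\omega_{(d)}$. Collecting the three factors reproduces $g_{d}=g_{\omega_{1}}e_{(k)}g_{\omega_{(d)}}g_{\omega_{2}}$, so $g_{\sigma}e_{(k)}g_{\tau}$ lies modulo $I(k+1)$ in the span of the $g_{d}$ with $2k$ horizontal edges, the $I(k+1)$-tail being absorbed by the inductive hypothesis.

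For linear independence I would pass to the field of fractions $F=\Q(r,q)$ of $R$. Spanning over $R$ forces $\{g_{d}\}$ to span $Br_{n}(r,q)\otimes_{R}F$, so $\dim_{F}(Br_{n}(r,q)\otimes_{R}F)\le |\{g_{d}\}|=\dim D_{n}(N)$. On the other hand, Wenzl proved that over $\Q(r,q)$ the $q$-Brauer algebra is semisimple and isomorphic to the Brauer algebra, whence $\dim_{F}(Br_{n}(r,q)\otimes_{R}F)=\dim D_{n}(N)$. Thus $\{g_{d}\}$ is a spanning set of the correct cardinality, hence an $F$-basis, hence $F$-linearly independent; and any $R$-linear relation $\sum_{d}r_{d}g_{d}=0$ maps to the same relation in $Br_{n}(r,q)\otimes_{R}F$, forcing all $r_{d}=0$. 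Being spanning and $R$-linearly independent, $\{g_{d}\}$ is an $R$-basis.

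The main obstacle is the spanning step, and within it the bookkeeping of coset representatives. The delicate points are verifying that the expansion of $g_{\sigma}e_{(k)}$ via the $V^{*}_{k}$-action genuinely lands, modulo $I(k+1)$, in the $g_{\omega_{1}}$ with $\omega_{1}\in B^{*}_{k}$ rather than in some larger family of Hecke elements, and that the relations of Lemmas \ref{bd6} and \ref{bd7}---in particular $g_{2j+1}e_{(k)}=qe_{(k)}$ and $g_{2j-1}g_{2j}e_{(k)}=g_{2j+1}g_{2j}e_{(k)}$---exactly account for the passage from an arbitrary $\sigma\in S_{n}$ to the distinguished representative, matching the uniqueness of the reduced expression. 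Keeping the three pieces $V^{*}_{k}$, $H_{2k+1,n}$ and $V_{k}$ separated throughout the reduction modulo $I(k+1)$ is what makes the argument close up.
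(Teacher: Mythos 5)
Your proposal is correct in substance, but it takes a genuinely different route from the paper, whose own proof is a pure rank count: well-definedness of the $g_{d}$ from uniqueness of reduced expressions; the upper bound $\mathrm{rank}\, Br_{n}(r,q) \leq \dim D_{n}(N)$ straight from Lemma \ref{bd9}; and the lower bound from Lemmas \ref{bd1} and \ref{bd10}, namely that the representation of $Br_{n}(r,q)$ on $\bigoplus_{k} V^{*}_{k}$ specializes (at $q=1$, $r=q^{N}$, $(r-1)/(q-1)\mapsto N$) to the faithful representation of $D_{n}(N)$, forcing rank at least $1\cdot 3\cdots (2n-1)$ -- so the paper never leaves the generic ring and needs neither fraction fields nor semisimplicity. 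You instead prove spanning by explicit normal-form rewriting along the filtration $I(k+1)$ and get independence by base change to $\Q(r,q)$, citing Wenzl's semisimplicity theorem. Both halves can be made to work, with two caveats. First, your expansion of $g_{\sigma}e_{(k)}$ \emph{via the module action} on $V^{*}_{k}$ is not by itself a congruence in the algebra: for $Br_{n}(r,q)$ the module $V^{*}_{k}$ is defined by the abstract formulas preceding Lemma \ref{bd10}, not as the subquotient $(H_{n}e_{(k)}+I(k+1))/I(k+1)$, and identifying the two amounts to the independence you are trying to prove; the rewriting must instead be justified directly from Lemma \ref{bd4}(b) together with Lemmas \ref{bd6}(c) and \ref{bd7}(b), by induction on $\ell(\sigma)$ -- which your closing paragraph in effect concedes, and which is exactly Wenzl's proof of Lemma \ref{bd9} pushed one step further to the distinguished representatives. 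Second, the semisimplicity and isomorphism over $\Q(r,q)$ is proved in \cite{W3} downstream of Wenzl's own basis theorem by precisely the specialization argument the paper uses here; since his basis differs from the $g_{d}$, your route is not formally circular, but it imports heavier machinery to replace a short specialization step. What your version buys is an explicit algorithmic spanning argument of the kind the paper only develops later (Lemmas \ref{bd14} and \ref{bd18}), at the cost of a less self-contained independence argument.
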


\begin{proof}
A diagram $d$ of Brauer algebra with exactly $2k$ horizontal edges
has a unique reduced expression with data $(\omega_{1},\omega_{(d)}, \omega_{2})$.
By the uniqueness of reduced expression with respect to a diagram d in $D(N)$, the elements $g_{d}$ in $Br_{n}(r,q)$ are well-defined.
Observe that these elements $g_{d}$  belong to $H_{n}e_{(k)}H_{n}$
since $g_{\omega_{1}}, g_{\omega_{(d)}}$ and $g_{\omega_{2}}$ are in $H_{n}$.
Lemma \ref{bd1}  shows that there is a faithful representation of the Brauer algebra $D_{n}(N)$ on
$\bigoplus_{k=0}^{[n/2]}V^*_k$.
By Lemma \ref{bd10}, this is a specialization of the representation of $Br_{n}(r,q)$ on the same direct sum of modules $V^*_k$,
and hence, the dimension of $Br_{n}(r,q)$ has to be at least the one of $D_{n}(N)$.
Now, the other dimension inequality follows from using the result in Lemma \ref{bd9}. The theorem is proved.
\end{proof}
The next result provides an involution on the $q$-Brauer algebra $Br_{n}(r, q).$

\begin{prop} \label{md12} Let $i$ be the map from $Br_{n}(r, q)$ to itself
defined by
$$i(g_{\omega}) = g_{\omega^{-1}} \ and \ i(e) = e$$
for each $\omega \in S_{n}$,
extended an anti-homomorphism. Then $i$ is an involution on the $q$-Brauer algebra $Br_{n}(r, q).$
\end{prop}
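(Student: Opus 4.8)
The plan is to realize $i$ through the universal property of the presentation of $Br_{n}(r,q)$. Since $i$ is required to be anti-multiplicative and to fix each generator ($i(g_j)=g_j$ because $s_j^{-1}=s_j$, and $i(e)=e$), defining $i$ amounts to giving an algebra homomorphism $Br_{n}(r,q)\to Br_{n}(r,q)^{\mathrm{op}}$ sending each generator to itself. Such a homomorphism exists precisely when the images of the defining relations hold in $Br_{n}(r,q)^{\mathrm{op}}$, i.e.\ when the \emph{reversed} relations — obtained by reversing the order of the factors in each relation while keeping every generator fixed (and noting $i(g_2^{-1})=g_2^{-1}$, since $g_2^{-1}=q^{-1}g_2+(q^{-1}-1)$ by Lemma \ref{bd4}(c)) — hold in $Br_{n}(r,q)$. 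So the entire proof reduces to checking reversal-invariance of the relations (H), $(E_1)$, $(E_2)$, $(E_3)$.

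First I would dispose of the easy relations. The Hecke relations are reversal-invariant (the braid relation and each quadratic relation are palindromes, and $g_ig_j=g_jg_i$ reverses to itself); this is just the restriction of Lemma \ref{bd5} to the Hecke subalgebra. Relation $(E_1)$ and the relations $eg_2e=re$, $eg_2^{-1}e=q^{-1}e$ are palindromes, hence reversal-invariant; the relation $eg_i=g_ie$ for $i>2$ reverses to itself; and the pair $eg_1=qe$, $g_1e=qe$ is interchanged by reversal. Thus (H), $(E_1)$, $(E_2)$ present no difficulty.

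The substance is relation $(E_3)$. First I would record that $e_{(2)}$ is reversal-invariant: reversing $e_{(2)}=e\,g_2g_3g_1^{-1}g_2^{-1}\,e$ yields $e\,g_2^{-1}g_1^{-1}g_3g_2\,e$, and the last line of \eqref{eq12} shows this equals $e_{(2)}$. Consequently the two relations in $(E_3)$ reverse to $e_{(2)}=g_2^{-1}g_1^{-1}g_3g_2\,e_{(2)}$ and $e_{(2)}=e_{(2)}\,g_2^{-1}g_1^{-1}g_3g_2$, and it remains to verify these. For the first, Lemma \ref{bd6}(c) gives $g_3g_2e_{(2)}=g_1g_2e_{(2)}$, so $g_2^{-1}g_1^{-1}g_3g_2e_{(2)}=g_2^{-1}g_1^{-1}g_1g_2e_{(2)}=e_{(2)}$. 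For the second I would first extract, from the original right-hand relation $e_{(2)}=e_{(2)}g_2g_3g_1^{-1}g_2^{-1}$, the identity $e_{(2)}g_2g_3=e_{(2)}g_2g_1$ (right-multiply by $g_2$, then by $g_1$); substituting $g_2=qg_2^{-1}+(q-1)$ and using $e_{(2)}g_1=e_{(2)}g_3=qe_{(2)}$ from Lemma \ref{bd7}(b), the two $(q-1)$-terms cancel and one is left with $e_{(2)}g_2^{-1}g_3=e_{(2)}g_2^{-1}g_1$. Using $(H_2)$ to commute $g_1^{-1}$ and $g_3$, this gives $e_{(2)}g_2^{-1}g_1^{-1}g_3g_2=e_{(2)}g_2^{-1}g_3g_1^{-1}g_2=e_{(2)}g_2^{-1}g_1g_1^{-1}g_2=e_{(2)}$.

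Finally, having established $i$ as a well-defined anti-homomorphism, I would note that $i^2$ is an honest algebra homomorphism fixing every generator, whence $i^2=\mathrm{id}$; in particular $i$ is bijective, so it is an anti-automorphism and an involution. The only place where genuine computation is needed — and the expected main obstacle — is the reversed $(E_3)$ relation, where the right-handed analogues of Lemma \ref{bd6}(c) are not available a priori and must be squeezed out of the original $(E_3)$ together with Lemma \ref{bd7}(b) and the quadratic Hecke relation, as above.
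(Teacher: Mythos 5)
Your proposal is correct, and it takes a genuinely different route from the paper's own proof. The paper never verifies well-definedness of the anti-homomorphic extension: its proof of Proposition \ref{md12} takes the extension for granted and instead shows that $i$ permutes the basis $\{g_d\}$ of Theorem \ref{dl11} --- the key computations being $i(e_{(k)})=e_{(k)}$, proved by induction on $k$ from Lemma \ref{bd7}(c), and the identification, via uniqueness of reduced expressions (Lemmas \ref{bd2} and \ref{bd3}), of $i(g_{d^*})=g_{\omega_2^{-1}}g_{\omega_{(d^*)}^{-1}}e_{(k)}g_{\omega_1^{-1}}$ as the basis element of the reflected diagram, so that $i^2=\mathrm{id}$ because the flip is an involution on diagrams. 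You attack exactly the point the paper leaves implicit: existence of the map. Your reduction to reversal-stability of (H), $(E_1)$, $(E_2)$, $(E_3)$ is the right universal-property argument, the inverses $g_1^{-1},g_2^{-1}$ are legitimately handled through Lemma \ref{bd4}(c), and both $(E_3)$ verifications check out: the identity $e_{(2)}=eg_2^{-1}g_1^{-1}g_3g_2e$ from \eqref{eq12}, the relation $g_1g_2e_{(2)}=g_3g_2e_{(2)}$ of Lemma \ref{bd6}(c), and $e_{(2)}g_1=e_{(2)}g_3=qe_{(2)}$ of Lemma \ref{bd7}(b) are all derived from the defining relations before Proposition \ref{md12}, so there is no circularity, and the cancellation of the $(q-1)$-terms in your derivation of $e_{(2)}g_2^{-1}g_1=e_{(2)}g_2^{-1}g_3$ is correct. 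As for what each approach buys: yours actually furnishes the anti-homomorphism, after which $i^2=\mathrm{id}$ is free since $i^2$ is an algebra endomorphism fixing the generators, making the proposition self-contained; the paper's computation buys the stronger, later-needed fact that $i$ sends each basis element $g_{d^*}$ to the basis element $g_d$ of the flipped diagram --- this explicit action on the cell basis (recorded in \eqref{eq19}) is what feeds into Lemma \ref{bd19} and axiom (C2) in the cellularity proof, and it does not follow from your argument alone without redoing the induction $i(e_{(k)})=e_{(k)}$ and the reduced-expression bookkeeping.
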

\begin{proof}
It is sufficient to show that $i$ maps a basis element $g_{d^*}$ to a basis element $i(g_{d^*})$
on the $q$-Brauer algebra $Br_{n}(r, q)$. If given a diagram $d^*$ with no horizontal edge,
then $d^*$ is as a permutation in $S_{n}$, it implies that
obviously $i(g_{d^*})= g_{(d^*)^{-1}}=g_d$ is a basis element
of the $q$-Brauer algebra, where $d$ is diagram which is obtained after rotating $d^*$ downward via an horizontal axis.
If the diagram $d^*= e_{(k)}$, then by Definition \ref{dn10} the corresponding basis element in the $q$-Brauer algebra is $g_{d^*}= e_{(k)}.$
The equality $i(g_{d^*}) = i(e_{(k)}) = e_{(k)}$ is obtained by induction on $k$ as follows:
with $k = 1$ obviously $i(e) = e$ by definition. Suppose $i(e_{(k-1)}) = e_{(k-1)}$, then
\begin{align*}
i(e_{(k)}) &\overset{(\ref{eq6})}{=} i(eg^{+}_{2, 2k-1}g^{-}_{1, 2k-2}e_{(k-1)})\\
&= i(e_{(k-1)}) i(g^{+}_{2, 2k-1}g^{-}_{1, 2k-2}) i(e)\\
&= e_{(k-1)} g^{-}_{2k-2, 1} g^{+}_{2k-1, 2}e \overset{L\ref{bd7}(c)}{=} e_{(k)}.
\end{align*}
Now given a reduced expression $(\omega_{1}, \omega_{(d^*)}, \omega_{2})$ of a diagram $d^*$,
where $\omega_{1} \in B^*_k$,
$\omega_{2} \in B_k$ and $\omega_{(d^*)} \in S_{2k+1, n}$,
the corresponding basis element on the $q$-Brauer algebra $Br_{n}(r, q)$ is $g_{d^*}= g_{\omega_{1}}g_{\omega_{(d^*)}} e_{(k)}g_{\omega_{2}}.$
This yields
\begin{align} \label{eq19}
i(g_{d^*}) &= i(g_{\omega_{1}}g_{\omega_{(d^*)}} e_{(k)}g_{\omega_{2}})
= i(g_{\omega_{2}})i(e_{(k)})i(g_{\omega_{(d^*)}})i(g_{\omega_{1}})\\
&= g_{\omega_{2}^{-1}}e_{(k)}g_{\omega^{-1}_{(d^*)}}g_{\omega_{1}^{-1}}
\overset{Re\ref{rem1}(1)}{=} g_{\omega_{2}^{-1}}g_{\omega^{-1}_{(d^*)}}e_{(k)}g_{\omega_{1}^{-1}} \notag.
\end{align}
$\omega_{1} \in B^*_k$ and $\omega_{2} \in B_k$ imply that
$\omega_{1}^{-1} \in B_k$ and $\omega_{2}^{-1} \in B^*_k$.
Therefore, by Lemmas \ref{bd2} and \ref{bd3}, $\ell(e_{(k)}\omega_{1}^{-1}) = \ell(\omega_{1}^{-1})$
and $\ell(\omega_{2}^{-1}e_{(k)}) = \ell(\omega_{2}^{-1}).$
This means that the 3-triple $(\omega_{2}^{-1}, \omega_{(d^*)}^{-1}, \omega_{1}^{-1})$
is a reduced expression of the diagram
$d^* =N^{-k} \omega_{2}^{-1}e_{(k)}\omega_{(d^*)}^{-1}e_{(k)}\omega_{1}^{-1}.$
Thus $i(g_{d^*}) = g_{\omega_{2}^{-1}}g_{\omega^{-1}_{(d^*)}}e_{(k)}g_{\omega_{1}^{-1}}$ is a basis element in $Br_{n}(r, q)$
corresponding to the diagram~$d$.
\end{proof}
The next corollary is needed for Section 4.

\begin{cor}\label{hq13}
(a) $g^{+}_{2m-1, 2j}e_{(k)} = g^{+}_{2j+1, 2m }e_{(k)}$ and
  $g^{-}_{2m-1, 2j}e_{(k)} = g^{-}_{2j+1, 2m }e_{(k)}$ \\ for $ 1 \leq m \leq j < k.$

(b) $e_{(k)}g^{+}_{2l, 1} = e_{(k)}g^{+}_{2, 2l+1}$ and
  $e_{(k)}g^{-}_{2l, 1} = e_{(k)}g^{-}_{2, 2l+1}$ for $l < k.$

(c) $e_{(k)}g^{+}_{2j, 2i-1} = e_{(k)}g^{+}_{2i, 2j+1}$ and
  $e_{(k)}g^{-}_{2j, 2i-1} = e_{(k)}g^{-}_{2i, 2j+1}$ for $ 1 \leq i \leq j < k.$

(d) $(\dfrac{r-1} {q-1})^{j-1}e_{(k+1)} = e_{(k)}g^{-}_{2k, 2j-1 }g^{+}_{2k+1, 2j}e_{(j)}$ for $1 \leq j < k$.

(e) $e_{(k)}g_{2j}e_{(j)} = r(\dfrac{r-1} {q-1})^{j-1}e_{(k)}$ for $1 \leq j \leq k.$

(f) $e_{(k)}H_{n}e_{(j)} \subset e_{(k)}H_{2j+1, n} \ + \sum_{m \geq k+1} H_{n} e_{(m)}H_{n},$ where $j \leq k$.
\end{cor}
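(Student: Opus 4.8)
The six statements (a)--(f) of Corollary \ref{hq13} are the ``right-handed'' mirror images of Lemmas \ref{bd6}, \ref{bd7} and \ref{bd8}, which were all proved on the left. The plan is therefore to deduce the corollary from those earlier results by applying the involution $i$ of Proposition \ref{md12}, rather than redoing each induction from scratch. Recall that $i$ is an anti-automorphism fixing $e$ (hence each $e_{(k)}$, as verified inside the proof of Proposition \ref{md12}) and sending $g_\omega \mapsto g_{\omega^{-1}}$; in particular it reverses products and interchanges the roles of $g^{+}$ and $g^{-}$ in a controlled way, since $i(g^{+}_{l,k}) = g^{+}_{k,l}$ and $i(g^{-}_{l,k}) = g^{-}_{k,l}$ by reversing the order of the factors.

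\textbf{Parts (a)--(e): apply $i$ to the left-handed identities.}
First I would rewrite each of Lemma \ref{bd6}(b), (e), (f) and the relevant pieces of Lemma \ref{bd7} as an equation between two elements of $Br_n(r,q)$, apply $i$ to both sides, and simplify. For (a), applying $i$ to Lemma \ref{bd6}(c) (suitably iterated to $g^{+}_{1,2l}e_{(k)} = g^{+}_{2l+1,2}e_{(k)}$, i.e. Lemma \ref{bd6}(b)) and then left-multiplying or re-indexing gives the two asserted equalities $g^{+}_{2m-1,2j}e_{(k)} = g^{+}_{2j+1,2m}e_{(k)}$; since $i(e_{(k)})=e_{(k)}$, the factor $e_{(k)}$ simply moves from one side to the other. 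For (b), I apply $i$ directly to Lemma \ref{bd6}(b): from $g^{+}_{1,2l}e_{(k)} = g^{+}_{2l+1,2}e_{(k)}$ one gets $e_{(k)}i(g^{+}_{1,2l}) = e_{(k)}i(g^{+}_{2l+1,2})$, i.e. $e_{(k)}g^{+}_{2l,1} = e_{(k)}g^{+}_{2,2l+1}$, which is exactly (b), and likewise for the $g^{-}$ version. Part (c) follows the same pattern applied to (a); parts (d) and (e) are the $i$-images of Lemma \ref{bd6}(e) and (f) respectively, using that $i$ fixes the scalars $(\tfrac{r-1}{q-1})^{j-1}$ and $r(\tfrac{r-1}{q-1})^{j-1}$ and sends $e_{(j)}g^{+}_{2j,2k+1}g^{-}_{2j-1,2k}e_{(k)}$ to $e_{(k)}g^{-}_{2k,2j-1}g^{+}_{2k+1,2j}e_{(j)}$.

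\textbf{Part (f): apply $i$ to Lemma \ref{bd8}.}
The module statement $e_{(j)}H_n e_{(k)} \subset H_{2j+1,n}e_{(k)} + \sum_{m\ge k+1}H_n e_{(m)}H_n$ of Lemma \ref{bd8} is handled by noting that $i$ is an anti-automorphism fixing each $e_{(k)}$ and stabilizing $H_n$ (Lemma \ref{bd5}), so that $i(H_{2j+1,n}) = H_{2j+1,n}$. Applying $i$ to both sides and using $i(XY)=i(Y)i(X)$ turns $e_{(j)}H_n e_{(k)}$ into $e_{(k)}H_n e_{(j)}$, turns $H_{2j+1,n}e_{(k)}$ into $e_{(k)}H_{2j+1,n}$, and leaves the two-sided-ideal sum $\sum_{m\ge k+1}H_n e_{(m)}H_n$ invariant since it is fixed by the involution; this yields (f) verbatim.

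\textbf{Main obstacle.}
The genuine content is not the involution bookkeeping but the index arithmetic: one must check that under $i$ the index ranges transform correctly (e.g. that $g^{+}_{1,2l} \mapsto g^{+}_{2l,1}$ and that re-indexing $m \mapsto $ endpoint shifts line up the constraints $1\le m\le j<k$ with the hypotheses $l<k$, $j<k$ of the source lemmas), and that the iteration turning Lemma \ref{bd6}(b)--(c) into the more general strings $g^{+}_{2m-1,2j}$ is valid for the full stated range. I expect part (a)/(c)---where a single source identity must be boosted to a two-parameter family before applying $i$---to require the most care, since there the reindexing is not a pure mirror but also an internal telescoping of the $g^{+}_{l,k}$ strings; everything else is a mechanical transport of an already-proved equation across the anti-automorphism $i$.
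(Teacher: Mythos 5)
Your involution strategy is exactly the paper's proof for parts (b), (d), (e) and (f): these are obtained as the $i$-images of Lemma \ref{bd6}(b), (e), (f) and Lemma \ref{bd8} respectively, and your bookkeeping is correct ($i(g^{+}_{l,k})=g^{+}_{k,l}$, $i(e_{(k)})=e_{(k)}$, and $i$ stabilizes both $H_{2j+1,n}$ and the two-sided sum $\sum_{m\geq k+1}H_{n}e_{(m)}H_{n}$, since $S_{2j+1,n}$ is closed under inversion). The paper also obtains (c) exactly as you suggest, as the $i$-image of (a).

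The genuine gap is part (a). Observe that in (a) the element $e_{(k)}$ sits on the \emph{right} of the $g$-string, i.e.\ on the same side as in Lemma \ref{bd6}(b), (c). Applying $i$ to those lemmas moves $e_{(k)}$ to the left and can therefore only produce right-handed identities of type (b) or (c); no subsequent ``left-multiplying or re-indexing'' can return $e_{(k)}$ to the right-hand side. So (a) cannot be transported from the earlier lemmas by the involution at all: it is a strictly stronger two-parameter family, of which Lemma \ref{bd6}(b) is the case $m=1$ and Lemma \ref{bd6}(c) the case $m=j$ (the paper remarks after the proof that Lemma \ref{bd6}(c) is a special case of (a)). The paper proves (a) by induction on $m$: the base case $m=1$ is Lemma \ref{bd6}(b) with $l=j$; for the inductive step one telescopes $g^{+}_{2m-1,2j}=g^{-}_{2m-2,2m-3}\,g^{+}_{2m-3,2j-2}\,g^{+}_{2j-1,2j}$, replaces $g^{+}_{2j-1,2j}e_{(k)}$ by $g^{+}_{2j+1,2j}e_{(k)}$ via Lemma \ref{bd6}(c), commutes $g_{2j+1}g_{2j}$ past $g^{+}_{2m-3,2j-2}$ by $(H_{2})$, applies the induction hypothesis at $(m-1,j-1)$ to $g^{+}_{2m-3,2j-2}e_{(k)}$, and closes up with a second application of Lemma \ref{bd6}(c). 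You correctly flagged (a) as the hard case, but your proposed mechanism for it points in the wrong direction and the required induction is exactly what is missing; once (a) is established this way, your derivation of (c) as its $i$-image goes through and matches the paper.
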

\begin{proof}
These results, without the relations $(a)$ and $(c)$, are directly deduced from Lemmas \ref{bd6} and \ref{bd8} using the property of the above involution.
The statement $(a)$ can be proven by induction on $m$ as follow:
With $m=1$ obviously $(a)$ follows from Lemma \ref{bd6}(b) with $j=l.$ Suppose that $(a)$ holds for $m-1$,
that is,
\begin{align}\label{eq20}
&g^{+}_{2m-3, 2(j-1)}e_{(k)} = g^{+}_{2(j-1)+1, 2m-2 }e_{(k)} \text{ and }
\end{align}
\begin{align}\label{eq21}
&g^{-}_{2m-3, 2(j-1)}e_{(k)} = g^{-}_{2(j-1)+1, 2m-2 }e_{(k)}
\text{\hspace{2cm} for $ 1 \leq m \leq j < k$}.
\end{align}
Then
\begin{align*}
g^{+}_{2m-1, 2j}e_{(k)} &= g^{-}_{2m-2, 2m-3 }(g^{+}_{2m-3, 2j-2})g^{+}_{2j-1, 2j}e_{(k)}\\
&\overset{L\ref{bd6}(c)}= g^{-}_{2m-2, 2m-3 }(g^{+}_{2m-3, 2j-2})g^{+}_{2j+1, 2j}e_{(k)}\\
&\overset{(H_{2})}{=} g^{-}_{2m-2, 2m-3 }g^{+}_{2j+1, 2j}(g^{+}_{2m-3, 2j-2})e_{(k)}\\
&\overset{(\ref{eq20})}{=} g^{-}_{2m-2, 2m-3 }g^{+}_{2j+1, 2j}(g^{+}_{ 2j-1, 2m-2})e_{(k)}\\
&= g^{-}_{2m-2, 2m-3 }g^{+}_{2j+1, 2m-2}e_{(k)}\\
&\overset{(H_{2})}{=} g^{+}_{2j+1, 2m}g^{-}_{2m-2, 2m-3 }g^{+}_{2m-1, 2m-2}e_{(k)}\\
&\overset{L\ref{bd6}(c)}{=} g^{+}_{2j+1, 2m}g^{-}_{2m-2, 2m-3 }g^{+}_{2m-3, 2m-2}e_{(k)}\\
&= g^{+}_{2j+1, 2m}e_{(k)}.
\end{align*}
The other equality is proven similarly.
The relation $(c)$ is directly deduced from $(a)$ by using the involution.
\end{proof}
Notice that the equality (c) in Lemma \ref{bd6} is the special case of the above equality~(a).

\subsection{An algorithm producing basis elements of the $q$-Brauer algebras}
We introduce here an algorithm producing a basis element $g_{d}$ on the $q$-Brauer algebras $Br_{n}(r, q)$
from a given diagram $d$ in the Brauer algebra $D_{n}(N).$
This algorithm's construction bases on the proof of Lemma \ref{bd2}(a) (see Lemma 1.2(a), \cite{W3} for a complete proof).
From the expression of an arbitrary diagram $d$ as concatenation of three partial diagrams $(d_{1}, \omega_{(d)}, d_{2})$ in Subsection 3.2,
it is sufficient to consider diagrams $d^*$ as the form of $d_{1}$. That is, $d^*$ has exactly k horizontal edges on each row,
its bottom row is like a row of $e_{(k)}$, and there is no crossing between any two vertical edges.
Let $\mathscr{D}^*_{k, n}$ be the set of all diagrams $d^*$ above.

Recall that a permutation $s_{i,j}$ with $1 \leq i,\ j\leq n-1$
can be considered as a diagram, say $d^*_{(i, j+1)}$ if $i \leq j$ or $d^*_{(i+1, j)}$ if $i > j$, in the Brauer algebra such that its free points,
including $1$, $2$,...,$i-1$, $j+2$, ... $n$, are fixed.

\begin{ex} In $D_{7}(N)$ the permutation $s_{6, 3}$ corresponds to the diagram
$$ \begin{array}{c} 
\begin{xy}
\xymatrix@!=0.01pc{ \bullet \ar@{-}[d] & \bullet \ar@{-}[d] &\bullet \ar@{-}[rd] &\bullet\ar@{-}[rd] & \bullet \ar@{-}[rd] &\bullet \ar@{-}[rd] & \bullet \ar@{-}[lllld] \\
  \ar@{}[u]^{\text{ \large $d^*_{(7, 3)}  = $ \  }} \bullet &\bullet &\bullet &\bullet &\bullet &\bullet & \bullet }
\end{xy}\end{array}$$
\end{ex}

\subsubsection{The algorithm}
Given a diagram $d^*$ of $\mathscr{D}^*_{k, n}$,
we number the vertices in both rows of $d^*$ from left to right by $1$, $2$, ..., $n.$
Note that for $2k+1 \leq i \leq n$ if the $i$-th vertex in its bottom row joins to the $f(i)$-th vertex in the top row,
then $f(i) <  f(i+1)$ since there is no intersection between any two vertical edges in the diagram $d^*$.
This implies that concatenation of diagrams $d^*_{(n, f(n))}$ and $d^*$ yields a new diagram
$$d^*_{1}=d^*_{(n, f(n))}d^*$$
whose $n$-th vertex in the bottom row joins that of the top row and whose other vertical edges retain those of $d^*$.
That is, the diagram $d^*_{1}$ has the $(n-1)$-th vertex in its bottom row joining to the point $f(n-1)$-th vertex in the top row.
Again, a concatenation of diagrams $d^*_{(n-1, f(n-1))}$ and $d^*_{1}$ produces a diagram
$$d^*_{2}=d^*_{(n-1, f(n-1))}d^*_{1} = d^*_{(n-1, f(n-1))}d^*_{(n, f(n))}d^*$$
whose $n$-th and $(n-1)$-th vertices in the bottom row join, respectively,
those of the top row and whose other vertical edges maintain these in $d^*$.

Proceeding in this way, we determine a series of diagrams $d^*_{(n, f(n))}$, $d^*_{(n-1, f(n-1))}$,\dots,
$d^*_{(2k+1, f(2k+1))}$ such that
$$d'= d^*_{(2k+1, f(2k+1))}.. .d^*_{(n-1, f(n-1))}d^*_{(n, f(n))}\ d^*$$
is a diagram in $S_{2k}e_{(k)}$.
Here, $d'$ can be seen as a diagram in $D_{2k}(N)$ with only horizontal edges
to which we add $(n-2k)$ strictly vertical edges to the right.
Subsequently, set $i_{2k-2}$ the label of the vertex in the top row of $d'$ which is connected with the $2k$-th vertex in the same row
and $t_{(2k-2)}= s_{i_{2k-2}, 2k-2}.$ Then the new diagram
$$d'_{1}= t^{-1}_{(2k-2)}d'$$
has two vertices $2k$-th and $(2k-1)$-st in the top row which are connected by a horizontal edge. Proceeding this process, finally $d^*$ transforms into $e_{(k)}.$
$$e_{(k)}=t^{-1}_{(2)}...t^{-1}_{(2k-4)} t^{-1}_{(2k-2)}d^*_{(2k+1, f(2k+1))}.. .d^*_{(n-1, f(n-1))}d^*_{(n, f(n))}\ d^*.$$
Hence, $d^*$ can be rewritten as $d^* = \omega e_{(k)}$, where
\begin{align} \label{eq22}
\omega &= (t^{-1}_{(2)}...t^{-1}_{(2k-4)} t^{-1}_{(2k-2)}d^*_{(2k+1, f(2k+1))}.. .d^*_{(n-1, f(n-1))}d^*_{(n, f(n))})^{-1}\\
&=(d^*_{(2k+1, f(2k+1))}.. .d^*_{(n-1, f(n-1))}d^*_{(n, f(n))})^{-1}(t^{-1}_{(2)}...t^{-1}_{(2k-4)} t^{-1}_{(2k-2)})^{-1}\notag \\
&=d^*_{(f(n), n)}d^{*}_{(f(n-1), n-1)}...\ d^{*}_{(f(2k+1), 2k+1)}t_{(2k-2)}t_{(2k-4)}...t_{(2)}\notag \\
&= s_{f(n), n-1}s_{f(n-1), n-2}...\ s_{f(2k+1), 2k}s_{i_{2k-2}, 2k-2}s_{i_{2k-4}, 2k-4}...\ s_{i_{2}, 2}\notag
\end{align}
with $f(i) < f(i+1)$ for $2k+1 \leq i \leq n-1.$

Notice that the involution $*$ in $D_n(N)$ maps $d^*$ to the diagram $d$ which is of the form $e_{(k)} \omega^{-1}$
with
$$\omega^{-1} = s_{2, i_{2}}...\ s_{2k-4, i_{2k-4}}s_{2k-2, i_{2k-2}}s_{2k, f(2k+1)}...\ s_{n-2, f(n-1)}s_{n-1, f(n)}.$$
By Definition \ref{dn10}, the corresponding basis elements $g_{d^*}$ ($g_{d}$) in the $q$-Brauer algebra are
$$g_{d^*}=g_{\omega}e_{(k)} \text{ and } g_{d}= e_{(k)}g_{\omega^{-1}}.$$

\begin{ex}{In $D_{7}(N)$ we consider the following diagram $d^*$}$$
\begin{array}{c} 
\begin{xy}
\xymatrix@!=0.01pc{ \bullet \ar@{-}[rrrrd] & \bullet \ar@{-}[rrrrd] &\bullet \ar@{-}[rrrrd] &\bullet \ar@{-}@/^/[rr] & \bullet \ar@{-}@/^/[rr] &\bullet & \bullet  \\
    \ar@{}[u]^{\text{ \large $d^*$ \  }} \bullet \ar@{-}[r] & \bullet & \bullet \ar@{-}[r] & \bullet &\bullet &\bullet &\bullet }
\end{xy}\end{array}.$$

Step 1. Transform $d$ into $d'$
$$ \begin{array}{c} 
\begin{xy}
\xymatrix@!=0.01pc{ \bullet \ar@{-}[d] & \bullet \ar@{-}[d] &\bullet \ar@{-}[rd] &\bullet\ar@{-}[rd] & \bullet\ar@{-}[rd] &\bullet\ar@{-}[rd] & \bullet \ar@{-}[lllld] \\
  \ar@{}[u]^{\text{ \large $d^*_{(7,\ 3)}$ \  }}  \bullet \dta[d] & \bullet \dta[d] & \bullet \dta[d] & \bullet \dta[d] &\bullet \dta[d] &\bullet \dta[d] &\bullet \dta[d] \ar@{}[d]^{\text{\large {=} }}
&\bullet \ar@{-}[rrrrd] & \bullet \ar@{-}[rrrrd] &\bullet \ar@{-}@/^/[rr] &\bullet \ar@{-}@/^/[rr] & \bullet &\bullet & \bullet \ar@{-}[d] \ar@{}[d]^{\text{\large $d^*_{1}$ }}\\
      \bullet \ar@{-}[rrrrd] & \bullet \ar@{-}[rrrrd] &\bullet \ar@{-}[rrrrd] &\bullet \ar@{-}@/^/[rr] & \bullet \ar@{-}@/^/[rr] &\bullet & \bullet
& \bullet \ar@{-}[r] & \bullet & \bullet \ar@{-}[r] & \bullet &\bullet &\bullet &\bullet  \\
   \ar@{}[u]^{\text{ \large $d^*$ \  }} \bullet \ar@{-}[r] & \bullet & \bullet \ar@{-}[r] & \bullet &\bullet &\bullet &\bullet }
\end{xy}\end{array};$$

$$ \begin{array}{c} 
\begin{xy}
\xymatrix@!=0.01pc{ \bullet \ar@{-}[d] & \bullet \ar@{-}[rd] &\bullet \ar@{-}[rd] &\bullet\ar@{-}[rd] & \bullet\ar@{-}[rd] &\bullet\ar@{-}[lllld] & \bullet \ar@{-}[d] \\
  \ar@{}[u]^{\text{ \large $d^*_{(6,\ 2)}$ \  }}  \bullet \dta[d] & \bullet \dta[d] & \bullet \dta[d] & \bullet \dta[d] &\bullet \dta[d] &\bullet \dta[d] &\bullet \dta[d] \ar@{}[d]^{\text{\large {=} }}
&\bullet \ar@{-}[rrrrd] & \bullet \ar@{-}@/^/[rr] &\bullet \ar@{-}@/^/[rr] &\bullet & \bullet &\bullet\ar@{-}[d] & \bullet \ar@{-}[d] \ar@{}[d]^{\text{\large $d^*_{2}$ }}\\
      \bullet \ar@{-}[rrrrd] & \bullet \ar@{-}[rrrrd] &\bullet \ar@{-}@/^/[rr] &\bullet \ar@{-}@/^/[rr] & \bullet &\bullet & \bullet \ar@{-}[d]
& \bullet \ar@{-}[r] & \bullet & \bullet \ar@{-}[r] & \bullet &\bullet &\bullet &\bullet  \\
   \ar@{}[u]^{\text{ \large $d^*_{1}$ \  }} \bullet \ar@{-}[r] & \bullet & \bullet \ar@{-}[r] & \bullet &\bullet &\bullet &\bullet }
\end{xy}\end{array};$$

$$ \begin{array}{c} 
\begin{xy}
\xymatrix@!=0.01pc{ \bullet \ar@{-}[rd] & \bullet \ar@{-}[rd] &\bullet \ar@{-}[rd] &\bullet\ar@{-}[rd] & \bullet \ar@{-}[lllld] &\bullet \ar@{-}[d] & \bullet \ar@{-}[d] \\
  \ar@{}[u]^{\text{ \large $d^*_{(5,\ 1)}$ \  }}  \bullet \dta[d] & \bullet \dta[d] & \bullet \dta[d] & \bullet \dta[d] &\bullet \dta[d] &\bullet \dta[d] &\bullet \dta[d] \ar@{}[d]^{\text{\large {=} }}
&\bullet \ar@{-}@/^/[rr] & \bullet \ar@{-}@/^/[rr] &\bullet &\bullet & \bullet \ar@{-}[d] &\bullet \ar@{-}[d] & \bullet \ar@{-}[d] \ar@{}[d]^{\text{\large $d'$ }}\\
      \bullet \ar@{-}[rrrrd] & \bullet \ar@{-}@/^/[rr] &\bullet \ar@{-}@/^/[rr] &\bullet & \bullet &\bullet\ar@{-}[d] & \bullet \ar@{-}[d]
& \bullet \ar@{-}[r] & \bullet & \bullet \ar@{-}[r] & \bullet &\bullet &\bullet &\bullet  \\
   \ar@{}[u]^{\text{ \large $d^*_{2}$ \  }} \bullet \ar@{-}[r] & \bullet & \bullet \ar@{-}[r] & \bullet &\bullet &\bullet &\bullet }
\end{xy}\end{array};$$

Step 2. Transform $d'$ into $e_{(2)}$
$$ \begin{array}{c} 
\begin{xy}
\xymatrix@!=0.01pc{ \bullet \ar@{-}[d] & \bullet \ar@{-}[rd] &\bullet \ar@{-}[ld] &\bullet\ar@{-}[d] & \bullet \ar@{-}[d] &\bullet \ar@{-}[d] & \bullet \ar@{-}[d] \\
  \ar@{}[u]^{\text{ \large $t_{(2)}$ \  }}  \bullet \dta[d] & \bullet \dta[d] & \bullet \dta[d] & \bullet \dta[d] &\bullet \dta[d] &\bullet \dta[d] &\bullet \dta[d] \ar@{}[d]^{\text{\large {=} }}
&\bullet \ar@{-}[r] & \bullet &\bullet \ar@{-}[r] &\bullet & \bullet \ar@{-}[d] &\bullet \ar@{-}[d] & \bullet \ar@{-}[d] \ar@{}[d]^{\text{\large $e_{(2)}.$ }}\\
      \bullet \ar@{-}@/^/[rr] & \bullet \ar@{-}@/^/[rr] &\bullet &\bullet & \bullet \ar@{-}[d] &\bullet \ar@{-}[d] & \bullet \ar@{-}[d]
& \bullet \ar@{-}[r] & \bullet & \bullet \ar@{-}[r] & \bullet &\bullet &\bullet &\bullet  \\
   \ar@{}[u]^{\text{ \large $d'$ \  }} \bullet \ar@{-}[r] & \bullet & \bullet \ar@{-}[r] & \bullet &\bullet &\bullet &\bullet }
\end{xy}\end{array}$$
\end{ex}
Now, the diagram $d^*$ is rewritten in the form $\omega e_{(2)},$ where
\begin{align*}
\omega =(d^{*}_{(5,\ 1)}d^{*}_{(6,\ 2)}d^{*}_{(7,\ 3)})^{-1}t_{(2)}
= s_{3, 6}s_{2, 5}s_{1, 4}s_{2}.
\end{align*}
The corresponding basis element with $d^*$ in the $q$-Brauer algebra $Br_{7}(r, q)$ is
$$g_{d^*}= g_{\omega}e_{(2)} = g^{+}_{3,6}g^{+}_{2,5}g^{+}_{1,4}g_{2}e_{(2)}.$$
Using the involution $*$ in the Brauer algebra $D_{n}(N)$ yields the resulting diagram $d$ in which
$$d =  e_{(2)}\omega^{-1} = e_{(2)}(s_{3, 6}s_{2, 5}s_{1, 4}s_{2})^{-1} =e_{(2)}s_{2}s_{4, 1}s_{5, 2}s_{6, 3}.$$
Hence,
$$g_{d}= e_{(2)}g_{\omega^{-1}} =e_{(2)}g_{2} g^{+}_{4,1}g^{+}_{5, 2}g^{+}_{6, 3}.$$
Observe that this result can also be obtained via applying the involution $i$ (see Proposition \ref{md12}) on the $q$-Brauer algebra, that is,
\begin{align*}
i(g_{d^*}) = i(g_{\omega}e_{(2)}) = i(g^{+}_{3,6}g^{+}_{2,5}g^{+}_{1,4}g_{2}e_{(2)})
=e_{(2)}g_{2} g^{+}_{4,1}g^{+}_{5, 2}g^{+}_{6, 3} = g_{d}.
\end{align*}

\begin{rem}\label{nx3}
1. Combining with Lemma \ref{bd2} the above algorithm implies that given a diagram $d^*$ of $\mathscr{D}^*_{k, n}$
there exists a unique element $\omega = t_{n-1}t_{n-2}...t_{2k}t_{2k-2}t_{2k-4}...t_{2} \in B^*_k$
with $t_{j} = s_{i_{j}, j}$ and $i_{j} < i_{j+1}$ for $2k+1 \leq j \leq n-1,$ such that
$d^*= \omega e_{(k)}$ and $\ell(d^*) = \ell(\omega).$ Let
\begin{align}\label{eq23}
B^*_{k, n} = \{\omega \in B^*_k\ |\ d^* = \omega e_{(k)} \text{ and } \ell(d^*) = \ell(\omega), \ d^* \in \mathscr{D}^*_{k, n} \}
\end{align}
and
\begin{align}\label{eq24}
B_{k, n} = \{\omega^{-1} |\ \omega \in B^*_{k, n}\ \}.
\end{align}
By Lemma \ref{bd3}(a), $B_{k, n} = \{ \omega^{-1} \in B_k |\ d = e_{(k)}\omega^{-1} \text{ and } \ell(d) = \ell(\omega^{-1}), \ d \in \mathscr{D}_{k, n} \}$,
where $\mathscr{D}_{k, n}$ is the set of all diagrams $d$ which are image of $d^* \in \mathscr{D}^*_{k, n}$ via the involution $*$.
The uniqueness of element $\omega\in B_{k, n}$ means that $|B^*_{k, n}| = |B_{k, n}| = |\mathscr{D}_{k, n}| = |\mathscr{D}^*_{k, n}|$.
Given a diagram $d^*$ in $\mathscr{D}^*_{k, n},$
since the number of diagrams $d^*$ is equal to the number of possibilities to draw k edges between n vertices on its top row,
it implies that,
$$|B^{*}_{k, n}| = |B_{k, n}| = \dfrac {n!} {2^{k}(n-2k)!k!}.$$

2. For an element $\omega = t_{n-1}t_{n-2}...t_{2k}t_{2k-2}t_{2k-4}...t_{2} \in B^*_{k, n}$
if $t_{j} = 1$ with $2k \leq j \leq n-1$ then $t_{j+1} = 1$.
Indeed, as in \eqref{eq22}, suppose that $t_{j}= s_{f(j+1), j}=1.$
This means the corresponding diagram $d^*_{(f(j+1), j+1)}=1$, that is, a diagram with all vertical edges and no intersection between any two vertical edges.
It implies that the $(j+1)$-st vertex in the bottom row of diagram $d^*$ joins the same vertex of the upper one,
that is, $f(j+1)= j+1$. By definition of $d^*$ in $\mathscr{D}^*_{k, n},$
the other vertical edges on the right side of the $(j+1)$-st vertex of the bottom row has no intersection.
This means the $(j+2)$-th vertex in the bottom row of diagram $d^*$ joins the $f(j+2)=(j+2)$-th vertex of the top row.
Hence, $d^*_{(f(j+2), j+2)} = d^*_{(j+2, j+2)}= 1$, that is, $t_{j+1} = s_{f(j+2), j+1} = s_{j+2, j+1} = 1$.
\end{rem}

\section{Cellular structure of the $q$-Brauer algebra $Br_{n}(r,q)$}
This section is devoted to establish cellularity of the $q$-Brauer algebra.
As usual, let $k$ be an integer, $0 \leq k \leq [n/2]$ and
denote $D_{n}(N)$ the Brauer algebra over ring $R$.
Define $V^*_{k, n}$ an $R$-vector space linearly spanned by $\mathscr{D}^*_{k, n}.$
This implies $V^*_{k, n}$ is an $R$-submodule of $V^*_k$.
Hence, by Lemma \ref{bd2}(a) a given basis diagram $d^*$ in $V^*_{k, n}$ has a unique reduced expression in the form $d^* =\omega e_{(k)}$
with $\ell(d^*) = \ell(\omega)$ and $\omega \in B^*_{k, n}.$
Similarly, let $V_{k, n}$ be an $R$-vector space linearly spanned by $\mathscr{D}_{k, n}$,
that is, a basis diagram $d$ in $V_{k, n}$ has exactly 2k horizontal
edges, its top row is the same as a row of $e_{(k)}$ and there is no intersection between any two vertical edges. \\
The following lemma is directly deduced from definitions.

\begin{lem}\label{bd11}
The $R$-module $V^*_{k, n}$ has a basis
$\{ d^*= \omega e_{(k)},\ \omega \in  B^*_{k, n} \}$.
Dually, the $R$-module $V_{k, n}$ has a basis
$\{ d= e_{(k)}\omega,\ \omega \in  B_{k, n} \}$.
Moreover,
$$dim_{R}V_{k, n} =dim_{R}V^{*}_{k, n} = \dfrac {n!} {2^{k}(n-2k)!k!}.$$
\end{lem}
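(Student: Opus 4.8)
The plan is to read both statements off directly from the construction of $\mathscr{D}^*_{k,n}$ together with the algorithm of Subsection 3.2 and Remark \ref{nx3}(1); this is what is meant by "directly deduced from definitions." First I would record the freeness of $V^*_{k,n}$. By construction $V^*_{k,n}$ is the $R$-submodule of $V^*_k$ spanned by $\mathscr{D}^*_{k,n}$, and every diagram in $\mathscr{D}^*_{k,n}$ is one of the basis diagrams of $V^*_k$ (bottom row fixed like $e_{(k)}$, no crossing among vertical edges). Since the basis diagrams of $V^*_k$ are $R$-linearly independent by Lemma \ref{bd2}(a), any subset of them is independent; as $\mathscr{D}^*_{k,n}$ also spans $V^*_{k,n}$ by definition, it is an $R$-basis. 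It therefore suffices to exhibit a bijection between $\mathscr{D}^*_{k,n}$ and the index set $B^*_{k,n}$ that sends each diagram to the claimed normal form.

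Second I would invoke the algorithm together with Lemma \ref{bd2}(a): for every $d^* \in \mathscr{D}^*_{k,n}$ there is a unique $\omega = t_{n-1}t_{n-2}\cdots t_{2k}t_{2k-2}\cdots t_2 \in B^*_k$ with $d^* = \omega e_{(k)}$ and $\ell(d^*) = \ell(\omega)$, and by the definition \eqref{eq23} this $\omega$ lies in $B^*_{k,n}$. Conversely each $\omega \in B^*_{k,n}$ yields, by definition, a diagram $\omega e_{(k)} \in \mathscr{D}^*_{k,n}$. Hence $\omega \mapsto \omega e_{(k)}$ is a well-defined bijection $B^*_{k,n} \to \mathscr{D}^*_{k,n}$, so $\{\, \omega e_{(k)} : \omega \in B^*_{k,n}\,\}$ is exactly the basis $\mathscr{D}^*_{k,n}$ rewritten in normal form, proving the first assertion.

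Third, the dual statement for $V_{k,n}$ follows by transporting everything through the involution $*$ of $D_n(N)$, which carries $\mathscr{D}^*_{k,n}$ bijectively onto $\mathscr{D}_{k,n}$ and sends $\omega e_{(k)}$ to $e_{(k)}\omega^{-1}$. Using $B_{k,n} = \{\,\omega^{-1} : \omega \in B^*_{k,n}\,\}$ from \eqref{eq24}, the same argument shows that $\{\, e_{(k)}\omega : \omega \in B_{k,n}\,\}$ is a basis of $V_{k,n}$. Finally, the dimension formula is the cardinality count already recorded in Remark \ref{nx3}(1): since each basis is indexed by $B^*_{k,n}$ (respectively $B_{k,n}$),
$$\dim_R V^*_{k,n} = |B^*_{k,n}| = \frac{n!}{2^{k}(n-2k)!\,k!} = |B_{k,n}| = \dim_R V_{k,n}.$$

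I expect the only genuine content to be the bijectivity in the second step, namely the uniqueness of the normal form $d^* = \omega e_{(k)}$ with $\omega \in B^*_{k,n}$; but this is precisely what the algorithm of Subsection 3.2 and Lemma \ref{bd2}(a) supply, so the lemma is indeed immediate from the definitions, as asserted.
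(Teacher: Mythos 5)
Your proposal is correct and matches the paper's intent exactly: the paper offers no written proof, stating only that the lemma "is directly deduced from definitions," and your argument is precisely the natural expansion of that remark, combining the linear independence of basis diagrams of $V^*_k$ (Lemma \ref{bd2}(a)), the normal-form bijection $\omega \mapsto \omega e_{(k)}$ with $B^*_{k, n}$ from the algorithm and Remark \ref{nx3}(1), the involution $*$ for the dual statement, and the cardinality count already recorded there. The only blemish is a citation slip: the algorithm you invoke is in Subsection 3.3, not 3.2.
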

\medskip
Statements below in Lemmas \ref{bd12}, \ref{bd13}, \ref{bd14} and Corollary \ref{hq3} are needed for later reference.

\begin{lem}\label{bd12}
Let $d^*$ be a basis diagram in $V^*_{k, n}$ and $\pi$ be a permutation in $S_{2k+1, n}.$
Then $d^* \pi$ is a basis diagram in $V^*_k$ satisfying $\ell(d^* \pi) = \ell(d^*) + \ell(\pi).$
\end{lem}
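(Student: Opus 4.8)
The plan is to reduce the statement to two minimality properties of reduced words in the symmetric group, exploiting that $e_{(k)}$ commutes with $S_{2k+1,n}$.

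First I would settle that $d^*\pi$ is a basis diagram of $V^*_k$. Since $\pi\in S_{2k+1,n}$ fixes the points $1,\dots,2k$ and only permutes $2k+1,\dots,n$, forming $d^*\pi$ merely permutes the endpoints of the vertical edges in the bottom row of $d^*$; the horizontal edges $(1,2),(3,4),\dots,(2k-1,2k)$ of that row are untouched, and no closed loop can appear because $d^*$ has no bottom horizontal edge among the positions $\ge 2k+1$. Hence $d^*\pi$ again has exactly $k$ horizontal edges in each row with its bottom row equal to a row of $e_{(k)}$, so by Lemma \ref{bd2}(a) it is a basis diagram of $V^*_k$ (in general with crossings, so it need not lie in $\mathscr{D}^*_{k, n}$).

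For the length, I would use the reduced expression $d^*=\omega e_{(k)}$ with $\omega\in B^*_{k, n}$ and $\ell(\omega)=\ell(d^*)$ (Lemmas \ref{bd11} and \ref{bd2}(a)). Because $e_{(k)}$ commutes with every generator $s_i$, $2k<i<n$, of $S_{2k+1,n}$ (relation $(4)$ of the Brauer algebra; see also Remark \ref{rem1}(1)), I obtain $d^*\pi=\omega e_{(k)}\pi=\omega\pi\, e_{(k)}$. The first minimality property I would invoke is that $\omega$ is a shortest element of the coset $\omega S_{2k+1,n}$, i.e.\ $(i)\omega<(i+1)\omega$ for $2k+1\le i\le n-1$: this is precisely the no-crossing condition satisfied by $d^*\in\mathscr{D}^*_{k, n}$, recorded as $f(i)<f(i+1)$ (equivalently $i_j<i_{j+1}$) in \eqref{eq22} and Remark \ref{nx3}. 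Since $S_{2k+1,n}$ is the standard parabolic generated by $s_{2k+1},\dots,s_{n-1}$, the standard theory of minimal coset representatives then gives $\ell(\omega\pi)=\ell(\omega)+\ell(\pi)$.

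It remains to check $\ell(d^*\pi)=\ell(\omega\pi)$. As $d^*\pi$ is a basis diagram of $V^*_k$, its length is the minimal length of a permutation $\sigma$ with $\sigma e_{(k)}=d^*\pi$ (Lemma \ref{bd2}(a)), and these $\sigma$ form the coset $(\omega\pi)\,\mathrm{Stab}(e_{(k)})$, where $\mathrm{Stab}(e_{(k)})=\{\tau\in S_n:\tau e_{(k)}=e_{(k)}\}$ fixes $2k+1,\dots,n$ and acts only on $\{1,\dots,2k\}$. Here $\omega$ is minimal in $\omega\,\mathrm{Stab}(e_{(k)})$ (again Lemma \ref{bd2}(a)), a condition involving only the relative order of the values $\le 2k$ in the one-line notation of $\omega$. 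Right multiplication by $\pi$ permutes only the values $\ge 2k+1$, since $(i)(\omega\pi)=((i)\omega)\pi$ and $\pi$ fixes every value $\le 2k$; thus it leaves the positions and the mutual order of the values $\le 2k$ unchanged, so $\omega\pi$ is still minimal in its $\mathrm{Stab}(e_{(k)})$-coset and $\min_{\tau}\ell(\omega\pi\tau)=\ell(\omega\pi)$. Combining, $\ell(d^*\pi)=\ell(\omega\pi)=\ell(\omega)+\ell(\pi)=\ell(d^*)+\ell(\pi)$. The main obstacle is exactly this bookkeeping linking diagrammatic length to permutation length: one must translate the no-crossing property of $\mathscr{D}^*_{k, n}$ into ``$\omega$ is minimal for $S_{2k+1,n}$ on the right,'' and verify that right multiplication by $\pi$, which permutes only the values $\ge 2k+1$, preserves minimality relative to $\mathrm{Stab}(e_{(k)})$, which lives on $\{1,\dots,2k\}$. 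Once both minimality statements are secured, additivity of lengths follows at once.
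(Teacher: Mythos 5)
Your proof is correct, but it takes a genuinely different route from the paper's. The paper's own proof is purely diagrammatic and very short: since $d^*\in\mathscr{D}^*_{k,n}$ has no crossings among its vertical edges, concatenating with $\pi$ introduces exactly the $\ell(\pi)$ crossings of $\pi$ among the vertical edges, and the additivity of lengths is read off from this crossing count. You instead argue entirely inside the symmetric group: the no-crossing condition $f(i)<f(i+1)$ makes $\omega$ a distinguished (minimal-length) representative of its coset of the standard parabolic $S_{2k+1,n}$, so $\ell(\omega\pi)=\ell(\omega)+\ell(\pi)$ by standard Coxeter theory; then you convert diagram length into permutation length via minimality in the coset modulo $\mathrm{Stab}(e_{(k)})\cong\Z_2\wr S_k$, checking that right multiplication by $\pi$ (which permutes only the values $\ge 2k+1$) cannot destroy that minimality because the stabilizer affects only inversions between two values $\le 2k$. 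This buys rigor exactly where the paper is informal: the passage from ``number of crossings'' to Wenzl's length function $\ell$ is not spelled out there (note $\ell(d^*)$ is generally positive even though $d^*$ has no vertical-vertical crossings), whereas your two minimality statements pin it down. Note also that your argument in effect proves Corollary \ref{hq3} directly, reversing the paper's logical order (the paper deduces \ref{hq3} from this lemma); there is no circularity, since you use only Lemma \ref{bd2}(a), the definition of $\mathscr{D}^*_{k,n}$, and standard facts about parabolic coset representatives. One caveat: with the paper's conventions (the symmetric group acts on the right, and in a diagram $d^*=\omega e_{(k)}$ the bottom vertex $i\ge 2k+1$ is joined to the top vertex $f(i)=(i)\omega^{-1}$), minimality of $\omega$ in $\omega S_{2k+1,n}$ reads $(i)\omega^{-1}<(i+1)\omega^{-1}$ for $2k+1\le i\le n-1$, not $(i)\omega<(i+1)\omega$ as you wrote; since you justify the condition by the correct invariant $f(i)<f(i+1)$, this is a notational slip rather than a gap, but it should be fixed.
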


\begin{proof}
Since $d^* \in V^*_{k, n}$, it implies that there exists a unique element $\omega \in B^*_{k, n}$ such that $d^*=\omega e_{(k)}$.
Consider $\pi$ as a diagram in $D_{n}(N)$ and also observe that two diagrams $e_{(k)}$ and $\pi$ commute.
It follows from concatenation of diagrams $d^* \text{ and } \pi$ that the diagram $d^* \pi$ is a basis diagram in $V^*_k$.
This yields $d^* \pi = \omega e_{(k)} \pi = \omega \pi e_{(k)}.$
Since the basis diagram $d^*$ has no intersection between any two vertical edges,
the number of intersections of vertical edges in the resulting basis diagram $d^*\pi$ is equal to
that of the diagram $\pi.$ In fact, the number of intersections of vertical edges in the diagram $\pi$
is equal to its length. This produces $\ell(d^* \pi) = \ell(d^*) + \ell(\pi).$
\end{proof}

\begin{cor}\label{hq3}
Let $\omega$ be a permutation in $B^*_{k, n}$ and $\pi$ be a permutation in $S_{2k+1, n}.$
Then
$$\ell(\omega \pi) = \ell(\omega) + \ell(\pi) \text{ and }  \omega \pi \in B^*_k.$$
\end{cor}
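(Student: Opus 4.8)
The plan is to lift the diagram identity of Lemma~\ref{bd12} to the symmetric group, exploiting the bijection between the basis diagrams of $V^*_k$ and the elements of $B^*_k$ provided by Lemma~\ref{bd2}(a).

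First I would pass to diagrams. Since $\omega\in B^*_{k,n}$, the element $d^*=\omega e_{(k)}$ is a basis diagram of $V^*_{k,n}$ with $\ell(d^*)=\ell(\omega)$. Because $\pi\in S_{2k+1,n}$ is a word in $s_{2k+1},\dots,s_{n-1}$, it commutes with $e_{(k)}$ at the diagram level (Remark~\ref{rem1}(1)), so $d^*\pi=\omega e_{(k)}\pi=(\omega\pi)e_{(k)}$. Lemma~\ref{bd12} then gives that $d^*\pi$ is a basis diagram of $V^*_k$ with $\ell(d^*\pi)=\ell(d^*)+\ell(\pi)=\ell(\omega)+\ell(\pi)$.

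The heart of the matter is the membership $\omega\pi\in B^*_k$, and I expect this to be the main obstacle. It cannot be read off from length-minimality alone, since one and the same basis diagram of $V^*_k$ may have several length-minimal preimages in $S_n$ (for instance $s_1s_2e_{(2)}=s_3s_2e_{(2)}$, where $s_1s_2\in B^*_2$ but $s_3s_2\notin B^*_2$): one really has to identify the \emph{distinguished} representative. The guiding observation is that right multiplication by $\pi$ does not move the small values: for $1\le v\le 2k$ the unique position $x$ with $(x)(\omega\pi)=v$ satisfies $(x)\omega=(v)\pi^{-1}=v$ (as $\pi^{-1}\in S_{2k+1,n}$ fixes $1,\dots,2k$), which is exactly the position of $v$ under $\omega$; hence the values $1,\dots,2k$ occupy the same positions in $\omega\pi$ as in $\omega$. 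Translating this into the normal form of Remark~\ref{nx3}, the trailing block $t_{2k-2}\cdots t_2$ is a word in $s_1,\dots,s_{2k-2}$ and therefore commutes with $\pi$, so that $\omega\pi=(t_{n-1}\cdots t_{2k})\,\pi\,(t_{2k-2}\cdots t_2)$ with the low-index block carried over verbatim. Membership of $\omega\pi$ in $B^*_k$ thus reduces to checking that the leading factor $(t_{n-1}\cdots t_{2k})\pi$ renormalises into factors of index $\ge 2k$ only; this I would verify directly from the construction underlying Remark~\ref{nx3} applied to $d^*\pi$, whose top horizontal edges coincide with those of $d^*$ while its vertical edges are merely permuted by $\pi$.

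Once membership is in hand the length equality follows at once: with $\omega\pi\in B^*_k$ and $(\omega\pi)e_{(k)}=d^*\pi$, Lemma~\ref{bd2}(a) gives $\ell(\omega\pi)=\ell\big((\omega\pi)e_{(k)}\big)=\ell(d^*\pi)$, and the right-hand side equals $\ell(\omega)+\ell(\pi)$ by the first step.
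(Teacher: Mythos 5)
Your proposal is correct in outline, and its route to the membership $\omega\pi\in B^*_k$ is genuinely different from the paper's. After the first step, which is identical to the paper's (pass to $d^*=\omega e_{(k)}$ and apply Lemma~\ref{bd12}), the paper gets the length formula immediately and \emph{independently} of membership: since $d^*\pi=(\omega\pi)e_{(k)}$ and the length of a diagram is a minimum over all such factorizations, $\ell(\omega)+\ell(\pi)=\ell(d^*\pi)\le\ell(\omega\pi)$, and subadditivity $\ell(\omega\pi)\le\ell(\omega)+\ell(\pi)$ forces equality; membership is then proved diagrammatically, by taking the unique $\omega'\in B^*_k$ with $d^*\pi=\omega'e_{(k)}$ from Lemma~\ref{bd2}(a), cancelling to $\pi e_{(k)}=\omega^{-1}\omega'e_{(k)}$, and using that the basis diagram $\pi e_{(k)}$ splits into the horizontal part of $e_{(k)}$ and a vertical part that determines $\pi$ uniquely as an element of $S_{2k+1,n}$, whence $\omega\pi=\omega'$. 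You instead work inside $S_n$ with the normal form \eqref{eq4}, and your plan does go through; in fact your own guiding observation already closes the step you leave open. Since lengths add over the factors of the normal form, $\tau=t_{n-1}\cdots t_{2k}$ is the minimal-length representative of its coset $\tau S_{2k}$, and minimality is equivalent to the positions of the small values being increasing, $(1)\tau^{-1}<(2)\tau^{-1}<\cdots<(2k)\tau^{-1}$; because $(v)(\tau\pi)^{-1}=(v)\pi^{-1}\tau^{-1}=(v)\tau^{-1}$ for $v\le 2k$, the element $\tau\pi$ is again such a minimal representative, so by uniqueness its normal form uses only factors $t'_j$ with $j\ge 2k$, and $\omega\pi=(\tau\pi)\,t_{2k-2}\cdots t_2\in B^*_k$. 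One caveat: the algorithm of Subsection~3.3 underlying Remark~\ref{nx3}, which you propose to apply to $d^*\pi$, is formulated only for crossing-free diagrams in $\mathscr{D}^*_{k,n}$, and $d^*\pi$ generally has crossing vertical edges, so you should finish with the coset argument just sketched (or with Wenzl's Lemma~1.2(a), i.e.\ Lemma~\ref{bd2}(a)) rather than with that algorithm. Comparing what each approach buys: the paper's cancellation argument is shorter and stays entirely at the diagram level, while your normal-form argument is purely combinatorial in $S_n$, makes explicit why the trailing block $t_{2k-2}\cdots t_2$ is untouched by $\pi$, and is essentially the same mechanism that underlies Lemma~\ref{bd13}; your derivation of the length equality \emph{after} membership via Lemma~\ref{bd2}(a) is valid, though the paper's min-plus-subadditivity trick delivers it more cheaply and without any appeal to the distinguished representative.
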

\begin{proof}
Set $d^*=\omega e_{(k)}$, then $\ell(d^*) = \ell(\omega)$ since $\omega \in B^*_{k, n}$.
Lemma \ref{bd12} implies that  $\ell(d^* \pi)= \ell(d^*) + \ell(\pi) = \ell(\omega) + \ell(\pi)$ and $d^*\pi = \omega\pi e_{(k)}$ is a basis diagram in $V^*_k$.
The latter deduces that $\ell(d^*\pi) = \ell(\omega\pi e_{(k)}) \le \ell(\omega\pi)$, that is, $\ell(d^*\pi) \le \ell(\omega\pi)$.
Immediately, the first equality follows from applying the inequality $\ell(\omega) + \ell(\pi) \geq \ell(\omega \pi)$.

Let $d' = d^* \pi \in V^*_k$, then by Lemma \ref{bd2} there uniquely exists an element $\omega'$ in $B^*_k$
such that $d'= \omega'e_{(k)}$ and $\ell(d') = \ell (\omega').$
This implies that
\begin{align} \label{eq25}
d' = \omega\pi e_{(k)} = \omega'e_{(k)}
\end{align}
and
$$\ell(d') = \ell(\omega\pi) = \ell(\omega) + \ell(\pi) = \ell(\omega').$$
The equality (\ref{eq25}) yields $\pi e_{(k)} = \omega^{-1} \omega'e_{(k)}.$
Observe that the basis diagram $\pi e_{(k)}$ is a combination of two separate parts where
the first part includes all horizontal edges on the left side and the second consists of all vertical edges on the other side.
As $\pi$ and $e_{(k)}$ commute, $\pi$ corresponds one-to-one with the second part of the basis diagram $\pi e_{(k)}$.
This implies that the basis diagram $\pi e_{(k)}$ is uniquely presented as a concatenation of diagrams $\omega \in S_{2k+1, n}$ and $e_{(k)}$.
Now, the equality $\pi e_{(k)} = \omega^{-1} \omega'e_{(k)}$ implies that $\pi = \omega^{-1} \omega'$, that is,
$\omega\pi = \omega'$. So that $\omega\pi \in B^*_k$.
\end{proof}

\begin{lem}\label{bd13}
Let $\sigma$ be a permutation in $B^*_k$. Then there exists a unique pair $(\omega', \pi'),$
where $\omega' \in B^*_{k, n}$ and $\pi' \in S_{2k+1, n},$ such that
$\sigma = \omega'\pi'.$
\end{lem}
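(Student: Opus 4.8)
The plan is to recognise the assertion as the statement that the multiplication map
$$\Phi\colon B^*_{k,n}\times S_{2k+1,n}\lra B^*_k,\qquad \Phi(\omega,\pi)=\omega\pi,$$
is a bijection: the existence of the pair $(\omega',\pi')$ is the surjectivity of $\Phi$, while its uniqueness is the injectivity of $\Phi$. That $\Phi$ is well defined, i.e.\ that $\omega\pi$ really lies in $B^*_k$, is exactly Corollary \ref{hq3}, which in addition records the length-additivity $\ell(\omega\pi)=\ell(\omega)+\ell(\pi)$. So it remains to prove that $\Phi$ is injective and then to match cardinalities.

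For injectivity I would suppose $\omega_1\pi_1=\omega_2\pi_2$ with $\omega_i\in B^*_{k,n}$ and $\pi_i\in S_{2k+1,n}$, and pass to diagrams by multiplying on the right by $e_{(k)}$. Since $\pi_i$ commutes with $e_{(k)}$ (cf.\ Remark \ref{rem1}(1) and the proof of Lemma \ref{bd12}), this gives $d_1^*\pi_1=d_2^*\pi_2$ for the basis diagrams $d_i^*=\omega_i e_{(k)}\in\mathscr{D}^*_{k,n}$. The key geometric observation is that in the concatenation $d_i^*\pi_i$ the permutation $\pi_i\in S_{2k+1,n}$ acts only on the bottom row, permuting the lower endpoints of the vertical edges, and therefore leaves the top row of $d_i^*$ unchanged. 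Hence the equality $d_1^*\pi_1=d_2^*\pi_2$ forces the top rows of $d_1^*$ and $d_2^*$ to coincide. Since a diagram of $\mathscr{D}^*_{k,n}$ has its bottom row fixed like $e_{(k)}$ and no crossing among its vertical edges, it is completely determined by its top row; thus $d_1^*=d_2^*$, and the uniqueness in Lemma \ref{bd2}(a) (see Remark \ref{nx3}(1)) yields $\omega_1=\omega_2$. Cancelling $\omega_1$ in $S_n$ then gives $\pi_1=\pi_2$.

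Finally I would conclude by a counting argument. By Lemma \ref{bd11} the set $B^*_{k,n}$ has $n!/(2^{k}(n-2k)!\,k!)$ elements, while $S_{2k+1,n}$, being the symmetric group on the $n-2k$ letters $2k+1,\dots,n$, has $(n-2k)!$ elements, so
$$|B^*_{k,n}\times S_{2k+1,n}|=\frac{n!}{2^{k}(n-2k)!\,k!}\cdot(n-2k)!=\frac{n!}{2^{k}\,k!}=|B^*_k|.$$
An injective map between finite sets of equal cardinality is a bijection, so $\Phi$ is also surjective; this supplies, for every $\sigma\in B^*_k$, the required pair $(\omega',\pi')$ with $\sigma=\omega'\pi'$, and the injectivity already established guarantees its uniqueness.

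The main obstacle is the injectivity step, and within it the diagram-theoretic claim that an element of $\mathscr{D}^*_{k,n}$ is pinned down by its top row while a factor $\pi\in S_{2k+1,n}$ affects only the vertical-edge connections at the bottom. Making this precise is where Lemma \ref{bd12} (the length-additivity $\ell(d^*\pi)=\ell(d^*)+\ell(\pi)$ together with the fact that $d^*\pi$ is again a basis diagram of $V^*_k$) does the real work; everything else is bookkeeping with cardinalities already computed in the preceding lemmas.
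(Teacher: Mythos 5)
Your proposal is correct, and its uniqueness half is essentially the paper's own argument: the paper likewise passes to diagrams, noting that concatenating $\omega e_{(k)}$ with an element of $S_{2k+1,n}$ on the right cannot change the positions of the horizontal edges in the top row, so two distinct elements of $B^*_{k,n}$ remain distinguishable after multiplication --- which is exactly your injectivity of $\Phi$, phrased there as a proof by contradiction. Where you genuinely diverge is the existence half. The paper obtains it constructively and rather implicitly: its proof opens by observing that $\sigma e_{(k)}$ is a basis diagram of $V^*_k$ whose bottom row is that of $e_{(k)}$, so the three-part decomposition of Subsection 3.2 (made effective by the algorithm of Subsection 3.3) exhibits this diagram as $\omega'\pi' e_{(k)}$ with $\omega'\in B^*_{k,n}$ and $\pi'\in S_{2k+1,n}$; since $\omega'\pi'\in B^*_k$ by Corollary \ref{hq3} and the element of $B^*_k$ representing a given basis diagram is unique by Lemma \ref{bd2}(a), this forces $\sigma=\omega'\pi'$. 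You instead combine injectivity with the cardinality count
$$|B^*_{k,n}|\cdot|S_{2k+1,n}|=\frac{n!}{2^{k}(n-2k)!\,k!}\cdot(n-2k)!=\frac{n!}{2^{k}\,k!}=|B^*_k|,$$
the first factor being recorded in Remark \ref{nx3}(1) and the count of $B^*_k$ in Section 2, so that surjectivity comes for free by the pigeonhole principle. Your route is shorter and in one respect more complete --- the paper's written proof never actually spells out the existence step, so your counting argument genuinely fills that gap --- at the price of being non-constructive, whereas the paper's version produces the pair $(\omega',\pi')$ explicitly from the diagram, which is what is exploited later, for instance in Lemma \ref{bd14} and Example \ref{ex6}.
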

\begin{proof}
Observe that $\sigma e_{(k)}$ is a basis diagram in $V^*_k$ in which $\ell(\sigma e_{(k)}) = \ell(\sigma)$ by Lemma \ref{bd2}(a).
Now suppose that there are two pairs $(\omega', \pi')$  and $(\omega, \pi)$ satisfying
\begin{align}\label{eq26}
\sigma=\omega'\pi' = \omega \pi,
\end{align}
where $\omega, \ \omega' \in B^*_{k, n}$ and $\pi, \ \pi' \in S_{2k+1, n}.$
Suppose that $\omega \neq \omega'$ then $\omega e_{(k)} \neq \omega' e_{(k)}$ by definition of $B^*_{k, n}$.
This implies two diagrams $\omega e_{(k)}$ and $\omega' e_{(k)}$ differ by position of horizontal edges in their top rows.
By definition of diagrams in $\mathscr{D}^*_{k, n}$,
the position of horizontal edges in the top row of the basis diagram $\omega e_{(k)}$ (or $\omega' e_{(k)}$) is unchanged after concatenating it with an arbitrary element of $S_{2k+1, n}$
on the right. This implies that $\omega e_{(k)}\pi$ differs from $\omega' e_{(k)}\pi'$,
that is, $\omega \pi e_{(k)} \neq \omega'\pi' e_{(k)}.$ However, the equality (\ref{eq26}) yields $\omega\pi e_{(k)} = \omega'\pi' e_{(k)},$ a contradiction.
Thus $\omega = \omega'$, and hence, $\pi = \pi'$ by multiplying (\ref{eq26}) with $\omega^{-1}$ on the left.
\end{proof}

\begin{lem}\label{bd14}
Let $k,\ l$ are integers, $0\leq l,\ k\leq [n/2]$. Let $\omega$ be a permutation in $B^*_{l, n}$ and $\pi$ be a permutation in $S_{2l+1, n}$.
Then there exist $\omega' \in B^*_{k, n}$ and $\pi' \in S_{2k+1, n}$ such that
$$\omega\pi e_{(k)} = \omega'\pi' e_{(k)}.$$
\end{lem}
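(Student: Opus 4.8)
The plan is to reduce the assertion to the diagrammatic combinatorics already developed and then invoke the two uniqueness results at hand. First I would check that $\omega\pi$ is genuinely a permutation of $S_n$. Since $\omega \in B^*_{l, n}$ and $\pi \in S_{2l+1, n}$, Corollary \ref{hq3} gives $\omega\pi \in B^*_l \subseteq S_n$. In particular, viewed as a Brauer diagram, $\omega\pi$ carries no horizontal edges, which is the only feature of the hypotheses on $l$ that I will actually need.

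Next I would analyze $\omega\pi e_{(k)}$ at the level of diagrams. Because $\omega\pi$ has no horizontal edges, its concatenation with $e_{(k)}$ produces no closed loops, so $\omega\pi e_{(k)}$ is a single Brauer diagram with coefficient one (no power of $N$ appears). Its bottom row coincides with the bottom row of $e_{(k)}$, and its top row still has exactly $k$ horizontal edges: the $k$ top-row horizontal edges of $e_{(k)}$ are transported, through the permutation $\omega\pi$, to $k$ horizontal edges in the top row of the product, while the $n-2k$ free vertices remain free. Hence $d := \omega\pi e_{(k)}$ is a basis diagram of the module $V^*_k$, and in particular it does not migrate into the ideal $I(k+1)$.

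The key step is to locate $d$ inside the $B^*_k$-parametrization. By Lemma \ref{bd2}(a) (Wenzl, Lemma 1.2(a)), every basis diagram of $V^*_k$ is $\sigma e_{(k)}$ for a \emph{unique} $\sigma \in B^*_k$ with $\ell(d) = \ell(\sigma)$; applied to $d$ this yields $\omega\pi e_{(k)} = \sigma e_{(k)}$ for a unique $\sigma \in B^*_k$. I would then feed this $\sigma$ into Lemma \ref{bd13}, which factorizes any element of $B^*_k$ uniquely as $\sigma = \omega'\pi'$ with $\omega' \in B^*_{k, n}$ and $\pi' \in S_{2k+1, n}$. Substituting back gives $\omega\pi e_{(k)} = \sigma e_{(k)} = \omega'\pi' e_{(k)}$, which is precisely the claimed identity.

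I expect the only genuine subtlety to lie in the middle step, namely the diagrammatic bookkeeping that confirms $\omega\pi e_{(k)}$ is honestly a basis diagram of $V^*_k$ (no loops created, exactly $k$ horizontal edges in each row, bottom row equal to that of $e_{(k)}$), rather than a diagram that has acquired extra horizontal edges and fallen into $I(k+1)$. Once this is secured, the conclusion follows formally from the uniqueness statements of Lemmas \ref{bd2} and \ref{bd13}, with Corollary \ref{hq3} playing only the auxiliary role of ensuring that $\omega\pi$ is a permutation.
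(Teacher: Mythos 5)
Your proposal is correct and takes essentially the same route as the paper's own proof: observe that $\omega\pi e_{(k)}$ is a basis diagram of $V^*_k$, invoke Lemma \ref{bd2}(a) to obtain the unique $\sigma \in B^*_k$ with $\omega\pi e_{(k)} = \sigma e_{(k)}$, and then factor $\sigma = \omega'\pi'$ via Lemma \ref{bd13}. The additional diagrammatic bookkeeping you supply (no closed loops, exactly $k$ horizontal edges preserved, bottom row equal to that of $e_{(k)}$) merely makes explicit what the paper compresses into ``by concatenation of diagrams,'' and your appeal to Corollary \ref{hq3} is harmless auxiliary detail.
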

\begin{proof}
Observe that $\omega\pi e_{(k)}$ is a basis diagram in $V^*_k$ by concatenation of diagrams $\omega,\ \pi \text{ and } e_{(k)}$.
Using Lemma \ref{bd2}(a), there exists a unique element $\sigma \in B^*_k$ such that $\omega\pi e_{(k)}=\sigma e_{(k)}$
and $\ell(\omega \pi e_{(k)}) = \ell(\sigma).$ Lemma \ref{bd13} implies that $\sigma $ can be rewritten in the form $\sigma = \omega'\pi'$,
where $\omega'$ and $\pi'$ are elements uniquely determined in $B^*_{k, n}$ and $S_{2k+1, n},$ respectively.
Therefore, $\sigma e_{(k)} = \omega\pi e_{(k)} = \omega'\pi' e_{(k)}.$
\end{proof}

Note that given $\omega \in B^*_{l, n}$ and $\pi \in S_{2l+1, n}$, Lemma \ref{bd14} can be obtained via a direct calculation using the defining relations as follows:

By Corollary \ref{hq3}, $\omega\pi$ is in the form $\omega\pi = t_{n-1}t_{n-2}...t_{2l}t_{2l-2}t_{2l-4}...t_{2} \in B^*_l$,
where $t_{j} = 1$ or $t_{j} =s_{i_{j}, j}$ for $1 \le i_{j} \le j\le n-1.$
Concatenation of $\omega\pi$ and $e_{(k)}$ will transform $\omega\pi e_{(k)}$ into $\sigma e_{(k)}$,
where $\sigma = t'_{n-1}t'_{n-2}...t'_{2k}t'_{2k-2}t'_{2k-4}...t'_{2} \in B^*_k$
with $t'_{j} = 1$ or $t'_{j} =s_{i_{j}, j}$ for $1 \le i_{j} \le j \le n-1.$
This process is done by using defining relations on the Brauer algebra $D_{n}(N),$ including $(S_{0})$, $(S_{1})$, $(S_{2})$, (3) and (5).
In fact, two final relations yield the vanishing of some transpositions $s_{i}$ in $\omega\pi e_{(k)}$ and
the three first imply an rearrange $\omega\pi e_{(k)} \text{ into } \sigma e_{(k)}.$

\begin{ex}\label{ex6} We fix the element $\omega = s_{7}s_{5, 6}s_{4, 5}s_{1, 4}s_{2}$ in $B^*_{2, 8}$ and the element $\pi = s_{6, 7}s_{5}$ of $S_{5, 8}.$
In the Brauer algebra $D_{8}(N)$ the diagram $\omega \pi e_{(2)}$ corresponds to the diagram $d'= d^*\pi$ which is the result of concatenating $d^*$ and $\pi$ as follows:
$$ \begin{array}{c} 
\begin{xy}
\xymatrix@!=0.01pc{\bullet \ar@{-}[rrrrd] & \bullet \ar@{-}@/^/[rrrr] & \bullet \ar@{-}@/^/[rrrrr] &\bullet \ar@{-}[rrd] &\bullet \ar@{-}[rrd] & \bullet &\bullet \ar@{-}[rd] & \bullet \\
  \ar@{}[u]^{\text{ \large $d^*$ \  }} \bullet \dta[d] \ar@{-}[r] & \bullet \dta[d] & \bullet \dta[d] \ar@{-}[r] & \bullet \dta[d] & \bullet \dta[d] &\bullet \dta[d] &\bullet \dta[d] &\bullet \dta[d] \ar@{}[d]^{\text{\large {=} }}
& \bullet \ar@{-}[rrrrrd] & \bullet \ar@{-}@/^/[rrrr] & \bullet \ar@{-}@/^/[rrrrr] &\bullet \ar@{-}[rrrrd] &\bullet \ar@{-}[d] & \bullet &\bullet \ar@{-}[d] & \bullet \ar@{}[d]^{\text{\large $d'.$ }}\\
      \bullet \ar@{-}[d] & \bullet \ar@{-}[d] & \bullet \ar@{-}[d] &\bullet \ar@{-}[d] &\bullet \ar@{-}[rd] & \bullet \ar@{-}[rrd] &\bullet \ar@{-}[lld] & \bullet \ar@{-}[ld]
& \bullet \ar@{-}[r] & \bullet & \bullet \ar@{-}[r] & \bullet & \bullet &\bullet &\bullet &\bullet  \\
   \ar@{}[u]^{\text{ \large $\pi$ \  }} \bullet & \bullet & \bullet & \bullet & \bullet &\bullet &\bullet &\bullet }
\end{xy}\end{array}$$
Using the algorithm in Subsection 3.3, we obtain the element $\sigma= s_{4, 7}s_{6}s_{1, 5}s_{3, 4}s_{2} \in B^*_2$
satisfying
$$d' = \sigma e_{(2)} \text{ and } \ell(d') = \ell(\sigma)= 13.$$
By direct calculation using relations $(S_{1}),$ $(S_{2})$ on the Brauer algebra $D_{n}(N)$,
it also transforms $\omega\pi$ into $\sigma$ as follows:
\begin{align*}
\omega \pi & = (s_{7}s_{5, 6}s_{4, 5}s_{1, 4}s_{2})( s_{6, 7}s_{5})
\overset{(S_{2})}{=} s_{7}s_{5, 6}s_{4, 5}s_{6, 7}s_{1, 5}s_{2} \\
&\overset{(S_{2})}{=} s_{7}s_{5}s_{4}(s_{6}s_{5}s_{6})s_{7}s_{1, 5}s_{2}
\overset{(S_{1})}{=} s_{7}s_{5}s_{4}(s_{5}s_{6}s_{5})s_{7}s_{1, 5}s_{2} \\
&\overset{(S_{1})}{=} s_{7}(s_{4}s_{5}s_{4})s_{6}s_{5}s_{7}s_{1, 5}s_{2}
\overset{(S_{2})}{=} s_{4}s_{5}s_{7}s_{4}(s_{6}s_{7})s_{5}s_{1, 5}s_{2} \\
&\overset{(S_{2})}{=} s_{4}s_{5}(s_{7}s_{6}s_{7})s_{4}s_{5}s_{1, 3}s_{4}s_{5}s_{2}
\overset{(S_{1})}{=} s_{4}s_{5}(s_{6}s_{7}s_{6})s_{4}s_{1, 3}(s_{5}s_{4}s_{5})s_{2} \\
&\overset{(S_{1})}{=} s_{4,7}s_{6}s_{4}s_{1, 3}(s_{4}s_{5}s_{4})s_{2}
\overset{(S_{2})}{=} s_{4,7}s_{6}s_{1, 2}(s_{4}s_{3}s_{4})s_{5}s_{4}s_{2} \\
&\overset{(S_{1})}{=} s_{4,7}s_{6}s_{1, 2}(s_{3}s_{4}s_{3})s_{5}s_{4}s_{2}
\overset{(S_{2})}{=} s_{4,7}s_{6}s_{1, 5}s_{3, 4}s_{2}.
\end{align*}
Thus
\begin{align}\label{eq27}
\omega \pi = s_{4,7}s_{6}s_{1, 5}s_{3, 4}s_{2} \in B^*_2.
\end{align}
Subsequently, concatenating two diagrams $d'$ and $e_{(3)}$ produces the diagram $d''$
$$ \begin{array}{c} 
\begin{xy}
\xymatrix@!=0.01pc{\bullet \ar@{-}[rrrrrd] & \bullet \ar@{-}@/^/[rrrr] & \bullet \ar@{-}@/^/[rrrrr] &\bullet \ar@{-}[rrrrd] &\bullet \ar@{-}[d] & \bullet &\bullet \ar@{-}[d] & \bullet \\
  \ar@{}[u]^{\text{ \large $d'$ \  }} \bullet \ar@{-}[r] \dta[d] & \bullet \dta[d] & \bullet \dta[d] \ar@{-}[r] & \bullet \dta[d] & \bullet \dta[d] &\bullet \dta[d] &\bullet \dta[d] &\bullet \dta[d] \ar@{}[d]^{\text{\large {=} }}
& \bullet \ar@{-}@/^/[rrrr] & \bullet \ar@{-}@/^/[rrrr] & \bullet \ar@{-}@/^/[rrrrr] &\bullet \ar@{-}[rrrrd] &\bullet & \bullet &\bullet \ar@{-}[d] & \bullet \ar@{}[d]^{\text{\large $d''$. }}\\
      \bullet \ar@{-}[r] & \bullet & \bullet\ar@{-}[r] &\bullet &\bullet \ar@{-}[r] & \bullet &\bullet \ar@{-}[d] & \bullet \ar@{-}[d]
& \bullet \ar@{-}[r] & \bullet & \bullet \ar@{-}[r] & \bullet & \bullet \ar@{-}[r] &\bullet &\bullet &\bullet  \\
   \ar@{}[u]^{\text{ \large $e_{(3)}$ \  }} \bullet \ar@{-}[r] & \bullet & \bullet\ar@{-}[r] &\bullet &\bullet \ar@{-}[r] & \bullet &\bullet & \bullet }
\end{xy}\end{array}$$
The algorithm in Section 3.3 identifies a unique element $\sigma'= s_{4,7}s_{6}s_{3,4}s_{2} \in B^*_3$
satisfying $d'' = \sigma'e_{(3)}$ and $\ell(d'') = \ell(\sigma')= 8$.
In another way, a direct calculation using the defining relations $(S_0),\ (S_1),\ (S_2), (3) \text{ and } (5)$ also yields the same result. In detail,
\begin{align*}
\omega \pi e_{(3)} & = s_{4,7}s_{6}s_{1, 5}s_{3, 4}s_{2}e_{(3)}
\overset{(S_{2})}{=} s_{4,7}s_{6}s_{1, 4}s_{3}s_{2}(s_{5}s_{4}e_{(3)}) \\
&\overset{(5)}{=}  s_{4,7}s_{6}s_{1, 4}s_{3}s_{2}(s_{3}s_{4}e_{(3)})
\overset{(S_{1})}{=} s_{4,7}s_{6}s_{1, 4}(s_{2}s_{3}s_{2})s_{4}e_{(3)} \\
&\overset{(S_{2})}{=} s_{4,7}s_{6}s_{1, 3}s_{2}(s_{4}s_{3}s_{4})s_{2}e_{(3)}
\overset{(S_{1})}{=} s_{4,7}s_{6}s_{1, 3}s_{2}(s_{3}s_{4}s_{3})s_{2}e_{(3)} \\
&\overset{(S_{2})}{=} s_{4,7}s_{6}s_{1, 2}(s_{3}s_{2}s_{3})s_{4}(s_{3}s_{2}e_{(3)})
\overset{(S_{1}),\ (5)}{=} s_{4,7}s_{6}s_{1, 2}(s_{2}s_{3}s_{2})s_{4}(s_{1}s_{2}e_{(3)}) \\
&\overset{(S_{0}),\ (S_{2}) }{=} s_{4,7}s_{6}s_{1}s_{3}s_{4}(s_{2}s_{1}s_{2})e_{(3)}
\overset{(S_{1}),\ (S_{2}) }{=} s_{4,7}s_{6}s_{3}s_{4}s_{1}(s_{1}s_{2}s_{1})e_{(3)} \\
&\overset{(S_{0})}{=} s_{4,7}s_{6}s_{3, 4}s_{2}(s_{1}e_{(3)})
\overset{(3)}{=} s_{4,7}s_{6}s_{3, 4}s_{2}e_{(3)}
\overset{(S_1),\ (S_2)}{=}s_7 s_{4,6}s_{3, 4}s_{2}(s_7e_{(3)})
\end{align*}
Thus, we obtain
\begin{align}\label{eq28}
d'' = d' e_{(3)} = d^* \pi e_{(3)} = \omega \pi e_{(3)} = \sigma' e_{(3)} =\omega' \pi' e_{(3)},
\end{align}
where $\omega' = s_7 s_{4,6}s_{3, 4}s_{2}$ and $\pi' = s_7$.
\end{ex}
Notice that computing the diagrams $\sigma \text{ and } \sigma'$ using the algorithm in Section 3.3 is left for the reader.
\begin{lem}\label{bd15}
There exists a bijection between the $q$-Brauer algebra $Br_{n}(r, q)$ and
$R$-vector space
$$\oplus_{k=0}^{[n/2]}V^*_{k, n}\otimes_{R}V_{k, n}\otimes_{R}H_{2k+1,n}.$$
\end{lem}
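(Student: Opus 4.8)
The plan is to write down the obvious candidate map at the level of bases and then verify it is well defined and exhausts both sides, the whole content being the uniqueness of reduced expressions from Subsection 3.2. Recall from Theorem \ref{dl11} that $Br_{n}(r, q)$ is free over $R$ with basis $\{ g_{d} \mid d \in D_{n}(N) \}$, and that by Definition \ref{dn10} a diagram $d$ with exactly $2k$ horizontal edges and reduced expression $(\omega_{1}, \omega_{(d)}, \omega_{2})$ gives $g_{d} = g_{\omega_{1}}e_{(k)}g_{\omega_{(d)}}g_{\omega_{2}}$. I would define the $R$-linear map
$$\Phi : Br_{n}(r, q) \lra \bigoplus_{k=0}^{[n/2]} V^*_{k, n}\otimes_{R}V_{k, n}\otimes_{R}H_{2k+1,n}$$
on basis elements by $\Phi(g_{d}) = (\omega_{1}e_{(k)}) \otimes_{R} (e_{(k)}\omega_{2}) \otimes_{R} g_{\omega_{(d)}}$, sending a permutation diagram (the case $k=0$, where the first two factors are the rank-one spaces spanned by the identity) to $1 \otimes_{R} 1 \otimes_{R} g_{\omega_{(d)}}$.

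First I would record the explicit bases of the three tensor factors: by Lemma \ref{bd11}, $V^*_{k, n}$ has basis $\{\omega e_{(k)} \mid \omega \in B^*_{k, n}\}$ and $V_{k, n}$ has basis $\{e_{(k)}\omega \mid \omega \in B_{k, n}\}$, while by Definition \ref{dn5} the subalgebra $H_{2k+1,n}$ is free with basis $\{g_{\pi} \mid \pi \in S_{2k+1, n}\}$; hence the $k$-th summand of the target has a basis indexed by $B^*_{k, n}\times B_{k, n}\times S_{2k+1, n}$. The point to check is that $\Phi$ really lands among these basis elements: in the reduced expression of $d$ the partial diagrams $d_{1} = \omega_{1}e_{(k)}$ and $d_{2} = e_{(k)}\omega_{2}$ lie in $\mathscr{D}^*_{k, n}$ and $\mathscr{D}_{k, n}$ respectively (they carry the prescribed horizontal edges and have no crossing between vertical edges), so that $\omega_{1} \in B^*_{k, n}$ and $\omega_{2} \in B_{k, n}$, while $\omega_{(d)} \in S_{2k+1, n}$ by construction.

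It then remains to see that $\Phi$ is a bijection on bases. By the uniqueness of the reduced expression established in Subsection 3.2 (which rests on Lemmas \ref{bd2} and \ref{bd3}), the assignment $d \mapsto (\omega_{1}, \omega_{(d)}, \omega_{2})$ is a bijection from the set of Brauer diagrams with exactly $2k$ horizontal edges onto $B^*_{k, n}\times S_{2k+1, n}\times B_{k, n}$; composing with the tautological identification of these triples with the chosen basis of the $k$-th summand shows that $\Phi$ carries $\{g_{d}\}$ bijectively onto a basis of the direct sum, so $\Phi$ is in fact an isomorphism of $R$-modules. As a sanity check one compares ranks: by Remark \ref{nx3} the $k$-th summand has rank $\bigl(n!/(2^{k}(n-2k)!k!)\bigr)^{2}(n-2k)!$, which is exactly the number of Brauer diagrams with $2k$ horizontal edges, and summing over $k$ recovers the rank of $Br_{n}(r, q)$, equal to that of $D_{n}(N)$.

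The main obstacle here is not a hard computation but the bookkeeping of the previous two paragraphs: one must confirm that the factors $\omega_{1}, \omega_{2}$ of a reduced expression lie in the refined sets $B^*_{k, n}, B_{k, n}$ rather than merely in $B^*_{k}, B_{k}$, and that every triple in $B^*_{k, n}\times S_{2k+1, n}\times B_{k, n}$ arises from a unique diagram. Both of these follow from the construction of $(d_{1}, \omega_{(d)}, d_{2})$ and its established uniqueness, so the verification is essentially combinatorial, and the bijection is then immediate from matching the two indexing sets.
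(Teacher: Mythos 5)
Your proposal is correct, and it proves something marginally stronger than what the paper's own proof environment establishes. The paper's formal proof of Lemma \ref{bd15} is a pure rank count: it computes $\dim_{R}\bigl(V^*_{k,n}\otimes_{R}V_{k,n}\otimes_{R}H_{2k+1,n}\bigr) = \bigl(n!/(2^{k}(n-2k)!k!)\bigr)^{2}(n-2k)!$, matches this against the number of Brauer diagrams with exactly $2k$ horizontal edges, sums to $1\cdot 3\cdots(2n-1)$, and then invokes Wenzl's Theorem 3.8(a) in \cite{W3} for the equality $\dim Br_{n}(r,q) = \dim D_{n}(N)$; the explicit correspondence $g_{d} = g_{\omega_{1}}g_{\omega_{(d)}}e_{(k)}g_{\omega_{2}} \longmapsto \omega_{1}e_{(k)}\otimes e_{(k)}\omega_{2}\otimes g_{\omega_{(d)}}$ is only described informally after the proof, without verification. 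You instead construct that map directly from Theorem \ref{dl11} and check on bases that it is a bijection, with the two points that actually need care handled correctly: that the factors of a reduced expression land in the refined sets $B^*_{k,n}$ and $B_{k,n}$ (immediate from the definition \eqref{eq23} of $B^*_{k,n}$, since $d_{1}\in\mathscr{D}^*_{k,n}$ and $d_{2}\in\mathscr{D}_{k,n}$ by the construction in Subsection 3.2), and that every triple in $B^*_{k,n}\times S_{2k+1,n}\times B_{k,n}$ arises from a unique diagram (concatenate and normalize by $N^{-k}$; uniqueness is the uniqueness of the reduced expression). What each approach buys: the paper's count is shorter but rests on an external dimension theorem and, strictly speaking, only delivers equality of ranks rather than a distinguished bijection, which the surrounding text then has to supply anyway because Lemmas \ref{bd16} and \ref{bd19} use the explicit identification; your version delivers the needed isomorphism in one step and uses the count only as a consistency check. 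There is no circularity in your reliance on Theorem \ref{dl11}, since its proof (faithfulness via Lemmas \ref{bd1} and \ref{bd10}, spanning via Lemma \ref{bd9}) is independent of Lemma \ref{bd15}.
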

\begin{proof}
For a value $k$ the dimension of each $V^*_{k, n}\otimes_{R}V_{k, n}\otimes_{R}H_{2k+1,n}$ is calculated by the formula:
$$dim_{R}V^*_{k, n}\otimes_{R}V_{k, n}\otimes_{R}H_{2k+1,n}= dim_{R}(V^*_{k, n})\cdot dim_{R}(V_{k, n}) \cdot dim_{R} H_{2k+1, n} = (\dfrac {n!} {2^{k}(n-2k)!k!})^{2}\cdot (n-2k)!.$$
In the Brauer algebra $D_{n}(N)$ the number of diagrams $d$ which has exactly 2k horizontal edges is $(\dfrac {n!} {2^{k}(n-2k)!k!})^{2}\cdot (n-2k)!.$
Hence
$$dim_{R}(\oplus_{k=0}^{[n / 2]}V^*_{k, n}\otimes_{R}V_{k, n}\otimes_{R}H_{2k+1,n})= dim_{R}D_{n}(N) = 1\cdot 3 \cdot 5 ... (2n -1).$$
Theorem 3.8(a) in \cite{W3} implies the dimension of the Brauer algebra and the q-Brauer algebra is the same. Therefore
\begin{center}
$dim_{R}\oplus_{k=0}^{[n/2]}V^*_{k, n}\otimes_{R}V_{k, n}\otimes_{R}H_{2k+1,n}= dim_{R}Br_{n}(r, q).$
\end{center}
\end{proof}
Now an explicit isomorphism will be given.
Suppose that $d$ is a diagram with a unique reduced expression $(\omega_{1}, \omega_{(d)}, \omega_{2}),$
where $\omega_{1} \in B^*_{k, n}$, $\omega_{2} \in B_{k, n}$ and $\omega_{(d)} \in S_{2k+1, n}.$
As indicated in Subsection 3.2, the partial diagrams $d_{1}= \omega_{1} e_{(k)}$ and $d_{2}= e_{(k)}\omega_{2}$ with
$\ell(d_{1})= \ell(\omega_{1})$ and $\ell(d_{2})= \ell(\omega_{2})$ are basis diagrams of $V^*_{k, n}$ ($V_{k, n}$), respectively.
Therefore, the diagram $d$ corresponds one-to-one to a basis element
$$d_{1} \otimes d_{2} \otimes g_{\omega_{(d)}}= \omega_{1}e_{(k)}\otimes e_{(k)}\omega_{2} \otimes g_{\omega_{(d)}}$$
of the $R$-vector space $V^*_{k, n}\otimes_{R}V_{k, n}\otimes_{R}H_{2k+1,n}.$
Now, the correspondence between an arbitrary diagram $d$ in the Brauer algebra and a basis element $g_{d}= g_{\omega_{1}}g_{\omega_{(d)}}e_{(k)}g_{\omega_{2}}$ of the q-Brauer algebra
shown in Theorem \ref{dl11}, implies a bijection from $Br_{n}(r, q)$ to $\oplus_{k=0}^{[n/2]}V^*_{k, n}\otimes_{R}V_{k, n}\otimes_{R}H_{2k+1,n}$ linearly spanned by the rule
$$ g_{d}=g_{\omega_{1}}g_{\omega_{(d)}}e_{(k)}g_{\omega_{2}} \longmapsto \omega_{1}e_{(k)}\otimes e_{(k)} \omega_{2}\otimes g_{\omega_{(d)}}.$$

\medskip
From now on, if no confusion can arise, we will denote by $g_{d}$ both a basis element of $B_{n}(r, q)$ and its corresponding representation in
$V^*_{k, n}\otimes_{R}V_{k, n}\otimes_{R}H_{2k+1,n}$.

\subsection{The \textbf{$R$}-bilinear form for \textbf{$V^*_{k, n}\otimes_{R}V_{k, n}\otimes_{R}H_{2k+1,n}$}}
Now we want to construct an $R$-bilinear form
$$ \varphi_{k} :  V_{k, n}\otimes_{R}V^*_{k, n}  \longrightarrow H_{2k+1,n}$$
for each $0\leq k \leq [n/2].$

Given elements $\omega_{1},\ \omega_{2} \in B^*_{k, n}$,
by Lemma \ref{bd15} we form the element $X_{j}:= d^*_{j}\otimes d_{j}\otimes {\bf 1}$
in $V^*_{k, n}\otimes_{R}V_{k, n}\otimes_{R}H_{2k+1,n}$ for $j= 1, 2$,
where $d^*_{j} = {\omega_{j}e_{(k)}}$ and $d_{j} = {e_{(k)}\omega_{j}^{-1}}$.
The corresponding basis element of $X_{j}$ in the q-Brauer algebra is ${X_{j}} = g_{\omega_{j}}e_{(k)}g_{\omega^{-1}_{j}}.$ Then
\begin{align}\label{eq29}
{X_{1}}{X_{2}} = (g_{\omega_{1}}e_{(k)}g_{\omega^{-1}_{1}})(g_{\omega_{2}}e_{(k)}g_{\omega^{-1}_{2}}).
\end{align}
Using Lemma \ref{bd8} for $j=k$ implies
$$e_{(k)}g_{\omega^{-1}_{1}}g_{\omega_{2}}e_{(k)} \in  H_{2k+1, n}e_{(k)} + \sum_{m \geq k+1} H_{n} e_{(m)}H_{n}.$$
Hence, $e_{(k)}g_{\omega^{-1}_{1}}g_{\omega_{2}}e_{(k)}$ can be rewritten
as an $R$-linear combination of the form
$$e_{(k)}g_{\omega^{-1}_{1}}g_{\omega_{2}}e_{(k)} =\sum_{j}a_{j}g_{\omega_{(c_{j})}}e_{(k)}\  +\ a',$$
where $a_{j} \in R$,
$g_{\omega_{(c_{j})}} \in H_{2k+1, n},$
and $a'$ is a linear combination of basis elements in $\sum_{m \geq k+1} H_{n} e_{(m)}H_{n}.$
This implies that
\begin{align*}
{X_{1}}{X_{2}}=g_{\omega_{1}} (\sum_{j}a_{j}g_{\omega_{(c_{j})}}e_{(k)})g_{\omega^{-1}_{2}}
\ + \ g_{\omega_{1}}a^{'}g_{\omega^{-1}_{2}}
= \sum_{j}a_{j}g_{\omega_{1}}g_{\omega_{(c_{j})}}e_{(k)}g_{\omega^{-1}_{2}} \ + \ a,
\end{align*}
where $a$ is an $R$-linear combination in $\sum_{m \geq k+1} H_{n} e_{(m)}H_{n}.$
By Definition \ref{dn10}, the elements $g_{\omega_{1}}g_{\omega_{(c_{j})}}e_{(k)}g_{\omega^{-1}_{2}}$,
denoted by $g_{c_{j}}$, are basis elements in the $q$-Brauer algebra,
and hence, the product $X_{1}X_{2}$ can be rewritten to be
$\sum_{j}a_{j}g_{c_{j}} +\ a.$
Using Lemma \ref{bd15}, $g_{c_{j}}$ can be expressed in the form
$$g_{c_{j}} = \omega_{1}e_{(k)}\otimes {e_{(k)}\omega^{-1}_{2}}\otimes g_{\omega_{(c_{j})}}.$$
Finally, $X_{1}X_{2}$ can be presented as
\begin{align}\label{eq30}
X_{1}X_{2} = \sum_{j} \omega_{1}e_{(k)} \otimes e_{(k)}\omega^{-1}_{2} \otimes a_{j}g_{\omega_{(c_{j})}} + a.
\end{align}
Subsequently, $\varphi_{k} : V_{k, n}\otimes_{R}V^*_{k, n}  \longrightarrow H_{2k+1,n}$
is an $R$-bilinear form defined by
\begin{align}\label{eq31}
\varphi_{k}(d_{1}, d^*_{2}) = \sum_{j} a_{j}g_{\omega_{(c_{j})}} \in H_{2k+1,n}.
\end{align}
In a particular case
\begin{align} \label{eq31'}
\varphi_{k}(e_{(k)}, e_{(k)}) = (\dfrac{r-1} {q-1})^{k} \in H_{2k+1,n}.
\end{align}
Also note that, the equality (\ref{eq30}) and Definition \ref{eq31} yield the product $X_{1}X_{2}$ in the \hspace{2cm} $q$-Brauer algebra $Br_{n}(r, q)$
\begin{align}\label{eq32}
{X_{1}}{X_{2}}=g_{\omega_{1}}\varphi_{k}(d_{1}, d^{*}_{2})e_{(k)}g_{\omega^{-1}_{2}} + a
=g_{\omega_{1}}e_{(k)}\varphi_{k}(d_{1}, d^{*}_{2})g_{\omega^{-1}_{2}} + a.
\end{align}

We define $J_{k}$ to be the $R-module$ generated by the basis elements $g_{d}$ in the q-Brauer algebra $Br_{n}(r,q)$,
where $d$ is a diagram whose number of vertical edges, say $\vartheta(d)$, are less than or equal $n-2k$.
It is clear that $J_{k+1}\subset J_{k}$
and $J_{k}$ is an ideal in $Br_{n}(r, q)$.
By Lemma \ref{bd9} (see the complete proof in \cite{W3}), $J_{k} = \sum_{m=k}^{[n/2]}H_{n}e_{(m)}H_{n}.$

\begin{lem}\label{bd16}
Let $g_{c}= c_{1}\otimes {c_{2}}\otimes g_{\omega_{(c)}}$
and $g_{d}= d_{1}\otimes d_{2}\otimes g_{\omega_{(d)}}$,
where $g_{\omega_{(c)}},\ g_{\omega_{(d)}} \in H_{2k+1, n}$, $c_{1}= \omega_{1}e_{(k)}$, $c_{2}=e_{(k)}\omega_{2}$,
$d_{1}=\delta_{1}e_{(k)}$, $d_{2} =e_{(k)}\delta_{2}$
with $\omega_{1}, \delta_{1} \in B^*_{k, n}$ and $\omega_{2}, \delta_{2} \in B_{k, n}$.
Then
$$g_{c}g_{d} =c_{1}\otimes d_{2}\otimes g_{\omega_{(c)}}\varphi_{k}(c_{2}, d_{1})g_{\omega_{(d)}} \ (mod \ J_{k+1}).$$
\end{lem}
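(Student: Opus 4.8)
The plan is to reduce the product $g_c g_d$ to a single application of the bilinear form $\varphi_k$, which was defined precisely to record a sandwiched product of the shape $e_{(k)}(\cdots)e_{(k)}$. First I would write the two basis elements as honest elements of $Br_n(r,q)$ via Definition \ref{dn10}, namely $g_c = g_{\omega_1}e_{(k)}g_{\omega_{(c)}}g_{\omega_2}$ and $g_d = g_{\delta_1}e_{(k)}g_{\omega_{(d)}}g_{\delta_2}$. Since $\omega_{(c)},\omega_{(d)}\in S_{2k+1,n}$, Remark \ref{rem1}(1) gives $e_{(k)}g_{\omega_{(c)}}=g_{\omega_{(c)}}e_{(k)}$ and likewise for $\omega_{(d)}$, so I may pull both ``permutation-of-the-free-vertices'' factors to the outside and rewrite
\[
g_c g_d = g_{\omega_1}g_{\omega_{(c)}}\bigl(e_{(k)}g_{\omega_2}g_{\delta_1}e_{(k)}\bigr)g_{\omega_{(d)}}g_{\delta_2}.
\]
This isolates the middle factor $M:=e_{(k)}g_{\omega_2}g_{\delta_1}e_{(k)}$ as the only part of the product capable of creating new horizontal edges.

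The second step is to evaluate $M$. By hypothesis $\omega_2\in B_{k,n}$ and $\delta_1\in B^*_{k,n}$, so $c_2=e_{(k)}\omega_2\in V_{k,n}$ and $d_1=\delta_1 e_{(k)}\in V^*_{k,n}$ are basis diagrams, and $M$ is exactly the sandwiched product whose reduction defines $\varphi_k$ (compare \eqref{eq30}--\eqref{eq32}, with the first argument drawn from $V_{k,n}$ and the second from $V^*_{k,n}$). Lemma \ref{bd8} for $j=k$ therefore yields $M=\varphi_k(c_2,d_1)e_{(k)}+a''$ with $\varphi_k(c_2,d_1)\in H_{2k+1,n}$ and $a''\in J_{k+1}$. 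Substituting this and using that $J_{k+1}$ is a two-sided ideal (so multiplying $a''$ by $g_{\omega_1}g_{\omega_{(c)}}$ on the left and $g_{\omega_{(d)}}g_{\delta_2}$ on the right keeps it inside $J_{k+1}$), I obtain
\[
g_c g_d \equiv g_{\omega_1}g_{\omega_{(c)}}\varphi_k(c_2,d_1)e_{(k)}g_{\omega_{(d)}}g_{\delta_2}\pmod{J_{k+1}}.
\]

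Finally I would translate this back into the tensor description. Commuting $e_{(k)}$ once more past $g_{\omega_{(d)}}$ (again Remark \ref{rem1}(1)) puts the leading term into the canonical shape $g_{\omega_1}\,h\,e_{(k)}\,g_{\delta_2}$ with $h:=g_{\omega_{(c)}}\varphi_k(c_2,d_1)g_{\omega_{(d)}}\in H_{2k+1,n}$, since $H_{2k+1,n}$ is a subalgebra. Each basis element $g_{\omega_1}g_{\sigma}e_{(k)}g_{\delta_2}$ with $\sigma\in S_{2k+1,n}$ has exactly $2k$ horizontal edges and corresponds under the bijection of Lemma \ref{bd15} to $\omega_1 e_{(k)}\otimes e_{(k)}\delta_2\otimes g_{\sigma}=c_1\otimes d_2\otimes g_{\sigma}$; expanding $h$ in the basis $\{g_{\sigma}\}$ of $H_{2k+1,n}$ and invoking the linearity of this bijection identifies $g_{\omega_1}h\,e_{(k)}g_{\delta_2}$ with $c_1\otimes d_2\otimes h$, which is the asserted value. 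The step deserving the most care is precisely this last matching: one must check that the leading term lands entirely in the $k$-th layer $J_k/J_{k+1}$ (no diagram with more than $2k$ horizontal edges occurs, which is what makes the tensor well defined), and that the first/second-slot convention of $\varphi_k$ agrees with the roles played by $c_2\in V_{k,n}$ and $d_1\in V^*_{k,n}$. The algebraic manipulations themselves are routine once Remark \ref{rem1}(1) and the definition of $\varphi_k$ are in hand.
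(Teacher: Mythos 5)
Your proposal is correct and takes essentially the same route as the paper's own proof: you isolate the middle factor $e_{(k)}g_{\omega_2}g_{\delta_1}e_{(k)}$, evaluate it as $\varphi_k(c_2,d_1)e_{(k)}$ plus a term in $J_{k+1}$ using Lemma \ref{bd8} with $j=k$ together with the definition \eqref{eq31} of $\varphi_k$ (the paper records exactly this as \eqref{eq33}), and then push the leading term $g_{\omega_1}g_{\omega_{(c)}}\varphi_k(c_2,d_1)e_{(k)}g_{\omega_{(d)}}g_{\delta_2}$ through the bijection of Lemma \ref{bd15} to land on $c_1\otimes d_2\otimes g_{\omega_{(c)}}\varphi_k(c_2,d_1)g_{\omega_{(d)}}$. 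The only cosmetic difference is that the paper obtains \eqref{eq33} by sandwiching $g_{c_2}g_{d_1}$ between $g_{\omega_2^{-1}}$ and $g_{\delta_1^{-1}}$ so as to quote \eqref{eq32}, whereas you read the same identity off directly from the definition of $\varphi_k$; your closing check that the leading term stays in the $k$-th layer (so the tensor expression is well defined) is the same implicit use of Definition \ref{dn10} made in the paper.
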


\begin{proof}
Note that the basis diagram $c_{2}=e_{(k)}\omega_{2} \in V_{k, n}$ (similarly $d_{1}=\delta_{1}e_{(k)} \in V^*_{k, n}$)
can be seen as elements
$$g_{c_{2}} = e_{(k)} \otimes e_{(k)}\omega_{2} \otimes {\bf 1}
\text{ and } g_{d_{1}} = \delta_{1} e_{(k)} \otimes e_{(k)} \otimes {\bf 1}$$
in $V^*_{k, n}\otimes_{R}V_{k, n}\otimes_{R}H_{2k+1,n}.$ Lemma \ref{bd15} implies that
$g_{c_{2}} =e_{(k)}g_{\omega_{2}}$ and $g_{d_{1}}=g_{\delta_{1}}e_{(k)}$
are corresponding basis elements in the q-Brauer algebra $Br_{n}(r, q),$ respectively.
Applying Lemma \ref{bd8} for $j=k$, the product of $g_{c_{2}}$ and $g_{d_{1}}$ is
$$g_{c_{2}}g_{d_{1}} = e_{(k)}g_{\omega_{2}}g_{\delta_{1}}e_{(k)} \in  H_{2k+1, n}e_{(k)} + \sum_{m \geq k+1} H_{n} e_{(m)}H_{n}.$$
Therefore,
\begin{align*}
g_{\omega^{-1}_{2}}g_{c_{2}}g_{d_{1}}g_{\delta^{-1}_{1}} = (g_{\omega^{-1}_{2}} e_{(k)}g_{\omega_{2}})(g_{\delta_{1}}e_{(k)}g_{\delta^{-1}_{1}})
\overset{(\ref{eq32})}{=} g_{\omega^{-1}_{2}}\varphi_{k}(c_{2}, d_{1}) e_{(k)}g_{\delta^{-1}_{1}} + a,
\end{align*}
where $a$ is a linear combination of basis elements in $J_{k+1}$ and
$\varphi_{k}(c_{2}, d_{1})$ is the above defined bilinear form.
This means
\begin{align}\label{eq33}
g_{c_{2}}g_{d_{1}}= \varphi_{k}(c_{2}, d_{1}) e_{(k)} + a'
\end{align}
with $a'=  (g_{\omega^{-1}_{2}})^{-1} a (g_{\delta^{-1}_{1}})^{-1} \in J_{k+1}.$
As a consequence, $g_{c}g_{d}$ is formed as product of basis elements:
\begin{align*}
g_{c}g_{d} &= (g_{\omega_{1}}g_{\omega_{(c)}}e_{(k)}g_{\omega_{2}})(g_{\delta_{1}}g_{\omega_{(d)}}e_{(k)}g_{\delta_{2}})\\
&= g_{\omega_{1}}g_{\omega_{(c)}}(e_{(k)}g_{\omega_{2}}g_{\delta_{1}}e_{(k)})g_{\omega_{(d)}}g_{\delta_{2}}\\
&=g_{\omega_{1}}g_{\omega_{(c)}}(g_{c_{2}}g_{d_{1}})g_{\omega_{(d)}}g_{\delta_{2}}\\
&\overset{(\ref{eq33})}{=} g_{\omega_{1}}g_{\omega_{(c)}}(\varphi_{k}(c_{2}, d_{1}) e_{(k)} + a')g_{\omega_{(d)}}g_{\delta_{2}}\\
&=g_{\omega_{1}}g_{\omega_{(c)}}\varphi_{k}(c_{2}, d_{1}) e_{(k)}g_{\omega_{(d)}}g_{\delta_{2}}  + a''
\end{align*}
with $a'' \in J_{k+1}.$
Thus, by Lemma \ref{bd15}, $g_{c}g_{d}$ can be expressed as
\begin{center}
$ g_{c}g_{d} \equiv c_{1} \otimes d_{2} \otimes g_{\omega_{(c)}} \varphi_{k}(c_{2}, d_{1}) g_{\omega_{(d)}}  (mod \ J_{k+1}).$
\end{center}
\end{proof}

Lemma \ref{bd15} implies that $Br_{n}(r, q)$ has a decomposition as $R$-modules :
$$Br_{n}(r, q)= \oplus^{[n/2]}_{k=0}V^*_{k, n}\otimes_{R}V_{k, n}\otimes_{R}H_{2k+1,n}.$$

\begin{lem}\label{bd17}
Let $\omega$ be an arbitrary permutation in $B^*_{k, n}$ and $\pi \in S_{2k+1, n}$. Then
$g_{\omega}g_{\pi}e_{(k)}$ is a basis element in the q-Brauer algebra $Br_{n}(r, q)$.
\end{lem}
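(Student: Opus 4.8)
The plan is to identify $g_{\omega}g_{\pi}e_{(k)}$ with the basis element $g_{d^*}$ attached, via Definition \ref{dn10}, to a suitable Brauer diagram $d^*$; once this identification is made, being a basis element follows from Theorem \ref{dl11}.

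First I would collapse the Hecke-algebra factors. Since $\omega \in B^*_{k,n}$ and $\pi \in S_{2k+1,n}$, Corollary \ref{hq3} gives $\ell(\omega\pi) = \ell(\omega) + \ell(\pi)$ together with $\omega\pi \in B^*_k$. Lemma \ref{bd4}(a) then yields $g_{\omega}g_{\pi} = g_{\omega\pi}$, so that
$$g_{\omega}g_{\pi}e_{(k)} = g_{\omega\pi}e_{(k)}.$$
Writing $\sigma := \omega\pi \in B^*_k$, the diagram $d^* := \sigma e_{(k)}$ is, by Lemma \ref{bd2}(a), a basis diagram of $V^*_k$ with $\ell(d^*) = \ell(\sigma)$, and it therefore corresponds to a genuine basis element $g_{d^*}$ of $Br_{n}(r,q)$ by Theorem \ref{dl11}.

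Next I would compute the reduced expression of $d^*$ in the sense of Subsection 3.2 and compare. Because the bottom row of $d^* = \sigma e_{(k)}$ is exactly that of $e_{(k)}$, its lower partial diagram $d_2$ is $e_{(k)}$ itself, that is $\omega_2 = 1$. Applying Lemma \ref{bd13} to $\sigma \in B^*_k$ produces the unique pair $(\omega', \pi')$ with $\omega' \in B^*_{k,n}$, $\pi' \in S_{2k+1,n}$ and $\sigma = \omega'\pi'$; using the uniqueness in Lemma \ref{bd2}(a) together with Corollary \ref{hq3} one checks that this pair is precisely the data $(\omega_1, \omega_{(d^*)})$ read off from $d^*$. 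Hence Definition \ref{dn10} gives $g_{d^*} = g_{\omega'}e_{(k)}g_{\pi'}$.

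Finally I would run the first step in reverse for $(\omega', \pi')$. Since $e_{(k)}$ commutes with $g_{\pi'}$ by Remark \ref{rem1}(1), and since Corollary \ref{hq3} with Lemma \ref{bd4}(a) gives $g_{\omega'}g_{\pi'} = g_{\omega'\pi'} = g_{\sigma}$, we obtain
$$g_{d^*} = g_{\omega'}e_{(k)}g_{\pi'} = g_{\omega'}g_{\pi'}e_{(k)} = g_{\omega'\pi'}e_{(k)} = g_{\sigma}e_{(k)} = g_{\omega}g_{\pi}e_{(k)}.$$
This identifies $g_{\omega}g_{\pi}e_{(k)}$ with the basis element $g_{d^*}$, completing the argument. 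The only delicate point is the middle step: correctly reading off the reduced expression of $\sigma e_{(k)}$ and verifying that the Lemma \ref{bd13} pair $(\omega', \pi')$ coincides with the upper-permutation and middle-permutation data that Definition \ref{dn10} assigns to $d^*$. Everything else is a direct application of Corollary \ref{hq3} and the Hecke-algebra relation in Lemma \ref{bd4}(a).
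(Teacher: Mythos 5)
Your proposal is correct and follows essentially the paper's own route: both arguments rest on Corollary \ref{hq3}, which gives $\ell(\omega\pi)=\ell(\omega)+\ell(\pi)$, so that $(\omega,\ \pi,\ 1)$ is a reduced expression of the diagram $d=\omega\pi e_{(k)}$, and then Definition \ref{dn10} together with Theorem \ref{dl11} identifies $g_{\omega}g_{\pi}e_{(k)}$ with the basis element $g_{d}$. Your detour through Lemma \ref{bd13} is harmless but redundant: since $\omega\in B^*_{k,n}$ and $\pi\in S_{2k+1,n}$ already form such a pair, the uniqueness in that lemma forces $(\omega',\pi')=(\omega,\pi)$, collapsing your middle and final steps into the paper's one-line conclusion.
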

\begin{proof}
Corollary \ref{hq3} yields that the basis diagram $d=\omega\pi e_{(k)}$ satisfies
$$\ell(d) = \ell(\omega\pi) = \ell(\omega) + \ell(\pi).$$
This means the pair $(\omega, \pi)$ is a reduced expression of $d.$ Therefore, by Definition \ref{dn10} we get the precise statement.
\end{proof}

\begin{lem}\label{bd18}
Let $k,\ l$ are integers, $0 \leq l, k \leq [n/2]$.
Let $\omega$ be a permutation in $B^*_{l, n}$ and $\pi$ a permutation in $S_{2l+1, n}$.
Then
$$g_{\omega}g_{\pi} e_{(k)} = \sum_{j} a_{j} g_{\omega_{j}'}g_{\pi_{j}'} e_{(k)},$$
where $a_{j} \in R,$ $\omega_{j}' \in B^*_{k, n}$, and $\pi_{j}' \in S_{2k+1, n}$.
\end{lem}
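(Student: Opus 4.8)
The plan is to strip the statement down to a single left-multiplication claim and then to settle that claim by induction on length. First I would note that, by Corollary \ref{hq3}, the product $\omega\pi$ already lies in $B^*_l$ and satisfies $\ell(\omega\pi)=\ell(\omega)+\ell(\pi)$, so Lemma \ref{bd4}(a) gives $g_{\omega}g_{\pi}=g_{\omega\pi}$. Setting $\sigma:=\omega\pi\in B^*_l$, it therefore suffices to prove that $g_{\sigma}e_{(k)}$ lies in the $R$-span of $\{\,g_{\omega'}g_{\pi'}e_{(k)} : \omega'\in B^*_{k,n},\ \pi'\in S_{2k+1,n}\,\}$. Using Corollary \ref{hq3} again (which gives $g_{\omega'}g_{\pi'}=g_{\omega'\pi'}$ with $\omega'\pi'\in B^*_k$) together with Lemma \ref{bd13} (which factors every $\omega_0\in B^*_k$ uniquely as such a product), this span equals $M_k:=\sum_{\omega_0\in B^*_k}R\,g_{\omega_0}e_{(k)}$. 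Thus the whole lemma is equivalent to the inclusion $g_{\sigma}e_{(k)}\in M_k$ for every $\sigma\in S_n$, a claim that no longer mentions $l$ or $\pi$.

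Next I would prove this inclusion by induction on $\ell(\sigma)$. The base case $\sigma=1$ is immediate, as $e_{(k)}$ corresponds to $\omega_0=1\in B^*_k$. For the inductive step, I peel off a left generator $\sigma=s_i\sigma'$ with $\ell(\sigma')=\ell(\sigma)-1$, so that $g_{\sigma}=g_ig_{\sigma'}$ by Lemma \ref{bd4}(b). The induction hypothesis gives $g_{\sigma'}e_{(k)}=\sum_m c_m\,g_{\omega_m}e_{(k)}$ with $c_m\in R$ and $\omega_m\in B^*_k$, so everything reduces to the key claim that for each $\omega_0\in B^*_k$ and each generator $g_i$ the element $g_ig_{\omega_0}e_{(k)}$ again lies in $M_k$.

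To establish this key claim I would mirror Lemma \ref{bd2}(b). Writing $d^*=\omega_0e_{(k)}$, a basis diagram of $V^*_k$ with $\ell(d^*)=\ell(\omega_0)$, there are exactly three possibilities: $s_id^*=d^*$, or $\ell(s_id^*)=\ell(d^*)+1$, or $\ell(s_id^*)=\ell(d^*)-1$. These match the three cases of the action of $Br_n(r,q)$ on $V^*_k$ recorded just before Lemma \ref{bd10}, and I would show that left multiplication by $g_i$ in the algebra reproduces the same coefficients, producing respectively $q\,g_{\omega_0}e_{(k)}$, a single term $g_{\omega_0'}e_{(k)}$, or $(q-1)g_{\omega_0}e_{(k)}+q\,g_{\omega_0'}e_{(k)}$. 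The mechanics use Lemma \ref{bd4}(b),(c) to turn $g_ig_{\omega_0}$ into $g_{s_i\omega_0}$ or into $(q-1)g_{\omega_0}+q\,g_{s_i\omega_0}$, after which the resulting word on $e_{(k)}$ is transported back to a $B^*_k$-indexed basis element. No component in $J_{k+1}$ can appear, since multiplying $e_{(k)}$ on the left by Hecke generators never alters the number $2k$ of horizontal edges of the underlying diagram.

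The main obstacle is precisely this last transport. When $s_i\omega_0$ is longer than $\omega_0$ in $S_n$ yet the diagram length stays equal or even drops, the word $g_{s_i\omega_0}$ is not itself a $B^*_k$-labelled basis element and must be rewritten into one. Here I would invoke the $q$-analogues of the very Brauer relations that the reduction algorithm of Subsection 3.3 uses on diagrams: Lemma \ref{bd6}(c) (the exact $q$-form of relation $(5)$) and Lemma \ref{bd7}(b) (the $q$-form of relation $(3)$, which absorbs an odd generator into the scalar $q$). Because relation $(5)$ lifts without error and the only genuinely length-reducing step, the Hecke quadratic $g_i^2=(q-1)g_i+q$, contributes its extra term with strictly smaller length, every correction term stays inside $M_k$ and the induction closes. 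The diagram-level identity that guarantees the leading term has the required form is exactly Lemma \ref{bd14}.
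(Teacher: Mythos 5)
Your proposal is correct and follows essentially the same route as the paper: both first merge $g_{\omega}g_{\pi}=g_{\omega\pi}$ via Corollary \ref{hq3} and Lemma \ref{bd4}, and then replay the diagram rewriting of Lemma \ref{bd14} inside $Br_n(r,q)$ using the same dictionary of relations, namely $(H_1)$, $(H_2)$, the quadratic relation $g_i^2=(q-1)g_i+q$, and Lemmas \ref{bd6}(c) and \ref{bd7}(b) as the $q$-lifts of the Brauer relations $(5)$ and $(3)$. Your only addition is to make the paper's implicit recursion explicit, by identifying the target span with $\sum_{\omega_0\in B^*_k}R\,g_{\omega_0}e_{(k)}$ via Lemma \ref{bd13} and running an induction on $\ell(\sigma)$ with the trichotomy of Lemma \ref{bd2}(b) governing the single-generator step — a cleaner organization of the same argument.
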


\begin{proof}
Corollary \ref{hq3} implies $\ell(\omega\pi) = \ell(\omega) + \ell(\pi)$.
As a consequence, $g_{\omega}g_{\pi} = g_{\omega \pi}$ by applying Lemma \ref{bd4}(i).
The remainder of proof follows from the correspondence between the Brauer algebra $D_{n}(N)$
and the q-Brauer algebra $Br_{n}(r, q)$ in the following way:

Using the common properties of the Brauer algebra $D_{n}(N)$ in Section 2 and of the \hspace{2cm} $q$-Brauer algebra $Br_{n}(r, q)$ shown in whole Section 3,
hence, the effect of the basis element $g_{\omega \pi}$ on $e_{(k)}$ on the left(right) is similar to this of permutation $\omega \pi$ with respect to diagram $e_{(k)}$, respectively.
in the left (right).

In fact, the operations used to move $\omega \pi e_{(k)}$ into $\omega' \pi' e_{(k)}$ in Lemma \ref{bd14} are $(S_{0}),$ $(S_{1}),$ $(S_{2}),$ (3) and (5) on the Brauer algebra $D_{n}(N).$
In the same way, the product $g_{\omega \pi} e_{(k)}$ transforms into the form $\sum_{j} a_{j} g_{\omega_{j}'}g_{\pi_{j}'} e_{(k)}$
via using corresponding relations $g^{2}_{i} = (q-1) g_{i} + q$ in Definition \ref{dn5}(iii), $(H_{1}),$ $(H_{2})$ on the Hecke algebra of type $A_{n-1}$
as well as two relations in Lemmas \ref{bd7}(b) and \ref{bd6}(c).
\end{proof}

\begin{ex} Continue considering the same diagram as in Example \ref{ex6} with\\
$\omega =  s_{7}s_{5, 6}s_{4, 5}s_{1, 4}s_{2} \in B^*_{2, 8}$ and $\pi = s_{6, 7}s_{5} \in S_{5, 8}.$

As in \ref{eq27}, the product of $g_{\omega}$ and $g_{\pi}$ in the q-Brauer algebra $Br_{n}(r, q)$ is
$$g_{\omega} g_{\pi} \overset{L\ref{bd4}(i)}=  g_{\omega \pi} \overset{(4.3)}{=} g^{+}_{4, 7}g_{6}g^{+}_{1, 5}g^{+}_{3, 4}g_{2}.$$
Hence,
\begin{align*}
&g_{\omega \pi} e_{(3)} = g^{+}_{4,7}g_{6}g^{+}_{1, 5}g^{+}_{3, 4}g_{2}e_{(3)}
\overset{(H_{2})}{=} g^{+}_{4,7}g_{6}g^{+}_{1, 4}g_{3}g_{2}(g_{5}g_{4}e_{(3)}) \\
&\overset{L\ref{bd4}(c)}{=}  g^{+}_{4,7}g_{6}g^{+}_{1, 4}g_{3}g_{2}(g_{3}g_{4}e_{(3)})
\overset{(H_{1})}{=} g^{+}_{4,7}g_{6}g^{+}_{1, 4}(g_{2}g_{3}g_{2})g_{4}e_{(3)} \\
&\overset{(H_{2})}{=} g^{+}_{4,7}g_{6}g^{+}_{1, 3}g_{2}(g_{4}g_{3}g_{4})g_{2}e_{(3)}
\overset{(H_{1})}{=} g^{+}_{4,7}g_{6}g_{1, 3}g_{2}(g_{3}g_{4}g_{3})g_{2}e_{(3)} \\
&\overset{(H_{2})}{=} g^{+}_{4,7}g_{6}g^{+}_{1, 2}(g_{3}g_{2}g_{3})g_{4}(g_{3}g_{2}e_{(3)})
\overset{(H_{1}),\ L3.3(b)}{=} g^{+}_{4,7}g_{6}g^{+}_{1, 2}(g_{2}g_{3}g_{2})g_{4}(g_{1}g_{2}e_{(3)}) \\
&\overset{Def\ref{dn5}(iii),\ (H_{2})}{=} g^{+}_{4,7}g_{6}g_{1}((q-1)g_{2}+ q)g_{3}g_{4}(g_{2}g_{1}g_{2})e_{(3)}\\
&\overset{(H_{1})}{=} g^{+}_{4,7}g_{6}g_{1}((q-1)g_{2}+ q)g_{3}g_{4}(g_{1}g_{2}g_{1})e_{(3)} \\
&\overset{L\ref{bd7}(b)}{=} qg^{+}_{4,7}g_{6}g_{1}((q-1)g_{2}+ q)g_{3}g_{4}g_{1}g_{2}e_{(3)}\\
&\overset{}{=} q(q-1)g^{+}_{4,7}g_{6}g_{1}g_{2}g_{3}g_{4}g_{1}g_{2}e_{(3)} + q^{2}g^{+}_{4,7}g_{6}g_{1}g_{3}g_{4}g_{1}g_{2}e_{(3)}\\
&\overset{(H_{2})}{=} q(q-1)g^{+}_{4,7}g_{6}g^{+}_{1, 4}g^{+}_{1, 2}e_{(3)} + q^{2}g^{+}_{4,7}g_{6}g^{+}_{3, 4}g_{1}g_{1}g_{2}e_{(3)}\\
&\overset{Def\ref{dn5}(iii)}{=} q(q-1)g^{+}_{4,7}g_{6}g^{+}_{1, 4}g^{+}_{1, 2}e_{(3)} + q^{2}g^{+}_{4,7}g_{6}g^{+}_{3, 4}((q-1)g_{1} +q)g_{2}e_{(3)}\\
& = q(q-1)g^{+}_{4,7}g_{6}g^{+}_{1, 4}g^{+}_{1, 2}e_{(3)} + q^{2}(q-1)g^{+}_{4,7}g_{6}g^{+}_{3, 4}g^{+}_{1, 2}e_{(3)} +q^{3}g^{+}_{4,7}g_{6}g^{+}_{3, 4}g_{2}e_{(3)}
\end{align*}
Thus, in the q-Brauer algebra $Br_{n}(r, q)$ the element $g_{\omega \pi} e_{(3)}$ is rewritten
as an $R$- linear combination of elements $g_{\omega_{j}}e_{(3)}$ ($1 \le j \le 3$), where $\omega_{j} \in B^*_3$.
Now, using Lemmas \ref{bd14} and \ref{bd17}, each element $\omega_{j}$ of  $B^*_3$ can be uniquely expressed in the form $\omega_{j}= \omega'_{j}\pi'_{j}$,
where $\omega'_{j} \in B^*_{3, 8}$ and $\pi'_{j} \in S_{7, 8}$, as follows:
\begin{align}\label{eq34}
g_{\omega \pi} e_{(3)} = \sum_{j =1}^{3} a_{j}g_{\omega'_{j}}g_{\pi'_{j}}e_{(3)}
= &q(q-1)g_{7}g^{+}_{4,6}g^{+}_{1, 4}g^{+}_{1, 2}(g_{7}e_{(3)}) \\
&+ q^{2}(q-1)g_{7}g^{+}_{4,6}g^{+}_{3, 4}g^{+}_{1, 2}(g_{7}e_{(3)}) +q^{3}g_{7}g^{+}_{4,6}g^{+}_{3, 4}g_{2}(g_{7}e_{(3)}).\notag
\end{align}
\end{ex}

Note that if fixing $r= q^{N}$ and $q \ra 1$, then $Br_{n}(r, q) \equiv D_{n}(N)$ (see Remark 3.1, \cite{W3}).
In this case $g_{i}$ becomes the transposition $s_{i}$ and the element $e_{(k)}$ can be identified with the diagram $e_{(k)}.$
Hence, the last Lemma coincides with Lemma \ref{bd14}. This means the equality (\ref{eq34}) in the above example recovers the equality (\ref{eq28}) in Example \ref{ex6}.

The next statement shows how to get an ideal in $Br_{n}(r, q)$ from an ideal in Hecke algebras.

\begin{prop}\label{md13}
Let I be an ideal in $H_{2k+1,n}$.
Then $J_{k+1} + V^*_{k, n}\otimes_{R}V_{k, n}\otimes I$
is an ideal in $Br_{n}(r, q).$
\end{prop}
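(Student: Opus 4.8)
The plan is to check directly that $J := J_{k+1} + V^*_{k, n}\otimes_{R}V_{k, n}\otimes I$ absorbs multiplication by $Br_{n}(r,q)$ on both sides. Since $J_{k+1}$ is already a two-sided ideal contained in $J$, and since $Br_n(r,q)$ and the summand $V^*_{k,n}\otimes_R V_{k,n}\otimes I$ are spanned over $R$ by the distinguished elements, it suffices to show that $g_c\,x$ and $x\,g_c$ lie in $J$ whenever $g_c$ is a basis element (say the diagram $c$ has $2l$ horizontal edges) and $x=g_{\omega_1}e_{(k)}h\,g_{\omega_2}$ is a spanning element of $V^*_{k,n}\otimes_R V_{k,n}\otimes I$, with $\omega_1\in B^*_{k,n}$, $\omega_2\in B_{k,n}$ and $h\in I\subseteq H_{2k+1,n}$; here I repeatedly use that every element of $H_{2k+1,n}$ commutes with $e_{(k)}$ (Remark \ref{rem1}(1)). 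The case $l>k$ is immediate: then $g_c\in H_ne_{(l)}H_n\subseteq J_{k+1}$, so both $g_c x$ and $x g_c$ lie in $J_{k+1}\subseteq J$.

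For the left action in the remaining case $l\le k$, I would write $g_c=g_{\sigma_1}e_{(l)}g_{\omega_{(c)}}g_{\sigma_2}$ as in Definition \ref{dn10}, so that $g_c x=g_{\sigma_1}e_{(l)}(g_{\omega_{(c)}}g_{\sigma_2}g_{\omega_1})e_{(k)}h\,g_{\omega_2}$, the middle factor lying in $H_n$. Lemma \ref{bd8} (with $j=l\le k$) gives $e_{(l)}H_ne_{(k)}\subseteq H_{2l+1,n}e_{(k)}+J_{k+1}$, so modulo $J_{k+1}$ I am reduced to $R$-combinations of $g_{\sigma_1}g_{\tau}e_{(k)}h\,g_{\omega_2}$ with $\tau\in S_{2l+1,n}$. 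Lemma \ref{bd18} applied to $g_{\sigma_1}g_{\tau}e_{(k)}$ (valid since $\sigma_1\in B^*_{l,n}$, $\tau\in S_{2l+1,n}$) rewrites each such term as an $R$-combination of $g_{\omega'}g_{\pi'}e_{(k)}$ with $\omega'\in B^*_{k,n}$ and $\pi'\in S_{2k+1,n}$. Commuting $g_{\pi'}$ past $e_{(k)}$ gives $g_{\omega'}g_{\pi'}e_{(k)}h\,g_{\omega_2}=g_{\omega'}e_{(k)}(g_{\pi'}h)g_{\omega_2}$, and $g_{\pi'}h\in I$ because $I$ is a left ideal; hence each summand is a spanning element of $V^*_{k,n}\otimes_R V_{k,n}\otimes I$, so $g_c x\in J$.

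The right action is the mirror computation, but with one decisive precaution. Using the involution $i$ of Proposition \ref{md12} (which fixes every $e_{(m)}$) to transport Lemma \ref{bd18}, I obtain its right-handed form: for $\omega\in B_{l,n}$ and $\pi\in S_{2l+1,n}$, $e_{(k)}g_\pi g_\omega=\sum_j a_j e_{(k)}g_{\pi'_j}g_{\omega'_j}$ with $\pi'_j\in S_{2k+1,n}$ and $\omega'_j\in B_{k,n}$. To compute $x g_c$ I would first move $h$ to the far left, writing $x=g_{\omega_1}h\,e_{(k)}g_{\omega_2}$, so that $x g_c=g_{\omega_1}h\,e_{(k)}(g_{\omega_2}g_{\sigma_1})e_{(l)}g_{\omega_{(c)}}g_{\sigma_2}$ has only a factor of $H_n$ squeezed between $e_{(k)}$ and $e_{(l)}$. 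Corollary \ref{hq13}(f) (with $j=l\le k$) reduces this contraction into $e_{(k)}H_{2l+1,n}$ modulo $J_{k+1}$; the right-handed Lemma \ref{bd18} and the commutation of $S_{2k+1,n}$ and of $h$ past $e_{(k)}$ then produce terms $g_{\omega_1}e_{(k)}(h\,g_{\pi'})g_{\omega'}$ with $h\,g_{\pi'}\in I$, now using that $I$ is a right ideal, so again $x g_c\in J$.

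The genuinely delicate point is exactly this right action: one must commute $h$ out to the left of $e_{(k)}$ \emph{before} invoking Corollary \ref{hq13}(f), since otherwise the crude containment would swallow $h$ into $e_{(k)}H_{2l+1,n}$ and destroy the membership $h\in I$. The rest is bookkeeping with the two sandwiching lemmas and their involution images, and the argument uses the two-sided nature of $I$ in an essential way — left absorption to conclude $g_{\pi'}h\in I$ on the left, and right absorption to conclude $h\,g_{\pi'}\in I$ on the right.
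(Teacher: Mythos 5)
Your proof is correct, and its core engine is the same as the paper's: contract $e_{(l)}H_{n}e_{(k)}$ (resp.\ $e_{(k)}H_{n}e_{(l)}$) using Lemma \ref{bd8} (resp.\ Corollary \ref{hq13}(f)), rewrite the resulting products $g_{\sigma}g_{\tau}e_{(k)}$ with $\sigma\in B^*_{l,n}$, $\tau\in S_{2l+1,n}$ via Lemma \ref{bd18}, and commute the $H_{2k+1,n}$-factor past $e_{(k)}$ so that $I$ absorbs it. You differ in two worthwhile ways. First, for $l>k$ you simply observe $g_{c}\in H_{n}e_{(l)}H_{n}\subseteq J_{k+1}$ and use that $J_{k+1}$ is a two-sided ideal; the paper instead runs the contraction through Corollary \ref{hq13}(f) to land in $J_{l}\subseteq J_{k+1}$ --- your shortcut is cleaner. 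Second, and more substantively, the paper's proof verifies only left multiplication: it establishes $g_{c}g_{d}\equiv b\otimes d_{2}\otimes a\,g_{\omega_{(d)}}\ (mod \ J_{k+1})$ with $a\in H_{2k+1,n}$ independent of $g_{\omega_{(d)}}$ (a stronger, \emph{bilinear-form-governed} shape, which is what Lemma \ref{bd20} later consumes), and leaves right absorption implicit. Transporting the left-hand statement by the involution $i$ of Proposition \ref{md12} would give right absorption only for $i$-invariant ideals $I$ --- enough for the cell chain actually used in Theorem \ref{dl16}, but not for an arbitrary ideal $I$ as in the statement of the proposition. Your explicit right-handed computation, using the $i$-image of Lemma \ref{bd18}, closes this for arbitrary two-sided $I$, and your precaution is genuinely necessary: one must rewrite $g_{\omega_{1}}e_{(k)}h\,g_{\omega_{2}}$ as $g_{\omega_{1}}h\,e_{(k)}g_{\omega_{2}}$ \emph{before} applying Corollary \ref{hq13}(f), since otherwise the membership $h\in I$ is dissolved into the containment $e_{(k)}H_{2l+1,n}$. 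The only content of the paper's proof that your argument does not reproduce is the independence of the coefficient $a$ from the $H_{2k+1,n}$-component; that is not needed for the proposition as stated, but it is the precise form the paper reuses when checking the hypotheses of Lemma \ref{bd20}.
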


\begin{proof}
Given two elements $g_{c} = c_{1}\otimes c_{2}\otimes g_{\omega_{(c)}}$
with $c \in D_{n}(N)$ and $\vartheta(c)= n-2l$, and \hspace{2cm}
$g_{d} = d_{1}\otimes d_{2}\otimes g_{\omega_(d)}$
with $d \in D_{n}(N)$ and $\vartheta(d) = n-2k$, we need to prove out that:
$$(c_{1}\otimes c_{2}\otimes g_{\omega_{(c)}})(d_{1}\otimes d_{2}\otimes g_{\omega_{(d)}})\equiv b\otimes d_{2}\otimes ag_{\omega_{(d)}} \ (mod \ J_{k+1})$$
for some $b \in V^*_{k}$,
and $a$ is an element in $H_{2k+1,n}$
which is independent of $g_{\omega_{(d)}}$.

This property is shown via considering the multiplication of basis elements
of the q-Brauer algebra $Br_{n}((r, q)$ as in the proof of Lemma \ref{bd15}.
Assume that
$$g_{c} = g_{\omega_{1}}e_{(l)}g_{\omega_{(c)}}g_{\omega_{2 }}
\text{ and } g_{d} = g_{\delta_{1}e_{(k)}}g_{\omega_{(d)}}g_{\delta_{2 }}$$
are basis elements on $Br_{n}(r, q)$, where $\omega_{(d)} \in S_{2k+1, n}$; $\omega_{(c)} \in S_{2l+1, n}$; $\omega_{1},\ \omega^{-1}_{2 } \in B^*_{l, n}$
and $\delta_{1}, \delta^{-1}_{2} \in B^*_{k, n}$.
Then, it implies
\begin{equation*}
g_{c}g_{d} = g_{\omega_{1}}(e_{(l)}g_{\omega_{(c)}}g_{\omega_{2}}g_{\delta_{1}}e_{(k)})g_{\omega_{(d)}}g_{\delta_{2}}. 
\end{equation*}
In the following we consider two separate cases of $l$ and $k$. \\
Case 1. If $l > k,$ then Corollary \ref{hq13}(f) implies that
$$e_{(l)}g_{\omega_{(c)}}g_{\omega_{2 }} g_{\delta_{1}}e_{(k)}  \in e_{(l)}H_{2k+1, n} + \sum_{m \geq l+1} H_{n} e_{(m)}H_{n},$$
and hence
\begin{align*}
g_{c}g_{d} = g_{\omega_{1}}(e_{(l)}g_{\omega_{(c)}}g_{\omega_{2}}g_{\delta_{1}}e_{(k)})g_{\omega_{(d)}}g_{\delta_{2}}
&\in   g_{\omega_{1}}e_{(l)}H_{2k+1, n}g_{\omega_{(d)}}g_{\delta_{2}} + \sum_{m \geq l+1} H_{n} e_{(m)}H_{n}\\
& \subseteq   H_{n}e_{(l)}H_{n} + \sum_{m \geq l+1} H_{n} e_{(m)}H_{n} \\
&= \sum_{m \geq l} H_{n} e_{(m)}H_{n} = J_{l} \overset{l > k}{\subseteq} J_{k+1}.
\end{align*}
Thus, in this case we obtain $g_{c}g_{d} \equiv 0 \ (mod \ J_{k+1})$.\\
Case 2. If $l \leq k$, then by Lemma \ref{bd8},
$$e_{(l)}g_{\omega_{(c)}}g_{\omega_{2 }} g_{\delta_{1}}e_{(k)}  \in H_{2l+1, n}e_{(k)} + \sum_{m \geq k+1} H_{n} e_{(m)}H_{n}$$
and
\begin{align*}
g_{c}g_{d} = g_{\omega_{1}}(e_{(l)}g_{\omega_{(c)}}g_{\omega_{2}}g_{\delta_{1}}e_{(k)})g_{\omega_{(d)}}g_{\delta_{2}}
\in   g_{\omega_{1}}H_{2l+1, n}e_{(k)}g_{\omega_{(d)}}g_{\delta_{2}} + \sum_{m \geq k+1} H_{n} e_{(m)}H_{n}.
\end{align*}
Without loss of generality we may assume that
\begin{align*}
g_{c}g_{d}  = g_{\omega_{1}}(\sum_{i}b_{i} g_{\omega_{(c_{i})}}e_{(k)})g_{\omega_{(d)}}g_{\delta_{2}} + b'
= \sum_{i}b_{i} (g_{\omega_{1}} g_{\omega_{(c_{i})}}e_{(k)})g_{\omega_{(d)}}g_{\delta_{2}} + b',
\end{align*}
where $b_{i} \in R,$ $g_{\omega_{(c_{i})}} \in H_{2l +1, n},$ and $ b' \in \ \sum_{m \geq k+1} H_{n} e_{(m)}H_{n}$.\\
Using Lemma \ref{bd18} with respect to $\omega_{1} \in B_{l}$ and $\omega_{(c_{i})} \in S_{2l +1, n}$, it implies that
$$g_{\omega_{1}} g_{\omega_{(c_{i})}}e_{(k)} = \sum_{j} a_{(i,j)} g_{\omega_{(i,j)}^{'}}g_{\pi_{(i,j)}^{'}} e_{(k)},$$
where $a_{(i,j)} \in R$, $\omega_{(i,j)}^{'} \in B^*_{k, n}$, and $\pi_{(i,j)}^{'} \in S_{2k+1, n}$ for $l \le k.$

Finally, $g_{c}g_{d}$ can be rewritten as an $R$ - linear combination of basis elements in the $q$-Brauer algebra $Br_{n}(r, q)$ as follows:
\begin{align*}
g_{c}g_{d} &= \sum_{i}b_{i} (g_{\omega_{1}} g_{\omega_{(c_{i})}}e_{(k)})g_{\omega_{(d)}}g_{\delta_{2}} + b'\\
&= \sum_{i}b_{i} \big{(} \sum_{j} a_{(i,j)} g_{\omega_{(i,j)}^{'}}g_{\pi_{(i,j)}^{'}} e_{(k)}\big{)} g_{\omega_{(d)}}g_{\delta_{2}} + b'\\
&= \sum_{i} \sum_{j} b_{i}a_{(i,j)} (g_{\omega_{(i,j)}'}g_{\pi_{(i,j)}'} e_{(k)} g_{\omega_{(d)}}g_{\delta_{2}}) + b'\\
&= \sum_{i,\ j} b_{i}a_{(i,j)} g_{\omega_{(i,j)}'}g_{\pi_{(i,j)}'}g_{\omega_{(d)}} e_{(k)} g_{\delta_{2}} + b',
\end{align*}
where $\pi_{(i,j)}' \text{ and  $\omega{(d)}$ are in } S_{2k+1, n}$; $b_{i}a_{(i,j)} \in R$; $\omega_{(i,j)}' \in B^*_{k, n}$ and $ \delta_{2} \in B_{k, n}.$
Lemma \ref{bd15} implies that the corresponding element of $g_{c}g_{d}$ in $V^*_{k, n}\otimes_{R}V_{k, n}\otimes_{R}H_{2k+1,n}$ is
\begin{align*}
g_{c}g_{d} &\equiv \sum_{i,\ j} b_{i}a_{(i,j)}(\omega_{(i,j)}'e_{(k)} \otimes e_{(k)}\delta_{2} \otimes g_{\pi_{(i,j)}'} g_{\omega_{(d)}}) \text{ mod } J_{k+1}\\
&\equiv \sum_{i,\ j} (\omega_{(i,j)}'e_{(k)} \otimes e_{(k)}\delta_{2} \otimes (b_{i}a_{(i,j)}) g_{\pi_{(i,j)}'} g_{\omega_{(d)}}) \text{ mod } J_{k+1}\\
& \equiv b\otimes d_{2}\otimes ag_{\omega_{(d)}} \ (mod \ J_{k+1}),
\end{align*}
where $b=\sum_{i,\ j} \omega_{(i,j)}'e_{(k)} \in V^*_{k}$, $a = \sum_{i,\ j} b_{i}a_{(i,j)}g_{\pi_{(i, j)}'} \in H_{2k +1, n},$ and $d_{2} = e_{(k)} \delta_{2}.$
\end{proof}
The following lemma describes the effect of the involution $i$ of the $q$-Brauer algebra on
$$V^*_{k, n}\otimes_{R}V_{k, n}\otimes_{R}H_{2k+1,n}.$$

\begin{lem}\label{bd19}
(a) If $g_{d} = \ d_{1} \otimes d_{2} \otimes g_{\omega_{(d)}}$
with a reduced expression $d = (\omega_{1},\ \omega_{(d)},\ \omega_{2})$ in $D_{n}(N)$. Then
$$i(g_{d}) = d^{-1}_{2} \otimes d^{-1}_{1} \otimes i(g_{\omega_{(d)}})$$
with  $d^{-1}_{2}=\omega^{-1}_{2}e_{(k)} \in V^*_{k, n}$ and $d^{-1}_{1}=e_{(k)}\omega^{-1}_{2}\in V_{k, n}.$

(b) The involution $i$ on $H_{2k+1, n}$ and the $R-bilinear$ form $\varphi_{k}$
have the following property:
$$i \varphi_{k}(c, d) = \varphi_{k}(d^{-1}, c^{-1})$$
for all $c \in V_{k, n}$ and $d \in V^*_{k, n}.$
\end{lem}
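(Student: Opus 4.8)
The plan is to derive both statements directly from the anti-homomorphism property of $i$ (Proposition \ref{md12}), the defining relation of the form $\varphi_k$, and the directness of the decomposition in Lemma \ref{bd15}.

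For part (a) I would begin from the reduced expression $g_d = g_{\omega_1} g_{\omega_{(d)}} e_{(k)} g_{\omega_2}$, where $e_{(k)}$ and $g_{\omega_{(d)}}$ commute by Remark \ref{rem1}(1) since $\omega_{(d)} \in S_{2k+1,n}$. Applying $i$ as an anti-homomorphism and using $i(e_{(k)}) = e_{(k)}$ together with $i(g_\omega) = g_{\omega^{-1}}$ reproduces the computation \eqref{eq19}, giving $i(g_d) = g_{\omega_2^{-1}} g_{\omega_{(d)}^{-1}} e_{(k)} g_{\omega_1^{-1}}$. Because $\omega_1 \in B^*_{k,n}$ and $\omega_2 \in B_{k,n}$, the definitions of these index sets force $\omega_2^{-1} \in B^*_{k,n}$ and $\omega_1^{-1} \in B_{k,n}$, so $(\omega_2^{-1}, \omega_{(d)}^{-1}, \omega_1^{-1})$ is again a reduced expression. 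Reading off its image under Lemma \ref{bd15} and recalling $i(g_{\omega_{(d)}}) = g_{\omega_{(d)}^{-1}}$ (Lemma \ref{bd5}) then yields $i(g_d) = d_2^{-1} \otimes d_1^{-1} \otimes i(g_{\omega_{(d)}})$ with $d_2^{-1} = \omega_2^{-1} e_{(k)} \in V^*_{k,n}$ and $d_1^{-1} = e_{(k)}\omega_1^{-1} \in V_{k,n}$.

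For part (b) I would write $c = e_{(k)}\beta \in V_{k,n}$ and $d = \alpha e_{(k)} \in V^*_{k,n}$ and start from the defining relation \eqref{eq33},
$$g_c g_d = e_{(k)} g_\beta g_\alpha e_{(k)} = \varphi_k(c, d)\, e_{(k)} + a', \qquad a' \in J_{k+1}.$$
Applying $i$ and using $i(e_{(k)}) = e_{(k)}$ turns the left-hand side into $e_{(k)} g_{\alpha^{-1}} g_{\beta^{-1}} e_{(k)} = g_{d^{-1}} g_{c^{-1}}$, which by the same relation \eqref{eq33} equals $\varphi_k(d^{-1}, c^{-1})\, e_{(k)} + a''$ for some $a'' \in J_{k+1}$; the right-hand side becomes $i(\varphi_k(c,d))\, e_{(k)} + i(a')$, since $i$ restricts to the Hecke involution on $H_{2k+1,n}$ (Lemma \ref{bd5}) and commutes $e_{(k)}$ past it by Remark \ref{rem1}(1). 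Comparing the two gives the congruence $\varphi_k(d^{-1}, c^{-1})\, e_{(k)} \equiv i(\varphi_k(c,d))\, e_{(k)} \pmod{J_{k+1}}$.

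The main obstacle is upgrading this congruence to the genuine equality $\varphi_k(d^{-1}, c^{-1}) = i(\varphi_k(c,d))$ in $H_{2k+1,n}$, for which I need two observations. First, $i(J_{k+1}) = J_{k+1}$: this holds because $J_{k+1} = \sum_{m \geq k+1} H_n e_{(m)} H_n$ and each summand is preserved, as $i(e_{(m)}) = e_{(m)}$ and $i(H_n) = H_n$. Second, the $R$-linear map $h \mapsto h e_{(k)} \pmod{J_{k+1}}$ is injective on $H_{2k+1,n}$: indeed, under Lemma \ref{bd15} the element $h e_{(k)}$ corresponds to $e_{(k)} \otimes e_{(k)} \otimes h$ in the $k$-th summand $V^*_{k,n} \otimes V_{k,n} \otimes H_{2k+1,n}$, while $J_{k+1}$ is precisely the sum of the summands of higher index, so $h e_{(k)} \in J_{k+1}$ forces $h = 0$. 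Together these give the desired equality and complete part (b).
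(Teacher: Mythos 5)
Your proof is correct and takes essentially the same route as the paper: part (a) is the paper's own argument via \eqref{eq19}, the index sets $B^*_{k,n}$, $B_{k,n}$ and Lemma \ref{bd15}, while part (b) is the paper's comparison of $i(g_{c}g_{d})$ with $i(g_{d})i(g_{c}) = g_{d^{-1}}g_{c^{-1}}$ through \eqref{eq33} and Lemma \ref{bd16}, merely reorganized. The only difference is that you make explicit two facts the paper leaves implicit --- that $i(J_{k+1}) = J_{k+1}$ and that $h \mapsto he_{(k)} \ (mod \ J_{k+1})$ is injective on $H_{2k+1,n}$ by the directness of the decomposition in Lemma \ref{bd15} --- which is a welcome tightening of the same argument rather than a different approach.
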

\begin{proof}
Part (a). The basis element corresponding to the reduced expression $d = (\omega_{1},\ \omega_{(d)},\ \omega_{2})$ is $g_{d} = g_{\omega_{1}}e_{(k)}g_{\pi_{(d)}}g_{\omega_{2}}.$
As shown in (\ref{eq19}) of Proposition \ref{md12}, the image of $g_{d}$ via involution $i$ is
\begin{align*}
i(g_{d}) = g_{\omega^{-1}_{2}}e_{(k)}g_{\omega^{-1}_{(d)}}g_{\omega^{-1}_{1}}
\end{align*}
with $\ell(d^{-1}_{2})= \ell(\omega^{-1}_{2}e_{(k)})= \ell(\omega^{-1}_{2})$ and
$\ell(d^{-1}_{1})= \ell(e_{(k)} \omega^{-1}_{1})= \ell(\omega^{-1}_{1})$.
Definitions of $V^*_{k, n}$ and $V_{k, n}$ imply that $d^{-1}_{1} \in V_{k, n}$ and $d^{-1}_{2} \in V^*_{k, n}.$
Hence, by Lemma \ref{bd12}, $i(g_{d})$ can be rewritten as
$$i(g_{d}) = d^{-1}_{2} \otimes d^{-1}_{1} \otimes i(g_{\omega_{(d)}})$$
in $V^*_{k, n}\otimes_{R}V_{k, n}\otimes_{R}H_{2k+1,n}$.\\

Part (b). Using the same argument as in the proof of Lemma \ref{bd16}, $c$ and $d$ can be expressed as basis elements in
$V^*_{k, n}\otimes_{R}V_{k, n}\otimes_{R}H_{2k+1,n}$
with $g_{c} = e_{(k)} \otimes c \otimes \textbf{1}$ and $g_{d} = d \otimes e_{(k)} \otimes \textbf{1}$,
where $\textbf{1}$ is identity in $H_{2k +1, n}$.
As a consequence,
$$g_{c}g_{d} = (e_{(k)} \otimes c \otimes \textbf{1}) (d \otimes e_{(k)} \otimes \textbf{1}) \overset{L\ref{bd16}}{=} e_{(k)} \otimes e_{(k)} \otimes \varphi_{k} (c, d) \ (mod \ J_{k+1}).$$
This means the corresponding basis element in the $q$-Brauer algebra is $g_{c}g_{d}=e_{(k)}\varphi_{k} (c, d) + a$ with $a \in J_{k+1}$.
And hence,
\begin{align*}
i(g_{c}g_{d}) &= i(e_{(k)}\varphi_{k} (c, d) +a)
= i(\varphi_{k} (c, d)) i(e_{(k)}) + i(a)\\
&\overset{P\ref{md12}}{=} i(\varphi_{k} (c, d)) e_{(k)} + i(a)
\overset{Rem\ref{rem1}(1)}{=}e_{(k)} i(\varphi_{k} (c, d)) + i(a).
\end{align*}
Thus, the corresponding element in $V^*_{k, n}\otimes_{R}V_{k, n}\otimes_{R}H_{2k+1,n}$ is of the form
\begin{align}\label{eq35}
i(g_{c}g_{d}) =i(e_{(k)} \otimes e_{(k)} \otimes \varphi_{k} (c, d)) = e_{(k)} \otimes e_{(k)} \otimes i\varphi_{k} (c, d) \ (mod \ J_{k+1})
\end{align}
since $i(a) \in J_{k+1}.$\\
In another way, the part (1) implies that $i(g_{c}) = c^{-1} \otimes e_{(k)} \otimes \textbf{1}$ and $i(g_{d}) = e_{(k)} \otimes d^{-1} \otimes \textbf{1}$.
Therefore,
\begin{align}\label{eq36}
i(g_{d})i(g_{c}) = (e_{(k)} \otimes d^{-1} \otimes \textbf{1})(c^{-1} \otimes e_{(k)} \otimes \textbf{1}) \overset{L\ref{bd16}}{=} e_{(k)} \otimes e_{(k)} \otimes \varphi_{k} (d^{-1}, c^{-1}) \ (mod \ J_{k+1}).
\end{align}
Now the equality $i(g_{c}g_{d}) = i(g_{d})i(g_{c})$ shows that
$$e_{(k)} \otimes e_{(k)} \otimes i\varphi_{k} (c, d)
= e_{(k)} \otimes e_{(k)} \otimes \varphi_{k} (d^{-1}, c^{-1}),$$
that is, $i\varphi_{k} (c, d) = \varphi_{k} (d^{-1}, c^{-1}).$
\end{proof}

The below statement gives an $'iterated\ inflation'$ structure for the $q$-Brauer algebras $Br_{n}(r, q)$,
and the proof of this comes from above results.

\begin{prop}\label{md14} The $q$-Brauer algebra $Br_{n}(r, q)$ is an iterated inflation of Hecke algebras of type $A_{n-1}$.
More precisely: as a free $R$-module, $Br_{n}(r, q)$ is equal to
$$V^*_{0, n}\otimes_{R}V_{0, n}\otimes_{R}H_{n}
    \oplus V^*_{1, n}\otimes_{R}V_{1, n}\otimes_{R} H_{3, n}
    \oplus V^*_{2, n}\otimes_{R}V_{2, n}\otimes_{R} H_{5, n} \oplus ... ,$$
and the iterated inflation starts with $H_{n}$, inflates it along $V^*_{k, n}\otimes_{R}V_{k, n}$ and so on,
ending with an inflation of $R = H_{n,n}$ or $R = H_{n+1,n}$ as bottom layer (depending on whether n is odd or even),
where $H_{2k+1, n}$ is Hecke algebra with  generators $g_{2k + 1}$, $g_{2k + 2}$... $g_{n -1}.$
\end{prop}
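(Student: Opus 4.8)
The plan is to check directly that the structural data collected in this section fit the axioms of an iterated inflation in the sense of Koenig and Xi \cite{KX2, KX4}. Recall that a single inflation of an algebra $B$ (equipped with an involution $j$) along a finitely generated free $R$-module $V$, together with a bilinear form $\phi : V \otimes_R V \to B$ satisfying $j(\phi(u,v)) = \phi(v,u)$, is the algebra $V \otimes_R V \otimes_R B$ with product $(x_1 \otimes y_1 \otimes b_1)(x_2 \otimes y_2 \otimes b_2) = x_1 \otimes y_2 \otimes b_1 \phi(y_1, x_2) b_2$ and involution $x \otimes y \otimes b \mapsto y \otimes x \otimes j(b)$; an iterated inflation is obtained by stacking such layers along a chain of ideals, each product being computed modulo the lower layers.

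First I would declare the $k$-th layer to be the Hecke algebra $H_{2k+1,n}$, inflated along $V^*_{k,n} \otimes_R V_{k,n}$ by means of the bilinear form $\varphi_k$ from \eqref{eq31}. The underlying $R$-module identification $Br_n(r,q) = \bigoplus_{k=0}^{[n/2]} V^*_{k,n} \otimes_R V_{k,n} \otimes_R H_{2k+1,n}$ is Lemma \ref{bd15}, and the chain of two-sided ideals $0 = J_{[n/2]+1} \subset J_{[n/2]} \subset \cdots \subset J_1 \subset J_0 = Br_n(r,q)$ supplies the filtration, its sections $J_k / J_{k+1}$ being exactly the $k$-th layers (diagrams with precisely $n-2k$ vertical edges). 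Each $J_k$ is fixed by $i$, as required.

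The heart of the verification is that multiplying two basis elements reproduces the inflation rule modulo $J_{k+1}$: this is precisely Lemma \ref{bd16}, whose formula $g_c g_d \equiv c_1 \otimes d_2 \otimes g_{\omega_{(c)}} \varphi_k(c_2, d_1) g_{\omega_{(d)}} \pmod{J_{k+1}}$ matches the inflation product with middle term $g_{\omega_{(c)}} \varphi_k(c_2, d_1) g_{\omega_{(d)}} \in H_{2k+1,n}$, the containment in the correct Hecke subalgebra being guaranteed by Lemma \ref{bd8}. The behaviour of the involution is furnished by Lemma \ref{bd19}: part (a) shows $i$ interchanges the two outer tensor factors (via the diagram involution $*$, under which $V^*_{k,n}$ and $V_{k,n}$ correspond) and applies the Hecke involution to the middle factor, while part (b) gives the symmetry $i\varphi_k(c,d) = \varphi_k(d^{-1},c^{-1})$ that the inflation form must satisfy. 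Associativity is inherited from $Br_n(r,q)$, and Proposition \ref{md13} ensures that ideals assembled from the Hecke layers stay ideals of $Br_n(r,q)$, so the inflation is genuinely iterated.

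I do not expect a substantive obstacle, since Lemmas \ref{bd15}, \ref{bd16} and \ref{bd19} have already done the computational work; the remaining effort is organisational, namely matching each datum to its role in the inflation axioms and ruling out interference between distinct layers. The one point worth stating explicitly is the bottom layer: when $k = [n/2]$ the starting index $2k+1$ is at least $n$, so $H_{2k+1,n}$ has no Hecke generators and collapses to $R$, yielding $H_{n,n}$ if $n$ is odd and $H_{n+1,n}$ if $n$ is even, exactly as asserted.
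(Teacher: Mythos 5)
Your proposal is correct and follows essentially the same route as the paper: the paper gives no separate argument for Proposition \ref{md14} beyond stating that ``the proof comes from above results,'' meaning exactly the assembly you carry out --- Lemma \ref{bd15} for the $R$-module decomposition, the chain of ideals $J_k$, Lemma \ref{bd16} for the inflation product modulo $J_{k+1}$, Lemma \ref{bd19} for the compatibility of the involution $i$ with the layers and with $\varphi_k$, and Proposition \ref{md13} for the ideal structure (the same lemmas the paper then cites in proving Theorem \ref{dl16}). Your explicit identification of the bottom layer $H_{n,n}$ or $H_{n+1,n}$ with $R$ is also exactly as intended.
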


\begin{cor}\label{md15} The statement in Proposition \ref{md14} holds on $Br_n(N)$.
\end{cor}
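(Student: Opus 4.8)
The plan is to realize $Br_n(N)$ as the parameter specialization $r=q^{N}$ of $Br_{n}(r,q)$ and then to check that every ingredient feeding into Proposition \ref{md14} survives this specialization. Indeed, putting $r=q^{N}$ turns the quotient $\frac{r-1}{q-1}$ into $[N]=1+q+\cdots+q^{N-1}$, so relation $(E_{1})$ becomes $(E1)'$ and the identity $eg_{2}e=re$ in $(E_{2})$ becomes $eg_{2}e=q^{N}e$ in $(E2)'$; the remaining parts of $(E_{2})$, together with $(H)$ and $(E_{3})$, are literally the same in both presentations. Thus $Br_n(N)$ is presented by exactly the relations of $Br_{n}(r,q)$ with $r$ replaced by $q^{N}$, as already noted in Remark \ref{remofqBr}(1).

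First I would observe that the structural input of Sections 3 and 4 is of two kinds. Lemmas \ref{bd6}, \ref{bd8}, \ref{bd9} and \ref{bd10} are quoted from Wenzl \cite{W3} and were proved there for $Br_n(N)$ directly, so they are available verbatim. All the remaining statements — Lemma \ref{bd7}, Theorem \ref{dl11}, Proposition \ref{md12}, Corollary \ref{hq13}, Lemmas \ref{bd11}--\ref{bd19}, and Proposition \ref{md13} — are derived purely by manipulating the defining relations, using nothing about the ground ring beyond those relations and the combinatorics of Brauer diagrams recalled in Section 2, which is independent of the deformation parameter. Replacing $r$ by $q^{N}$ and $\frac{r-1}{q-1}$ by $[N]$ throughout, each of these proofs reads off word for word; in particular the basis $\{g_{d}\mid d\in D_{n}(N)\}$, the involution $i$, the bilinear forms $\varphi_{k}$, and the ideals $J_{k}$ are defined by the same formulas and satisfy the same relations. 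Since the free $R$-module decomposition and the iterated-inflation structure of Proposition \ref{md14} are formal consequences of these ingredients, they transfer to $Br_n(N)$.

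The one point that needs care, and the main obstacle, is that the coefficient ring of $Br_{n}(r,q)$ inverts $\frac{r-1}{q-1}$, whereas $Br_n(N)$ lives over $\Z[q,q^{-1}]$, where $[N]$ need not be invertible. This matters only for those intermediate computations — for instance inside the proof of Lemma \ref{bd7} and in the evaluation of $\varphi_{k}$ — that divide by $\frac{r-1}{q-1}$. To dispose of it I would note that the conclusions of all these statements are polynomial in $[N]$, with no negative powers: the divisions by $[N]$ introduced along the way always cancel, as is visible already from $\varphi_{k}(e_{(k)},e_{(k)})=[N]^{k}$ and from the fact that the final form of Lemma \ref{bd7}(c) contains no occurrence of $[N]$ at all. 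Such an identity, valid over the localization of $\Z[q,q^{-1}]$ at $[N]$ (equivalently over $\Q(q)$ after specializing $r=q^{N}$), then descends to $\Z[q,q^{-1}]$ because, by Wenzl (\cite{W3}, Theorem 3.8), $Br_n(N)$ is free over the domain $\Z[q,q^{-1}]$ and hence embeds in its localization. Therefore every identity required for Proposition \ref{md14} holds already over $\Z[q,q^{-1}]$, and the iterated-inflation description is valid for $Br_n(N)$, which is exactly the content anticipated in Remark \ref{remofqBr}(1).
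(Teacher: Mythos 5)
Your proposal is correct, and it follows the same transfer principle the paper itself invokes: the paper offers no explicit proof of Corollary \ref{md15} at all, relying on the blanket assertion in Remark \ref{remofqBr}(1) that all results for $Br_n(r,q)$ hold verbatim for $Br_n(N)$ under $r\mapsto q^N$, $(r-1)/(q-1)\mapsto [N]$. Where you go beyond the paper is in actually justifying this for the one genuinely delicate point: over $\Z[q,q^{-1}]$ the element $[N]$ is not invertible, while several intermediate computations (e.g.\ the repeated use of Lemma \ref{bd6}(d) inside the proof of Lemma \ref{bd7}, which introduces coefficients with $(r-1)$ in the denominator) do divide by it. Your fix --- observe that the final identities are denominator-free, prove them after localizing at $[N]$ where they follow from the $Br_n(r,q)$ case by specialization, and then descend using the freeness of $Br_n(N)$ over the domain $\Z[q,q^{-1}]$, which makes the map to the localization injective --- is sound, and it in fact establishes the iterated-inflation decomposition over $\Z[q,q^{-1}]$ itself, a sharper statement than the paper's parallel cellularity result (Corollary \ref{dl17}), which simply assumes $[N]$ invertible. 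One small caveat worth flagging: the identification $(r-1)/(q-1)\big|_{r=q^N}=[N]=1+q+\cdots+q^{N-1}$ is literally valid only for $N>0$; for negative $N$ the paper's convention for $[N]$ needs a separate interpretation, but this ambiguity is in the paper's own setup, not a defect of your argument.
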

\smallskip
Before giving the main result we need the following lemma.
This lemma is shown in \cite{X1} as a condition to ensure that an algebra has cellular structure.

\begin{lem}\label{bd20}(\cite{X1}, Lemma 3.3)
Let A be a $\varLambda-algebra$ with an involution $i$.
Suppose there is a decomposition
$$ A = \oplus^{m}_{j=1}V_{(j)}
\otimes_{\varLambda}V_{(j)}
\otimes_{\varLambda}B_{j} \quad
{(direct \ sum \ of \ {\varLambda-modules})}$$
where $V_{(j)}$ is a free $\varLambda-modules$
of finite rank and $B_{j}$ is a cellular $\varLambda-algebra$
with respect to an involution $\delta_{j}$
and a cell chain
$J_{1}^{j} \subset J_{2}^{j} ... \subset J_{s_{j}}^{j} = B_{j}$ for each $j$.
Define
$J_{t} = \oplus^{t}_{j=1}V_{(j)}
\otimes_{\varLambda}V_{(j)}
\otimes_{\varLambda}B_{j}$.
Assume that the restriction of $i$ on $\oplus^{t}_{j=1}V_{(j)}\otimes_{\varLambda}V_{(j)}\otimes_{\varLambda}B_{j}$
is given by $w\otimes v\otimes b \longrightarrow v\otimes w\otimes\delta_{j}(b)$.
If for each j there is a bilinear from
$\varphi_{j} : V_{(j)}\otimes_{\varLambda}V_{(j)} \longrightarrow B_{j}$
such that $\delta_{j}(\varphi_{j}(w, v)) = \varphi_{j}(v, w)$ for all $w, v \in V_{(j)}$
and that the multiplication of two elements in $V_{(j)}\otimes_{\varLambda}V_{(j)}\otimes_{\varLambda}B_{j}$
is governed by $\phi_{j}$ modulo $J_{j-1}$;
that is, for $x, y, u, v \in V_{(j)},$ and $b, c \in B_{j}$,
we have
$$ (x\otimes y \otimes b)(u \otimes v \otimes c)
= x \otimes v \otimes b\varphi_{j}(y, u)c$$
modulo the ideal $J_{j-1}$,
and if $V_{(j)}\otimes_{\varLambda}V_{(j)}\otimes_{\varLambda}J^{j}_{l} + J_{j-1}$
is an ideal in A for all l and j, then A is a cellular algebra.
\end{lem}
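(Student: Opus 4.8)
The plan is to verify the Koenig--Xi characterization of cellularity recalled above: I will exhibit an $i$-stable chain of two-sided ideals of $A$ whose successive subquotients are cell ideals of the appropriate quotient algebras. The given layer filtration $0 = J_0 \subset J_1 \subset \cdots \subset J_m = A$ is too coarse on its own, since a single layer $V_{(j)} \otimes_\varLambda V_{(j)} \otimes_\varLambda B_j$ need not be a cell ideal; the idea is to refine it using the cell chains of the individual $B_j$.

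First I would construct the refined filtration. For each $j$ write the cell chain of $B_j$ as $0 = J^j_0 \subset J^j_1 \subset \cdots \subset J^j_{s_j} = B_j$ and set
$$J_{j,l} := V_{(j)} \otimes_\varLambda V_{(j)} \otimes_\varLambda J^j_l + J_{j-1}, \qquad 0 \le l \le s_j.$$
By the final hypothesis of the lemma each $J_{j,l}$ is a two-sided ideal of $A$, and since $J^j_0 = 0$ and $J^j_{s_j} = B_j$ we have $J_{j,0} = J_{j-1}$ and $J_{j,s_j} = J_j$. Concatenating these ideals over $j = 1, \dots, m$ and $l = 1, \dots, s_j$ yields a chain of ideals refining the layer filtration. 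Each step is $i$-stable: using the description of $i$ on the layer as $w \otimes v \otimes b \mapsto v \otimes w \otimes \delta_j(b)$ together with $i(J_{j-1}) = J_{j-1}$ (proved by induction on $j$, the base case $J_0 = 0$ being trivial), and the fact that $\delta_j(J^j_l) = J^j_l$ because $J^j_l$ sits in the cell chain of the cellular algebra $B_j$, one obtains $i(J_{j,l}) = J_{j,l}$.

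Next I would identify the subquotients. Since the decomposition is one of $\varLambda$-modules,
$$J_{j,l}/J_{j,l-1} \;\cong\; V_{(j)} \otimes_\varLambda V_{(j)} \otimes_\varLambda \big(J^j_l/J^j_{l-1}\big).$$
Because $B_j$ is cellular, $J^j_l/J^j_{l-1}$ is a cell ideal of $B_j/J^j_{l-1}$, hence isomorphic as a $B_j$-bimodule to $\Delta^j_l \otimes_\varLambda \delta_j(\Delta^j_l)$ for a cell module $\Delta^j_l$, with $\delta_j$ acting by $x \otimes y \mapsto \delta_j(y) \otimes \delta_j(x)$. Substituting and re-bracketing the tensor factors gives
$$J_{j,l}/J_{j,l-1} \;\cong\; \big(V_{(j)} \otimes_\varLambda \Delta^j_l\big) \otimes_\varLambda \big(V_{(j)} \otimes_\varLambda \delta_j(\Delta^j_l)\big).$$
Writing $\widetilde\Delta_{j,l} := V_{(j)} \otimes_\varLambda \Delta^j_l$, which is free of finite rank over $\varLambda$, this realizes the subquotient in the shape $\widetilde\Delta_{j,l} \otimes_\varLambda i(\widetilde\Delta_{j,l})$ demanded of a cell ideal. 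The multiplication rule $(x \otimes y \otimes b)(u \otimes v \otimes c) \equiv x \otimes v \otimes b\,\varphi_j(y,u)\,c \pmod{J_{j-1}}$ shows that the left $A$-action on $J_{j,l}/J_{j,l-1}$ alters only the left-hand factor $\widetilde\Delta_{j,l}$ and leaves the middle factor $v \in V_{(j)}$ inert, so $\widetilde\Delta_{j,l}$ is a left ideal and the isomorphism is one of $A$-bimodules, encoding condition (C3) of Graham--Lehrer \cite{GL} in the quotient.

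The main obstacle will be fitting the involution into the cell-ideal diagram of \cite{KX1}, i.e. checking that under the isomorphism above $i$ corresponds to $x \otimes y \mapsto i(y) \otimes i(x)$. This requires combining three ingredients: the swap $w \otimes v \otimes b \mapsto v \otimes w \otimes \delta_j(b)$ describing $i$ on the layer, the compatibility $\delta_j(\varphi_j(w,v)) = \varphi_j(v,w)$ of the bilinear form with $\delta_j$, and the fact that $\delta_j$ interchanges $\Delta^j_l$ with $\delta_j(\Delta^j_l)$ in the cell-ideal diagram for $B_j$. Tracking these through the re-bracketing shows $i(\widetilde\Delta_{j,l}) = V_{(j)} \otimes_\varLambda \delta_j(\Delta^j_l)$ and that the relevant squares compose to the required commutative square; the symmetry of $\varphi_j$ under $\delta_j$ is precisely what makes the middle bilinear-form factor transform correctly. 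Once this is verified for every $j$ and $l$, the refined chain exhibits $A$ as a sequence of successive cell ideals, and the Koenig--Xi criterion gives that $A$ is cellular.
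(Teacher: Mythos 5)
You should first note that the paper itself contains no proof of Lemma \ref{bd20}: it is quoted verbatim from \cite{X1} (Lemma 3.3), whose own justification rests on the inflation machinery of \cite{KX2}, Section 3. So there is no in-paper argument to compare against line by line; measured against the cited sources, your skeleton is exactly the standard route: refine the layer filtration by the cell chains of the $B_{j}$ into ideals $J_{j,l}=V_{(j)}\otimes_{\varLambda}V_{(j)}\otimes_{\varLambda}J^{j}_{l}+J_{j-1}$, check $i$-stability via the swap description and $\delta_{j}(J^{j}_{l})=J^{j}_{l}$, identify $J_{j,l}/J_{j,l-1}\cong V_{(j)}\otimes_{\varLambda}V_{(j)}\otimes_{\varLambda}(J^{j}_{l}/J^{j}_{l-1})$, re-bracket it as $\widetilde\Delta_{j,l}\otimes_{\varLambda}i(\widetilde\Delta_{j,l})$ with $\widetilde\Delta_{j,l}=V_{(j)}\otimes_{\varLambda}\Delta^{j}_{l}$, and fit the involution into the commutative square of \cite{KX1} using $\delta_{j}(\varphi_{j}(w,v))=\varphi_{j}(v,w)$. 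All of that is correct and is how the result is proved in the literature.

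There is, however, one step you assert rather than prove, and it is the only genuinely delicate point of the lemma. You claim that the displayed multiplication rule shows the left $A$-action on $J_{j,l}/J_{j,l-1}$ alters only the factor $\widetilde\Delta_{j,l}$ and hence that the isomorphism is one of $A$-bimodules. But that rule governs only products of \emph{two elements of the same layer} $j$. Elements of layers $j'<j$ are harmless: they lie in $J_{j-1}\subseteq J_{j,l-1}$, and since $J_{j-1}$ is an ideal their action on the subquotient is zero (this should be said explicitly). For layers $j'>j$ the only available hypothesis is the ideal condition $a\cdot(u\otimes v\otimes c)\in J_{j,l}$; nothing stated forces such a product to have the form $\sum u'\otimes v\otimes c'$ with the middle slot $v$ and the $\delta_{j}(\Delta^{j}_{l})$-component of $c$ left inert, which is exactly what the bimodule isomorphism onto $\widetilde\Delta_{j,l}\otimes_{\varLambda}i(\widetilde\Delta_{j,l})$ --- equivalently axiom (C3) of \cite{GL} for the assembled basis --- demands. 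This cross-layer control is an additional input, not a consequence of the within-layer rule; in every application it is verified directly, and the present paper does precisely that: Proposition \ref{md13} proves the stronger statement $g_{c}g_{d}\equiv b\otimes d_{2}\otimes a\,g_{\omega_{(d)}} \ (mod \ J_{k+1})$ with $a$ independent of $g_{\omega_{(d)}}$ and with $d_{2}$ untouched, for $c$ in an \emph{arbitrary} layer $l$. So your proof needs either a strengthened hypothesis (the governed-multiplication statement for arbitrary left factors $a\in A$, as it is checked in practice) or a supplementary argument for the action of the higher layers; as written, the sentence ``so $\widetilde\Delta_{j,l}$ is a left ideal and the isomorphism is one of $A$-bimodules'' does not follow from what precedes it.
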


\begin{thm}\label{dl16}
Suppose that $\varLambda$ is a commutative noetherian ring
which contains $R$ as a subring with the same identity.
If $q$, $r$ and $(r-1)/(q-1)$ are invertible in $\varLambda$,
then the $q$-Brauer algebra $Br_{n}(r, q)$
over the ring $\varLambda$ is cellular
with respect to the involution $i$.
\end{thm}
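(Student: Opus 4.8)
The plan is to deduce the theorem directly from Lemma \ref{bd20}, whose hypotheses have essentially all been verified in the preceding lemmas; the proof is therefore a matter of matching the pieces assembled in Section~4 with the data required by that lemma. First I would fix the decomposition of Lemma \ref{bd15},
$$Br_{n}(r, q) = \bigoplus_{k=0}^{[n/2]} V^*_{k, n} \otimes_{\varLambda} V_{k, n} \otimes_{\varLambda} H_{2k+1, n},$$
and identify $V_{k, n}$ with the $i$-image of $V^*_{k, n}$ under the involution $*$, so that in the notation of Lemma \ref{bd20} one takes $V_{(k)} = V^*_{k, n}$ (a free $\varLambda$-module of finite rank by Lemma \ref{bd11}) and $B_{k} = H_{2k+1, n}$. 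Since $\varLambda$ contains $R = \Z[q, q^{-1}]$ with the same identity and $q$, $r$, $(r-1)/(q-1)$ are invertible in $\varLambda$, every structural statement of Sections~3 and~4 proved over the ground ring of $Br_{n}(r, q)$ persists after base change to $\varLambda$; in particular the basis of Theorem \ref{dl11}, the multiplication rules, and the involution $i$ of Proposition \ref{md12} all extend.

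Next I would check the individual hypotheses of Lemma \ref{bd20}. Each layer $B_{k} = H_{2k+1, n}$ is isomorphic to the Hecke algebra of type $A_{n-2k-1}$ and is therefore cellular (Theorem \ref{dl1}, and cellularity is preserved by the base change to $\varLambda$); its cell chain $J^{k}_{1} \subset \dots \subset J^{k}_{s_{k}} = H_{2k+1, n}$ together with the involution $\delta_{k} = i\vert_{H_{2k+1, n}}$ of Lemma \ref{bd5} supplies the required cellular datum. The behaviour of the involution on the decomposition is exactly Lemma \ref{bd19}(a), $i(g_{d}) = d_{2}^{-1} \otimes d_{1}^{-1} \otimes i(g_{\omega_{(d)}})$, which is the prescribed rule $w \otimes v \otimes b \mapsto v \otimes w \otimes \delta_{k}(b)$ under the identification above. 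The symmetry $\delta_{k}(\varphi_{k}(w, v)) = \varphi_{k}(v, w)$ is Lemma \ref{bd19}(b), and the inflation multiplication rule $(x \otimes y \otimes b)(u \otimes v \otimes c) = x \otimes v \otimes b\,\varphi_{k}(y, u)\,c$ modulo the lower ideal is precisely Lemma \ref{bd16}. Finally, for every cell ideal $J^{k}_{l}$ of $H_{2k+1, n}$, Proposition \ref{md13} guarantees that $J_{k+1} + V^*_{k, n} \otimes_{\varLambda} V_{k, n} \otimes J^{k}_{l}$ is a two-sided ideal of $Br_{n}(r, q)$, which is the last hypothesis of Lemma \ref{bd20}.

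The one point requiring care is the bookkeeping of the chain of ideals. The ideals $J_{k} = \sum_{m \geq k} H_{n} e_{(m)} H_{n}$ form a \emph{descending} chain $J_{[n/2]+1} = 0 \subset \dots \subset J_{1} \subset J_{0} = Br_{n}(r, q)$, whereas Lemma \ref{bd20} is stated for an ascending cell chain in which $J_{j-1}$ denotes the smaller ideal. I would therefore reindex the layers so that increasing layer index corresponds to decreasing $k$; with this convention $J_{k+1}$ plays the role of the lower term $J_{j-1}$, consistently with the fact that Lemma \ref{bd16} and Proposition \ref{md13} are formulated modulo $J_{k+1}$. Since no genuinely new computation is needed beyond this matching, once all hypotheses are in place Lemma \ref{bd20} yields that $Br_{n}(r, q)$ is cellular over $\varLambda$ with respect to $i$, completing the proof. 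The main (and only) obstacle is ensuring that the two tensor slots, recorded separately here as $V^*_{k, n}$ and $V_{k, n}$, are correctly identified via the involution so that all of Lemma \ref{bd20}'s symmetry conditions apply verbatim.
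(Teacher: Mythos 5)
Your proposal is correct and follows essentially the same route as the paper: the paper likewise proves Theorem \ref{dl16} by applying Lemma \ref{bd20} to the decomposition of Lemma \ref{bd15}, with $B_{n-2k} = H_{2k+1,n}$ (which is exactly your reindexing of the descending chain $J_k$), and cites Lemmas \ref{bd12}, \ref{bd15}, \ref{bd16}, \ref{bd19}, Proposition \ref{md13} and Theorem \ref{dl1} to verify the hypotheses, just as you do. Your additional remarks on base change to $\varLambda$ and on identifying $V_{k,n}$ with the $i$-image of $V^*_{k,n}$ make explicit two points the paper leaves implicit, but they do not change the argument.
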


\begin{proof}
From the above construction, we know that
$V^*_{k, n}$ and $V_{k, n}$ has the same finite rank.
Now apply Lemma \ref{bd20}
to the $q$-Brauer algebra $Br_{n}(r, q)$ with $j=k$.
Set $B_{n-2k} = H_{2k+1, n}$
with $0 \leq k \leq [n/2]$,
then the $q$-Brauer algebra $Br_{n}(r, q)$ has a decomposition
$$Br_{n}(r, q) = V^*_{0, n}\otimes_{\varLambda}V_{0, n}\otimes_{\varLambda}H_{n}
    \oplus V^*_{1, n}\otimes_{\varLambda}V_{1, n}\otimes_{\varLambda} H_{3, n}
    \oplus V^*_{2, n}\otimes_{\varLambda}V_{2, n}\otimes_{\varLambda} H_{5, n} \oplus $$
$$...\oplus V^*_{[n/2]-1, n}\otimes_{\varLambda}V_{[n/2]-1, n}
\otimes_{\varLambda} H_{2[n/2]-1, n}
\oplus V^*_{[n/2], n}\otimes_{\varLambda}V_{[n/2], n}\otimes_{\varLambda} H_{2[n/2]+1, n}, $$
where $H_{n,n} = H_{n+1, n} = \varLambda$ and $V^*_{0, n}= V_{0, n}= \varLambda$.
Lemmas \ref{bd12}, \ref{bd15}, \ref{bd16}, \ref{bd19}, Proposition \ref{md13} and Theorem \ref{dl1} show that
the conditions of the lemma \ref{bd20} applied for $q$-Brauer algebra  are satisfied.
Thus, the q-Brauer algebra $Br_{n}(r, q)$ is a cellular algebra.
\end{proof}

\begin{cor}\label{dl17}
Suppose that $\varLambda$ is a commutative noetherian ring
which contains $R$ as a subring with the same identity.
If $q$, $r$ and $[N]$ are invertible in $\varLambda$,
then the $q$-Brauer algebra $Br_{n}(N)$
over the ring $\varLambda$ is cellular
with respect to the involution $i$.
\end{cor}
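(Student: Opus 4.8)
The plan is to realise $Br_n(N)$ as the specialisation of the generic algebra $Br_n(r,q)$ at $r=q^{N}$ and to transport the cellularity of Theorem \ref{dl16} across this specialisation. Comparing the two presentations, the relations (H) and $(E_3)$ are literally identical in the two algebras, while under the substitution $r=q^{N}$ the relation $(E_1)$ becomes $e^{2}=\frac{q^{N}-1}{q-1}e=[N]e$, which is exactly $(E1)'$ because $[N]=1+q+\cdots+q^{N-1}=(q^{N}-1)/(q-1)$, and the equation $eg_{2}e=re$ in $(E_2)$ becomes $eg_{2}e=q^{N}e$, which is $(E2)'$. Thus $Br_n(N)$ is nothing but $Br_n(r,q)$ with $r$ replaced by $q^{N}$, as already anticipated in Remark \ref{remofqBr}.

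First I would make this precise at the level of ground rings. Write $R_{r,q}=\mathbb{Z}[q^{\pm1},r^{\pm1},((r-1)/(q-1))^{\pm1}]$ for the base ring of $Br_n(r,q)$. The assignment $q\mapsto q$, $r\mapsto q^{N}$ extends to a ring homomorphism $\theta:R_{r,q}\to\varLambda$ precisely under our hypotheses: $q$ is invertible, so the image $q^{N}$ of $r$ is invertible, and the image $[N]$ of $(r-1)/(q-1)$ is invertible by assumption. Via $\theta$ the ring $\varLambda$ becomes an $R_{r,q}$-algebra, and the matching of relations above yields a $\varLambda$-algebra isomorphism between $Br_n(N)$ over $\varLambda$ and the base change $Br_n(r,q)\otimes_{R_{r,q}}\varLambda$. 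Note that $\theta$ is not injective, so Theorem \ref{dl16} cannot be quoted verbatim (its hypothesis that $\varLambda$ contain $R_{r,q}$ as a subring fails); the correct mechanism is base change rather than literal containment.

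I would then conclude as follows. Taking $\varLambda=R_{r,q}$ in Theorem \ref{dl16}, the algebra $Br_n(r,q)$ is cellular over $R_{r,q}$, with the cell datum built there from the diagram basis $\{g_d\}$ of Theorem \ref{dl11}, the involution $i$ of Proposition \ref{md12}, the iterated inflation of Proposition \ref{md14}, and the bilinear forms $\varphi_k$. Cellularity, in either the sense of Graham--Lehrer or that of Koenig--Xi, is preserved under extension of scalars, since the axioms only constrain the basis and its structure constants, which lie in $R_{r,q}$ and are simply carried into $\varLambda$ by $\theta$. Applying this to $\theta$ gives that $Br_n(r,q)\otimes_{R_{r,q}}\varLambda\cong Br_n(N)$ over $\varLambda$ is cellular with respect to $i$. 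Equivalently, one may re-run the proof of Theorem \ref{dl16} with $r=q^{N}$ substituted throughout; every structural lemma used there depends on $r$ only through the combinations $r$ and $(r-1)/(q-1)$, which map to the invertible elements $q^{N}$ and $[N]$, so each lemma holds verbatim for $Br_n(N)$. The only thing to check is this last bookkeeping point, namely that no earlier result covertly relied on $r$ being an independent indeterminate; since it does not, I expect the verification to be entirely routine, which is why the statement is recorded as a corollary rather than a separate theorem.
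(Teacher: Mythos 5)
Your proof is correct and is essentially the paper's own (implicit) argument: the paper derives Corollary \ref{dl17} from Remark \ref{remofqBr}(1), namely that the entire development of Sections 3--4, and hence Theorem \ref{dl16}, goes through verbatim for $Br_n(N)$ under the substitution $r\mapsto q^{N}$, $(r-1)/(q-1)\mapsto [N]$, which is exactly your second mechanism. Your base-change formalization via the specialization homomorphism $\theta$ is a slightly more careful packaging of the same idea (and your observation that Theorem \ref{dl16} cannot be quoted verbatim because $\theta$ is not injective is a legitimate point the paper glosses over), but it is not a different route.
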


\begin{rem}
By Remark \ref{remofqBr}(2), in the case $q=1$ the statements in Corollaries \ref{md15} and \ref{dl17} recover these of the Brauer algebra $D_n(N)$ due to Koenig and Xi (see \cite{KX4}, Theorem 5.6).
\end{rem}


As a consequence of Theorem \ref{dl16}, we have the following parametrization of cell modules for $q$-Brauer algebra.
Given a natural number n, denote by I the set $\{ (n-2k, \la)|$ k is a non negative integer
with $0 \leq k \leq [n/2],$ $\la$ - a partition of $(n - 2k)$ with the shape $(\la_{1}, \la_{2},..., \la_{l})\}.$

\begin{cor}\label{hq17} Let $r$, $q$ and $(r-1)/(q-1)$ are invertible over a commutative Noetherian ring $R$.
The $q$-Brauer algebra $Br_{n}(r, q)$ over $R$ has the set of cell modules
$$ \{\Delta_{k}(\la) = V^*_{k, n}\ten d_{k} \ten \Delta(\la) | (n-2k, \la) \in I \}$$
where $d_{k}$ is non-zero elements in $V_{k, n}$
and $\Delta(\la)$ are cell modules of the Hecke algebra $H_{2k+1, n}$ corresponding to the partition $\la$ of $(n - 2k)$.
\end{cor}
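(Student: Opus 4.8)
The plan is to read the cell modules directly off the cellular structure established in Theorem \ref{dl16}, using the general description of cell modules for an algebra presented as an iterated inflation. Recall that for any cellular algebra the cell modules are in bijection with the underlying indexing poset, and that in an iterated inflation $A = \oplus_j V_{(j)} \ten_R V_{(j)} \ten_R B_j$ of the shape governed by Lemma \ref{bd20}, the cell modules of $A$ are obtained by inflating the cell modules of the layer algebras $B_j$. Since Proposition \ref{md14} exhibits $Br_n(r,q)$ in exactly this form with $B_{n-2k}=H_{2k+1,n}$, the task reduces to describing the layer cells and their inflations.

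First I would fix the poset. The layer algebra at level $k$ is $H_{2k+1,n}$, the Hecke algebra on the generators $g_{2k+1},\dots,g_{n-1}$; relabelling these as $g_1,\dots,g_{n-2k-1}$ identifies it with the Hecke algebra $H_{n-2k}$ of $S_{n-2k}$. By Theorem \ref{dl1} this is cellular, with cell modules $\Delta(\la)$ indexed by the partitions $\la$ of $n-2k$. Hence the poset $\La$ underlying the cellular structure of $Br_n(r,q)$ is precisely the set $I=\{(n-2k,\la)\}$, ordered so as to refine the layer filtration $0=J_{[n/2]+1}\subset J_{[n/2]}\subset\cdots\subset J_0=Br_n(r,q)$ (deeper layers, i.e.\ larger $k$, being smaller in the order) and, within a fixed layer, by the cell order of $H_{2k+1,n}$ on partitions.

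Next I would identify each cell module. Fix $(n-2k,\la)\in I$ and a nonzero $d_k\in V_{k,n}$. In the section $V^*_{k,n}\ten_R V_{k,n}\ten_R H_{2k+1,n}$, freezing the middle factor at $d_k$ and inflating the Hecke cell module $\Delta(\la)$ yields the left $Br_n(r,q)$-module
\[
\Delta_k(\la) = V^*_{k,n}\ten d_k\ten \Delta(\la).
\]
The action is dictated by Lemma \ref{bd16}: a basis element $g_c$ sends $d_1\ten d_k\ten x$ to $c_1\ten d_k\ten\big(g_{\omega_{(c)}}\varphi_k(c_2,d_1)\,x\big)$ modulo $J_{k+1}$, so $g_c$ acts on the $V^*_{k,n}$-factor through the inflation and on the $\Delta(\la)$-factor through $\varphi_k$ followed by multiplication in $H_{2k+1,n}$, while everything landing in $J_{k+1}$ acts as zero. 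That this is a well-defined module, independent up to isomorphism of the choice of $d_k$, and that it is the cell module attached to $(n-2k,\la)$, is exactly the inflation formalism once the hypotheses of Lemma \ref{bd20} are known to hold; these were already verified in the proof of Theorem \ref{dl16}, namely the bilinear form $\varphi_k$ of \eqref{eq31}, the compatibility $i\varphi_k(c,d)=\varphi_k(d^{-1},c^{-1})$ of Lemma \ref{bd19}(b), and the ideal property of Proposition \ref{md13}.

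The only genuine work beyond bookkeeping is the identification of the layer cells of $H_{2k+1,n}$ with those of $H_{n-2k}$ indexed by partitions of $n-2k$, together with the check that inflating them produces precisely $V^*_{k,n}\ten d_k\ten\Delta(\la)$ with the action above. I expect this matching of the abstract cell module furnished by Lemma \ref{bd20} with the explicit tensor description to be the main point requiring care, since it is where the inflation datum $(\varphi_k,i)$ must be seen to interact correctly with the cell chain of the Hecke algebra; all remaining assertions then follow from the general theory of cellular algebras applied to the structure of Theorem \ref{dl16}.
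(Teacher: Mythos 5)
Your proposal is correct and matches the paper's own route: the paper gives no separate argument for this corollary but derives it, exactly as you do, from the iterated inflation structure of Proposition \ref{md14} and Theorem \ref{dl16} (via Lemma \ref{bd20}), with the layer algebras $H_{2k+1,n}\cong H_{n-2k}$ contributing their Hecke cell modules $\Delta(\la)$ and the inflation freezing the $V_{k,n}$-factor at a nonzero $d_{k}$. Your identification of the action through Lemma \ref{bd16} and the form $\varphi_{k}$ of \eqref{eq31} is precisely the inflation formalism the paper invokes implicitly.
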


Let us describe here what is the $R$-bilinear form, say $\Phi_{(k, \lambda)}$, on cellular $q$-Brauer algebra $Br_{n}(r, q)$.
For a general definition of the $R$-bilinear form induced by cellular algebras we refer the reader to \cite{GL}.

\begin{defn} \label{dn8} Keep the above notations. Define
$$ \Phi_{(k, \lambda)}:  \Delta_{k}(\la) \times \Delta_{k}(\la) \lra R$$
determined by
$$\Phi_{(k, \lambda)}(d^*_{k}\otimes b  \otimes x,\ c \otimes d_{k} \otimes y):= \phi_{(k, \lambda)}(x\varphi_{k}(b,\ c),\ y)$$
is an $R$-bilinear form, where $(d^*_{k}\otimes b  \otimes x)\ \text{and} \ (c \otimes d_{k} \otimes y)$ are in $\Delta_{k}(\la)$
and $\phi_{(k, \lambda)}$ is the symmetric $R$-bilinear form on the Hecke algebra $H_{2k+1, n}.$
\end{defn}

In the following  we assume that $R$ is a field which contains non-zero elements $q$, $r$.
Let $e(q)$ be the least positive integer $m$ \st $[m]_{q}=1 + q + q^{2}...+ q^{m-1} = 0$, if that exists, let
$ e(q) = \infty$ otherwise. Notice that, $q$ can be also seen as an $e(q)-th$ primitive root of unity
and $ e(q) = \infty$ if $q$ is not an $m-th$ root of unity for all m.
Recall that a partition $\la = (\la_{1}, \la_{2},..., \la_{l})$ is called e(q)-restricted if $\la_{j}-\la_{j+1} < e(q),$ for all j.

\begin{thm}
Let $Br_{n}(r, q)$ be the q-Brauer algebra over an arbitrary field $R$ with characteristic $p \geq 0$. More assume that
$q$, $r$ and $(r-1)/(q-1)$ are invertible in $R$.
Then the non-isomorphic simple $Br_{n}(r, q)$-modules are parametrized by the set
$\{ (n-2k, \la) \in I |$ $\la$ is an $e(q)$-restricted partition of (n - 2k)\}.
\end{thm}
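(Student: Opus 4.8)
The plan is to read off the classification directly from the cellular structure established in Theorem \ref{dl16}, combined with the Graham--Lehrer parametrization of simple modules and the known representation theory of Hecke algebras of type $A$. Recall that for a cellular algebra over a field, Graham and Lehrer prove that the isomorphism classes of simple modules are in bijection with $\varLambda_{0} = \{\lambda \in \varLambda \mid \phi_{\lambda} \neq 0\}$, where $\phi_{\lambda}$ is the cell bilinear form attached to the cell module indexed by $\lambda$. By Corollary \ref{hq17} the cell modules of $Br_{n}(r,q)$ are the $\Delta_{k}(\lambda) = V^*_{k,n} \ten d_{k} \ten \Delta(\lambda)$ indexed by the pairs $(n-2k,\lambda) \in I$, and by Definition \ref{dn8} the attached cell form is $\Phi_{(k,\lambda)}$. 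Thus the whole problem reduces to deciding, for each $(n-2k,\lambda) \in I$, whether $\Phi_{(k,\lambda)} \neq 0$.

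The second step is to reduce non-vanishing of $\Phi_{(k,\lambda)}$ to non-vanishing of the underlying Hecke-algebra form $\phi_{(k,\lambda)}$. Starting from the formula of Definition \ref{dn8},
\[
\Phi_{(k,\lambda)}(d^*_{k} \ten b \ten x,\ c \ten d_{k} \ten y) = \phi_{(k,\lambda)}\big(x\,\varphi_{k}(b, c),\ y\big),
\]
I would specialize the middle tensor factors to $b = c = e_{(k)}$. By \eqref{eq31'} this produces the value $\varphi_{k}(e_{(k)}, e_{(k)}) = \left(\dfrac{r-1}{q-1}\right)^{k}$, which is a unit in $R$ under the standing hypothesis that $(r-1)/(q-1)$ is invertible. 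Hence
\[
\Phi_{(k,\lambda)}(d^*_{k} \ten e_{(k)} \ten x,\ e_{(k)} \ten d_{k} \ten y) = \left(\dfrac{r-1}{q-1}\right)^{k}\phi_{(k,\lambda)}(x, y),
\]
so $\Phi_{(k,\lambda)}$ is non-zero exactly when $\phi_{(k,\lambda)}$ is non-zero.

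It then remains to identify the $\lambda$ for which the Hecke form is non-zero. Since $H_{2k+1,n}$ is the Hecke algebra on the generators $g_{2k+1},\dots,g_{n-1}$, i.e.\ the Hecke algebra of type $A_{n-2k-1}$ on $n-2k$ strands, its cell modules are the $q$-Specht modules $\Delta(\lambda)$ indexed by partitions $\lambda$ of $n-2k$, and $\phi_{(k,\lambda)}$ is precisely the Graham--Lehrer form on $\Delta(\lambda)$. By the representation theory of these algebras (Dipper--James \cite{DJ1}), $\phi_{(k,\lambda)} \neq 0$ if and only if $\lambda$ is $e(q)$-restricted. Combining this with the reduction of the previous paragraph shows that $\varLambda_{0}$ for $Br_{n}(r,q)$ is exactly $\{(n-2k,\lambda) \in I \mid \lambda \text{ is } e(q)\text{-restricted}\}$, which is the assertion of the theorem.

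The main obstacle I anticipate is in the reduction step: one must be sure that setting $b=c=e_{(k)}$ yields \emph{admissible} arguments of $\Phi_{(k,\lambda)}$ realizing the invertible value of $\varphi_{k}$, and that the lower-filtration terms lying in $J_{k+1}$ do not interfere with the evaluation. This is exactly controlled by the congruence of Lemma \ref{bd16} modulo $J_{k+1}$ together with the invertibility of $(r-1)/(q-1)$; it is the point where the iterated-inflation data and the intrinsic Hecke-algebra form must be matched carefully, and all other steps are then formal consequences of the cellular theory.
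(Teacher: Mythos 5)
Your proposal is correct and follows essentially the same route as the paper: both reduce $\Phi_{(k,\lambda)}\neq 0$ to $\phi_{(k,\lambda)}\neq 0$ by evaluating on elements with middle factors $e_{(k)}$, using $\varphi_{k}(e_{(k)},e_{(k)})=\left(\frac{r-1}{q-1}\right)^{k}$ being a unit (via Lemma \ref{bd16} and \eqref{eq31'}), and then invoke Dipper--James (\cite{DJ1}, Theorem 7.6) for the Hecke form. The only cosmetic difference is that the paper treats $n-2k=0$ as a separate (trivial) case, which your uniform argument subsumes since the empty partition is vacuously $e(q)$-restricted and the bottom layer is $H_{n+1,n}=R$.
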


\begin{proof}
By Theorem \ref{dl16}, the $q$-Brauer algebra $Br_{n}(r, q)$ is cellular.
As a consequence, it follows from Corollary \ref{hq17} above and (\cite{GL}, Theorem 3.4)
that the simple $Br_{n}(r, q)$-modules are parametrized by
the set $\{(n-2k, \la) \in I\ |\ \Phi_{(k,\la)} \neq 0 \}.$ If $n-2k \neq 0,$ then
given a pair of basis elements $g_{c}= e_{(k)} \otimes e_{(k)} \otimes g_{\omega_{(c)}}$, $g_{d}= e_{(k)} \otimes e_{(k)} \otimes g_{\omega_{(d)}}$
in $V^*_{k, n}\otimes_{R}V_{k, n}\otimes_{R}H_{2k+1,n}$ with $g_{\omega_{(c)}}$ and $g_{\omega_{(d)}}$ are in $\Delta(\lambda)$,
it follows from Definition \ref{dn8} and Lemma \ref{bd16} that
\begin{align*}
\Phi_{(k, \lambda)}(g_{c}, g_{d}) &=\Phi_{(k, \lambda)} (e_{(k)}\otimes e_{(k)} \otimes g_{\omega_{(c)}},\ e_{(k)}\otimes e_{(k)}\otimes g_{\omega_{(d)}})\\
&= \phi_{(k, \lambda)}(g_{\omega_{(c)}}\varphi_{k}(e_{(k)}, e_{(k)}),\ g_{\omega_{(d)}})\\
&\overset{(\ref{eq31'})}= (\dfrac {r-1} {q-1})^{k}\phi_{(k, \lambda)}(g_{\omega_{(c)}},\ g_{\omega_{(d)}}).
\end{align*}
The last formula implies that $\Phi_{(k,\la)} \neq 0$ if and only if the corresponding linear form
$\phi_{(k,\la)}$ for the cellular algebra $H_{2k+1, n}$ is not zero.
By using a result of Dipper and James (\cite{DJ1}, Theorem 7.6) that states that $\phi_{(k, \la)} \neq 0$ if and only if $\la$ is an $e(q)$-restricted partition of $(n-2k)$
it yields $\Phi_{(k, \lambda)} \neq 0$ if and only if the partition $\lambda$ of n-2k is $e(d)$-restricted.
If $n-2k = 0$ then the last formula above implies $\Phi_{(k, \lambda)} = (\dfrac {r-1} {q-1})^{[n/2]} \neq 0.$
Hence, we obtain the precise statement.
\end{proof}

\begin{rem} The last theorem points out that all simple $Br_{n}(r,q)$-modules are labeled by Young diagrams $[\lambda]$ with n, n-2, n-4, ... boxes,
where each Young diagram $[\lambda]$ with $e(q)$-restricted partition $\lambda$ of $n-2k$ indexes a simple module $\Delta(\lambda)$ of the Hecke algebra $H_{2k+1, n}$
\end{rem}

The next consequence follows from applying Theorem \ref{dl16} and a result in (\cite{KX3}, Proposition 3.2)   on cellular algebras:
\begin{cor}
Let $Br_{n}(r, q)$ be the $q$-Brauer algebra over a field $R$ with $r$, $q$, and  $(r-1)/(q-1)$ are invertible elements.
Then the determinant of the the Cartan matrix C
of the $q$-Brauer algebra is a positive integer,
where the entries of C are by definition the multiplicities of composition factors in indecomposable projective modules.
\end{cor}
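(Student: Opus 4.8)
The plan is to observe that this corollary is a direct consequence of cellularity together with a general fact about cellular algebras over a field, so almost no new work is required. First I would note that the hypotheses here --- $R$ a field in which $q$, $r$, and $(r-1)/(q-1)$ are invertible --- are exactly those of Theorem~\ref{dl16} with $\varLambda = R$, so that $Br_{n}(r,q)$ is a cellular algebra over the field $R$. Once cellularity is established, the statement is no longer special to the $q$-Brauer algebra: it holds for any finite-dimensional cellular algebra over a field, which is precisely the content of (\cite{KX3}, Proposition~3.2). I would therefore invoke that proposition rather than reprove it, and merely record why its conclusion gives what we want.

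The mechanism behind that proposition is the factorization of the Cartan matrix through the decomposition matrix, and I would recall it briefly. For a cellular algebra the multiplicities of the simple modules $L(\mu)$ (indexed by $\mu \in \varLambda_{0}$) in the cell modules $\Delta(\lambda)$ (indexed by $\lambda \in \varLambda$) assemble into an integral decomposition matrix $D$, and a standard computation yields $C = D^{\mathrm{tr}} D$ for the Cartan matrix $C$. The cell structure forces $D$ to be unitriangular in the following sense: the square submatrix of $D$ whose rows are indexed by $\varLambda_{0}$ is triangular with respect to the partial order on $\varLambda$, since for each $\lambda \in \varLambda_{0}$ the factor $L(\lambda)$ occurs in $\Delta(\lambda)$ with multiplicity one while every other constituent $L(\mu)$ of $\Delta(\lambda)$ satisfies $\mu > \lambda$. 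Hence this distinguished submatrix has determinant $1$.

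Applying the Cauchy--Binet formula to $C = D^{\mathrm{tr}} D$ then expresses $\det C$ as the sum, over all $|\varLambda_{0}|$-element subsets $S$ of the rows of $D$, of the squares $(\det D_{S})^{2}$; this is a sum of squares of integers. The summand corresponding to $S = \varLambda_{0}$ equals $1$ by the previous paragraph, and no summand is negative, so $\det C$ is a positive integer. For $Br_{n}(r,q)$ the relevant indexing sets and cell modules are exactly those exhibited in Corollary~\ref{hq17}, with $\varLambda_{0}$ consisting of the $e(q)$-restricted data identified in the preceding theorem, so the abstract statement specializes directly to our algebra.

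I expect no genuine obstacle here: the substantive content was already carried out in proving Theorem~\ref{dl16}. The only points requiring care are verifying that restricting the ground ring to a field preserves all the invertibility hypotheses, so that cellularity continues to apply, and confirming that the partial order supplied by the cell chain of Section~4 is the one making $D$ triangular; both are routine in view of the earlier sections, and the remainder is a formal application of the cited proposition.
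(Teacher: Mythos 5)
Your proposal is correct and matches the paper's approach exactly: the paper derives this corollary by combining Theorem \ref{dl16} (cellularity of $Br_{n}(r,q)$ over a field) with Proposition 3.2 of \cite{KX3}, which is precisely your strategy. Your additional paragraphs on $C = D^{\mathrm{tr}}D$, unitriangularity of the decomposition matrix, and Cauchy--Binet merely unpack the proof of the cited Koenig--Xi proposition, which the paper leaves as a black box.
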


As another consequence of Theorem \ref{dl16} and (\cite{GL}, Theorem 3.8),
we obtain the following corollary.

\begin{cor}
Under the assumption of the Theorem \ref{dl16}, the q-Brauer algebra is semisimple \iff
the cell modules are simple and pairwise non-isomorphic.
\end{cor}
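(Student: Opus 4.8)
The plan is to obtain this statement as an immediate application of the cellular structure established in Theorem \ref{dl16} together with the general semisimplicity criterion for cellular algebras proved by Graham and Lehrer in (\cite{GL}, Theorem 3.8). Since semisimplicity and the theory of simple modules are only meaningful over a field, I would read the hypotheses of Theorem \ref{dl16} with $R$ (equivalently $\varLambda$) taken to be a field in which $q$, $r$ and $(r-1)/(q-1)$ are invertible, exactly as in the preceding results on simple modules.

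First I would recall the precise content of the cited criterion. For a cellular algebra $A$ over a field with cell datum $(\varLambda, M, C, i)$, cell modules $\{\Delta(\la)\}_{\la \in \varLambda}$, and associated bilinear forms $\phi_{\la}$, Graham and Lehrer show that the following are equivalent: (i) $A$ is semisimple; (ii) every cell module $\Delta(\la)$ is simple and the $\Delta(\la)$ are pairwise non-isomorphic; (iii) each form $\phi_{\la}$ is non-degenerate. The equivalence of (i) and (ii) is exactly the assertion of the corollary, once one knows that $Br_{n}(r,q)$ is cellular and identifies its cell modules.

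Next I would apply this verbatim with $A = Br_{n}(r,q)$. By Theorem \ref{dl16} this algebra is cellular with respect to the involution $i$, and by Corollary \ref{hq17} its cell modules are precisely the modules $\Delta_{k}(\la) = V^*_{k, n}\ten d_{k} \ten \Delta(\la)$ indexed by $(n-2k, \la) \in I$. As these are the cell modules entering the Graham--Lehrer framework, the stated equivalence follows directly: $Br_{n}(r,q)$ is semisimple if and only if all the $\Delta_{k}(\la)$ are simple and pairwise non-isomorphic.

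The main obstacle is bookkeeping rather than genuine mathematics, namely checking that the cell datum produced by the iterated inflation of Proposition \ref{md14} via Lemma \ref{bd20} is the very datum whose cell modules are listed in Corollary \ref{hq17}, so that the general theorem applies without adjustment. This is guaranteed by the explicit description of the cell form $\Phi_{(k,\la)}$ in Definition \ref{dn8}, which shows that on each layer the cell form of $Br_{n}(r,q)$ restricts to a nonzero scalar multiple of the cell form $\phi_{(k,\la)}$ of the Hecke algebra $H_{2k+1,n}$; in particular condition (iii) for $Br_{n}(r,q)$ is controlled layer by layer. No further computation is required beyond this matching of data.
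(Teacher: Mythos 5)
Your proposal is correct and follows essentially the same route as the paper, which obtains this corollary directly from Theorem \ref{dl16} together with Graham and Lehrer's semisimplicity criterion (\cite{GL}, Theorem 3.8), with the cell modules identified as in Corollary \ref{hq17}. Your additional care in reading the hypotheses over a field and in matching the cell datum and bilinear forms $\Phi_{(k,\la)}$ layer by layer is sound bookkeeping that the paper leaves implicit.
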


\section{Quasi-heredity of $Br_{n}(r, q)$ over a field}
Let $R$ be a field and $q$, $r$ are non-zero elements in R.
It is well-known that in some explicit cases the cellularity of algebras can provide the quasi-hereditary structure on themselve.
More precisely, Koenig and Xi \cite{KX1, KX3} and Xi \cite{X1, X2} pointed out that with suitable choices of parameters
Brauer algebras, Partition algebras, Temperley-Lieb algebras of type A and BMW algebras are quasi-hereditary.
These enable us to be interested in the existence of the quasi-hereditary structure of the $q$-Brauer algebra.
The question is that what conditions the $q$-Brauer algebra is quasi-hereditary.
Let us first recall the definition of quasi-hereditary algebras introduced in \cite{CPS} due to Cline, Parshall and Scott.
The examples of quasi-hereditary algebras are given in \cite{PS, CPS}.

\begin{defn}
Let $A$ be a finite dimensional $R$-algebra.
An ideal $J$ in $A$ is called a {\it hereditary ideal} if $J$ is idempotent,
$J(rad(A))J = 0 $ and $J$ is a projective left (or right) $A$-module.
The algebra $A$ is called {\it quasi-hereditary} provided there is a finite chain
$0 = J_{0} \subset J_{1} \subset J_{0} \subset...\subset J_{n} = A$
of ideals in $A$ \st $J_{i+1}/J_{i}$ is a hereditary ideal in $A/J_{i}$ for all i.
Such a chain is then called a heredity chain of the quasi-hereditary~algebra~$A$.
\end{defn}
Subsequently, we give an positive answer with respect to q-Brauer algebras.
We state below when the $q$-Brauer algebra is quasi-hereditary.
It is well-known that the Hecke algebra $H_{n}$ is semisimple \iff
$e(q) > n$. This is shown by Dipper and James (see \cite{DJ1} or \cite{DJ2} for more details).

\begin{thm}\label{qhthm}
Let $R$ be any field with $0 \neq q,\ r \in R$.
Assume more that  $(r-1)/(q-1)\neq0$.
Then the $q$-Brauer algebra $Br_{n}(r, q)$ is quasi-hereditary \iff
$e(q)$ is strictly bigger than n.
\end{thm}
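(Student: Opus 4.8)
The plan is to derive Theorem \ref{qhthm} from the cellularity of $Br_{n}(r,q)$ (Theorem \ref{dl16}) together with the standard criterion of the theory of cellular algebras: over a field a cellular algebra is quasi-hereditary \iff every one of its cell bilinear forms is non-zero, equivalently \iff the number of its simple modules equals the number of its cell modules (see \cite{GL} and \cite{KX1}). Since the cell data of $Br_{n}(r,q)$ are indexed by $I$ and the associated forms are the $\Phi_{(k,\la)}$ of Definition \ref{dn8}, it suffices to determine for which parameters one has $\Phi_{(k,\la)}\neq 0$ for \emph{every} $(n-2k,\la)\in I$. Almost all of the required computation is already at hand: in the proof of the preceding theorem the factorisation
$$\Phi_{(k,\la)}=\Big(\frac{r-1}{q-1}\Big)^{k}\phi_{(k,\la)}\qquad(n-2k\neq 0)$$
was obtained from \eqref{eq31'}, while $\Phi_{(k,\la)}=((r-1)/(q-1))^{[n/2]}\neq 0$ when $n-2k=0$. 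Because $(r-1)/(q-1)$ is invertible, $\Phi_{(k,\la)}\neq 0$ holds exactly when the Hecke cell form $\phi_{(k,\la)}$ on $H_{2k+1,n}$ is non-zero.

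First I would prove sufficiency. Assume $e(q)>n$ and fix $k$ with $0\le k\le[n/2]$. If $n-2k=0$ then $\Phi_{(k,\la)}\neq 0$ by the remark above, so suppose $n-2k\ge 1$ and let $\la$ be a partition of $n-2k$. Since $n-2k\le n<e(q)$, every part of $\la$ is smaller than $e(q)$, hence every successive difference $\la_{j}-\la_{j+1}$ is $<e(q)$; that is, $\la$ is $e(q)$-restricted. By Dipper and James (\cite{DJ1}, Theorem 7.6) this gives $\phi_{(k,\la)}\neq 0$, and so $\Phi_{(k,\la)}\neq 0$. Equivalently, $e(q)>n-2k$ is precisely the condition that the Hecke algebra $H_{2k+1,n}\cong H_{n-2k}$ be semisimple, so that all its cell modules are simple and all $\phi_{(k,\la)}$ are non-zero. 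As this holds for every layer, every cell form of $Br_{n}(r,q)$ is non-zero, and the cellular criterion gives that $Br_{n}(r,q)$ is quasi-hereditary; the heredity chain is then the iterated-inflation chain $0\subset J_{[n/2]}\subset\dots\subset J_{0}=Br_{n}(r,q)$ of Proposition \ref{md14}, refined by the cell chains of the semisimple Hecke layers.

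Next I would prove necessity by contraposition. Suppose $e(q)\le n$ and consider the bottom layer $k=0$, where $H_{2k+1,n}=H_{n}$ and $n-2k=n$, together with the one-row partition $\la=(n)$ of $n$. Then $\la_{1}-\la_{2}=n\ge e(q)$, so $\la$ is not $e(q)$-restricted; \cite{DJ1} again yields $\phi_{(0,(n))}=0$, whence $\Phi_{(0,(n))}=0$. Thus $Br_{n}(r,q)$ has a cell module on which the cell form vanishes identically, so by the criterion it is not quasi-hereditary. Combining the two directions gives the equivalence claimed in Theorem \ref{qhthm}.

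I expect the only genuinely delicate points to be the correct invocation of the quasi-hereditary criterion for cellular algebras and the bookkeeping of quantifiers. One must require non-vanishing simultaneously for all $k$ and all partitions $\la$ of $n-2k$, and recognise that the most restrictive layer is $k=0$: there the one-row partition $(n)$ is $e(q)$-restricted \iff $e(q)>n$, and this single constraint already forces $e(q)>n-2k$ for all larger $k$. Everything else, namely the factorisation of $\Phi_{(k,\la)}$ through $\phi_{(k,\la)}$ and the $e(q)$-restriction criterion of Dipper and James, is inherited verbatim from the preceding results.
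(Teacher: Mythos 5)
Your proposal is correct, and it reaches the equivalence by a partly different route than the paper. The sufficiency direction is essentially the paper's argument in different clothing: the paper verifies, via Lemma \ref{bd16} and \eqref{eq31'}, that the square of each inflation layer $(V^*_{k,n}\otimes_{R}V_{k,n}\otimes_{R}H_{2k+1,n})$ is nonzero modulo $J_{k+1}$ when the semisimple layers $H_{2k+1,n}$ are used, whereas you verify nonvanishing of each individual form $\Phi_{(k,\lambda)}$ through the factorisation $\Phi_{(k,\lambda)}=\bigl(\tfrac{r-1}{q-1}\bigr)^{k}\phi_{(k,\lambda)}$; both hinge on the same two inputs, namely $\varphi_{k}(e_{(k)},e_{(k)})=\bigl(\tfrac{r-1}{q-1}\bigr)^{k}$ being invertible and Dipper--James (\cite{DJ1}, Theorem 7.6). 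The necessity direction is where you genuinely diverge: the paper argues structurally, using that $H_{n}\cong Br_{n}(r,q)/J_{1}$ is a quotient along the cell chain, that by \cite{KX3} any cell chain of a quasi-hereditary cellular algebra is a heredity chain, and that a self-injective algebra is quasi-hereditary only if semisimple, forcing $H_{n}$ semisimple and $e(q)>n$; you instead exhibit an explicit degenerate layer, the partition $(n)$ at $k=0$, which is not $e(q)$-restricted when $e(q)\le n$, so $\phi_{(0,(n))}=0$ and hence $\Phi_{(0,(n))}=0$. Your version buys concreteness (an explicit cell module with vanishing form) and avoids invoking self-injectivity of $H_{n}$, a fact the paper uses without citation; the paper's version buys independence from the full $e(q)$-restriction classification, needing only the semisimplicity criterion for $H_{n}$. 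Two small points to tidy: the equivalence ``quasi-hereditary $\Leftrightarrow$ all cell forms nonzero'' should be credited to \cite{KX3} rather than \cite{GL} and \cite{KX1} (Graham--Lehrer give only the direction that nonvanishing of all forms yields a heredity chain), and your factorisation of $\Phi_{(k,\lambda)}$ is established in the paper only on elements of the form $e_{(k)}\otimes e_{(k)}\otimes x$, so for the implication $\phi_{(k,\lambda)}=0\Rightarrow\Phi_{(k,\lambda)}=0$ you should appeal directly to Definition \ref{dn8}, where $\Phi_{(k,\lambda)}(d^*_{k}\otimes b\otimes x,\ c\otimes d_{k}\otimes y)=\phi_{(k,\lambda)}(x\varphi_{k}(b,c),y)$ vanishes because $x\varphi_{k}(b,c)$ again lies in $\Delta(\lambda)$ --- as indeed the paper's preceding theorem already records this equivalence.
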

\begin{proof}
\smallskip
Suppose that the $q$-Brauer algebra is quasi-hereditary. For any choices of non-zero parameter,
the $q$-Brauer algebra has a quotient $H_{n}$, the Hecke algebra of type $A_{n-1}$, and
this quotient actually arises as $Br_{n}(r, q)/J_{1}$ for some ideal in the cell chain.
Furthermore, we know by \cite{KX3} that any cell chain is a hereditary chain and
also note that a self-injective algebra is quasi-heredity \iff it is semisimple.
Thus, as a consequence, the Hecke algebra $H_{n}$ is semisimple, that is, $e(q) > n$.

Conversely, if $e(q)$ is strictly bigger than n, then using Theorem 7.6 \cite{DJ1}, $H_{n}$ is semisimple and so, all subalgebras $H_{2k+1, n}$ of $H_{n}$ are semisimple.
To prove that under our assumption the algebra $Br_{n}(r, q)$ is quasi-hereditary, we need to show by \cite{KX3} that the square
$(V^*_{k, n}\otimes_{R}V_{k, n}\otimes_{R}H_{2k+1,n})2$ is not zero modulo $J_{k+1}$ for all $0 \leq k \leq [n/2]$.
Proceed as in \cite{KX3},
let $\{ g^{\lambda}_{S, T} \ \mid \ \lambda$ is a partition of the form $(\lambda_{1}, \lambda_{2},..., \lambda_{l})$ and $S, T$ are standard tableaux of shape $\lambda  \}$
be a cellular basis of the semisimple cellular algebra $H_{2k+1, n}$.
Then there are two elements $g^{\lambda}_{S, T}$ and $g^{\lambda}_{U, V}$ such that $g^{\lambda}_{S, T}g^{\lambda}_{U, V}$
is not zero modulo the span of all $g^{\mu}_{S^{'}, T^{'}}$, where $\mu$ strictly smaller than $\lambda$
and $S',\ T'$ are standard tableau of shape $\mu$ .
Take the element $e_{(k)}$ in $V^*_{k, n}$ ($V_{k, n}$) and consider the product of
$e_{(k)} \otimes e_{(k)} \otimes g^{\lambda}_{S, T}$ and $e_{(k)}\otimes e_{(k)} \otimes g^{\lambda}_{U, V}.$
By Lemma \ref{bd16}
\begin{align*}
x&\overset{}{:=} (e_{(k)} \otimes e_{(k)} \otimes g^{\lambda}_{S, T})(e_{(k)}\otimes e_{(k)} \otimes g^{\lambda}_{U, V}) \\
&\overset{}{\equiv} e_{(k)} \otimes e_{(k)} \otimes g^{\lambda}_{S, T} \varphi_{k}(e_{(k)}, e_{(k)})g^{\lambda}_{U, V}\\
&\overset{(\ref{eq31'})}{\equiv} e_{(k)} \otimes e_{(k)} \otimes g^{\lambda}_{S, T} (\dfrac {r-1} {q-1})^{k} g^{\lambda}_{U, V}\\
&\overset{}{\equiv} e_{(k)} \otimes e_{(k)} \otimes (\dfrac {r-1} {q-1})^{k}(g^{\lambda}_{S, T}g^{\lambda}_{U, V})\ (mod \ J_{k+1}).
\end{align*}
Since $\dfrac {r-1} {q-1} \neq 0 $, x is non-zero modulo $J_{k+1}$.
\end{proof}

As a consequence of the last theorem and (Theorem 3.1, \cite{KX3}), we obtain the following corollary.

\begin{cor}
Let $R$ be any field with $q$, $r$, and  $(r-1)/(q-1)$ are simultaneously non-zero in $R.$
If $e(q) > n$, then

(a)  $Br_{n}(r, q)$ has finite global dimension.

(b) The Cartan determinant of  $Br_{n}(r, q)$  is 1.

(c) The simple  $Br_{n}(r, q)- module$ can be parametrized by the set of all pairs $(n-2k, \lambda)$
with $0 \leq k \leq [n/2]$ and $\lambda$ is an $e(q)$-restricted partition of (n-2k) of the form $(\lambda_{1}, \lambda_{2},..., \lambda_{l}).$
\end{cor}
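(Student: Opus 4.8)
The plan is to read all three parts off Theorem~\ref{qhthm} together with the general homological machinery for quasi-hereditary algebras assembled in (Theorem~3.1, \cite{KX3}). Since $q$, $r$ and $(r-1)/(q-1)$ are non-zero and $e(q) > n$, Theorem~\ref{qhthm} tells us that $Br_{n}(r, q)$ is quasi-hereditary, and the decreasing chain of ideals $J_{k} = \sum_{m \geq k} H_{n}e_{(m)}H_{n}$ coming from the iterated inflation of Proposition~\ref{md14} reverses to a heredity chain
\[0 = J_{[n/2]+1} \subset J_{[n/2]} \subset \cdots \subset J_{1} \subset J_{0} = Br_{n}(r, q),\]
whose $[n/2]+1$ sections are indexed by the poset $I$. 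Everything below is then formal consequence.

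First I would dispose of (a). It is a foundational theorem of Cline, Parshall and Scott that a quasi-hereditary algebra has finite global dimension, bounded by $2(m-1)$ where $m$ is the length of a heredity chain; here this gives global dimension at most $2[n/2]$. Thus (a) follows the moment quasi-heredity is available. For (b) I would invoke the standard factorisation of the Cartan matrix of a quasi-hereditary algebra. Writing $\Delta(\mu)$ for the cell (standard) modules of Corollary~\ref{hq17} and $L(\nu)$ for their simple heads, the decomposition matrix $D = \bigl([\Delta(\mu):L(\nu)]\bigr)_{\mu,\nu}$ is unitriangular for the partial order on $I$, since $[\Delta(\lambda):L(\lambda)] = 1$ while every other composition factor is strictly smaller. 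By BGG reciprocity the Cartan matrix satisfies $C = D^{t}D$, so $\det C = (\det D)^{2} = 1$; this is exactly (Theorem~3.1, \cite{KX3}) specialised to $Br_{n}(r, q)$, and it sharpens the earlier corollary (where only $\det C > 0$ was claimed) to $\det C = 1$ precisely because quasi-heredity forces $D$ to be square.

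Part (c) is essentially book-keeping. In a quasi-hereditary algebra every standard module has a simple top and these tops form a complete irredundant list of the simple modules, so the simple $Br_{n}(r, q)$-modules are indexed by all of $I$, i.e.\ by the pairs $(n-2k, \lambda)$ with $\lambda$ an arbitrary partition of $n-2k$. To match the stated form it remains only to note that the $e(q)$-restriction condition is now vacuous: any partition $\lambda = (\lambda_{1}, \ldots, \lambda_{l})$ of $n-2k$ has all parts at most $n - 2k \leq n < e(q)$, so every difference $\lambda_{j} - \lambda_{j+1}$ is strictly less than $e(q)$ and $\lambda$ is automatically $e(q)$-restricted. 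Hence the classification of simple modules from the preceding theorem collapses to the full set of pairs $(n-2k, \lambda)$, which is (c).

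The only genuinely delicate point is (b): one must be certain that the decomposition matrix is unitriangular with respect to the \emph{correct} order on $I$ — the order refining the grading by the number of horizontal edges $k$ by the dominance order on partitions of $n-2k$ — so that $D$ really has determinant $1$ rather than merely positive determinant. This unitriangularity is precisely what the quasi-hereditary structure established in Theorem~\ref{qhthm} supplies, so once that theorem is invoked the remaining arguments for (a), (b) and (c) are immediate.
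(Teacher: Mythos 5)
Your proposal is correct and takes essentially the same route as the paper, which obtains the corollary simply by combining Theorem~\ref{qhthm} with Theorem~3.1 of \cite{KX3} — the latter packaging exactly the standard quasi-hereditary facts (finite global dimension, unitriangular decomposition matrix forcing Cartan determinant $1$, and simple heads of the standard modules exhausting the simples) that you spell out explicitly. Your added observation that the $e(q)$-restriction in (c) becomes vacuous when $e(q) > n$, since every part of a partition of $n-2k$ is at most $n < e(q)$, is also the correct way to reconcile the statement with the earlier parametrization theorem.
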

We obtain the similar results for the version $Br_n(N)$ as follows:

\begin{cor} \label{hqofBrN1}
Let $R$ be any field with $q$, $r$, and $[N]$ are simultaneously non-zero in $R$.
Then the $q$-Brauer algebra $Br_{n}(N)$ is quasi-hereditary \iff
$e(q)$ is strictly bigger than n.
\end{cor}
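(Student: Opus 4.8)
The plan is to deduce this from Theorem \ref{qhthm} by passing through the specialization that links the two versions of the algebra. Recall from Remark \ref{remofqBr}(1) that $Br_n(N)$ arises from $Br_{n}(r, q)$ by setting $r = q^{N}$, under which the scalar governing the idempotency relation $e^{2} = \frac{r-1}{q-1}e$ becomes
$$\frac{r-1}{q-1} = \frac{q^{N} - 1}{q-1} = [N],$$
and that every structural result proved for $Br_{n}(r, q)$ in Sections 3 and 4 holds verbatim for $Br_n(N)$. In particular, cellularity of $Br_n(N)$ is already recorded in Corollary \ref{dl17} and its iterated-inflation decomposition in Corollary \ref{md15}, both under the hypothesis that $q$, $r$ and $[N]$ are invertible.

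First I would establish the forward direction exactly as in the proof of Theorem \ref{qhthm}: if $Br_n(N)$ is quasi-hereditary, then the quotient $Br_n(N)/J_{1}$, which is the Hecke algebra $H_{n}$ of type $A_{n-1}$, inherits a heredity chain from a cell chain by \cite{KX3}. Since $H_{n}$ is self-injective, it is quasi-hereditary only if it is semisimple, and by Dipper and James (\cite{DJ1}, see also the remark preceding Theorem \ref{qhthm}) this forces $e(q) > n$.

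For the converse I would assume $e(q) > n$, so that $H_{n}$ and hence each subalgebra $H_{2k+1, n}$ is semisimple. Following \cite{KX3}, it then suffices to verify that for every $0 \leq k \leq [n/2]$ the square of the inflation layer $V^*_{k, n}\otimes_{R}V_{k, n}\otimes_{R}H_{2k+1,n}$ is non-zero modulo $J_{k+1}$. Choosing a cellular basis $\{ g^{\lambda}_{S, T}\}$ of the semisimple algebra $H_{2k+1, n}$ and a pair $g^{\lambda}_{S, T}, g^{\lambda}_{U, V}$ whose product is non-zero modulo lower terms, the computation carried out in the proof of Theorem \ref{qhthm} via Lemma \ref{bd16} transfers directly, the only change being that the scalar $\varphi_{k}(e_{(k)}, e_{(k)})$ of \eqref{eq31'} now equals $[N]^{k}$ in place of $(\frac{r-1}{q-1})^{k}$. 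Since $[N] \neq 0$ by hypothesis, this product remains non-zero modulo $J_{k+1}$, yielding the required heredity chain.

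The argument presents no genuine obstacle: its whole content is the observation that replacing $\frac{r-1}{q-1}$ by $[N]$ leaves every step of the proof of Theorem \ref{qhthm} intact. The one point requiring care is to confirm that $[N]$, and not $q^{N}$ or any other specialization of $r$, is precisely the scalar occurring in the relation $e^{2}=[N]e$, so that non-vanishing of $[N]$ is exactly the correct non-degeneracy condition; this is guaranteed by relation $(E1)'$ of Remark \ref{remofqBr}(1).
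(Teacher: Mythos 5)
Your proposal is correct and follows exactly the route the paper intends: Corollary \ref{hqofBrN1} is obtained by rerunning the proof of Theorem \ref{qhthm} for $Br_n(N)$, with the scalar $\varphi_k(e_{(k)},e_{(k)})=[N]^k$ replacing $\left(\frac{r-1}{q-1}\right)^k$ throughout, as licensed by Remark \ref{remofqBr}(1) and Corollaries \ref{md15} and \ref{dl17}. Your closing observation is also the right one to make: since the paper defines $[N]=1+q+\cdots+q^{N-1}$ polynomially (so that the case $q=1$ is covered, where the specialization $r=q^N$ with the fraction $\frac{r-1}{q-1}$ would be meaningless), anchoring the scalar in relation $(E1)'$ rather than in the specialization is precisely what makes the argument valid in arbitrary characteristic.
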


\begin{cor} \label{hqofBrN2}
Let $R$ be any field with $q$, $r$, and $[N]$ are simultaneously non-zero in $R$.
If $e(q) > n$, then

(a)  $Br_{n}(N)$ has finite global dimension.

(b) The Cartan determinant of  $Br_{n}(N)$  is 1.

(c) The simple  $Br_{n}(N)- module$ can be parametrized by the set of all pairs $(n-2k, \lambda)$
with $0 \leq k \leq [n/2]$ and $\lambda$ is an $e(q)$-restricted partition of (n-2k) of the form $(\lambda_{1}, \lambda_{2},..., \lambda_{l}).$
\end{cor}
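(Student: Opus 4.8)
The plan is to obtain Corollary~\ref{hqofBrN2} as the $Br_{n}(N)$-analogue of the preceding corollary for $Br_{n}(r,q)$, transferring the argument along the dictionary recorded in Remark~\ref{remofqBr}(1). Comparing relations $(E_{1})$, $(E_{2})$ with $(E1)'$, $(E2)'$ shows that $Br_{n}(N)$ is the specialization of $Br_{n}(r,q)$ under $r \mapsto q^{N}$ and $(r-1)/(q-1) \mapsto [N]$; in particular the hypothesis that $q$, $r$ and $[N]$ are nonzero in $R$ is exactly the image of the hypothesis that $q$, $r$ and $(r-1)/(q-1)$ are nonzero. First I would note that under this dictionary Corollary~\ref{hqofBrN1} already places us in the quasi-hereditary regime: the assumption $e(q) > n$ guarantees that $Br_{n}(N)$ is quasi-hereditary.

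For parts (a) and (b) I would invoke the general homological theory of quasi-hereditary cellular algebras. By Corollary~\ref{dl17} the algebra $Br_{n}(N)$ is cellular, and by the previous step it is quasi-hereditary, so the cell chain produced in Theorem~\ref{dl16} (specialized to $Br_{n}(N)$) is a genuine heredity chain fixed by the involution $i$. Then (Theorem~3.1, \cite{KX3}) applies directly: a quasi-hereditary algebra has finite global dimension, which is (a), and its Cartan determinant equals $1$, which is (b). The only point to verify is that each layer $V^{*}_{k,n}\ten_{R}V_{k,n}\ten_{R}H_{2k+1,n}$ yields a hereditary section, and this is exactly the computation carried out in the proof of Theorem~\ref{qhthm}: the square of a layer is nonzero modulo $J_{k+1}$ because the scalar $(r-1)/(q-1)$, i.e. $[N]$, is nonzero.

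For part (c) I would combine quasi-heredity with the simple-module classification established earlier. When $e(q) > n$, every partition $\la$ of $n-2k$ with $0 \le k \le [n/2]$ is automatically $e(q)$-restricted, since $\la_{j}-\la_{j+1} \le n-2k \le n < e(q)$ for all $j$; equivalently each Hecke subalgebra $H_{2k+1,n}$ is semisimple, so the cell module $\Delta_{k}(\la)$ is simple and no restriction on $\la$ is imposed. Feeding this into the theorem parametrizing simple modules by $e(q)$-restricted partitions, transferred to $Br_{n}(N)$ via Remark~\ref{remofqBr}(1), produces precisely the stated indexing set of pairs $(n-2k,\la)$.

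The main obstacle here is not computational but one of bookkeeping: I must check that the parameter dictionary $r \leftrightarrow q^{N}$ and $(r-1)/(q-1) \leftrightarrow [N]$ is compatible with every cited ingredient --- cellularity, the value $\varphi_{k}(e_{(k)},e_{(k)}) = ((r-1)/(q-1))^{k}$ of the bilinear form, and the quasi-heredity criterion --- and that the hypotheses of \cite{KX3} on the cellular heredity chain are satisfied in the specialized algebra. Once this transfer is justified, statements (a)--(c) follow formally, with no genuinely new algebra to perform.
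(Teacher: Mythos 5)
Your proposal is correct and takes essentially the same route as the paper, which obtains Corollary \ref{hqofBrN2} precisely by transferring the $Br_{n}(r,q)$ statements (Theorem \ref{qhthm} and its corollary, resting on Theorem 3.1 of \cite{KX3}) to $Br_{n}(N)$ via the parameter correspondence $r \leftrightarrow q^{N}$, $(r-1)/(q-1) \leftrightarrow [N]$ recorded in Remark \ref{remofqBr}(1). Your supplementary checks --- that each layer squares to something nonzero modulo $J_{k+1}$ because $[N]^{k} \neq 0$, and that every partition of $n-2k$ is automatically $e(q)$-restricted when $e(q) > n$ --- match the computations already present in the proofs of Theorem \ref{qhthm} and of the simple-module parametrization.
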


\begin{rem} Note that if $q=1$ then $[m]_{q}=1 + 1 + 1^{2}...+ 1^{m-1} = m = 0$
and hence $e(q)$ is equal to the characteristic of $R$.
As a consequence, the result in Corollary \ref{hqofBrN1} recovers this of the Brauer algebra $D_n(N)$ with the nonzero parameter due to Koenig and Xi (see \cite{KX3}, Theorem 1.3).
\end{rem}
\bigskip

{\bf{Acknowledgments}}

I would like to thank Professor Hans Wenzl for useful discussions. I am specially grateful to my supervisor, Professor Steffen Koenig, for constant support and valuable advice during this work.
The research work is financially supported by the Project MOET-322 of the Training and Education Ministry of Vietnam
and (partly) by the DFG Priority Program SPP-1489. I would also like to express my gratitude for this.

\end{document}